\documentclass[11pt]{article}
\usepackage{amssymb,amsmath,accents}
\usepackage{amscd}
\usepackage{amsfonts,amsthm,mathrsfs}
\usepackage{setspace} 
\usepackage{graphics}
\usepackage{latexsym}
\usepackage{color}
\usepackage{authblk}

\topmargin=0cm
\oddsidemargin0cm
\textheight23.3cm
\textwidth16.1cm
\headsep0mm
\headheight0mm
\parskip 3pt
\setstretch{1.1} 
\pagestyle{plain}

\newtheorem{theorem}{Theorem}
\newtheorem{lemma}[theorem]{Lemma}
\newtheorem{proposition}[theorem]{Proposition}
\newtheorem{corollary}[theorem]{Corollary}

\theoremstyle{definition}

\newtheorem{remark}[theorem]{Remark}

\title{\textbf{The Helmholtz decomposition of a $BMO$ type vector field in a slightly perturbed half space}}

\author[1]{Yoshikazu Giga\thanks{labgiga@ms.u-tokyo.ac.jp}}
\author[1,2]{Zhongyang Gu\thanks{zgu@ms.u-tokyo.ac.jp}}  
\affil[1]{Graduate School of Mathematical Sciences, The University of Tokyo, 3-8-1 Komaba, Meguro-ku, 153-8914, Tokyo, Japan}
\affil[2]{Graduate School of Economics, Hitotsubashi University, 2-1 Naka, Kunitachi, 186-8601, Tokyo, Japan}

\begin{document}
\date{}

\maketitle
\begin{abstract}
We introduce a space of $L^2$ vector fields with bounded mean oscillation whose ``normal'' component to the boundary is well-controlled.
We establish its Helmholtz decomposition in the case when the domain is a perturbed $C^3$ half space in $\mathbf{R}^n$ $(n \geq 3)$ with small perturbation.
\end{abstract}
%
\begin{center}
Keywords: Helmholtz decomposition, $BMO$ space, perturbed half space.
\end{center}

%
\section{Introduction} 
\label{intro}
This is a continuation of our paper \cite{GG22b}.
It is well known that the Helmholtz decomposition plays a key role in analyzing the Stokes and the Navier-Stokes equations \cite{L}.
Such decomposition is well studied for $L^p$ space with $1<p<\infty$.
It is a topological direct sum decomposition
\[
L^p(\Omega)^n = L^p_\sigma(\Omega) \oplus G^p(\Omega)
\]
of the $L^p$ vector fields in a domain $\Omega \subseteq \mathbf{R}^n$.
Here, $L^p_\sigma(\Omega)$ denotes the $L^p$-closure of the space of all smooth div-free vector fields that are compactly supported and $G^p$ denotes the space of all $L^p$ gradient fields.
If $p=2$, such decomposition holds for any domain $\Omega$.
It is an orthogonal decomposition and often called Weyl's decomposition.
For $1<p<\infty$, the decomposition still holds for various domain including the whole space $\mathbf{R}^n$, the half space $\mathbf{R}^n_+$, a perturbed half space, a bounded smooth domain \cite{FM} and an exterior smooth domain.
However, there are smooth unbounded domains which do not admit $L^p$-Helmholtz decomposition;
see e.g.\ a nice book of Galdi \cite{Gal}.

If $p=\infty$, such a decomposition does not hold even when $\Omega=\mathbf{R}^n$ since the projection operator is a kind of the Riesz operator which is unbounded in $L^\infty$, though it is bounded in $L^p$ ($1<p<\infty$).
We replace $L^\infty$ by $BMO$ space.
It turns out that it is convenient to consider a subspace $vBMO$ of $BMO$ to have the Helmholtz decomposition, at least for a half space \cite{GG20} and a bounded domain \cite{GG22b}.
Our goal is to extend such a result to a perturbed half space.
Unfortunately, it seems that a direct extension is difficult because the behavior at space infinity is not well controlled.
Thus we consider the $L^2$ intersection of this space.
For $L^p$ space, Farwig, Kozono, and Sohr \cite{FKS1} established the Helmholtz decomposition of $L^p\cap L^2$ ($p\geq 2$) and $L^p+L^2$ ($1<p<2$) for arbitrary uniformly $C^2$ smooth domain in $\mathbf{R}^3$.
(Later, it is extended to arbitrary dimension \cite{FKS2}.)
Although we consider $vBMO\cap L^2$ in a slightly perturbed half space in the present paper, our results extend to any uniformly $C^3$ domain.
This will be discussed in a separate forthcoming paper.

Let us recall the $BMO$ space of vector fields introduced in \cite{GG22a}, \cite{GG22b}.
For $\mu\in(0,\infty]$, we recall the $BMO$ seminorm.
For a locally integrable function $f$ in $\Omega$, i.e., $f\in L^1_{loc}(\Omega)$, we set
\[
[f]_{BMO^\mu(\Omega)} := \sup \left\{ \frac{1}{\left|B_r(x)\right|} \int_{B_r(x)} \left| f(y) - f_{B_r(x)} \right| \, dy \biggm| B_r(x) \subset \Omega,\ r < \mu \right\};
\]
here $f_B$ denotes the average over $B$ and $B_r(x)$ denotes the open ball of radius $r$ centered at $x$ and $\lvert B \rvert$ denotes the Lebesgue measure of $B$.
For $\nu\in(0,\infty]$, we also use a seminorm
\[
[f]_{b^\nu(\Gamma)} := \sup \left\{ r^{-n} \int_{\Omega\cap B_r(x)} \left| f(y) \right| \, dy \biggm| x \in \Gamma,\ 0<r<\nu \right\},
\]
where $\Gamma:=\partial\Omega$ denotes the boundary of $\Omega$.
The space
 \[
BMO^{\mu,\nu}_b(\Omega) := \left\{ f \in L^1_{loc}(\Omega) \,\middle\vert\, [f]_{BMO^\mu(\Omega)} + [f]_{b^\nu(\Gamma)}< \infty \right\}
\]
is essentially introduced in \cite{BG} and well studied in \cite{BGST}.
The Stokes semigroup in such spaces was studied \cite{BG}, \cite{BGS} and it is useful to prove that the analyticity of the Stokes semigroup still holds in some unbounded domains which do not admit $L^p$-Helmholtz decomposition \cite{BGMST}.

Our space $vBMO$ requires a control only on the normal component.
Let $d_\Gamma$ denote the distance function from $\Gamma$.
We set
\begin{align*}
	&[v]_{vBMO^{\mu,\nu}(\Omega)} := [v]_{BMO^\mu(\Omega)} + [\nabla d_\Gamma\cdot v]_{b^\nu(\Gamma)}, \\
	&vBMO^{\mu,\nu}(\Omega) := \left\{ v \in L^1_{loc}(\Omega)^n \,\middle\vert\, [v]_{vBMO^{\mu,\nu}(\Omega)} < \infty \right\},
\end{align*}
where $\cdot$ denotes the standard inner product.
This is a seminorm.
If $\Omega=\mathbf{R}^n_+$, this is not a norm unless $n=1$, $\nu=\infty$.
However, if $\Gamma$ has a fully curved part in the sense of \cite[Definition 7]{GG22a}, then $[\cdot]_{vBMO^{\mu,\nu}(\Omega)}$ becomes a norm \cite[Lemma 8]{GG22a}.
In particular, when $\Omega$ is a bounded $C^2$ domain, this is a norm.
In this paper, we consider the case where $\Omega$ is a perturbed $C^k$ half space
\[
\mathbf{R}_h^n := \left\{ x = (x',x_n) \in \mathbf{R}^n \,\middle\vert\, x_n > h(x') \right\},
\]
where $h\not\equiv 0$ is in $C^k_c(\mathbf{R}^{n-1})$, i.e., $h$ is a compactly supported $C^k$ function in $\mathbf{R}^{n-1}$;
here $x'=(x_1,\ldots,x_{n-1})$ for $x\in\mathbf{R}^n$.
A perturbed $C^k$ ($k\geq2$) half space $\mathbf{R}^n_h$ is said to be type $(K)$ if
\[
\sup_{x' \in \mathbf{R}^{n-1}} \left\lvert \nabla'^2 h (x') \right\rvert < K
\]
where $\nabla' := (\partial_1, \partial_2,\ldots, \partial_{n-1})$.
We note that the perturbed $C^k$ half space has a fully curved part so that $[v]_{vBMO^{\mu,\nu}(\Omega)}$ is a norm.
By definition, there always exists $R_h>0$ such that the support $\operatorname{supp} h \subseteq \overline{B_{R_h}(0')}$.
We say that the perturbed $C^k$ half space $\mathbf{R}^n_h$ has small perturbation if 
\begin{align} \label{SC}
R_h^{\frac{2n-1}{2n}} < \frac{1}{2}, \quad
C_s(h)^{\frac{3n}{2} + 8} C_1(h) \left( C_{\ast,1}(h) + C_{\ast,2}(h) + R_h^{\frac{n}{2}} \right) < \frac{1}{2C^\ast(n)}
\end{align}
where $C^\ast(n)$ is a specific constant depending only on the space dimension $n$,
\begin{align*}
	&C_s(h) := 1 + \lVert h \rVert_{C^1(\mathbf{R}^{n-1})}, \quad 
	C_1(h) := 1 + R_h \left\lVert \nabla'^2 h \right\rVert_{L^\infty(\mathbf{R}^{n-1})}, \\
	&C_{\ast,1}(h) := C_1(h)^3 \left(1 + R_h^{\frac14}\right) \left( R_h^{\frac12} \lVert \nabla'^2 h \rVert_{L^\infty(\mathbf{R}^{n-1})} + R_h^{\frac52} \lVert \nabla'^2 h \rVert_{L^\infty(\mathbf{R}^{n-1})}^3 \right), \\
	&C_{\ast,2}(h) := \left( R_h + R_h^{\frac{1}{2h}} \right) \left\lVert \nabla'^2 h \right\rVert_{L^\infty(\mathbf{R}^{n-1})} + \left(R_h^{n-1} + 1\right) \lVert h \rVert_{C^1(\mathbf{R}^{n-1})}.
\end{align*}
To simplify the behavior near the infinity, we consider
\[
	vBMOL^2(\Omega) := vBMO^{\mu,\nu}(\Omega) \cap L^2(\Omega).
\]
Note that this space is independent of the choice of $\mu,\nu$.
The purpose of this paper is to establish the Helmholtz decomposition for the space $vBMOL^2\big( \mathbf{R}_h^n \big)$ in the case where $\mathbf{R}_h^n$ is a perturbed $C^3$ half space that has small perturbation with $n \geq 3$. Here is our main theorem.

\begin{theorem} \label{M}
Let $\mathbf{R}_h^n$ be a perturbed $C^3$ half space of type $(K)$ that has small perturbation with $n \geq 3$ and $\Gamma = \partial \mathbf{R}_h^n$.
Then for any $v \in vBMOL^2\big( \mathbf{R}_h^n \big)$, there exists a unique decomposition $v = v_0 + \nabla q$ with
\begin{align*}
v_0 \in vBMOL^2_\sigma\big( \mathbf{R}_h^n \big) &:= \left\{ f \in vBMOL^2\big( \mathbf{R}_h^n \big) \,\middle\vert\, \operatorname{div} f = 0\ \text{in}\ \mathbf{R}_h^n,\ f \cdot \mathbf{n} = 0\ \text{on}\ \Gamma \right\}, \\
\nabla q \in GvBMOL^2\big( \mathbf{R}_h^n \big) &:= \left\{ \nabla p \in vBMOL^2\big( \mathbf{R}_h^n \big) \,\middle\vert\, p \in L^2_{loc}\big( \mathbf{R}_h^n \big) \right\}
\end{align*}
satisfying the estimate
\[
\Vert v_0 \Vert_{vBMOL^2\big( \mathbf{R}_h^n \big)} + \Vert \nabla q \Vert_{vBMOL^2\big( \mathbf{R}_h^n \big)} \leq C(K,R_\ast,R_h) \Vert v \Vert_{vBMOL^2\big( \mathbf{R}_h^n \big)},
\]
where $C(K,R_\ast,R_h)$ is a constant that depends only on the constant $K$ which controls the second order derivative of $h$, the reach of the boundary $R_\ast$ and the size $R_h$ which characterizes the support of $h$.
In particular, the Helmholtz projection $P_{vBMOL^2}$, defined by $P_{vBMOL^2}(v) = v_0$, is a bounded linear map on $vBMOL^2\big( \mathbf{R}_h^n \big)$ with range $vBMOL^2_\sigma\big( \mathbf{R}_h^n \big)$ and kernel $GvBMOL^2\big( \mathbf{R}_h^n \big)$.
\end{theorem}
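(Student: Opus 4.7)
The plan is to realize the decomposition $v = v_0 + \nabla q$ through the weak Neumann problem
\[
\int_{\mathbf{R}^n_h} \nabla q \cdot \nabla \varphi \, dx = \int_{\mathbf{R}^n_h} v \cdot \nabla \varphi \, dx, \qquad \varphi \in \dot H^1(\mathbf{R}^n_h),
\]
whose unique solvability in the homogeneous space $\dot H^1$ modulo constants is a standard consequence of Weyl's orthogonal decomposition in $L^2$ on any smooth domain. This produces a pair $(v_0, \nabla q) \in L^2 \times L^2$ satisfying $\|v_0\|_{L^2} + \|\nabla q\|_{L^2} \leq 2\|v\|_{L^2}$, with $\operatorname{div} v_0 = 0$ weakly and $v_0 \cdot \mathbf{n} = 0$ on $\Gamma$ in the sense of normal traces. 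Uniqueness of the decomposition is automatic from the $L^2$ orthogonality of gradients and solenoidal fields. The entire content of the theorem is therefore the quantitative $vBMO$ estimate
\[
[\nabla q]_{vBMO^{\mu,\nu}(\mathbf{R}^n_h)} \leq C(K,R_\ast,R_h)\, \|v\|_{vBMOL^2(\mathbf{R}^n_h)}.
\]

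To establish this bound I would treat $\mathbf{R}^n_h$ as a small perturbation of $\mathbf{R}^n_+$ through the boundary-flattening diffeomorphism $\Psi(y',y_n) = (y', y_n + h(y'))$. Pulling the Neumann problem back to $\mathbf{R}^n_+$ turns $-\Delta$ with Neumann datum into a conormal problem for a second-order operator $-\Delta + L_h$, where the perturbation $L_h$ and the corresponding change of right-hand side have coefficients supported in $B_{R_h}(0') \times \mathbf{R}_+$ whose sizes are governed by $\|h\|_{C^1}$ and $\|\nabla'^2 h\|_{L^\infty}$. I would then invert $-\Delta + L_h$ by a Neumann series $\nabla \tilde q = \sum_{k \geq 0} (-\mathcal K)^k \nabla \tilde q_0$, which reduces matters to two ingredients: (i) the $vBMO$-Helmholtz decomposition on $\mathbf{R}^n_+$ from \cite{GG20}, applied to the transported data, which bounds the leading term $\nabla \tilde q_0$ in $vBMOL^2(\mathbf{R}^n_+)$ by $\|v\|_{vBMOL^2(\mathbf{R}^n_h)}$ after undoing $\Psi$; and (ii) a small-norm estimate $\|\mathcal K\|_{vBMOL^2 \to vBMOL^2} < 1/2$ for the composition of $L_h$ with the flat Neumann--Laplace solver. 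The smallness condition \eqref{SC} is the precise bookkeeping that forces (ii): the factors $C_s(h)$, $C_1(h)$, $C_{\ast,1}(h)$, $C_{\ast,2}(h)$ each record a distinct contribution appearing when one estimates the Jacobian of $\Psi$, the coefficients of $L_h$, and the interaction of the conormal derivative with $\nabla d_\Gamma$.

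The hardest step will be the operator-norm bound in (ii). Because the $BMO$ seminorm annihilates constants while the $L^2$ norm does not, one cannot simply quote Calder\'on--Zygmund theory off the shelf and must track additive constants through every integration by parts. The compact support of $h$ is essential here: it confines $L_h$ to a ball of radius $R_h$, so that every transition between an $L^2$ quantity on $B_{R_h}$ and a $BMO$ quantity on the whole domain costs a factor $R_h^\alpha$ with $\alpha > 0$, which is exactly the origin of the explicit powers $R_h^{1/2}$, $R_h^{5/2}$, $R_h^{n/2}$, and $R_h^{(2n-1)/(2n)}$ in the definitions of $C_{\ast,1}(h)$, $C_{\ast,2}(h)$ and in \eqref{SC}. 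A secondary technical point, handled in parallel, is the boundary seminorm $[\nabla d_\Gamma \cdot \nabla q]_{b^\nu(\Gamma)}$: since $d_\Gamma$ coincides with $y_n$ outside a neighborhood of $\operatorname{supp} h$ (controlled by the reach $R_\ast$), one can transport this seminorm under $\Psi$ modulo errors of the same perturbative type as those controlling $\mathcal K$. Once $\|\mathcal K\| < 1/2$ is secured and the geometric series is summed, the required bound for $\nabla q$ drops out, and $v_0 = v - \nabla q$ inherits its $vBMOL^2$ estimate by the triangle inequality, completing the proof.
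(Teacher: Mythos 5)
Your reduction of the theorem to a quantitative $vBMO$ estimate for the gradient part of the $L^2$-Helmholtz projection is sound, and the uniqueness argument via $L^2$ orthogonality is fine. The route you propose for that estimate, however, has a genuine gap at its center. The map $\Psi(y',y_n)=(y',y_n+h(y'))$ pushes $e_n$ forward to $e_n$, not to the unit normal $-\nabla d$ of $\Gamma$, so after flattening, the quantity you must control in the $b^\nu$ seminorm is $\nabla d\cdot\nabla q = \partial_n\tilde q + (\text{terms involving }\nabla'h\cdot\nabla'\tilde q)$. The tangential derivatives $\nabla'\tilde q$ of a Neumann potential are controlled only in $BMO$, and a $BMO$ bound does not yield a $b^\nu$ bound: the $b^\nu$ seminorm is an un-centered $L^1$ average on balls touching $\Gamma$, and Proposition \ref{RfHdmh} of the paper shows explicitly that tangential derivatives of layer potentials are \emph{not} pointwise bounded by the $L^\infty$ norm of the data. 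So these terms are not ``errors of the same perturbative type''; they are the principal obstruction, and they are precisely why the paper is forced to introduce the $\dot H^{-\frac{1}{2}}(\Gamma)$ norm of the normal trace (Lemma \ref{ET}) and to estimate the straight part of the boundary data by duality against $\dot H^{\frac{1}{2}}$ rather than by $L^\infty$. Your proposal never produces this extra norm on the data, so step (ii) cannot close as stated.

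A second, related gap is the claimed bound $\Vert\mathcal K\Vert_{vBMOL^2\to vBMOL^2}<1/2$. Even boundedness of $\mathcal K$ on $vBMOL^2$ requires controlling the $b^\nu$ seminorm of the normal component of $\nabla A^{-1}\operatorname{div}(a\,w)$ for $w\in vBMOL^2$, and this is exactly where the paper has to build in structure: the extensions are arranged so that the normal part is odd and the tangential part is even across $\Gamma$, which makes the leading term of $\nabla d\cdot\nabla q_1$ odd and converts the Fefferman--Stein $BMO$ bound into a $b^\nu$ bound. A generic Neumann-series iterate has no such symmetry, and the coefficients of $L_h$ live on the unbounded cylinder $B_{R_h}(0')\times(0,\infty)$, so the ``compact support costs $R_h^\alpha$'' heuristic does not apply in the $y_n$ direction either. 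For comparison, the paper avoids a single global perturbation series: it first constructs a volume potential $q_1$ with $-\Delta q_1=\operatorname{div}v$ valid for \emph{any} uniformly $C^3$ domain with no smallness (Theorem \ref{CSV}, where the Neumann series acts only on the localized, symmetrized second-order coefficients), and then corrects the normal trace by a single layer potential, with the smallness condition \eqref{SC} entering only through the invertibility of $I-2S$ on $L^\infty(\Gamma)\cap\dot H^{-\frac{1}{2}}(\Gamma)$ (Lemma \ref{EN}). To repair your argument you would need, at minimum, to (a) carry an $\dot H^{-\frac12}$-type norm of the transported boundary data alongside the $L^\infty$ and $BMO$ information, and (b) justify the $b^\nu$ component of the operator norm of $\mathcal K$, neither of which follows from the estimates you invoke.
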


Roughly speaking, the reach $R_\ast$ represents the size of a small neighborhood of the boundary within which every point has a unique projection on the boundary. Here we would like to direct the readers to subsection \ref{sub:LOC} for its precise definition.

Our strategy to prove Theorem \ref{M} follows from the potential theoretic strategy we used to establish the Helmholtz decomposition in a bounded $C^3$ domain \cite{GG22b}.
Let $E$ represents the fundamental solution of $-\Delta$ in $\mathbf{R}^n$.
By \cite{Gu}, we see that as long as the boundary $\Gamma$ is uniformly $C^2$, the space $BMO^\infty(\Omega) \cap L^2(\Omega)$ allows the standard cut-off, 
i.e., we can decompose $v$ into two parts $v=v_1+ v_2$ with $v_2 = \varphi v$ and $v_1 = v - v_2$ with some $\varphi \in C^\infty(\mathbf{R}^n)$ that is supported within a small neighborhood of $\Gamma$.
Thus, the support of $v_2$ lies in a small neighborhood of $\Gamma$ whereas the support of $v_1$ is away from $\Gamma$.
For $v_1$, by extending $v_1$ as zero outside $\Omega$, we just set $q^1_1 = E * \operatorname{div} v_1$.
Then, the $L^\infty$ bound for $\nabla q^1_1$ is well controlled near $\Gamma$, which yields a bound for $b^\nu$ semi-norm.
To estimate $v_2$, we use a normal coordinate system near $\Gamma$ and reduce the problem to the half space. 
We extend $v_2$ to $\mathbf{R}^n$ so that the normal part $(\nabla d\cdot\overline{v}_2) \nabla d$ is odd and the tangential part $\overline{v_2} - (\nabla d \cdot \overline{v_2}) \nabla d$ is even in the direction of $\nabla d$ with respect to $\Gamma$.
In such type of coordinate system, the minus Laplacian can be transformed as
\[
	L = A - B + \text{lower order terms},\ 
	A = -\Delta_\eta, \ B = \sum_{1\leq i,j\leq n-1} \partial_{\eta_i} b_{ij} \partial_{\eta_j},
\]
where $\eta_n$ is the normal direction to the boundary so that $\{ \eta_n>0\}$ is the half space.
We then use a freezing coefficient method to construct volume potential $q_1^{\mathrm{tan}}$ and $q_1^{\mathrm{nor}}$, which corresponds to the contribution from the tangential part $\overline{v_2}^{\mathrm{tan}}$ and the normal part $\overline{v_2}^{\mathrm{nor}}$, respectively. Since the leading term of $\operatorname{div} \overline{v_2}^{\mathrm{nor}}$ in normal coordinate consists of the differential of $\eta_n$ only,
if we extend the coefficient $b_{ij}$ even in $\eta_n$, 
$q_1^{\mathrm{nor}}$ is constructed so that the leading term of $\nabla d \cdot \nabla q_1^{\mathrm{nor}}$ is odd in the direction of $\nabla d$.
On the other hand, as the leading term of $\operatorname{div} \overline{v_2}^{\mathrm{tan}}$ in normal coordinate consists of the differential of $\eta' = (\eta_1, ... , \eta_{n-1})$ only, the even extension of $b_{ij}$ in $\eta_n$ gives rise to $q_1^{\mathrm{tan}}$ so that the leading term of $\nabla d \cdot \nabla q_1^{\mathrm{tan}}$ is also odd in the direction of $\nabla d$.
Disregarding lower order terms and localization procedure, $q_1^{\mathrm{tan}}$ and $q_1^{\mathrm{nor}}$ are constructed as
\begin{align*}
q_1^{\mathrm{tan}} &= -L^{-1} \operatorname{div}\overline{v}^{\mathrm{tan}}_2 = -A^{-1} (I-BA^{-1})^{-1} \operatorname{div}\overline{v}^{\mathrm{tan}}_2, \\
q_1^{\mathrm{nor}} &= -L^{-1} \operatorname{div}\overline{v}^{\mathrm{nor}}_2 = -A^{-1} (I-BA^{-1})^{-1} \operatorname{div}\overline{v}^{\mathrm{nor}}_2.
\end{align*}
One is able to arrange $BA^{-1}$ small by working in a small neighborhood of a boundary point.
Then $(I-BA^{-1})^{-1}$ is given as the Neumann series $\sum^\infty_{m=0}(BA^{-1})^m$.
The $BMO$-$BMO$ estimates for $\nabla q_1^{\mathrm{tan}}$ and $\nabla q_1^{\mathrm{nor}}$ follow from \cite{FS}. 
Since the leading term of $\nabla d \cdot (\nabla q_1^{\mathrm{tan}} + \nabla q_1^{\mathrm{nor}})$ is odd in the direction of $\nabla d$ with respect to $\Gamma$, the $BMO$ bound implies $b^\nu$ bound.
The $L^2$ estimates for $\nabla q_1^{\mathrm{tan}}$ and $\nabla q_1^{\mathrm{nor}}$ hold as in the localization procedure, the partition of unity we consider for a small neighborhood of $\Gamma$ is locally finite.
As a result, setting $q_1=q^1_1+q_1^{\mathrm{tan}}+q_1^{\mathrm{nor}}$ would give us our desired volume potential corresponding to $\operatorname{div} v$. 
Some results needed for the construction of volume potential $q_1$ have already been established in \cite[Section 3]{GG22b} and \cite{Gu}, for these parts we omit their proofs and recall them directly.

\begin{theorem}[Construction of a suitable volume potential] \label{CSV}
Let $\Omega \subset \mathbf{R}^n$ be a uniformly $C^3$ domain of type $(\alpha,\beta,K)$ with $n \geq 2$. Let $R_\ast$ be the reach of the boundary $\Gamma = \partial \Omega$.
Then, there exists a bounded linear operator $v\longmapsto q_1$ from $vBMOL^2(\Omega)$ to $L^\infty(\Omega)$ such that
\[
- \Delta q_1 = \operatorname{div}v \quad\text{in}\quad \Omega
\]
and that there exists a constant $C=C(\alpha,\beta,K,R_\ast)>0$ satisfying
\[
\Vert \nabla q_1\Vert_{vBMOL^2(\Omega)} \leq C \Vert v\Vert_{vBMOL^2(\Omega)}.
\]
In particular, the operator $v\longmapsto\nabla q_1$ is a bounded linear operator in $vBMOL^2(\Omega)$.
\end{theorem}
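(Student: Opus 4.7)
The plan is to follow the potential-theoretic strategy outlined in the introduction, adapting the bounded-domain construction of \cite{GG22b} to the uniformly $C^3$ setting. First I would split $v = v_1 + v_2$ via a cutoff $\varphi \in C^\infty(\mathbf{R}^n)$ equal to one in a narrow tubular neighbourhood of $\Gamma$ of width $\sim R_\ast$ and zero outside a slightly larger one, so that $v_2 = \varphi v$ concentrates near $\Gamma$ while $v_1 = v - v_2$ stays at positive distance from $\Gamma$. The admissibility of this cutoff on $vBMOL^2(\Omega)$ for uniformly $C^2$ domains is the content of \cite{Gu}. For the far-field piece I would extend $v_1$ by zero to $\mathbf{R}^n$ and set $q_1^{(1)} := E \ast \operatorname{div} v_1$ with $E$ the Newtonian kernel. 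Since $\operatorname{supp} v_1$ lies at distance $\gtrsim R_\ast$ from $\Gamma$, the gradient $\nabla q_1^{(1)} = \nabla E \ast \operatorname{div} v_1$ is smooth and pointwise bounded on a neighbourhood of $\Gamma$, which settles the $b^\nu$ seminorm immediately; the $BMO$ and $L^2$ estimates reduce to boundedness of the double Riesz transform on $BMO(\mathbf{R}^n)$ and on $L^2(\mathbf{R}^n)$.

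For the boundary piece $v_2$ I would introduce a locally finite partition of unity $\{\chi_k\}$ subordinate to a covering of a tubular neighbourhood of $\Gamma$ by normal coordinate charts around boundary points $p_k \in \Gamma$. In each chart $(\eta', \eta_n)$ the boundary is flattened to $\{\eta_n = 0\}$ and the Laplacian takes the form $L = A - B + \text{lower order terms}$ with $A = -\Delta_\eta$ and $B = \sum_{i,j < n} \partial_{\eta_i} b_{ij} \partial_{\eta_j}$. I would first extend $\chi_k v$ to $\mathbf{R}^n$ so that the normal part $(\nabla d_\Gamma \cdot \overline{\chi_k v}) \nabla d_\Gamma$ is odd in $\eta_n$, the tangential part $\overline{\chi_k v} - (\nabla d_\Gamma \cdot \overline{\chi_k v}) \nabla d_\Gamma$ is even in $\eta_n$, and the coefficients $b_{ij}$ are extended evenly in $\eta_n$; then define
\[
q^{\mathrm{tan}}_k := -A^{-1}(I - BA^{-1})^{-1} \operatorname{div} \overline{\chi_k v}^{\mathrm{tan}}, \qquad q^{\mathrm{nor}}_k := -A^{-1}(I - BA^{-1})^{-1} \operatorname{div} \overline{\chi_k v}^{\mathrm{nor}},
\]
realising $A^{-1}$ as convolution with the fundamental solution of $-\Delta$ on $\mathbf{R}^n$. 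The Neumann series $(I - BA^{-1})^{-1} = \sum_{m \geq 0}(BA^{-1})^m$ converges on $L^2(\mathbf{R}^n)$ and $BMO(\mathbf{R}^n)$ provided the chart radius is small enough that $\lVert BA^{-1} \rVert < 1$; the admissible radius is controlled uniformly in $k$ by $K$ and $R_\ast$. The $BMO$-$BMO$ step is the Fefferman--Stein theorem \cite{FS}, and the $L^2$-$L^2$ step is classical Calder\'on--Zygmund.

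The decisive structural feature is parity: because $\overline{\chi_k v}^{\mathrm{nor}}$ is odd and $\overline{\chi_k v}^{\mathrm{tan}}$ is even in $\eta_n$ while $b_{ij}$ is even, the leading part of $\nabla d_\Gamma \cdot (\nabla q^{\mathrm{tan}}_k + \nabla q^{\mathrm{nor}}_k)$ comes out odd in $\eta_n$ and therefore vanishes on $\Gamma$. This cancellation is what upgrades the per-chart $BMO$ bound into a $b^\nu$ bound through a Poincar\'e-type argument on balls centred at $\Gamma$. Summation over $k$ is legitimate for $L^2$ because the partition has uniformly bounded overlap, and for $BMO$ and $b^\nu$ because each reduces to a supremum controlled by the uniform per-chart estimate. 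Setting $q_1 := q_1^{(1)} + \sum_k (q^{\mathrm{tan}}_k + q^{\mathrm{nor}}_k)\bigr|_\Omega$ yields $-\Delta q_1 = \operatorname{div} v$ in $\Omega$ by construction, and the operator bound follows by summing the three contributions.

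The hard part will be the bookkeeping of the lower order terms in $L - A + B$, the commutators generated by differentiating $\chi_k$, and the error introduced by the coordinate change: one must verify that none of these destroys either the oddness on $\Gamma$ of the normal component of $\nabla q_1$ responsible for the $b^\nu$ gain, or the uniform convergence of the Neumann series across charts. The uniform $C^3$ hypothesis and the type parameters $(\alpha,\beta,K)$ enter precisely to keep these perturbations controlled uniformly in $k$ and thus determine the final constant $C(\alpha,\beta,K,R_\ast)$. Most of the individual technical lemmas needed for this bookkeeping are already provided in \cite[Section 3]{GG22b} and \cite{Gu}, so the work essentially reduces to packaging them into the advertised uniform operator bound.
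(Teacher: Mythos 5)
Your outline follows the paper's strategy step for step: the near/far cutoff, the Newtonian potential $E*\operatorname{div}v_1$ for the far piece with the $b^\nu$ bound coming from harmonicity of $\nabla q_1^{(1)}$ near $\Gamma$, the partition of unity in normal charts, the odd/even extension, the freezing-coefficient construction $-A^{-1}(I-BA^{-1})^{-1}$ with the Fefferman--Stein and Calder\'on--Zygmund inputs, and the parity argument that converts the $BMO$ bound on the normal component into a $b^\nu$ bound. All of that matches Propositions \ref{2V}, \ref{VPT} and \ref{VPN}, which the paper imports from \cite{GG22b} and \cite{Gu}.

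The gap is in the gluing step, which is precisely where the paper's own proof of Theorem \ref{CSV} lives. First, the local potential $-A^{-1}(I-BA^{-1})^{-1}\operatorname{div}\overline{\chi_k v}^{\mathrm{tan}}$ is built in the flattened coordinates of the $k$-th chart; the identity $-\Delta_x = A - B + (\text{l.o.t.})$ and indeed the coordinate map $F_{x_k}$ itself are only available inside $U_{2\rho,k}$, so this object cannot simply be restricted to $\Omega$ and summed. One must multiply by a second cutoff $\theta_k$ supported in $U_{2\rho,k}$ and add the correction $E*(p_k\Delta\theta_k+2\nabla\theta_k\cdot\nabla p_k)$ to restore $-\Delta q_{1,k}=\operatorname{div}w_k$ in all of $\Omega$. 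Second, your justification ``summation over $k$ is legitimate for $L^2$ because the partition has uniformly bounded overlap'' confuses bounded overlap of the data $\chi_k v$ with bounded overlap of the potentials: the potentials are global objects, and since $\Gamma$ is unbounded there are infinitely many charts, so without the support localization $\operatorname{supp}(\theta_k p_{k,1})\subset U_{2\rho,k}$ the series $\sum_k\nabla q_k$ need not converge in any of the three norms. Even after localizing, the correction terms $\nabla E*f_k$ with $f_k=p_k\Delta\theta_k+2\nabla\theta_k\cdot\nabla p_k$ are \emph{not} compactly supported, and the paper controls their sum by the $\ell^1$ summability of $\Vert f_k\Vert_{L^1}$, the $BMO$--$L^1$ interpolation inequality, the local integrability of $\nabla E$ for the $L^\infty$ bound, and the Hardy--Littlewood--Sobolev inequality for the $L^2$ bound. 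This is the genuinely new difficulty in passing from the bounded domain of \cite{GG22b} (finitely many charts) to a uniformly $C^3$ domain, and your proposal does not address it beyond labelling it ``bookkeeping.''
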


Here $(\alpha,\beta,K)$ are parameters that characterize the regularity of $\Gamma$. We would like to direct the readers to subsection \ref{sub:LOC} for their precise definitions.
Although the construction of the suitable volume potential works for arbitrary uniformly $C^3$ domain in $\mathbf{R}^n$ with $n \geq 2$, for the rest of the theory we need to focus back on perturbed half space in $\mathbf{R}^n$ with $n \geq 3$.
For $v \in vBMOL^2\big( \mathbf{R}_h^n \big)$, we observe that $w=v-\nabla q_1$ is divergence free in $\mathbf{R}_h^n$.
Unfortunately, this $w$ may not fulfill the trace condition $w \cdot \mathbf{n}=0$ on the boundary $\Gamma = \partial \mathbf{R}_h^n$.
We construct another potential $q_2$ by solving the Neumann problem
\begin{align*}
\Delta q_2 &= 0 \quad \quad \quad \text{in}\quad \mathbf{R}_h^n, \\
\frac{\partial q_2}{\partial n} &= w\cdot\mathbf{n} \quad \; \text{on} \quad \Gamma.
\end{align*}
We then set $q=q_1+q_2$.
Since $\partial q_2/\partial\mathbf{n}=\nabla q_2\cdot \mathbf{n}$, $v_0=v-\nabla q$ gives the Helmholtz decomposition.
To complete the proof of Theorem \ref{M}, it suffices to control $\Vert \nabla q_2\Vert_{vBMOL^2\big( \mathbf{R}_h^n \big)}$ by $\Vert v\Vert_{vBMOL^2\big( \mathbf{R}_h^n \big)}$.
\begin{lemma}[Estimate of the normal trace] \label{ET}
Let $\mathbf{R}_h^n$ be a perturbed $C^{2+\kappa}$ half space of type $(K)$ with $\kappa \in (0,1)$, $n \geq 3$ and $\Gamma = \partial \mathbf{R}_h^n$.
Then, there is a constant $C = C(K,R_\ast,R_h)>0$ such that
\[
\Vert w \cdot \mathbf{n} \Vert_{L^\infty(\Gamma) \cap \dot{H}^{-\frac{1}{2}}(\Gamma)} \leq C \Vert w \Vert_{vBMOL^2\big( \mathbf{R}_h^n \big)}
\]
for all $w \in vBMOL^2\big( \mathbf{R}_h^n \big)$ with $\operatorname{div} w = 0$ in $\mathbf{R}_h^n$.
\end{lemma}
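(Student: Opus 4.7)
The plan is to treat the two components of the intersection norm separately. Set $\tau := w \cdot \mathbf{n}$ on $\Gamma$.

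For the $\dot H^{-\frac12}(\Gamma)$ bound I use duality. Given $\varphi \in \dot H^{\frac12}(\Gamma)$, let $\Phi$ denote its harmonic extension to $\mathbf{R}_h^n$. Because $h \in C^3_c(\mathbf{R}^{n-1})$, the diffeomorphism $(x',x_n) \mapsto (x', x_n - h(x'))$ reduces the existence of $\Phi$ and the energy bound $\Vert \nabla \Phi \Vert_{L^2(\mathbf{R}_h^n)} \leq C(R_h,K) \Vert \varphi \Vert_{\dot H^{1/2}(\Gamma)}$ to the classical half-space case. Integration by parts together with $\operatorname{div} w = 0$ yields
\[
\int_\Gamma \tau \, \varphi \, dS = \int_{\mathbf{R}_h^n} w \cdot \nabla \Phi \, dx,
\]
and Cauchy--Schwarz gives $\lvert \langle \tau, \varphi \rangle \rvert \leq \Vert w \Vert_{L^2} \Vert \nabla \Phi \Vert_{L^2} \leq C \Vert w \Vert_{vBMOL^2} \Vert \varphi \Vert_{\dot H^{1/2}(\Gamma)}$, which is the desired estimate.

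For the $L^\infty(\Gamma)$ bound I fix $x_0 \in \Gamma$ and use a boundary chart that flattens $\Gamma$ near $x_0$. Outside $B_{R_h}(0')$ the chart is the identity and $\Gamma$ is already $\{x_n = 0\}$, so only a bounded boundary region requires nontrivial analysis. In the chart, writing $\tilde w = (\tilde w', \tilde w_n)$, the divergence-free condition becomes
\[
\partial_n \tilde w_n + \operatorname{div}_{y'} \tilde w' = R(\tilde w),
\]
where $R$ is a zeroth/first order operator whose coefficients are controlled by $\Vert h \Vert_{C^2}$. Integrating from $0$ to $y_n$ and then averaging in both $y_n \in (0,r)$ and $y' \in B'_r(y'_0)$ produces an identity of the form
\[
\frac{1}{\lvert B'_r \rvert} \int_{B'_r(y'_0)} \tilde w_n(y',0) \, dy' = \frac{1}{r \lvert B'_r \rvert} \int_{B'_r(y'_0) \times (0,r)} \tilde w_n \, dy + \mathcal{T}(\tilde w') + \mathcal{R}(\tilde w).
\]
The first right-hand side term is exactly a half-ball average of $\nabla d_\Gamma \cdot w$ and is controlled by $[\nabla d_\Gamma \cdot w]_{b^\nu(\Gamma)}$ by definition. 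The tangential term $\mathcal{T}(\tilde w')$, after subtracting the vector average $(\tilde w')_{B'_r \times (0,r)}$ and integrating by parts in $y'$, reduces to a boundary flux of $\tilde w' - (\tilde w')_{B'_r \times (0,r)}$ which is bounded by the $BMO$ seminorm $[w]_{BMO^\mu}$. The remainder $\mathcal{R}(\tilde w)$ carries an extra factor of $r$ and is absorbed into $\Vert w \Vert_{L^2}$ via H\"older. Letting $r \to 0$ and invoking Lebesgue differentiation along $\Gamma$ (justified because the averages of $\tilde w_n(\cdot,0)$ are uniformly bounded and $w$ is $BMO$) yields $\lvert \tau(x_0) \rvert \leq C \Vert w \Vert_{vBMOL^2}$ at every Lebesgue point.

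The principal obstacle is to carry out the $BMO$--$b^\nu$ bookkeeping in the averaged identity uniformly in $r$ and $x_0$: the tangential divergence $\operatorname{div}_{y'} \tilde w'$ is not a priori locally integrable, so the integration by parts must be coupled with subtraction of the tangential vector average in order to invoke $BMO$, and every Jacobian and metric correction from the chart map must be tracked so that the final constant depends only on $K$, $R_\ast$, and $R_h$. Since $\operatorname{supp} h$ is compact these corrections vanish outside $B_{R_h}(0')$, and the type $(K)$ bound on $\nabla'^2 h$ makes the remaining constants uniform over $x_0$.
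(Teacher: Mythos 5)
Your proposal is essentially correct but follows a route that differs from the paper's in both halves. For the $L^\infty(\Gamma)$ bound the paper does not reprove anything: it simply invokes the trace theorem of \cite[Theorem 22]{GG22a}, which is exactly the averaged divergence-free identity you sketch (interior half-ball average of the normal component controlled by $[\nabla d_\Gamma\cdot w]_{b^\nu}$, tangential flux controlled by $[w]_{BMO^\mu}$ after subtracting the average). Your re-derivation is the right idea, but note that the lateral flux in your term $\mathcal{T}(\tilde w')$ is a surface integral over $\partial B'_r\times(0,y_n)$, which $BMO$ does not control directly; one must additionally average in the radius $r$ to convert it into a solid average --- this is handled in the cited reference. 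For the $\dot H^{-1/2}(\Gamma)$ bound the paper decomposes $w=w_1+w_2$ with $w_1$ compactly supported near the curved part (so $w_1\cdot\mathbf{n}$ is handled by the $L^\infty$ bound plus the embedding $L^{\frac{2n-2}{n}}(\Gamma)\hookrightarrow\dot H^{-\frac12}(\Gamma)$ on a set of finite measure) and $w_2$ supported over the flat part (handled by the exact half-space duality, Proposition \ref{NTL2hs}, transported via $T_h$). You instead run a single global duality argument with a lifting $\varphi\mapsto\Phi$ for the perturbed half space; this is cleaner and buys you a proof that never uses the $L^\infty$ bound in the $\dot H^{-1/2}$ estimate. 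Two points must be filled in for it to be complete. First, the existence and boundedness of the lifting $\dot H^{\frac12}(\Gamma)\to\dot H^1(\mathbf{R}^n_h)$: harmonicity is irrelevant (any extension works in the duality), and the bound follows by composing the half-space lifting $\ell_{\mathbf n}$ of Proposition \ref{LftHdh} with the graph diffeomorphism and the isomorphism $T_h$ of Lemma \ref{IsoH}; you should say this rather than appeal to a ``harmonic extension.'' Second, the identity $\int_\Gamma (w\cdot\mathbf{n})\varphi\,d\mathcal{H}^{n-1}=\int_{\mathbf{R}^n_h}w\cdot\nabla\Phi\,dx$ is not an immediate consequence of the Gauss--Green formula, because $\Phi$ lies only in $\dot H^1$ of an unbounded domain; one needs the cutoff approximation $\theta_{2\rho}\Phi\to\Phi$ together with the convergence $\theta_{2\rho}(\cdot',0)\varphi\to\varphi$ in $\dot H^{\frac12}$, which is precisely the technical content of the paper's proof of Proposition \ref{NTL2hs}. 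With those two insertions your argument closes and yields the stated constant depending on $K$, $R_\ast$, $R_h$.
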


Here $\dot{H}^{-\frac{1}{2}}(\Gamma)$ is a Hilbert space that is isomorphic to $\dot{H}^{-\frac{1}{2}}(\mathbf{R}^{n-1})$, which turns out to be the dual space of the homogeneous fractional Sobolev space $\dot{H}^{\frac{1}{2}}(\mathbf{R}^{n-1})$. Here, the homogeneous Sobolev space of order $s \in \mathbf{R}$ is defined as
\begin{multline*}
	\dot{H}^s(\mathbf{R}^{n-1})
	:=\Bigg\{ f \in \mathcal{S}'(\mathbf{R}^{n-1}) \, \Bigg\vert \, \widehat{f} \in L^1_{loc}(\mathbf{R}^{n-1}) \quad \text{and} \\ 
	\Vert f \Vert_{\dot{H}^s(\mathbf{R}^{n-1})}
	:= \left( \int_{\mathbf{R}^{n-1}} \vert \xi' \vert^{2s} \big\vert \widehat{f}(\xi') \big\vert^2 \, d\xi' \right)^{\frac{1}{2}} < \infty \Bigg\}
\end{multline*}
where $\mathcal{S}'(\mathbf{R}^{n-1})$ denotes the space of Schwartz's tempered distributions and $\widehat{f}$ denotes the Fourier transform of $f$, see e.g. \cite[Section 1.3]{BCD}.
Unfortunately, the space $\dot{H}^{\frac{1}{2}}(\mathbf{R}^{n-1})$ is complete if and only if $n \geq 3$. 
Thus, we assume that $n \geq 3$ when the $\dot{H}^{-\frac{1}{2}}$ norm appears.
The basic idea to establish Lemma \ref{ET} is as follows.
The $L^\infty$ estimate of $w \cdot \mathbf{n}$ follows directly from the trace theorem established in \cite[Theorem 22]{GG22a}. 
For the $\dot{H}^{-\frac{1}{2}}$ estimate for $w \cdot \mathbf{n}$, we split the boundary into the straight part and the curved part. Since we have the $L^\infty$ estimate for $w \cdot \mathbf{n}$ and the curved part is compact, the contribution in the $\dot{H}^{-\frac{1}{2}}$ estimate for $w \cdot \mathbf{n}$ that comes from the curved part can be estimated by the $L^\infty$ norm of $w \cdot \mathbf{n}$.
For the contribution in the $\dot{H}^{-\frac{1}{2}}$ estimate of $w \cdot \mathbf{n}$ that comes from the straight part, we invoke the $\dot{H}^{-\frac{1}{2}}$ estimate of $w \cdot \mathbf{n}$ in the case of the half space.
We finally need the estimate for the Neumann problem.

\begin{lemma}[Estimate for the Neumann problem] 
\label{EN}
Let $\mathbf{R}_h^n$ be a perturbed $C^2$ half space of type $(K)$ that has small perturbation with $n \geq 3$. Let $R_\ast$ be the reach of $\Gamma = \partial \mathbf{R}_h^n$.
For any $g \in L^\infty(\Gamma) \cap \dot{H}^{-\frac{1}{2}}(\Gamma)$, there exists a unique (up to constant) solution $u \in L_{loc}^2\big( \mathbf{R}_h^n \big)$ to the Neumann problem
\begin{align}
&\begin{aligned} \label{1NP} 
\Delta u &= 0 \quad\text{in}\quad \mathbf{R}_h^n \\
\frac{\partial u}{\partial \mathbf{n}} &= g \quad\text{on}\quad \Gamma
\end{aligned}
\end{align}
such that the operator $g \longmapsto u$ is linear. Moreover, there exists a constant $C = C(K,R_\ast,R_h)>0$ such that
\[
\Vert \nabla u \Vert_{vBMOL^2\big( \mathbf{R}_h^n \big)} \leq C \Vert g\Vert_{L^\infty(\Gamma) \cap \dot{H}^{-\frac{1}{2}}(\Gamma)}.
\]
\end{lemma}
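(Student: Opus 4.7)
My plan is to combine a variational construction yielding existence, uniqueness, and the $L^2$-gradient bound with a reduction to the flat half space that produces the $BMO^\mu$ and $b^\nu$ bounds.

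\textbf{Step 1 (Variational existence and $L^2$ energy bound).} For $n \geq 3$, the space $\dot H^1(\mathbf{R}_h^n)/\mathbf{R}$ is a Hilbert space under $\Vert \nabla \cdot \Vert_{L^2}$ by virtue of the Sobolev embedding $\dot H^1 \hookrightarrow L^{2n/(n-2)}$. The trace operator $\dot H^1(\mathbf{R}_h^n) \to \dot H^{1/2}(\Gamma)$ is bounded because $\Gamma$ is a global $C^2$ graph and $\dot H^{1/2}(\Gamma) \cong \dot H^{1/2}(\mathbf{R}^{n-1})$ via this parametrization. Therefore $\varphi \longmapsto \langle g, \varphi|_\Gamma \rangle_{\dot H^{-1/2}(\Gamma), \dot H^{1/2}(\Gamma)}$ is a bounded linear functional on $\dot H^1(\mathbf{R}_h^n)/\mathbf{R}$, and Riesz's representation yields a unique $u$ (modulo constants) satisfying $\int \nabla u \cdot \nabla \varphi = \langle g, \varphi|_\Gamma \rangle$ for every test function and
\[
\Vert \nabla u \Vert_{L^2(\mathbf{R}_h^n)} \leq C \Vert g \Vert_{\dot H^{-1/2}(\Gamma)}.
\]
Weyl's lemma gives $\Delta u = 0$ in $\mathbf{R}_h^n$, and the variational identity recovers $\partial u/\partial \mathbf{n} = g$ as a trace; uniqueness up to constants follows by testing with $u$ itself when $g = 0$. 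This also supplies $u \in L^2_{loc}$ and linearity of $g \mapsto u$.

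\textbf{Step 2 ($BMO$ and $b^\nu$ via flattening).} Flatten $\Gamma$ through the diffeomorphism $\Phi : \mathbf{R}^n_+ \to \mathbf{R}_h^n$, $\Phi(y', y_n) = (y', y_n + h(y'))$, which has unit Jacobian. Setting $v = u \circ \Phi$, the problem transforms into a conormal boundary value problem
\[
-\partial_i(a_{ij} \partial_j v) = 0 \ \text{in}\ \mathbf{R}^n_+, \qquad a_{nj} \partial_j v \big|_{y_n = 0} = \tilde g,
\]
where $a_{ij} - \delta_{ij}$ is compactly supported in $B'_{R_h} \times \mathbf{R}_+$ and is small in $L^\infty$ under (\ref{SC}), while $\tilde g$ inherits the $L^\infty \cap \dot H^{-1/2}$ bound of $g$. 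For the flat case ($a_{ij} = \delta_{ij}$) the explicit single-layer representation
\[
v_0(y) = -2 \int_{\mathbf{R}^{n-1}} E\bigl( y - (z', 0) \bigr) \tilde g(z') \, dz'
\]
gives $\nabla v_0 \in L^2$ by a Plancherel computation, $\nabla v_0 \in BMO^\mu(\mathbf{R}^n_+)$ by the classical $L^\infty \to BMO$ boundedness for Calder\'on--Zygmund singular integrals applied to $\tilde g$ (cf.\ \cite{FS}), and a $b^\nu$ bound via the identity $\partial_n v_0|_{y_n = 0} = \tilde g \in L^\infty$. Treating the perturbation as a Neumann series in the operator that composes the flat-case solution map with $(a_{ij} - \delta_{ij}) \partial_j$, whose norm is controlled by (\ref{SC}), produces the analogous bounds for $v$, which transfer back to $u = v \circ \Phi^{-1}$ with constants depending only on $(K, R_\ast, R_h)$.

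\textbf{Main obstacle.} The delicate part is the $b^\nu$ component of the seminorm. It measures the normal concentration of $\nabla u$ on $\Gamma$ and is not captured by singular integral bounds alone; at each step of the Neumann series one must preserve the identity $\nabla d_\Gamma \cdot \nabla u|_\Gamma = g$, which in the flattened picture relies on the evenness/oddness decomposition across the boundary that is analogous to the one sketched for the volume potential in Theorem \ref{CSV}. Closing the Neumann series simultaneously in the $L^2$, $BMO^\mu$, and $b^\nu$ seminorms with constants compatible with the quantitative smallness hypothesis (\ref{SC}), and tracking the transport of each seminorm under $\Phi$ and $\Phi^{-1}$, constitutes the technical heart of the argument.
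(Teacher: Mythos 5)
Your Step 1 is a legitimate and arguably cleaner route to existence, uniqueness up to constants, and the $L^2$ gradient bound than the paper's (which constructs $u$ as a single layer potential $E\ast\bigl(\delta_\Gamma\otimes 2(I-2S)^{-1}g\bigr)$ and never argues uniqueness separately); the required trace and lifting facts for $\dot H^{1/2}$, $\dot H^1$ are consistent with Propositions \ref{LftHdh} and \ref{NTL2hs}. The problem is Step 2, where you have outlined a strategy but left the decisive estimates unproved, and where the strategy as stated runs into a concrete obstruction.

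The gap is the $b^\nu$ bound on $\nabla d\cdot\nabla u$ near the curved part of $\Gamma$. Your flat-case argument controls only $\partial_n v_0|_{y_n=0}=\tilde g\in L^\infty$, but after transporting back by $\Phi^{-1}$ the normal $\nabla d(x)$ is tilted wherever $\nabla'h\neq 0$, so $\nabla d\cdot\nabla u$ picks up \emph{tangential} derivatives of the single layer potential of the far-away (straight) part of the data. Proposition \ref{RfHdmh} of the paper shows precisely that these tangential contributions are Riesz-transform--type operators that are \emph{not} bounded by $\Vert g\Vert_{L^\infty}$; this is the whole reason the $\dot H^{-1/2}(\Gamma)$ norm appears in the hypothesis, and it is exercised exactly in the $b^\nu$ estimate (Lemma \ref{NM} (ii), via the $\dot H^{1/2}$--$\dot H^{-1/2}$ duality applied to the kernel $\nabla d(x)\cdot\nabla E(x-\cdot)$ together with Proposition \ref{BSNE}). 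Your proposal never indicates where the $\dot H^{-1/2}$ norm enters the $b^\nu$ estimate, so this step cannot be completed as written. Separately, your interior Neumann series iterates the flat solution operator against $(a_{ij}-\delta_{ij})\partial_j v$, which is generic divergence-form data with no parity structure across $\{y_n=0\}$; the flat solution operator preserves $BMO$ (Fefferman--Stein) and $L^2$, but it does not preserve the $b^\nu$ seminorm for such data unless one imposes the odd/even decomposition the paper uses for the volume potential, so the series does not visibly close in the full $vBMOL^2$ norm. You flag both points as ``the technical heart'' but do not supply them; in the paper these are resolved by a different device altogether --- a Neumann series on the \emph{boundary} operator $S$ (the trace of the double layer potential), whose $L^\infty\cap\dot H^{-1/2}$ operator norm is made less than $1/2$ by the smallness condition \eqref{SC}, combined with the potential estimates of Lemma \ref{NM} and Lemma \ref{L2EPH} for the single layer potential itself.
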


To establish Lemma \ref{EN}, the basic strategy is the same as in \cite{GG22b}, we firstly show that
\begin{equation*}
\left\Vert \nabla E \ast \big( \delta_{\Gamma} \otimes g \big) \right\Vert_{vBMO^{\infty,\nu}\big( \mathbf{R}_h^n \big)}
\end{equation*}
can be controlled for any $g \in L^\infty(\Gamma) \cap \dot{H}^{-\frac{1}{2}}(\Gamma)$ where 
\[
E \ast (\delta_\Gamma \otimes g)(x) := \int_\Gamma E(x-y) g(y) \, d\mathcal{H}^{n-1}(y), \quad x \in \mathbf{R}_h^n
\]
represents the single layer potential for $g$.
Since the boundary $\Gamma = \partial \mathbf{R}_h^n$ is curved and not compact, we appeal a perturbation argument.
For $g \in L^\infty(\Gamma)$, we decompose $g$ into the curved part $g_1$ and the straight part $g_2$ by setting $g_1\big( y',h(y') \big) := 1'_{B_{2R_h}(0')}(y') g\big( y',h(y') \big)$ for $y' \in \mathbf{R}^{n-1}$ and $g_2 := g - g_1$, where $1'_{B_{2R_h}(0')}$ represents the characteristic function for the open ball $B_{2R_h}(0')$ in $\mathbf{R}^{n-1}$. Since $g_2$ vanishes in the curved part of $\Gamma$, we can define $g_2^H \in L^\infty(\mathbf{R}^{n-1}) \cap \dot{H}^{-\frac{1}{2}}(\mathbf{R}^{n-1})$ by setting $g_2^H(y',0) = g_2\big( y',h(y') \big)$ for any $y' \in \mathbf{R}^{n-1}$. Note that
\[
E \ast \big( \delta_\Gamma \otimes g_2 \big) (x) = E \ast \big( \delta_{\partial \mathbf{R}_+^n} \otimes g_2^H \big) (x)
\]
for any $x \in \mathbf{R}^n$.
By setting $g_2^{\mathbf{R}^n}(y',y_n) := g_2^H(y',0)$ for any $(y',y_n) \in \mathbf{R}^n$, we can deduce the $BMO$ estimate of $\nabla E \ast \big( \delta_\Gamma \otimes g_2 \big)$ by applying the $L^\infty-BMO$ estimate for singular integral operator \cite[Theorem 4.2.7]{GraM} to $\nabla \partial_{x_n} E \ast 1_{\mathbf{R}_+^n} g_2^{\mathbf{R}^n}$. 
Since $g_1\big( \cdot', h(\cdot') \big)$ is compactly supported in $\mathbf{R}^{n-1}$, we may extend $g_1$ to some $g_{1,\mathrm{c}}^\mathrm{e} \in L^\infty(\mathbf{R}^n)$ such that $g_{1,\mathrm{c}}^\mathrm{e}$ is compactly supported and $\nabla d \cdot \nabla g_{1,\mathrm{c}}^\mathrm{e} = 0$ within a small neighborhood of $\Gamma$. Then, we can rewrite $\nabla E \ast \big( \delta_\Gamma \otimes g_1 \big)$ as
\begin{align} \label{ReGSg1}
\nabla E \ast \big( \delta_\Gamma \otimes g_1 \big) = \nabla\operatorname{div} \big(E*( g_{1,\mathrm{c}}^\mathrm{e} 1_{\mathbf{R}_h^n} \nabla d) \big) -\nabla E* \big( 1_{\mathbf{R}_h^n} g_1^\mathrm{e} f_{\theta,\rho_0/4} \big)
\end{align}
with some compactly supported continuous function $f_{\theta,\rho_0/4}$ (see Proof of Lemma \ref{NM} (i)) that is independent of $g$.
The $BMO$ estimate for the first term on the right hand side of \eqref{ReGSg1} can be controlled using the $L^\infty-BMO$ estimate for singular integration operator. The second term on the right hand side of \eqref{ReGSg1} can be controlled by the $L^\infty$ norm as $\nabla E (x)$ is a locally integrable kernel and $f_{\theta,\rho_0/4}$ has compact support. We thus obtain the $BMO$ estimate for $\nabla E \ast \big( \delta_\Gamma \otimes g \big)$ for $g \in L^\infty(\Gamma)$.

For the $b^\nu$ estimate of the normal component of $\nabla E \ast \big( \delta_\Gamma \otimes g \big)$ with $g \in L^\infty(\Gamma) \cap \dot{H}^{-\frac{1}{2}}(\Gamma)$, we also decompose $g$ into the curved part $g_1$ and the straight part $g_2$. 
It is possible to show that
\begin{align} \label{abvNGE}
\sup_{x \in \Gamma^{\mathbf{R}^n}_{\rho_0}} \int_\Gamma \left\lvert \frac{\partial E}{\partial\mathbf{n}_y} (x-y) \right\rvert \, d\mathcal{H}^{n-1}(y) < \infty
\end{align}
where $\Gamma^{\mathbf{R}^n}_{\rho_0} := \left\{ x \in \Omega \,\middle\vert\,d(x) < \rho_0 \right\}$ denotes the $\rho_0$-neighborhood of $\Gamma$ in $\Omega$. 
Since $g_1\big( \cdot',h(\cdot') \big)$ is compactly supported in $\mathbf{R}^{n-1}$, estimate \eqref{abvNGE} allows us to show that
\[
\left\lvert \nabla d (x) \cdot \nabla \big( E*(\delta_\Gamma\otimes g_1) \big)(x) \right\rvert \leq C \Vert g \Vert_{L^\infty(\Gamma)}
\]
for any $x \in \Gamma^{\mathbf{R}^n}_{\rho_0}$ with some constant $C=C(K,R_h,R_\ast)>0$. 
On the other hand, for $x$ close to the curved part of $\Gamma$, $\nabla d(x)$ is not necessarily $(0,...,0,1)$, in this case
\[
\left\lvert \nabla d(x) \cdot \nabla \big( E \ast (\delta_\Gamma \otimes g_2) \big) (x) \right\rvert 
\]
would contain contributions from $\nabla' \big( E \ast (\delta_\Gamma \otimes g_2) \big)$, which cannot be estimated by the $L^\infty$ bound of $g_2$ (see Proposition \ref{RfHdmh}). As a result, we introduce the $\dot{H}^{-\frac{1}{2}}$ bound of $g_2$.
Since $g_2\big( \cdot',h(\cdot') \big)$ is supported in $B_{2R_h}(0')^\mathrm{c}$, $\nabla d(x) \cdot \nabla E (x - \cdot')$ can be viewed as an element of $\dot{H}^{\frac{1}{2}}(\Gamma)$ for any $x$ close to the curved part of $\Gamma$. Hence, by the $\dot{H}^{-\frac{1}{2}}-\dot{H}^{\frac{1}{2}}$ duality we can deduce that
\[
\left\lvert \nabla d(x) \cdot \nabla \big( E \ast (\delta_\Gamma \otimes g_2) \big) (x) \right\rvert \leq C \Vert g \Vert_{\dot{H}^{-\frac{1}{2}}(\Gamma)}
\]
with some constant $C=C(K,R_h,R_\ast)>0$.
We thus obtain an $L^\infty$ estimate for the normal component of $\nabla E \ast \big( \delta_\Gamma \otimes g \big)$ within a small neighborhood of $\Gamma$. The $b^\nu$ estimate naturally follows.

For $g \in L^\infty(\Gamma) \cap \dot{H}^{-\frac{1}{2}}(\Gamma)$, we prove that the trace of 
\[
\big( Qg \big) (x) = \int_\Gamma \frac{\partial E}{\partial \mathbf{n}_x} (x-y) g(y) \, d\mathcal{H}^{n-1}(y), \quad x \in \Gamma^{\mathbf{R}^n}_{\rho_0}
\]
is of the form
\[
\gamma \big( Qg \big) \big( x',h(x') \big) = \frac{1}{2} g\big( x',h(x') \big) - \big( Sg \big) \big( x',h(x') \big),
\]
where $S : L^\infty(\Gamma) \cap \dot{H}^{-\frac{1}{2}}(\Gamma) \to L^\infty(\Gamma) \cap \dot{H}^{-\frac{1}{2}}(\Gamma)$ is a bounded linear operator satisfying
\begin{align*}
\Vert S \Vert_{L^\infty(\Gamma) \cap \dot{H}^{-\frac{1}{2}}(\Gamma) \to L^\infty(\Gamma) \cap \dot{H}^{-\frac{1}{2}}(\Gamma)} \leq C^\ast(n) C_s(h)^{\frac{3n}{2} + 8} C_1(h) \big( C_{\ast,1}(h) + C_{\ast,2}(h) + R_h^{\frac{n}{2}} \big)
\end{align*}
with $C^\ast(n)$ denoting a specific fixed constant which depends on dimension $n$ only. 
Therefore, if $\mathbf{R}_h^n$ is a perturbed $C^2$ half space that has small perturbation with $n \geq 3$, the inverse of $I - 2S$ is well-defined as a bounded linear map from $L^\infty(\Gamma) \cap \dot{H}^{-\frac{1}{2}}(\Gamma)$ to $L^\infty(\Gamma) \cap \dot{H}^{-\frac{1}{2}}(\Gamma)$ by the Neumann series
\[
(I - 2S)^{-1} = \sum_{i=0}^\infty (2S)^i.
\]
The solution to the Neumann problem (\ref{1NP}) is formally given by 
\[
u(x) = E \ast \Big( \delta_\Gamma \otimes \big( 2(I - 2S)^{-1} g \big) \Big) (x),  \quad x \in \mathbf{R}_h^n.
\]

We finally need the $L^2$ estimate for $\nabla u$ in $\mathbf{R}_h^n$.
In the case of a half space, for $g \in \dot{H}^{-\frac{1}{2}}(\mathbf{R}^{n-1})$, the single layer potential $E \ast \big( \delta_{\partial \mathbf{R}_+^n} \otimes g \big)$ is exactly half of the solution $u$ to the Neumann problem. By directly considering the partial Fourier transform of $E(x',x_n)$ with respect to $x'$, we could easily deduce that
\[
\Vert \nabla u \Vert_{L^2(\mathbf{R}_+^n)} = 2 \big\Vert \nabla E \ast \big( \delta_{\partial \mathbf{R}_+^n} \otimes g \big) \big\Vert_{L^2(\mathbf{R}_+^n)} = C(n) \Vert g \Vert_{\dot{H}^{-\frac{1}{2}}(\mathbf{R}^{n-1})}.
\]
In the case that $\mathbf{R}_h^n$ is a perturbed $C^2$ half space with $n \geq 3$, for $g \in L^\infty(\Gamma) \cap \dot{H}^{-\frac{1}{2}}(\Gamma)$, we still decompose $g$ into the curved part $g_1$ and the straight part $g_2$. 
Since $L^{\frac{2n-2}{n}}(\Gamma)$ is continuously embedded in $\dot{H}^{-\frac{1}{2}}(\Gamma)$ and the curved part $g_1$ has compact support in $\Gamma$, $g \in L^\infty(\Gamma)$ would imply that both $g_1,g_2 \in L^\infty(\Gamma) \cap \dot{H}^{-\frac{1}{2}}(\Gamma)$.
Since for any $x \in \mathbf{R}^n$ we have that
\[
\nabla E \ast \big( \delta_\Gamma \otimes g_2 \big) (x) = \nabla E \ast \big( \delta_{\partial \mathbf{R}_+^n} \otimes g_2^H \big) (x)
\]
and for any $x = (x',x_n) \in \mathbf{R}_+^n$ we have that
\[
\left\lvert \nabla E \ast \big( \delta_{\partial \mathbf{R}_+^n} \otimes g_2^H \big) (x',-x_n) \right\rvert = \left\lvert \nabla E \ast \big( \delta_{\partial \mathbf{R}_+^n} \otimes g_2^H \big) (x',x_n) \right\rvert,
\]
the $L^2$ estimate of $\nabla E \ast \big( \delta_\Gamma \otimes g_2 \big)$ in $\mathbf{R}^n$ follows from the $L^2$ estimate of $\nabla E \ast \big( \delta_{\partial \mathbf{R}_+^n} \otimes g_2^H \big)$ in $\mathbf{R}_+^n$, i.e., we have that
\begin{align*}
\big\Vert \nabla E \ast \big( \delta_\Gamma \otimes g_2 \big) \big\Vert_{L^2(\mathbf{R}^n)} &= 2 \big\Vert \nabla E \ast \big(\delta_{\partial \mathbf{R}_+^n} \otimes g_2^H \big) \big\Vert_{L^2(\mathbf{R}_+^n)} \leq C(K,R_h) \Vert g \Vert_{L^\infty(\Gamma) \cap \dot{H}^{-\frac{1}{2}}(\Gamma)}.
\end{align*}
For the curved part $g_1$, we recall the argument which establishes the $vBMO^{\infty,\nu}$ norm for $\nabla E \ast \big( \delta_\Gamma \otimes g \big)$. We extend $g_1$ to $g_{1,\mathrm{c}}^\mathrm{e} \in L^\infty(\mathbf{R}^n)$ and consider equation (\ref{ReGSg1}).
Since $\nabla \operatorname{div} E$ is $L^p$ integrable for any $1<p<\infty$, see e.g. \cite[Theorem 5.2.7 and Theorem 5.2.10]{Gra}, the $L^2$ norm of the first term on the right hand side of (\ref{ReGSg1}) can be estimated by the $L^\infty$ norm of $g$. Whereas $\nabla E(x)$ is an integration kernel that is dominated by a constant multiple of $\lvert x\rvert^{-(n-1)}$, by the famous Hardy-Littlewood-Sobolev inequality \cite[Theorem 1.7]{BCD}, we can also control the $L^2$ norm of the second term on the right hand side of (\ref{ReGSg1}) by the $L^\infty$ norm of $g$.
Combine with the $L^2$ estimate for the contribution from the straight part $g_2$, we finally obtain our desired $L^2$ estimate
\begin{align*} 
\Vert \nabla u \Vert_{L^2\big( \mathbf{R}_h^n \big)} \leq C(K,R_h,R_\ast) \Vert (I - 2S)^{-1} g \Vert_{L^\infty(\Gamma) \cap \dot{H}^{-\frac{1}{2}}(\Gamma)} \leq C(K,R_h,R_\ast) \Vert g \Vert_{L^\infty(\Gamma) \cap \dot{H}^{-\frac{1}{2}}(\Gamma)}
\end{align*}
for $g \in L^\infty(\Gamma) \cap \dot{H}^{-\frac{1}{2}}(\Gamma)$. This completes the proof of Lemma \ref{EN}.

When taking the normal trace and solving the Neumann problem, the reason why we need to require the dimension $n$ to be greater than or equal to $3$ is because when $n \geq 3$, we indeed have the fact that $\dot{H}^{\frac{1}{2}}(\mathbf{R}^{n-1})$ is continuously embedded in $L^{\frac{2n-2}{n-2}}(\mathbf{R}^{n-1})$ and $\dot{H}^{-\frac{1}{2}}(\mathbf{R}^{n-1})$ is the dual space of $\dot{H}^{\frac{1}{2}}(\mathbf{R}^{n-1})$, which further implies that $L^{\frac{2n-2}{n}}(\mathbf{R}^{n-1})$ is continuously embedded in $\dot{H}^{-\frac{1}{2}}(\mathbf{R}^{n-1})$. Based on these facts, we can show that in the case of any perturbed $C^2$ half space $\mathbf{R}_h^n$ with boundary $\Gamma = \partial \mathbf{R}_h^n$, $\dot{H}^{\frac{1}{2}}(\mathbf{R}^{n-1})$ is isomorphic to $\dot{H}^{\frac{1}{2}}(\Gamma)$ and $\dot{H}^{-\frac{1}{2}}(\mathbf{R}^{n-1})$ is isomorphic to $\dot{H}^{-\frac{1}{2}}(\Gamma)$. More importantly, we can estimate the $\dot{H}^{-\frac{1}{2}}$ norm of the trace operator $S$ by its $L^{\frac{2n-2}{n}}$ norm and do cut-offs to boundary data $g \in L^\infty(\Gamma) \cap \dot{H}^{-\frac{1}{2}}(\Gamma)$ to  decompose it into the curved part $g_1$ and the straight part $g_2$. In the case where $n=2$, the space $\dot{H}^{\frac{1}{2}}(\mathbf{R})$ is no longer complete and $\dot{H}^{-\frac{1}{2}}(\mathbf{R})$ is not necessarily the dual space of $\dot{H}^{\frac{1}{2}}(\mathbf{R})$. 
The completion of $\dot{H}^{\frac{1}{2}}(\mathbf{R})$ cannot be embedded in the space of Schwartz's tempered distributions.
Moreover, as a limit case, $\dot{H}^{\frac{1}{2}}(\mathbf{R})$ is not continuously embedded in $L^\infty(\mathbf{R})$. 
As a result, at the present we lack of tools to estimate the $\dot{H}^{-\frac{1}{2}}$ norm of the trace operator $S$ and we cannot do cut-offs to decompose a boundary data into the curved part and the straight part any more. This is why we focus on the case where the dimension $n \geq 3$ in this paper.
The problem of $\dot{H}^{\frac{1}{2}}(\mathbf{R})$ is similar to $\dot{H}^1(\mathbf{R}^2)$. The space $\dot{H}^1(\mathbf{R}^2)$ is not complete. Its completion should be the quotient space $\{ u \in L_{loc}^1(\mathbf{R}^2) \, \vert \, \nabla u \in L^2(\mathbf{R}^2)^n \} / \mathbf{R}$ as discussed in \cite{Gal} since $\dot{H}^1(\mathbf{R}^2)$ includes all smooth compactly supported functions. We have to study an appropriate dual space as in \cite{GKL}.

This paper is organized as follows.
In Section \ref{sec:VPC}, we recall results from \cite{Gu} to localize the problem and results from \cite{GG22b} to construct a suitable volume potential corresponding to $\operatorname{div}v$. 
Theorem \ref{CSV} is proved in this section.
In Section \ref{sec:NT}, we take the normal trace in $\dot{H}^{-\frac{1}{2}}$ sense by considering isomorphisms between $\dot{H}^{-\frac{1}{2}}(\Gamma)$ and $\dot{H}^{-\frac{1}{2}}(\mathbf{R}^{n-1})$. 
In Section \ref{sec:EBI}, we establish estimates for the trace operator $S$ of $Qg$. We show that $S$ is bounded from $L^\infty \cap \dot{H}^{-\frac{1}{2}}$ to $L^\infty \cap \dot{H}^{-\frac{1}{2}}$. 
In Section \ref{sec:NPB}, we solve Neumann problem (\ref{1NP}) by considering the single layer potential with $2(I-2S)^{-1} g$ and establish the $vBMOL^2$ estimate for its gradient in $\mathbf{R}_h^n$.

\section{Volume potential construction in a uniformly $C^3$ domain} 
\label{sec:VPC}

For $v \in vBMOL^2(\Omega)$, we shall construct a suitable potential $q_1$ so that $v\longmapsto\nabla q_1$ is a bounded linear operator in $vBMOL^2(\Omega)$ as stated in Theorem \ref{CSV}.
The construction in the case where $\Omega$ is a uniformly $C^3$ domain basically follows from the theory in \cite{GG22b}, where $\Omega$ is a bounded $C^3$ domain. 
 
\subsection{Localization tools} 
\label{sub:LOC}
Let us recall some uniform estimates established in \cite{Gu}.
Let $\Omega$ be a uniformly $C^k$ domain in $\mathbf{R}^n$ with $k \in \mathbf{N}$ and $n \geq 2$.
Let $\Gamma = \partial \Omega$ denotes the boundary of domain $\Omega$.
There exists $\alpha, \beta>0$ such that for each $z_0 \in \Gamma$, up to translation and rotation, there exists a function $h_{z_0} \in C^k\big( B_\alpha(0') \big)$, where $B_\alpha(0')$ denotes the open ball in $\mathbf{R}^{n-1}$ of radius $\alpha$ with center $0'$, that satisfies the following properties:
\begin{enumerate}
\item[(i)] 
\[
K := \sup_{0 \leq s \leq k} \left\lVert  (\nabla')^s h_{z_0} \right\rVert_{L^\infty\big( B_\alpha(0') \big)} < \infty; \quad \nabla' h_{z_0}(0')=0, \quad h_{z_0}(0')=0,
\] 
\item[(i\hspace{-1pt}i)] $\Omega \cap U_{\alpha,\beta, h_{z_0}}(z_0)=\left\{ (x',x_n) \in \mathbf{R}^n \,\middle\vert\, h_{z_0}(x') < x_n < h_{z_0}(x')+ \beta,\ \lvert x' \rvert < \alpha \right\}$ where
\[
U_{\alpha,\beta, h_{z_0}}(z_0) := \left\{ (x',x_n) \in \mathbf{R}^n \,\middle\vert\,  h_{z_0}(x') - \beta < x_n < h_{z_0}(x') +\beta,\ \lvert x' \rvert< \alpha \right\},
\]
\item[(i\hspace{-1pt}i\hspace{-1pt}i)] $\Gamma \cap U_{\alpha,\beta, h_{z_0}}(z_0)=\left\{ (x',x_n)\in\mathbf{R}^n \,\middle\vert\,  x_n = h_{z_0}(x'),\ \lvert x' \rvert < \alpha \right\}$.
\end{enumerate}
We say that $\Omega$ is of type $(\alpha,\beta,K)$.

Let $d$ denote the signed distance function from $\Gamma$ which is defined by
\begin{equation} \label{SDF}
	d(x) := \left \{
\begin{array}{r}
	\displaystyle \inf_{y\in\Gamma} \lvert x-y \rvert \quad\text{for}\quad x\in\Omega, \\
	\displaystyle -\inf_{y\in\Gamma}\lvert x-y \rvert \quad\text{for}\quad x\notin\Omega 
\end{array}
	\right.
\end{equation}
so that $d(x)=d_\Gamma(x)$ for $x\in\Omega$.
For a uniformly $C^k$ domain $\Omega$, there is $R^\Omega>0$ such that for $x \in \Omega$ with $\left\lvert d(x)\right\rvert <R^\Omega$, there is a unique point $\pi x \in \Gamma$ such that $\lvert x-\pi x\rvert=\left\lvert d(x)\right\rvert $.
The supremum of such $R^\Omega$ is called the reach of $\Gamma$ in $\Omega$, we denote this supremum by $R_\ast^\Omega$.
Let $\Omega^{\mathrm{c}}$ be the complement of $\Omega$ in $\mathbf{R}^n$. Similarly, there is $R^{\Omega^{\mathrm{c}}}>0$ such that for $x \in \Omega^{\mathrm{c}}$ with $\left\lvert d(x)\right\rvert < R^{\Omega^{\mathrm{c}}}$, we can also find a unique point $\pi x \in \Gamma$ such that $\lvert x-\pi x\rvert=\left\lvert d(x)\right\rvert $.
The supremum of such $R^{\Omega^{\mathrm{c}}}$ is called the reach of $\Gamma$ in $\Omega^{\mathrm{c}}$, we denote this supremum by $R_\ast^{\Omega^{\mathrm{c}}}$. We then define
\[
R_* := \min \left( R^\Omega_*, R^{\Omega^{\mathrm{c}}}_* \right),
\]
which we call it the reach of $\Gamma$.
Moreover, $d$ is $C^k$ in the $\rho$-neighborhood of $\Gamma$ for any $\rho \in (0,R_\ast)$, i.e., $d \in C^k\left(\Gamma^{\mathbf{R}^n}_\rho\right)$ for any $\rho \in (0,R_\ast)$ with
\[
\Gamma^{\mathbf{R}^n}_\rho := \left\{ x \in \mathbf{R}^n \,\middle\vert\, \left\lvert  d(x) \right\rvert  < \rho \right\};
\]
see e.g. \cite[Chap.\ 14, Appendix]{GT}, \cite[Section 4.4]{KP}.

There exists $0 < \rho_0 < \min \big( \alpha, \beta, \frac{R_*}{2}, \frac{1}{2n (K+1)} \big)$ such that for every $z_0 \in \Gamma$,
\[ 
U_\rho(z_0) := \left\{ x \in \mathbf{R}^n \,\middle\vert\,  (\pi x)' \in \operatorname{int} B_\rho(0'),\ \left\lvert  d(x) \right\rvert  < \rho \right\}
\]
is contained in the coordinate chart $U_{\alpha,\beta, h_{z_0}}(z_0)$ for any $\rho \leq\rho_0$.
For $z_0 \in \Gamma$, the normal coordinate change $F_{z_0}: V_{\rho_0} := B_{\rho_0}(0') \times (-\rho_0,\rho_0) \to U_{\rho_0}(z_0)$ is defined by
\begin{eqnarray} \label{NCC}
x = F_{z_0}(\eta) =
\left\{
\begin{array}{lcl}
\eta' + \eta_n (\nabla_x' d) \big( \eta', h_{z_0}(\eta') \big); \\
h_{z_0}(\eta') + \eta_n (\partial_{x_n} d) \big( \eta', h_{z_0}(\eta') \big),
\end{array}
\right.
\end{eqnarray}
or shortly
\[
x = \pi x  - d(x) \mathbf{n} (\pi x), \quad \mathbf{n}(\pi x) = -\nabla d(\pi x).
\]
Note that that for any $z_0 \in \Gamma$, $F_{z_0}$ is indeed a $C^1$-diffeomorphism between $V_{\rho_0}$ and $U_{\rho_0}(z_0)$.
For any $\varepsilon \in (0,1)$, there exists a constant $c_\Omega^\varepsilon := C(\varepsilon, K, \rho_0)>0$ and $c_\Omega^\varepsilon < \rho_0$ such that for any $\rho \in (0, c_\Omega^\varepsilon]$ and $z_0 \in \Gamma$, the estimates
\begin{align} \label{UCN}
\Vert \nabla F_{z_0} - I \Vert_{L^\infty(V_\rho)} < \varepsilon, \quad \Vert \nabla F_{z_0}^{-1} - I \Vert_{L^\infty(U_\rho(z_0))} < \varepsilon
\end{align}
hold simultaneously, see \cite[Proposition 3]{Gu}. 

For $\rho \in (0, \rho_0/2)$, there exist a countable family of points in $\Gamma$, say $\mathcal{P}_\Gamma := \{ x_i \in \Gamma \, \mid \, i \in \mathbf{N} \}$, such that
\[
\Gamma^{\mathbf{R}^n}_{\rho} = \underset{x_i \in S}{\bigcup} \, U_{\rho}(x_i).
\]
Moreover, there exists a natural number $N_\ast=C(n)$ such that for any $x_i \in \mathcal{P}_\Gamma$, there exist at most $N_\ast$ points in $\mathcal{P}_\Gamma$, say $\{ x_{i_1}, ... , x_{i_{N_\ast}} \} \subset \mathcal{P}_\Gamma$, with
\[
U_{\rho}(x_i) \cap U_{\rho}(x_{i_l}) \neq \emptyset
\]
for each $1 \leq l \leq N_\ast$, see e.g. \cite[Proposition 5]{Gu}.
Based on this open cover, a partition of unity for $\Gamma^{\mathbf{R}^n}_{\rho}$ can be constructed.
There exist $\varphi_i \in C^1\big( U_\rho(x_i) \big)$ for each $x_i \in \mathcal{P}_\Gamma$ and a constant $C(N_\ast,n,\rho)>0$ such that 
\begin{equation} \label{PUNG}
\begin{split}
& 0 \leq \varphi_i \leq 1 \; \; \, \text{for any} \; \; \, i \in \mathbf{N}, \\
& \operatorname{supp} \big( \varphi_i \circ F_{x_i}\big) (\cdot', \eta_n) \subset \overline{B_\rho(0')} \; \; \, \text{for any} \; \; \, \eta_n \in (-\rho, \rho) \; \; \, \text{and} \; \; \, i \in \mathbf{N}, \\
& \displaystyle\sum_{i=1}^\infty \, \varphi_i(x) \equiv 1 \; \; \, \text{for any} \; \; \, x \in \Gamma^{\mathbf{R}^n}_{\rho}, \quad \sup_{i \in \mathbf{N}} \, \Vert \nabla \varphi_i \Vert_{L^\infty(\Gamma^{\mathbf{R}^n}_{\rho})} \leq C(N_\ast,n,\rho);
\end{split}
\end{equation}
see e.g. \cite[Proposition 6]{Gu}.

\subsection{Cut-off and extension} 
\label{sub:CUT}
In this subsection, we assume that $\Omega \subset \mathbf{R}^n$ is a uniformly $C^2$ domain of type $(\alpha, \beta, K)$ with $n \geq 2$.
Let $\rho \in (0, \rho_0/2)$.
For a function $f$ defined in $\Gamma^{\mathbf{R}^n}_\rho \cap \overline{\Omega}$, we define $f_{\mathrm{even}}$ to be the even extension of $f$ to $\Gamma^{\mathbf{R}^n}_\rho$ with respect to the boundary $\Gamma$, i.e.,
\[
f_{\mathrm{even}} \big( \pi x + d(x)\mathbf{n}(\pi x) \big) := f \big( \pi x - d(x)\mathbf{n}(\pi x) \big) \quad \text{for} \quad x \in \Gamma^{\mathbf{R}^n}_\rho \backslash \overline{\Omega}
\]
and $f_{\mathrm{odd}}$ to be the odd extension of $f$ to $\Gamma^{\mathbf{R}^n}_\rho$ with respect to the boundary $\Gamma$, i.e.,
\[
f_{\mathrm{odd}} \big( \pi x + d(x)\mathbf{n}(\pi x) \big) := -f \big( \pi x - d(x)\mathbf{n}(\pi x) \big) \quad \text{for} \quad x \in \Gamma^{\mathbf{R}^n}_\rho \backslash \overline{\Omega}.
\]

For $x \in \Gamma^{\mathbf{R}^n}_\rho$, we further define that
\[
P(x) := \nabla d(\pi x) \otimes \nabla d(\pi x) = \mathbf{n}(\pi x) \otimes \mathbf{n}(\pi x), \quad Q(x) := I - P(x).
\]
It is not hard to see that $P(x)$ represents the normal projection to the direction $\nabla d$ whereas $Q(x)$ represents the tangential projection to the direction $\nabla d$.
For $v\in vBMOL^2(\Omega)$ with $\operatorname{supp} v \subseteq \Gamma^{\mathbf{R}^n}_\rho \cap \Omega$, we define $\overline{v}$ to be its extension of the form
\begin{align} \label{SE}
	\overline{v}(x) := (P v_{\mathrm{odd}}) (x) + (Q v_{\mathrm{even}}) (x)
\end{align}
for $x \in \Gamma^{\mathbf{R}^n}_\rho$. Since $\operatorname{supp} \overline{v} \subset \Gamma^{\mathbf{R}^n}_\rho$, $\overline{v}$ can be viewed as being defined in $\mathbf{R}^n$ with $\overline{v}(x) = 0$ for any $x \in \mathbf{R}^n \setminus \Gamma^{\mathbf{R}^n}_\rho$.

\begin{proposition}[\cite{Gu}] \label{2E}
Let $\Omega \subset \mathbf{R}^n$ be a uniformly $C^2$ domain of type $(\alpha,\beta,K)$ with $n \geq 2$. Let $\varepsilon \in (0,1)$.
For any $\rho \in (0, c_\Omega^\varepsilon/2]$, there exists a constant $C=C(\alpha,\beta,K,\rho)>0$ such that estimates
\begin{align*}
\Vert v_\mathrm{even} \Vert_{BMOL^2(\mathbf{R}^n)} &\leq C \Vert v\Vert_{vBMOL^2(\Omega)}, \\
\Vert \nabla d \cdot v_\mathrm{odd}\Vert_{BMOL^2\left( \mathbf{R}^n \right)} + [\nabla d \cdot v_\mathrm{odd}]_{b^\nu(\Gamma)} &\leq C \Vert v\Vert_{vBMOL^2(\Omega)}
\end{align*}
hold for all $v \in vBMOL^2(\Omega)$ with $\operatorname{supp} v \subset \Gamma^{\mathbf{R}^n}_\rho$ and $\nu>0$.
\end{proposition}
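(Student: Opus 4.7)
The plan is to reduce everything, via the localization toolkit of Subsection~\ref{sub:LOC}, to a standard reflection estimate on a flat half-space. First, using the partition of unity $\{\varphi_i\}_{i\in\mathbf{N}}$ from \eqref{PUNG}, I would write $v = \sum_i v_i$ with $v_i := \varphi_i v$. Each $v_i$ is supported in a single normal chart $U_\rho(x_i)$, and the bound $\Vert v_i \Vert_{vBMOL^2(\Omega)} \leq C\Vert v\Vert_{vBMOL^2(\Omega)}$ with uniform constant follows from $\sup_i\Vert\nabla\varphi_i\Vert_{L^\infty} \leq C(N_\ast,n,\rho)$. It then suffices to prove the two stated bounds for $(v_i)_{\mathrm{even}}$ and $\nabla d \cdot (v_i)_{\mathrm{odd}}$ chart-by-chart and sum using the finite-overlap property $N_\ast = C(n)$.

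Inside the chart $U_\rho(x_i)$, I would pull back via the normal coordinate diffeomorphism $F_{x_i}$ onto $V_\rho = B_\rho(0') \times (-\rho,\rho)$ and set $\tilde v_i := v_i \circ F_{x_i}$, supported in $V_\rho^+ := B_\rho(0')\times(0,\rho)$. By \eqref{UCN}, $F_{x_i}$ is bi-Lipschitz with distortion $O(\varepsilon)$, so $L^2$, $BMO$ and $b^\nu$ seminorms transfer with constants $C(\varepsilon,K,\rho)$. In these coordinates $d \circ F_{x_i} = \eta_n$, the geometric reflection across $\Gamma$ becomes $\sigma:(\eta',\eta_n)\mapsto(\eta',-\eta_n)$, and $\nabla d \circ F_{x_i}$ coincides with $e_n$ along $\{\eta_n=0\}$ and is $C^1$-close to $e_n$ throughout $V_\rho$. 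Thus the componentwise even (resp.\ odd) extension of $v_i$ across $\Gamma$ corresponds, modulo an $O(\varepsilon)$ perturbation coming from $\nabla d$, to the standard even (resp.\ odd) reflection of $\tilde v_i$ across $\{\eta_n=0\}$.

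On the flat half-box the proof reduces to two reflection lemmas. The first is classical: the even extension of a $BMO(V_\rho^+)$ function across $\{\eta_n=0\}$ is in $BMO(V_\rho)$ with a universal constant; applying it componentwise yields the first displayed estimate. For the odd part, for any ball $B_r(y) \subset V_\rho$ crossing $\{\eta_n=0\}$ one writes
\[
\int_{B_r(y)} \vert g_{\mathrm{odd}}(\eta) - c \vert \, d\eta = \int_{B_r(y)\cap V_\rho^+} \vert g - c \vert \, d\eta + \int_{B_r(\sigma(y))\cap V_\rho^+} \vert g + c \vert \, d\eta
\]
and chooses $c$ equal to the mean of $g$ over $B_r(y) \cap V_\rho^+$. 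The first integral is bounded by $\vert B_r(y)\vert \cdot [g]_{BMO(V_\rho^+)}$; the second is bounded by the same BMO quantity (after comparing the means of $g$ over the two half-balls in the standard way) plus $2\vert c\vert \cdot \vert B_r(y)\vert$, and $\vert c\vert$ is in turn controlled by $[g]_{b^\nu(\{\eta_n=0\})}$ whenever $r < \nu$, since the upper half-ball is contained in a ball of radius $2r$ centered on $\{\eta_n=0\}$. Applying this to the scalar $g = \tilde v_i \cdot e_n$, which agrees with $\nabla d \cdot v_i$ up to the $O(\varepsilon)$ perturbation of $\nabla d$, yields the $BMO$ portion of the second estimate after transfer back through $F_{x_i}$.

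The remaining $[\nabla d \cdot v_{\mathrm{odd}}]_{b^\nu(\Gamma)}$ contribution is automatic: since $v_{\mathrm{odd}} \equiv v$ on $\Omega$, this seminorm equals $[\nabla d \cdot v]_{b^\nu(\Gamma)}$, already dominated by $\Vert v\Vert_{vBMOL^2(\Omega)}$. The $L^2$ bounds follow directly from the reflection, which at most doubles the integral. The main technical obstacle is the odd-reflection BMO estimate above: one must simultaneously absorb the jump at the boundary via the $b^\nu$ seminorm and control the perturbation from $\nabla d \neq e_n$ inside a curved chart. Both are accommodated by the smallness of $\varepsilon$ and the choice $\rho \leq c_\Omega^\varepsilon/2$, after which summation over $i$ using the finite-overlap property from \eqref{PUNG} concludes the proof.
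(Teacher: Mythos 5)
Your overall strategy --- flatten via normal coordinates, reduce to even/odd reflection across $\{\eta_n=0\}$, and absorb the jump of the odd part with the $b^\nu$ seminorm --- is the right one (the paper quotes this result from \cite{Gu} without reproducing its proof, and this reflection mechanism is the standard one going back to \cite{BGST}). Two of your steps, however, would not go through as written. First, the claim that $\Vert \varphi_i v\Vert_{vBMOL^2(\Omega)}\leq C\Vert v\Vert_{vBMOL^2(\Omega)}$ ``follows from'' $\sup_i\Vert\nabla\varphi_i\Vert_{L^\infty}\leq C$ is not a one-line consequence: multiplication by a Lipschitz cutoff is not bounded on $BMO$, and the needed bound on $BMO\cap L^2$ requires controlling $|f_B|\cdot r\,\Vert\nabla\varphi_i\Vert_{L^\infty}$ for small balls $B=B_r$ near $\Gamma$ by John--Nirenberg chaining inside $\Omega$ together with the $L^2$ norm. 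This is precisely the multiplication rule of Proposition \ref{2M}, which the paper \emph{derives from} Proposition \ref{2E}, so invoking it here without an independent proof risks circularity. The cleanest repair is to drop the partition of unity altogether: the reflection $x\mapsto\pi x+d(x)\mathbf{n}(\pi x)$ is globally defined on $\Gamma^{\mathbf{R}^n}_\rho$ since $\rho<R_\ast$, the charts are needed only to verify that it is bi-Lipschitz, and a ball crossing $\Gamma$ can be compared directly with a slightly larger ball intersected with $\Omega$.

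Second, the ``$O(\varepsilon)$ perturbation coming from $\nabla d$'' is where the argument would actually fail for the second estimate. The geometric even and odd extensions pull back \emph{exactly} to the flat reflections of $\tilde v_i$ in normal coordinates --- no perturbation enters there --- and, because $\nabla d$ is constant along normal lines, one has $\nabla d\cdot v_{\mathrm{odd}}=(\nabla d\cdot v)_{\mathrm{odd}}$ exactly. Hence the scalar you must odd-extend is $g=\nabla d\cdot v$, whose $BMO$, $L^2$ and $b^\nu$ data are all furnished by $\Vert v\Vert_{vBMOL^2(\Omega)}$. If instead you odd-extend $\tilde v_i\cdot e_n$ and treat $(\nabla d-e_n)\cdot v$ as an error, the $b^\nu$ seminorm of that error is bounded only by $\varepsilon\,[\,|v|\,]_{b^\nu(\Gamma)}$, and the $b^\nu$ seminorm of the \emph{tangential} part of $v$ is not controlled by $\Vert v\Vert_{vBMOL^2(\Omega)}$ --- that is the whole point of the space $vBMO$. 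Smallness of $\varepsilon$ does not help, since the factor it multiplies may be infinite. Your core reflection computation for the odd part --- taking $c$ to be the mean over the upper half-ball and bounding $|c|$ by the $b^\nu$ seminorm when $r<\nu$ --- is correct and is the heart of the matter; it simply must be applied to $g=\nabla d\cdot v$ rather than to a Cartesian component of $v$.
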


Actually, we could achieve more than Proposition \ref{2E}. We consider a cut off function $\theta \in C_{\mathrm{c}}^\infty(\mathbf{R})$ such that $0 \leq \theta \leq 1$, $\theta(t) = 1$ for any $0 < \lvert t \rvert < 1/2$ and $\theta(t) = 0$ for any $\lvert t \rvert> 3/4$. 
Suppose that $\varepsilon \in (0,1)$ and $\rho \in (0, c_\Omega^\varepsilon/2]$. We set $\theta_\rho := \theta(d(x)/\rho)$. 
An easy check tells us that $\theta_\rho \in C_\mathrm{c}^2(\mathbf{R}^n)$ with $\operatorname{supp} \theta_\rho \subseteq \overline{\Gamma^{\mathbf{R}^n}_\rho}$ and $\theta_\rho(x) = 1$ for $x \in \Gamma^{\mathbf{R}^n}_{\rho/2}$. 
Within this paper, for any subset $D \subset \mathbf{R}^n$ we denote $r_D$ to be the restriction operator in $D$.
For $v \in vBMOL^2(\Omega)$, we let $v_2 := (r_\Omega \theta_\rho) v$ and $v_1 := v - v_2$. 
We extend $v_2$ to $\overline{v_2}$ in the same way as \eqref{SE} in which the normal component of $\overline{v_2}$ is odd with respect to $\Gamma$ and the tangential component of $\overline{v_2}$ is even with respect to $\Gamma$, i.e., we set
\[
\overline{v_2} := P (v_2)_\mathrm{odd} + Q (v_2)_\mathrm{even}.
\]
By \cite[Theorem 1]{Gu}, we see that $\overline{v} := \overline{v_2} + v_1$ is a linear extension of $v$ to $\mathbf{R}^n$ which satisfies
\[
\Vert \overline{v} \Vert_{BMOL^2(\mathbf{R}^n)} + [\nabla d \cdot \overline{v}]_{b^\infty(\Gamma)} \leq C \Vert v \Vert_{vBMOL^2(\Omega)}
\]
with some $C=C(\alpha,\beta,K,\rho)>0$.
In general, multiplication by a smooth function is not bounded in $BMO^\infty(\Omega)$.
However, since we have a bounded linear extension from $vBMOL^2(\Omega)$ to $BMOL^2(\mathbf{R}^n)$, such multiplication is bounded in $vBMOL^2(\Omega)$. 
Since $\rho_0$ depends on $\alpha,\beta,K$ and the reach $R_\ast$, by fixing an arbitrary $\varepsilon \in (0,1)$ and an arbitrary $\rho \in (0,c_\Omega^\varepsilon/2]$, we can deduce the following multiplication rule.

\begin{proposition}[\cite{Gu}] \label{2M}
Let $\Omega \subset \mathbf{R}^n$ be a uniformly $C^2$ domain of type $(\alpha,\beta,K)$ with $n \geq 2$.
Let $\varphi \in C^\gamma(\Omega)$ with $\gamma\in(0,1)$.
For each $v \in vBMOL^2(\Omega)$, the function $\varphi v \in vBMOL^2(\Omega)$ satisfies
\[
	\Vert \varphi v \Vert_{vBMOL^2(\Omega)}
	\leq C\Vert \varphi\Vert_{C^\gamma(\Omega)}
	\Vert v\Vert_{vBMOL^2(\Omega)}
\]
with some constant $C=C(\alpha,\beta,K,R_\ast)>0$ where $R_\ast$ represents the reach of $\Gamma$.
\end{proposition}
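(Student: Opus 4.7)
The strategy rests on the bounded linear extension $v \mapsto \overline{v}$ from $vBMOL^2(\Omega)$ to $BMOL^2(\mathbf{R}^n)$ recalled in the preceding paragraph: we transport the multiplication to the whole space, where $L^\infty \cap C^\gamma$ multiplication preserves $BMO^\mu \cap L^2$, and then restrict back to $\Omega$. Concretely, I would fix $\varepsilon \in (0,1)$ and $\rho \in (0, c_\Omega^\varepsilon/2]$ as in the discussion above, and first extend $\varphi \in C^\gamma(\Omega)$ to $\widetilde{\varphi} \in C^\gamma(\mathbf{R}^n)$ with $\Vert \widetilde{\varphi} \Vert_{C^\gamma(\mathbf{R}^n)} \leq C \Vert \varphi \Vert_{C^\gamma(\Omega)}$, e.g., by the McShane--Whitney extension combined with an $L^\infty$ truncation. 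Since the restriction of $\widetilde{\varphi}\,\overline{v}$ to $\Omega$ is exactly $\varphi v$, controlling $\Vert \varphi v \Vert_{BMO^\mu(\Omega) \cap L^2(\Omega)}$ is reduced to the whole-space multiplier estimate
\[
\Vert \widetilde{\varphi}\,\overline{v} \Vert_{BMO^\mu(\mathbf{R}^n) \cap L^2(\mathbf{R}^n)} \leq C(\mu,\gamma)\,\Vert \widetilde{\varphi} \Vert_{C^\gamma(\mathbf{R}^n)}\,\Vert \overline{v} \Vert_{BMO^\mu(\mathbf{R}^n) \cap L^2(\mathbf{R}^n)},
\]
after which the $vBMOL^2$-bound for the extension, $\Vert \overline{v} \Vert_{BMOL^2(\mathbf{R}^n)} \leq C(\alpha,\beta,K,\rho) \Vert v \Vert_{vBMOL^2(\Omega)}$, closes the $BMO^\mu \cap L^2$ part of the estimate.

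The heart of the argument is this whole-space multiplier inequality, and it is where the $L^2$ hypothesis becomes indispensable: the bare $L^\infty$ bound on $\widetilde{\varphi}$ is insufficient to bound multiplication on $BMO$. On a ball $B = B_r(x_0) \subset \mathbf{R}^n$ with $r < \mu$, the comparison splitting
\[
\widetilde{\varphi}\,\overline{v} - \widetilde{\varphi}(x_0)\,\overline{v}_B = \widetilde{\varphi}(\overline{v} - \overline{v}_B) + \bigl(\widetilde{\varphi} - \widetilde{\varphi}(x_0)\bigr)\,\overline{v}_B
\]
yields $\operatorname{osc}_B(\widetilde{\varphi}\,\overline{v}) \leq 2 \Vert \widetilde{\varphi} \Vert_\infty [\overline{v}]_{BMO^\mu} + 2 \Vert \widetilde{\varphi} \Vert_{C^\gamma} r^\gamma |\overline{v}_B|$. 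The main obstacle is to control $r^\gamma |\overline{v}_B|$ uniformly for small $r$, since the bare $L^2$ bound $|\overline{v}_B| \leq C r^{-n/2} \Vert \overline{v} \Vert_{L^2}$ is not enough when $\gamma < n/2$. I would telescope between concentric balls: the standard $BMO$ inequality yields $|\overline{v}_{B_r} - \overline{v}_{B_\mu}| \leq C \log(\mu/r)\,[\overline{v}]_{BMO^\mu}$, while $|\overline{v}_{B_\mu}| \leq C \mu^{-n/2} \Vert \overline{v} \Vert_{L^2}$. Since both $r^\gamma \log(\mu/r)$ and $r^\gamma \mu^{-n/2}$ stay bounded for $r \in (0, \mu)$ with a bound depending only on $\mu, \gamma, n$, one concludes $r^\gamma |\overline{v}_B| \leq C(\mu,\gamma)\bigl( [\overline{v}]_{BMO^\mu} + \Vert \overline{v} \Vert_{L^2} \bigr)$, which closes the multiplier estimate.

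It remains to handle the boundary seminorm $[\,\cdot\,]_{b^\nu(\Gamma)}$. Since $\varphi$ is scalar and $d$ is $C^k$ on $\Gamma^{\mathbf{R}^n}_{R_\ast}$, one has $\nabla d \cdot (\varphi v) = \varphi\,(\nabla d \cdot v)$ on the tubular neighbourhood $\Gamma^{\mathbf{R}^n}_{\rho_0} \cap \Omega$, which contains every set $B_r(x) \cap \Omega$ in the definition of $[\,\cdot\,]_{b^\nu(\Gamma)}$ as soon as $\nu < \rho_0$. Because $b^\nu$ is an $L^1$-type quantity and $\varphi$ is bounded, the immediate pointwise inequality gives
\[
[\nabla d \cdot (\varphi v)]_{b^\nu(\Gamma)} \leq \Vert \varphi \Vert_{L^\infty(\Omega)}\,[\nabla d \cdot v]_{b^\nu(\Gamma)} \leq \Vert \varphi \Vert_{C^\gamma(\Omega)}\,[v]_{vBMO^{\mu,\nu}(\Omega)}.
\]
Combining this with the previous $BMO^\mu \cap L^2$ bound, using the fact that $vBMOL^2(\Omega)$ is independent of the choice of $\mu, \nu$, and recalling that $\rho_0$ depends only on $(\alpha,\beta,K,R_\ast)$, yields the desired estimate with constant $C = C(\alpha,\beta,K,R_\ast)$.
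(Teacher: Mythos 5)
Your proposal is correct and follows exactly the route the paper indicates: the paper does not reproduce a proof (it cites \cite{Gu}), but the paragraph preceding Proposition \ref{2M} explicitly attributes the result to the bounded linear extension $v \mapsto \overline{v}$ from $vBMOL^2(\Omega)$ to $BMOL^2(\mathbf{R}^n)$ followed by a whole-space multiplier estimate, which is precisely your argument. Your telescoping bound $r^\gamma\lvert \overline{v}_B\rvert \leq C(\mu,\gamma)\bigl([\overline{v}]_{BMO^\mu} + \Vert\overline{v}\Vert_{L^2}\bigr)$ correctly supplies the step where the $L^2$ hypothesis is needed, and the $b^\nu$ part is indeed immediate from the pointwise identity $\nabla d\cdot(\varphi v)=\varphi\,(\nabla d\cdot v)$.
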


\subsection{Decomposition of volume potential corresponding to $v$} 
\label{sub:VOL}
In this subsection, we assume that $\Omega \subset \mathbf{R}^n$ is a uniformly $C^3$ domain of type $(\alpha,\beta,K)$ with $n \geq 2$.
Let us recall some results that have already been established in \cite{GG22b}.
There exists a constant $\rho_\ast = C(\rho_0,K)>0$ such that all theories in this subsection hold for every $\rho \in (0, \rho_\ast]$.
Let $\rho \in (0, \rho_\ast]$ and $\theta_\rho$ be the cut-off function defined in subsection \ref{sub:CUT}. Since now we assume that $\Gamma$ is uniformly $C^3$, in this case $\theta_\rho \in C^3_\mathrm{c}(\mathbf{R}^n)$ with $\operatorname{supp} \theta_\rho \subset \Gamma^{\mathbf{R}^n}_\rho$ and $\theta_\rho = 1$ for any $x \in \Gamma^{\mathbf{R}^n}_{\rho/2}$.
Still, for $v \in vBMOL^2(\Omega)$ we set $v_2 := (r_\Omega \theta_\rho) v$ and $v_1 := v - v_2$.
By Proposition \ref{2M}, we see that $v_1, v_2 \in vBMOL^2(\Omega)$ satisfying
\[
\Vert v_1 \Vert_{vBMOL^2(\Omega)} + \Vert v_2 \Vert_{vBMOL^2(\Omega)} \leq C \Vert v \Vert_{vBMOL^2(\Omega)}
\]
with some constant $C=C(\alpha,\beta,K,\rho)>0$.

To construct the mapping $v\mapsto q_1$ in Theorem \ref{CSV}, we localize $v_2$ by using the partition of the unity $\{\varphi_i\}^\infty_{i=1}$ associated with the covering $\{ U_{\rho,i} \}^\infty_{i=1}$ of $\Gamma^{\mathbf{R}^n}_\rho$. Here for each $i \in \mathbf{N}$, $U_{\rho,i}$ denotes $U_\rho(x_i)$ with $x_i \in \mathcal{P}_\Gamma$. The corresponding volume potential to $v_1$ can be estimated directly.

\begin{proposition} \label{2V}
There exists a constant $C(\rho)>0$ such that
\begin{align*}
\Vert \nabla q^1_1 \Vert_{BMOL^2(\mathbf{R}^n)} & \leq C(\rho) \Vert v\Vert_{vBMOL^2(\Omega)}, \\
\Vert \nabla q^1_1 \Vert_{L^\infty(\Gamma_{\rho/4}^{\mathbf{R}^n})} &\leq C(\rho) \Vert v\Vert_{vBMOL^2(\Omega)}
\end{align*}
for $q^1_1= E*\operatorname{div} \, v_1$ and $v \in vBMOL^2(\Omega)$.
In particular, 
\[
\left[ \nabla q^1_1 \right]_{b^\nu(\Gamma)} \leq C(\rho) \Vert v \Vert_{vBMOL^2(\Omega)}
\]
for any $\nu < \rho/4$.
\end{proposition}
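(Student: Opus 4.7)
The plan is to exploit the fact that, since $\theta_\rho \equiv 1$ on $\Gamma^{\mathbf{R}^n}_{\rho/2}$, the function $v_1 = (1-\theta_\rho)v$ vanishes on $\Omega \cap \Gamma^{\mathbf{R}^n}_{\rho/2}$, so extension by zero to $\mathbf{R}^n$ produces a function that is identically zero in the whole strip $\Gamma^{\mathbf{R}^n}_{\rho/2}$. Denoting this extension still by $v_1$, the first step is to establish
\[
\Vert v_1 \Vert_{BMOL^2(\mathbf{R}^n)} \leq C(\rho)\Vert v\Vert_{vBMOL^2(\Omega)}.
\]
The $L^2$ bound follows from $|1-\theta_\rho|\leq 1$. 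For the $BMO$ bound, one inspects a ball $B_r(x) \subset \mathbf{R}^n$: if $B_r(x)\subseteq \Gamma^{\mathbf{R}^n}_{\rho/2}$ or $B_r(x)\cap\Omega = \emptyset$ the oscillation is zero; if $B_r(x)\subseteq \Omega$, the oscillation is controlled by $[(1-\theta_\rho)v]_{BMO^\mu(\Omega)}$, which by Proposition \ref{2M} is dominated by $C(\rho)\Vert v\Vert_{vBMOL^2(\Omega)}$; and in the remaining straddling case one has $r\gtrsim \rho$, so the trivial estimate $r^{-n/2}\Vert v_1\Vert_{L^2(\mathbf{R}^n)}$ controls the oscillation by a $\rho$-dependent multiple of $\Vert v\Vert_{vBMOL^2(\Omega)}$.

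With the extension available in $\mathbf{R}^n$, the identity $q_1^1 = E\ast \operatorname{div} v_1$ rewrites, by moving a derivative onto $E$, as
\[
\partial_i q_1^1 = \sum_{j=1}^{n} \partial_i\partial_j E \ast (v_1)_j,
\]
so that each component of $\nabla q_1^1$ is a Calder\'on--Zygmund singular integral of $v_1$. Classical $L^2\to L^2$ and $BMO\to BMO$ boundedness then yields
\[
\Vert \nabla q_1^1\Vert_{BMOL^2(\mathbf{R}^n)} \leq C\Vert v_1\Vert_{BMOL^2(\mathbf{R}^n)} \leq C(\rho)\Vert v\Vert_{vBMOL^2(\Omega)},
\]
giving the first inequality. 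For the $L^\infty$ estimate on $\Gamma^{\mathbf{R}^n}_{\rho/4}$, observe that whenever $x\in \Gamma^{\mathbf{R}^n}_{\rho/4}$ the ball $B_{\rho/4}(x)$ is contained in $\Gamma^{\mathbf{R}^n}_{\rho/2}$, on which $v_1$ vanishes; the representation above therefore reduces to the absolutely convergent integral
\[
\nabla q_1^1(x) = \int_{|y-x|\geq \rho/4} \nabla\nabla E(x-y)\, v_1(y)\, dy.
\]
Since $|\nabla\nabla E(x-y)| \leq C|x-y|^{-n}$, this kernel is square-integrable in $y$ on $\{|y-x|\geq \rho/4\}$, and Cauchy--Schwarz gives $|\nabla q_1^1(x)|\leq C(\rho)\Vert v_1\Vert_{L^2(\mathbf{R}^n)}\leq C(\rho)\Vert v\Vert_{vBMOL^2(\Omega)}$ uniformly in $x$.

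The $b^\nu$ bound for $\nu<\rho/4$ follows immediately: for any $x\in\Gamma$ and $0<r<\nu$, the intersection $\Omega\cap B_r(x)$ lies in $\Gamma^{\mathbf{R}^n}_r\subset \Gamma^{\mathbf{R}^n}_{\rho/4}$, so
\[
r^{-n}\int_{\Omega\cap B_r(x)}|\nabla q_1^1(y)|\, dy \leq C\Vert \nabla q_1^1\Vert_{L^\infty(\Gamma^{\mathbf{R}^n}_{\rho/4})} \leq C(\rho)\Vert v\Vert_{vBMOL^2(\Omega)}.
\]
The only delicate point is the zero extension into $BMO(\mathbf{R}^n)$ in the straddling regime of the first step; once this is settled, the remainder is a direct combination of classical singular integral theory and a pointwise kernel estimate away from the singularity, the latter being available precisely because the cut-off $1-\theta_\rho$ isolates $v_1$ from a quantitative neighborhood of $\Gamma$.
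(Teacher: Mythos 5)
Your proposal is correct, and its overall architecture coincides with the paper's: extend $v_1$ by zero, apply the Fefferman--Stein $BMO$-$BMO$ bound and the $L^2$ boundedness of $\nabla^2 E\,\ast$ for the first inequality, and deduce the $b^\nu$ bound from an interior $L^\infty$ bound on $\Gamma^{\mathbf{R}^n}_{\rho/4}$. The one step where you genuinely diverge is the $L^\infty$ estimate: the paper observes that $\nabla q_1^1$ is harmonic in $\Gamma^{\mathbf{R}^n}_{\rho/2}$ (again because $v_1$ vanishes there) and applies the mean value property on $B_{\rho/4}(x)$ together with $\Vert \nabla q_1^1\Vert_{L^2(\mathbf{R}^n)}\leq C\Vert v\Vert_{L^2}$, whereas you use the pointwise representation $\nabla q_1^1(x)=\int_{\lvert y-x\rvert\geq \rho/4}\nabla\nabla E(x-y)v_1(y)\,dy$ valid off $\operatorname{supp} v_1$, the decay $\lvert\nabla\nabla E(z)\rvert\leq C\lvert z\rvert^{-n}$ (square-integrable on $\{\lvert z\rvert\geq\rho/4\}$), and Cauchy--Schwarz against $\Vert v_1\Vert_{L^2}$. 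Both mechanisms are equally elementary and yield the same $C(\rho)\Vert v\Vert_{L^2}$ bound; yours avoids invoking harmonicity but requires justifying that the singular-integral definition of $\nabla\nabla E\ast v_1$ agrees with the absolutely convergent integral away from the support, which is standard. You are also more explicit than the paper about why the zero extension of $v_1$ lies in $BMO(\mathbf{R}^n)$ (the paper delegates this to Proposition \ref{2M} and the extension results of \cite{Gu}); your three-case ball analysis, with the observation that a ball meeting both $\operatorname{supp} v_1\subset\{d\geq\rho/2\}$ and $\Omega^{\mathrm{c}}\subset\{d\leq 0\}$ must have radius $\gtrsim\rho$, is a clean self-contained substitute.
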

%
%
\begin{proof}
By the $BMO$-$BMO$ estimate \cite{FS} and Proposition \ref{2M}, we have the estimate
\[
\left[ \nabla q^1_1 \right]_{BMO(\mathbf{R}^n)} \leq C [ v_1 ]_{BMO(\mathbf{R}^n)} \leq C(\rho) \Vert v \Vert_{vBMOL^2(\Omega)}.
\]
Consider $x \in \Gamma_{\rho/4}^{\mathrm{R}^n}$. Since $\nabla q_1^1$ is harmonic in $\Gamma_{\rho/2}^{\mathrm{R}^n}$ and $B_{\frac{\rho}{4}}(x) \subset \Gamma_{\rho/2}^{\mathrm{R}^n}$, the mean value property for harmonic functions implies that
\[
\nabla q_1^1 (x) = \frac{C_n}{\rho^n} \int_{B_{\frac{\rho}{4}}(x)} \nabla q_1^1 (y) \, dy,
\]
i.e., we can estimate $\lvert \nabla q_1^1 (x) \rvert$ by $C(\rho) \Vert \nabla q_1^1 \Vert_{L^2(\mathbf{R}^n)}$. Since the convolution with $\nabla^2 E$ is bounded in $L^p$ for any $1<p<\infty$, see e.g.\ \cite[Theorem 5.2.7 and Theorem 5.2.10]{Gra}, we have that
\begin{align*}
\Vert \nabla q_1^1 \Vert_{L^2(\mathbf{R}^n)} \leq C \Vert v_1 \Vert_{L^2(\mathbf{R}^n)} \leq C \Vert v \Vert_{L^2(\mathbf{R}^n)}.
\end{align*}
Therefore, the estimate
\[
\lvert \nabla q^1_1 (x) \rvert \leq C(\rho) \Vert v\Vert_{vBMOL^2(\Omega)}
\]
holds for any $x \in \Gamma_{\rho/4}^{\mathrm{R}^n}$.
\end{proof}

For $i \in \mathbf{N}$, we extend $(r_\Omega \varphi_i) v_2$ as in Proposition \ref{2E} to get $\overline{(r_\Omega \varphi_i) v_2}$ and set
\[
\overline{v_2} := \sum^\infty_{i=1} \overline{(r_\Omega \varphi_i) v_2}.
\]
Indeed, this extension is independent of the choice of $\{ \varphi_i \}_{i=1}^\infty$ as long as $\{ \varphi_i \}_{i=1}^\infty$ is a partition of unity for $\Gamma^{\mathbf{R}^n}_\rho$.
We next set
\[
\overline{v_2}^{\tan} := Q \, \overline{v_2} = \sum_{i=1}^\infty Q \, \big( (r_\Omega \varphi_i)_\mathrm{even} (v_2)_{\mathrm{even}} \big).
\]
For $i \in \mathbf{N}$, we have that $\varphi_i \in C^2(U_{\rho,i})$ as in this case $\Gamma$ is of regularity uniformly $C^3$. For simplicity of notation, we denote $(r_\Omega \varphi_i)_\mathrm{even}(v_2)_{\mathrm{even}}$ by $v_{2,i}$. 
By Proposition \ref{2E} and \ref{2M}, we can easily deduce that for any $i \in \mathbf{N}$, $v_{2,i} \in BMOL^2(\mathbf{R}^n)$ with $\operatorname{supp} v_{2,i} \subset U_{\rho,i}$ satisfying the estimate
\begin{align} \label{BLEve}
\Vert v_{2,i} \Vert_{BMOL^2(\mathbf{R}^n)} \leq C(\rho) \Vert v_{2,i} \Vert_{BMOL^2\big( \Gamma^{\mathbf{R}^n}_\rho \big)}  \leq C(\alpha,\beta,K,\rho) \Vert v \Vert_{vBMOL^2(\Omega)}.
\end{align}
We further denote $Q \, v_{2,i}$ by $w_i^{\mathrm{tan}}$. Now, we are ready to construct the suitable potential corresponding to 
\[
\overline{v_2}^{\mathrm{tan}} := \sum_{i=1}^\infty w_i^\mathrm{tan} = \sum_{i=1}^\infty Q \, v_{2,i}.
\]

\begin{proposition}[\cite{GG22b}] \label{VPT}
For every $i \in \mathbf{N}$, there exist bounded linear operators $v \longmapsto p_{i,1}^{\mathrm{tan}}$ and $v \longmapsto p_{i,2}^{\mathrm{tan}}$ from $vBMOL^2(\Omega)$ to $L^\infty(\mathbf{R}^n)$ such that
\[
- \Delta p_i^{\mathrm{tan}} = \operatorname{div} w_i^{\mathrm{tan}} \quad \text{in} \quad U_{2 \rho,i} \cap \Omega
\]
with $p_i^{\mathrm{tan}} := p_{i,1}^{\mathrm{tan}} + p_{i,2}^{\mathrm{tan}}$, $\operatorname{supp} p_{i,1}^{\mathrm{tan}} \subset U_{2 \rho,i}$.
Moreover, there exists a constant $C=C(K,\rho)>0$ such that
\begin{align*}
\left\lVert \nabla p_{i,1}^{\mathrm{tan}}\right\rVert_{BMOL^2(\mathbf{R}^n)} 
&\leq C \left\lVert v_{2,i} \right\rVert_{BMOL^2(\mathbf{R}^n)}, \\
\left\lVert \nabla p_{i,2}^{\mathrm{tan}} \right\rVert_{L^\infty(\mathbf{R}^n)} 
&\leq C \left\lVert v_{2,i} \right\rVert_{L^p(\mathbf{R}^n)}, \\
\sup_{x\in\Gamma, \, r<\rho} r^{-n} \int_{B_r(x)} \left\lvert\nabla d \cdot \nabla p_i^{\mathrm{tan}} \right\rvert \, dy &\leq C \left\lVert v_{2,i}\right\rVert_{BMOL^2(\mathbf{R}^n)}
\end{align*}
with some $p>n$.
\end{proposition}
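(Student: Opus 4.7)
The plan is to pull the equation $-\Delta p = \operatorname{div} w_i^{\tan}$ back through the normal coordinate chart $F_{x_i}: V_{2\rho} \to U_{2\rho,i}$ and exploit the evenness of the tangential component $Q v_{2,i}$ across the boundary. In $\eta$-coordinates the pulled-back Laplacian takes the form $L = A - B + R_1$, where $A = -\Delta_\eta$, $B = \sum_{1 \leq i,j \leq n-1} \partial_{\eta_i} b_{ij} \partial_{\eta_j}$ with coefficients vanishing at $\eta = 0$, and $R_1$ is a first-order remainder. Estimate (\ref{UCN}) forces $\|b_{ij}\|_{L^\infty(V_{2\rho})} < \varepsilon$ as soon as $\rho \leq c_\Omega^\varepsilon/2$. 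Extending the coefficients $b_{ij}$ evenly across $\{\eta_n = 0\}$ to all of $\mathbf{R}^n$, the operator $BA^{-1}$ becomes a bounded operator on $BMO(\mathbf{R}^n)$ and on $L^2(\mathbf{R}^n)$ whose norm can be made arbitrarily small. The purpose of the even extension of $b_{ij}$ is that the evenly-extended operator $L_0 := A - B$ maps functions even in $\eta_n$ to functions even in $\eta_n$.

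Construct $p_{i,1}^{\tan}$ as a cutoff of the pullback of the Neumann series
\begin{align*}
  \tilde p_{i,1}^{\tan} := -A^{-1}(I - BA^{-1})^{-1}\operatorname{div}(Q v_{2,i}) = -\sum_{m=0}^\infty A^{-1}(BA^{-1})^m \operatorname{div}(Q v_{2,i}),
\end{align*}
applied to the tangentially-extended source on $\mathbf{R}^n$. Each operator $\nabla A^{-1}(BA^{-1})^m \operatorname{div}$ is a composition of Riesz-type singular integrals with pointwise multipliers, so the Fefferman--Stein theorem \cite{FS} gives $BMO(\mathbf{R}^n)$-boundedness and Plancherel gives $L^2(\mathbf{R}^n)$-boundedness; since $\rho$ is chosen so that the common ratio in the series is less than $1$, summation yields $\|\nabla \tilde p_{i,1}^{\tan}\|_{BMOL^2(\mathbf{R}^n)} \leq C \|v_{2,i}\|_{BMOL^2(\mathbf{R}^n)}$. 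Pulling back to $x$ and multiplying by a smooth bump supported in $U_{2\rho,i}$ produces $p_{i,1}^{\tan}$ with the desired support and first estimate.

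The correction terms --- the commutator of $\Delta$ with the localization bump, the lower-order remainder $R_1 \tilde p_{i,1}^{\tan}$, and the Jacobian/chain-rule errors from $F_{x_i}$ --- are absorbed into $p_{i,2}^{\tan}$. All such contributions can be written as $\nabla E \ast (k \cdot v_{2,i})$ with $k$ a compactly supported $C^1$ function in $U_{2\rho,i}$, so that $\nabla p_{i,2}^{\tan}$ is a convolution against a kernel of the form $\nabla^2 E \cdot (\text{smooth compactly supported factor})$ and therefore against an integrable kernel after integrating by parts; the Hardy--Littlewood--Sobolev inequality then yields $\|\nabla p_{i,2}^{\tan}\|_{L^\infty} \leq C \|v_{2,i}\|_{L^p(\mathbf{R}^n)}$ for $p > n$. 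For the third, $b^\nu$-type estimate, use the parity observation: $B$ involves only tangential derivatives, its coefficients are even in $\eta_n$, and $\operatorname{div}(Q v_{2,i})$ is even in $\eta_n$ (since $Q v_{2,i}$ itself is even there). Hence every term of the Neumann expansion of $\tilde p_{i,1}^{\tan}$ is even in $\eta_n$, so $\partial_{\eta_n}\tilde p_{i,1}^{\tan}$ is odd in $\eta_n$ and vanishes on $\{\eta_n = 0\}$. Translated back through $F_{x_i}$, the leading contribution to $\nabla d \cdot \nabla p_i^{\tan}$ vanishes on $\Gamma$, and combined with the $BMO$ bound a standard Poincaré-type argument gives $r^{-n}\int_{B_r(x)} |\nabla d \cdot \nabla p_i^{\tan}| \, dy \leq C \|v_{2,i}\|_{BMOL^2(\mathbf{R}^n)}$.

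The main obstacle is the bookkeeping behind the parity argument: one must check that $\operatorname{div}(Q v_{2,i})$, which in normal coordinates contains both tangential divergences and $\partial_{\eta_n}$ applied to potentially parity-violating Jacobian corrections, really does reduce to a distribution even in $\eta_n$, and that every parity-violating remainder (from $R_1$, from the commutator with the cutoff, and from conjugation by $F_{x_i}$) is disposed of into $p_{i,2}^{\tan}$ whose $L^\infty$ gradient bound controls it without spoiling the boundary estimate. Verifying that the clean operator $L_0 = A - B$ and the parity-preserving class of sources line up exactly through the Neumann expansion is the technically delicate part of the construction.
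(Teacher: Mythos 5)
Your construction coincides with the one the paper recalls from \cite{GG22b} (see the Remark following Proposition \ref{VPN}): pull back by the normal coordinate chart, extend the coefficients $b_{ij}$ evenly in $\eta_n$, invert via the Neumann series $A^{-1}(I-BA^{-1})^{-1}$ using the Fefferman--Stein estimate for $\partial_{\eta_k}A^{-1}\partial_{\eta_\ell}$ and the smallness of $B$ from \eqref{UCN}, dump the lower-order and cutoff remainders into $p_{i,2}^{\mathrm{tan}}$, and use the odd symmetry of $\partial_{\eta_n}\tilde p_{i,1}^{\mathrm{tan}}$ to pass from the $BMO$ bound to the $b^\nu$ bound. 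Two cosmetic slips: the $L^\infty$ bound on $\nabla p_{i,2}^{\mathrm{tan}}$ follows from H\"older's inequality with the locally integrable kernel $\nabla E$ (not Hardy--Littlewood--Sobolev, which gives $L^q$ rather than $L^\infty$ bounds), and the oddness should be used to annihilate the mean of $\nabla d\cdot\nabla p_i^{\mathrm{tan}}$ over balls centered on $\Gamma$ rather than to assert pointwise vanishing of a $BMO$ function on $\{\eta_n=0\}$.
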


Having the estimate for the volume potential near the boundary regarding its tangential component, we are left to handle the contribution from $\overline{v}^{\mathrm{nor}}_2:=\overline{v}_2-\overline{v}^{\tan}_2$.
We note that $\overline{v}_2^\mathrm{nor}$ admits decomposition
\[
\overline{v}^{\mathrm{nor}}_2 = \sum^\infty_{i=1} P \, \big( (r_\Omega \varphi_i)_\mathrm{even} (v_2)_{\mathrm{odd}} \big).
\]
For simplicity of notations, for every $i \in \mathbf{N}$ we denote $\nabla d \cdot \big( (r_\Omega \varphi_i)_\mathrm{even} (v_2)_{\mathrm{odd}} \big)$ by $f_{2,i}$.
In this case, for any $i \in \mathbf{N}$ we have that $f_{2,i} \in BMOL^2(\mathbf{R}^n)$ with $\operatorname{supp} f_{2,i} \subset U_{\rho,i}$ satisfying the estimate
\[
\Vert f_{2,i} \Vert_{BMOL^2(\mathbf{R}^n)} \leq C(\rho) \Vert f_{2,i} \Vert_{BMOL^2\big( \Gamma^{\mathbf{R}^n}_\rho \big)}  \leq C(\alpha,\beta,K,\rho) \Vert v \Vert_{vBMOL^2(\Omega)}.
\]
We further denote $f_{2,i} \nabla d$ by $w_i^{\mathrm{nor}}$. 
With a similar idea of proof, we can establish the suitable potential corresponding to $\overline{v}^{\mathrm{nor}}_2$.

\begin{proposition}[\cite{GG22b}] \label{VPN}
For every $i \in \mathbf{N}$, there exist bounded linear operators $v\longmapsto p_{i,1}^{\mathrm{nor}}$ and $v\longmapsto p_{i,2}^{\mathrm{nor}}$ from $vBMOL^2(\Omega)$ to $L^\infty(\mathbf{R}^n)$ such that
\[
- \Delta p_i^{\mathrm{nor}} = \operatorname{div} w_i^{\mathrm{nor}} \quad \text{in} \quad U_{2 \rho,i} \cap \Omega
\]
with $p_i^{\mathrm{nor}} := p_{i,1}^{\mathrm{nor}} + p_{i,2}^{\mathrm{nor}}$, $\operatorname{supp} p_{i,1}^{\mathrm{nor}} \subset U_{2 \rho,i}$.
Moreover, there exists a constant $C=C(K,\rho)>0$ such that
\begin{align*}
\Vert \nabla p_{i,1}^{\mathrm{nor}}\Vert_{BMOL^2(\mathbf{R}^n)} &\leq C \Vert f_{2,i}\Vert_{BMOL^2(\mathbf{R}^n)}, \\
\Vert \nabla p_{i,2}^{\mathrm{nor}}\Vert_{L^\infty(\mathbf{R}^n)} &\leq C \Vert f_{2,i}\Vert_{L^p(\mathbf{R}^n)}, \\
\sup_{x\in\Gamma, \, r<\rho} r^{-n} \int_{B_r(x)} \lvert\nabla d \cdot \nabla p_i^{\mathrm{nor}}\rvert \, dy &\leq C \Vert f_{2,i}\Vert_{BMOL^2(\mathbf{R}^n)}
\end{align*}
with some $p>n$.
\end{proposition}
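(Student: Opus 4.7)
The plan is to mirror the construction in Proposition \ref{VPT}: pull the problem back to a half-space via the normal coordinate diffeomorphism $F_{x_i}$, freeze coefficients at the base point $x_i$, and invert the resulting perturbed Laplacian by a Neumann series. In the normal coordinate $\eta$, the operator $-\Delta$ becomes $L = A - B + R$ with $A = -\Delta_\eta$, $B = \sum_{1 \leq i,j \leq n-1} \partial_{\eta_i} b_{ij}\partial_{\eta_j}$, and $R$ a first-order remainder; the coefficients $b_{ij}$ vanish at $\eta_n = 0$. The new feature in the normal case is that $w_i^{\mathrm{nor}} = f_{2,i}\nabla d$ where $f_{2,i}$ is odd in $\eta_n$ by construction, so the leading part of $\operatorname{div} w_i^{\mathrm{nor}}$ in normal coordinates is $\partial_{\eta_n} f_{2,i}$, an even function of $\eta_n$. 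Extending the coefficients $b_{ij}$ evenly across $\{\eta_n = 0\}$ then preserves parity, which is the key to the $b^\nu$ control.

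Having set up the coefficients, I would define $p_{i,1}^{\mathrm{nor}}$ as the solution of the principal-part equation on the full space $\mathbf{R}^n$, namely
\[
p_{i,1}^{\mathrm{nor}} = -A^{-1}(I - BA^{-1})^{-1}\operatorname{div}\tilde{w}_i^{\mathrm{nor}},
\]
where $\tilde{w}_i^{\mathrm{nor}}$ is $w_i^{\mathrm{nor}}$ written in $\eta$-coordinates and extended by zero outside $V_{2\rho}$. By shrinking $\rho$ inside $(0,\rho_\ast]$, one makes $\Vert BA^{-1}\Vert_{BMO \to BMO} < 1$ using the smallness of $b_{ij}$ implied by \eqref{UCN}, so the Neumann series $\sum_{m \geq 0}(BA^{-1})^m$ converges in $BMO(\mathbf{R}^n)$. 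The Fefferman--Stein $L^\infty$--$BMO$ estimate for second-order Riesz transforms \cite{FS}, together with the standard Calder\'on--Zygmund $L^2$ bound, yields $\Vert \nabla p_{i,1}^{\mathrm{nor}}\Vert_{BMOL^2(\mathbf{R}^n)} \leq C\Vert f_{2,i}\Vert_{BMOL^2(\mathbf{R}^n)}$. The remaining contributions (the first-order remainder $R$, the commutator of $A-B$ with a cutoff supported in $U_{2\rho,i}$, and the pull-back error from $F_{x_i}$) are collected into $p_{i,2}^{\mathrm{nor}}$ and treated by Newton-potential estimates for an $L^p$ right-hand side with $p > n$, which places $\nabla p_{i,2}^{\mathrm{nor}}$ in $L^\infty(\mathbf{R}^n)$ with the stated bound.

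The subtle step is the $b^\nu$ bound on $\nabla d \cdot \nabla p_i^{\mathrm{nor}}$. By the parity argument above, each iterate $(BA^{-1})^m \operatorname{div}\tilde{w}_i^{\mathrm{nor}}$ is even in $\eta_n$, hence $A^{-1}$ applied to it is even in $\eta_n$, and therefore $\partial_{\eta_n} p_{i,1}^{\mathrm{nor}}$, which coincides with the principal part of $\nabla d \cdot \nabla p_i^{\mathrm{nor}}$ in the original coordinates, is odd in $\eta_n$. For $x \in \Gamma$ and $r < \rho$, the ball $B_r(x)$ is symmetric about $\Gamma$ up to a controlled curvature error, so the oddness lets us replace the $L^1$ average of $\lvert\nabla d \cdot \nabla p_{i,1}^{\mathrm{nor}}\rvert$ by the mean oscillation around zero, which the previous $BMO$ bound dominates by $C\Vert f_{2,i}\Vert_{BMOL^2(\mathbf{R}^n)}$ uniformly in $r$. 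The contribution of $\nabla p_{i,2}^{\mathrm{nor}}$ is absorbed by its $L^\infty$ bound. The main obstacle is to propagate the odd/even parity through the Neumann series and through the change-of-coordinates identity relating $\partial_{\eta_n} p_{i,1}^{\mathrm{nor}}$ to $\nabla d \cdot \nabla p_i^{\mathrm{nor}}$; the non-principal discrepancies coming from the Jacobian of $F_{x_i}$ differ from the exact normal derivative by terms of size $O(d)$ or supported away from $\Gamma$, and these can be bundled into $p_{i,2}^{\mathrm{nor}}$ without damaging the estimate.
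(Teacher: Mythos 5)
Your proposal is correct and follows essentially the same route as the paper: the paper recalls Proposition \ref{VPN} from \cite[Proposition 5, Section 3.5]{GG22b} and, in the Remark following it, sketches exactly your construction — $p_{i,1}^{\mathrm{nor}}$ via the frozen-coefficient Neumann series $-\theta A^{-1}(I-BA^{-1})^{-1}\partial_{\eta_n}F_{x_i}^{-1}(f_{2,i})$ with the Fefferman--Stein and Calder\'on--Zygmund bounds, $p_{i,2}^{\mathrm{nor}}$ as a Newton potential of the lower-order remainders estimated in $L^\infty$ via an $L^p$ ($p>n$) right-hand side, and the even extension of $b_{ij}$ in $\eta_n$ so that $\nabla d\cdot\nabla p_{i,1}^{\mathrm{nor}}$ is odd, which converts the $BMO$ bound into the $b^\nu$ bound. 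The only (inessential) slips are that $b_{ij}$ vanishes at the base point $\eta=0$ rather than on all of $\{\eta_n=0\}$, and that the identity $\partial_{\eta_n}(p\circ F_{x_i})=\nabla d\cdot\nabla p$ is exact, so no extra Jacobian discrepancy needs to be absorbed there.
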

\begin{remark}
Specifically speaking, Proposition \ref{VPT} is indeed \cite[Proposition 4]{GG22b} whose proof is in \cite[Section 3.4]{GG22b}, Proposition \ref{VPN} is indeed \cite[Proposition 5]{GG22b} whose proof is in \cite[Section 3.5]{GG22b}. For Proposition \ref{VPT}, in the local normal coordinate system at $x_i \in \mathcal{P}_\Gamma$, $p_{i,1}^\mathrm{tan}$ is constructed as 
\begin{align*}
F_{x_i}^{-1}\big( p_{i,1}^\mathrm{tan} \big) &= - \theta A^{-1} (I - BA^{-1})^{-1} \nabla'_{\eta} F_{x_i}^{-1}\big( v_{2,i} \big), \\
A &= -\Delta_\eta, \quad B = \sum_{1\leq i,j\leq n-1} \partial_{\eta_i} (b_{ij} \theta) \partial_{\eta_j}
\end{align*}
with $\theta$ denoting some cut-off function in $V_{4\rho}$ and $F_{x_i} : V_{\rho_0} \to U_{\rho_0}(x_i)$ is the normal coordinate change in $U_{\rho_0}(x_i)$. Since $\partial_{\eta_k} A^{-1} \partial_{\eta_\ell}$ is bounded in $BMO$ \cite{FS} and in $L^p$ ($1<p<\infty$) for any $k,\ell=1,2,...,n$, see e.g. \cite[Theorem 5.2.7 and Theorem 5.2.10]{GraM}, the $BMO \cap L^2$ norm of $\nabla p_{i,1}^\mathrm{tan}$ is controlled by the $BMO \cap L^2$ norm of $v_{2,i}$. On the other hand, $p_{i,2}^\mathrm{tan}$ is constructed as the convolution of the Newton potential $E$ with some function of $v_{2,i}$, we can directly estimate the $L^\infty$ norm for $\nabla p_{i,2}^\mathrm{tan}$ by H$\ddot{\text{o}}$lder's inequality as $\nabla E$ is locally $L^p$-integrable for $p$ sufficiently close to $1$. Similarly, for Proposition \ref{VPN}, $p_{i,1}^\mathrm{nor}$ is constructed as 
\begin{align*}
F_{x_i}^{-1}\big( p_{i,1}^\mathrm{nor} \big) &= - \theta A^{-1} (I - BA^{-1})^{-1} \partial_{\eta_n} F_{x_i}^{-1}\big( f_{2,i} \big), \\
A &= -\Delta_\eta, \quad B = \sum_{1\leq i,j\leq n-1} \partial_{\eta_i} (b_{ij} \theta) \partial_{\eta_j}
\end{align*}
and $p_{i,2}^\mathrm{nor}$ is constructed as the convolution of the Newton potential $E$ with some function of $f_{2,i}$. The $BMO \cap L^2$ estimate for $\nabla p_{i,1}^\mathrm{nor}$ and the $L^\infty$ estimate for $\nabla p_{i,2}^\mathrm{nor}$ can be derived by exactly the same argument as in Proposition \ref{VPT}.
The reason why statements of Proposition \ref{VPT} and Proposition \ref{VPN} look different from \cite[Proposition 4]{GG22b} and \cite[Proposition 5]{GG22b} is because in the case that $\Omega$ is a bounded $C^3$ domain, the space $vBMO(\Omega)$ is continuously embedded in $L^1(\Omega)^n$. By the $L^1 - BMO$ interpolation inequality, the $L^p$ norm of $v_{2,i}$ and $f_{2,i}$ can be controlled by their $vBMO$ norm for any $1<p<\infty$, see e.g. \cite[Lemma 5]{BGST}, \cite[Theorem 2.2]{KW}. If $\Omega$ is a general uniformly $C^3$ domain, then it is not necessary that $vBMO^{\infty,\infty}(\Omega)$ is continuously embedded in $L^1(\Omega)^n$. This is why we state Proposition \ref{VPT} and Proposition \ref{VPN} in a different form from \cite{GG22b}.
\end{remark}

\subsection{Volume potential corresponding to $v$} 
\label{sub:PT2} 
We admit Proposition \ref{VPT} and \ref{VPN}. 
The corresponding volume potential to $v_2$ can be constructed by summing up the cut-off of $p_i^\mathrm{tan}$ and $p_i^\mathrm{nor}$ to $U_{2\rho,i}$ for all $i$.
We define $\mathcal{Q}_\Gamma := \left\{ U_{\rho,i} \,\middle\vert\, x_i \in \mathcal{P}_\Gamma \right\}$.
\begin{proof}[Proof of Theorem \ref{CSV}]
Let $i \in \mathbf{N}$. We firstly consider the contribution from the tangential part. 
Since $\Gamma$ is uniformly $C^3$, there exists a cut-off function $\theta_i \in C^2_c(U_{2\rho,i})$ such that $\theta_i=1$ in $U_{\rho,i}$, $0 \leq \theta_i \leq 1$ and moreover, the estimate
\[
\Vert \theta_i \Vert_{C^2(U_{2\rho,i})} \leq C(\rho)
\]
holds for some constant $C(\rho)>0$ independent of $i$.
We next set
\[
q^{\mathrm{tan}}_{1,i} := \theta_i p_i^{\mathrm{tan}} + E * \left(p_i^{\mathrm{tan}} \Delta \theta_i + 2\nabla\theta_i \cdot \nabla p_i^{\mathrm{tan}} \right).
\]
Note that Proposition \ref{VPT} ensures that
\begin{align*}
- \Delta q^{\mathrm{tan}}_{1,i} = - \Delta(\theta_i p_i^{\mathrm{tan}}) + p_i^{\mathrm{tan}} \Delta\theta_i + 2\nabla\theta_i \cdot \nabla p_i^{\mathrm{tan}} = \theta_i \operatorname{div} w_i^{\mathrm{tan}} = \operatorname{div} w_i^{\mathrm{tan}}
\end{align*}
in $\Omega$ as $\operatorname{supp}w_i^{\mathrm{tan}} \subset U_{\rho,i}$. We define that
\[
q_1^{\mathrm{tan}} := \sum_{i=1}^\infty q_{1,i}^{\mathrm{tan}}.
\]

Since $\operatorname{supp} p_{i,1}^{\mathrm{tan}} \subset U_{2 \rho,i}$ for all $i$, by Proposition \ref{VPT} we have that
\begin{align*}
\sum_{i=1}^\infty \Vert \nabla (\theta_i p_{i,1}^{\mathrm{tan}}) \Vert_{L^2(\mathbf{R}^n)} 
&\leq \sum_{i=1}^\infty C(\rho) \Big( \big\Vert p_{i,1}^\mathrm{tan} \big\Vert_{L^\infty(U_{2\rho,i})} + \big\Vert \nabla p_{i,1}^\mathrm{tan} \big\Vert_{L^2(U_{2\rho,i})} \Big) \\
&\leq C(K,\rho) \sum_{i=1}^\infty \Vert v_{2,i} \Vert_{L^2(U_{\rho,i})}.
\end{align*}
Since our partition of unity for $\Gamma^{\mathbf{R}^n}_\rho$ is locally finite, see subsection \ref{sub:LOC}, we can deduce that
\[
\sum_{i=1}^\infty \Vert v_{2,i} \Vert_{L^2(U_{\rho,i})} \leq N_\ast \Vert (v_2)_\mathrm{even} \Vert_{L^2\big( \Gamma^{\mathbf{R}^n}_\rho \big)} \leq 8 N_\ast \Vert v_2 \Vert_{L^2(\Omega)} \leq 8 N_\ast \Vert v \Vert_{L^2(\Omega)},
\]
where $N_\ast$ is the constant which characterizes the local finiteness of $\mathcal{Q}_\Gamma$ in the sense that any element of $\mathcal{Q}_\Gamma$ can intersect for at most $N_\ast$ other elements of $\mathcal{Q}_\Gamma$.
Suppose that $B$ is a ball of radius $r(B) < \rho$. If $B$ does not intersect $\Gamma^{\mathbf{R}^n}_{2 \rho}$, then 
\[
\frac{1}{\lvert B \rvert} \int_B \left\lvert \nabla (\theta_i p_{i,1}^{\mathrm{tan}}) - \big( \nabla ( \theta_i p_{i,1}^{\mathrm{tan}}) \big)_B \right\rvert \, dy = 0
\]
for each $i \in \mathbf{N}$. If $B$ intersects $\Gamma^{\mathbf{R}^n}_{2 \rho}$, then $B$ intersects at most $N_\ast$ neighborhoods of $\{ U_{2 \rho}(x_i) \bigm| x_i \in \mathcal{P}_\Gamma \}$, see \cite[Lemma 11]{Gu}. Hence in this case, we have that 
\begin{align*}
&\frac{1}{\lvert B \rvert} \int_B \left\lvert \, \sum_{i=1}^\infty \nabla (\theta_i p_{i,1}^{\mathrm{tan}}) - \bigg( \sum_{i=1}^\infty \nabla (\theta_i p_{i,1}^{\mathrm{tan}}) \bigg)_B \right\rvert \, dy \leq \sum_{\ell=1}^{N_\ast} [ \nabla (\theta_{i_\ell} p_{i_\ell,1}^{\mathrm{tan}} )]_{BMO(\mathbf{R}^n)} \\
&\ \ \leq \sum_{\ell=1}^{N_\ast} C(\rho) \Big( \big\Vert p_{i_\ell,1}^\mathrm{tan} \big\Vert_{L^\infty(U_{2\rho,i_\ell})} + \big\Vert \nabla p_{i_\ell,1}^\mathrm{tan} \big\Vert_{BMOL^2(\mathbf{R}^n)} \Big) \leq C(K,\rho) \sum_{\ell=1}^{N_\ast} \left\lVert v_{2,i_\ell} \right\rVert_{BMOL^2(\mathbf{R}^n)}.
\end{align*}
Hence, by estimate \eqref{BLEve} we deduce that
\[
\left\lVert \sum_{i=1}^\infty \nabla (\theta_i p_{i,1}^{\mathrm{tan}}) \right\rVert_{BMOL^2(\mathbf{R}^n)} \leq C(\alpha,\beta,K,\rho) \Vert v \Vert_{vBMOL^2(\Omega)}.
\]

Note that $\operatorname{supp} \theta_i p_{i,2}^{\mathrm{tan}} \subset U_{2 \rho,i}$ for any $i \in \mathbf{N}$ and for every $x \in \Gamma^{\mathbf{R}^n}_{2\rho}$, $x$ belongs to at most $N_\ast$ elements of $\mathcal{Q}_\Gamma$. By Proposition \ref{VPT} again we can deduce that
\begin{align*}
\bigg[ \sum_{i=1}^\infty \nabla (\theta_i p_{i,2}^{\mathrm{tan}}) \bigg]_{BMO(\mathbf{R}^n)} 
\leq 2 \bigg\Vert \sum_{i=1}^\infty \nabla (\theta_i p_{i,2}^{\mathrm{tan}}) \bigg\Vert_{L^\infty(U_{2\rho,i})} \leq C(K,\rho) N_\ast \sup_{i \in \mathbf{N}} \Vert v_{2,i} \Vert_{BMOL^2(\mathbf{R}^n)}.
\end{align*}
In addition, as
\[
\lVert \nabla (\theta_i p_{i,2}^{\mathrm{tan}}) \rVert_{L^2(\mathbf{R}^n)} \leq \lvert U_{2 \rho,i} \rvert^{\frac{1}{2}} \lVert \nabla (\theta_i p_{i,2}^{\mathrm{tan}}) \rVert_{L^\infty(\mathbf{R}^n)} \leq C(\rho) \Vert v_{2,i} \Vert_{L^p(\mathbf{R}^n)}
\]
with some $p>n$, by the $BMO-L^2$ interpolation inequality we have that
\begin{align*}
&\bigg\lVert \sum_{i=1}^\infty \nabla (\theta_i p_{i,2}^{\mathrm{tan}}) \bigg\rVert_{L^2(\mathbf{R}^n)} \leq \sum_{i=1}^\infty \lVert \nabla (\theta_i p_{i,2}^{\mathrm{tan}}) \rVert_{L^2(\mathbf{R}^n)} \leq C(\rho) \sum_{i=1}^\infty \lVert v_{2,i} \rVert_{L^p(\mathbf{R}^n)} \\
&\ \ \leq C(\rho) N_\ast \lVert (v_2)_\mathrm{even} \rVert_{L^p(\mathbf{R}^n)} \leq C(\rho) N_\ast \Vert (v_2)_\mathrm{even} \Vert_{BMOL^2(\mathbf{R}^n)}.
\end{align*}
Hence, by Proposition \ref{2E} we obtain that
\begin{align} \label{SBMOL}
\left\lVert \sum_{i=1}^\infty \nabla (\theta_i p_i^{\mathrm{tan}}) \right\rVert_{BMOL^2(\mathbf{R}^n)} \leq C(\alpha,\beta,K,\rho) \lVert v  \rVert_{vBMOL^2(\Omega)}.
\end{align}

Let $f_i^{\mathrm{tan}} = p_i^{\mathrm{tan}} \Delta \theta_i + 2 \nabla \theta_i \cdot \nabla p_i^{\mathrm{tan}}$. 
Since $\operatorname{supp} f_i^{\mathrm{tan}} \subset U_{2 \rho,i}$, we have that
\[
\Vert f_i^{\mathrm{tan}} \Vert_{L^1(U_{2 \rho,i})} \leq \vert U_{2 \rho,i} \vert^{\frac{1}{2}} \Vert f_i^{\mathrm{tan}} \Vert_{L^2(U_{2 \rho,i})}.
\]
By the same argument above which proves the estimate (\ref{SBMOL}), we can show that
\[
\bigg[ \sum_{i=1}^\infty f_i^{\mathrm{tan}} \bigg]_{BMO(\mathbf{R}^n)} + \sum_{i=1}^\infty \Vert f_i^{\mathrm{tan}} \Vert_{L^1(\mathbf{R}^n)} \leq C(\alpha,\beta,K,\rho) \Vert v \Vert_{vBMOL^2(\Omega)}.
\]
By the $BMO-L^1$ interpolation (cf. \cite[Lemma 5]{BGST}), we see that the estimate
\begin{equation} \label{ITBL} 
\begin{split}
\bigg\Vert \sum_{i=1}^\infty f_i^{\mathrm{tan}} \bigg\Vert_{L^s(\mathbf{R}^n)} 
&\leq C(n) \bigg\Vert \sum_{i=1}^\infty f_i^{\mathrm{tan}} \bigg\Vert_{L^1(\mathbf{R}^n)}^{\frac{1}{s}} \bigg[ \sum_{i=1}^\infty f_i^{\mathrm{tan}} \bigg]_{BMO(\mathbf{R}^n)}^{1 - \frac{1}{s}} \\
&\leq C(n) \bigg\Vert \sum_{i=1}^\infty f_i^{\mathrm{tan}} \bigg\Vert_{BMOL^1(\mathbf{R}^n)}
\end{split}
\end{equation}
holds for any $1<s<\infty$.
Since $\nabla E$ is in $L^{p'}\big( B_{6 \rho}(0) \big)$ for any $1<p'<n/(n-1)$, it follows that
\[
\sup_{x \in \mathbf{R}^n} \left\lvert \nabla E* \bigg( \sum_{i=1}^\infty f_i^{\mathrm{tan}} \bigg) (x) \right\rvert \leq C(\rho) \bigg\Vert \sum_{i=1}^\infty f_i^{\mathrm{tan}} \bigg\Vert_{L^p(\mathbf{R}^n)}
\]
for all $p>n$.
Thus, we have that
\[
\bigg\Vert \nabla E* \bigg( \sum_{i=1}^\infty f_i^{\mathrm{tan}} \bigg) \bigg\Vert_{L^\infty(\mathbf{R}^n)} \leq C(\alpha,\beta,K,\rho) \Vert v \Vert_{vBMOL^2(\Omega)}.
\]
Since the convolution kernel $\nabla E(x)$ is dominated by a constant multiple of $\lvert x \rvert^{-(n-1)}$, by the well-known Hardy-Littlewood-Sobolev inequality, see e.g. \cite[Theorem 1.7]{BCD}, we deduce that
\[
\bigg\Vert \nabla E \ast \bigg( \sum_{i=1}^\infty f_i^{\mathrm{tan}} \bigg) \bigg\Vert_{L^2(\mathbf{R}^n)} \leq C(n) \bigg\Vert \sum_{i=1}^\infty f_i^{\mathrm{tan}} \bigg\Vert_{L^{\frac{2n}{n+2}}(\mathbf{R}^n)}
\]
Hence by estimate (\ref{ITBL}), we see that
\[
\bigg\Vert \nabla E \ast \bigg( \sum_{i=1}^\infty f_i^{\mathrm{tan}} \bigg) \bigg\Vert_{BMOL^2(\mathbf{R}^n)} \leq C(\alpha,\beta,K,\rho) \Vert v \Vert_{vBMOL^2(\Omega)}.
\]
Combine with estimate (\ref{SBMOL}), we finally obtain that
\[
\Vert \nabla q_1^{\mathrm{tan}} \Vert_{BMOL^2(\mathbf{R}^n)} \leq C(\alpha,\beta,K,\rho) \Vert v \Vert_{vBMOL^2(\Omega)}.
\]
By Proposition \ref{VPT}, the control on the boundary with respect to $q_1^{\mathrm{tan}}$ is estimated by
\begin{align*}
\sup_{x\in\Gamma, \, r<\rho} \, r^{-n} \int_{B_r(x)} \vert \nabla d \cdot \nabla q_1^{\mathrm{tan}}\vert \, dy 
&\leq C(K,\rho) N_\ast \sup_{i \in \mathbf{N}} \Vert v_{2,i}\Vert_{vBMOL^2(\Omega)} \\
&\leq C(\alpha,\beta,K,\rho) \Vert v\Vert_{vBMOL^2(\Omega)}
\end{align*}
as the partition $\mathcal{Q}_\Gamma$ is a locally finite open cover of $\Gamma^{\mathbf{R}^n}_{\rho}$.

For the contribution coming from the normal component, we set in a similar way that
\[
q^{\mathrm{nor}}_{1,i} := \theta_i p_i^{\mathrm{nor}} + E * (p_i^{\mathrm{nor}} \Delta \theta_i + 2\nabla\theta_i \cdot \nabla p_i^{\mathrm{nor}} )
\]
and
\[
q_1^{\mathrm{nor}} := \sum_{i=1}^\infty q_{1,i}^{\mathrm{nor}}.
\]
By making use of Proposition \ref{VPN} and repeating the whole argument above that treats the case for $q_1^{\mathrm{tan}}$, we can prove that
\[
\Vert \nabla q_1^{\mathrm{nor}} \Vert_{BMOL^2(\mathbf{R}^n)} + [\nabla d \cdot \nabla q_1^\mathrm{nor}]_{b^\rho(\Gamma)} \leq C(\alpha,\beta,K,\rho) \Vert v \Vert_{vBMOL^2(\Omega)}
\]
in the same way.
Then the volume potential $q_1$ corresponding to $v$ can be constructed as
\[
q_1 := q_1^1 + q_1^{\mathrm{tan}} + q_1^{\mathrm{nor}}
\]
where $q_1^1$ is the volume potential defined in Proposition \ref{2V} corresponding to $v_1$.
By our construction, it can be easily seen that
\begin{align*}
	- \Delta q_1 &= - \Delta q^1_1 - \Delta q^{\mathrm{tan}}_1 - \Delta q^{\mathrm{nor}}_1 \\
	&= \operatorname{div}v_1 + \sum_{i=1}^\infty \operatorname{div} w_i^{\mathrm{tan}} + \sum^\infty_{i=1} \operatorname{div} w_i^{\mathrm{nor}} \\
	&= \operatorname{div} (v_1 + v_2) = \operatorname{div} v 
\end{align*}
in $\Omega$. We are done.
\end{proof}

\section{Normal trace for a $L^2$ vector field with bounded mean oscillation} 
\label{sec:NT}

Let $\mathbf{R}_h^n$ be a perturbed $C^{2+\kappa}$ half space of type $(K)$ with $\kappa \in (0,1)$ and $n \geq 3$. Let $R_\ast>0$ denote the reach of boundary $\Gamma = \partial \mathbf{R}_h^n$.
We have already shown that there exists a constant $C=C(\alpha,\beta,K,R_\ast)>0$ such that the estimate
\[
\Vert w \cdot \mathbf{n} \Vert_{L^\infty(\Gamma)} \leq C \Vert w \Vert_{vBMOL^2\big( \mathbf{R}_h^n \big)}
\]
holds for any $w \in vBMOL^2\big( \mathbf{R}_h^n \big)$, see \cite[Theorem 22]{GG22a}. In this section, we would like to further estimate the $\dot{H}^{-\frac{1}{2}}$ norm of the normal trace of $v$ on $\Gamma$ for $w \in vBMOL^2\big( \mathbf{R}_h^n \big)$.

For simplicity of notations, from now on we define that for any $\delta>0$,
\[
B'_\Gamma(\delta) := \left\{ x \in \Gamma \,\middle\vert\, \lvert x' \rvert < \delta \right\}, \quad B'_\Gamma(\delta)^{\mathrm{c}} := \left\{ x \in \Gamma \,\middle\vert\, \lvert x' \rvert \geq \delta \right\}
\]
and $\Lambda_\delta$ to be the surface area of $B'_\Gamma(\delta)$, i.e.,
\[
\Lambda_\delta := \int_{B'_\Gamma(\delta)} 1 \, d\mathcal{H}^{n-1}(y) = \int_{\lvert y' \rvert<\delta} \big( 1 + \left\lvert \nabla' h (y') \right\rvert^2 \big)^{\frac{1}{2}} \, dy'.
\]
Moreover, for $h \in C_\mathrm{c}^1(\mathbf{R}^{n-1})$, we define the constant $C_s(h) := 1 + \Vert h \Vert_{C^1(\mathbf{R}^{n-1})}$.

\subsection{Isomorphism between $\dot{H}^{\frac{1}{2}}(\mathbf{R}^{n-1})$ and $\dot{H}^{\frac{1}{2}}(\Gamma)$} 
\label{sub:IsoH}

We would like to firstly give a characterization to the homogeneous fractional Sobolev space $\dot{H}^{\frac{1}{2}}(\mathbf{R}^{n-1})$ before we define $\dot{H}^{\frac{1}{2}}(\Gamma)$.
Let us recall that if $f \in \dot{H}^{\frac{1}{2}}(\mathbf{R}^{n-1})$, then $f \in L_{loc}^2(\mathbf{R}^{n-1})$ and the Gagliardo seminorm
\[
[f]_{\dot{H}^{\frac{1}{2}}(\mathbf{R}^{n-1})}^2 := \int_{\mathbf{R}^{n-1}} \int_{\mathbf{R}^{n-1}} \frac{\left\lvert f(x') - f(y') \right\rvert^2}{\lvert x' - y' \rvert^n} \, dx' \, dy' < \infty.
\]
More precisely, if $f \in \dot{H}^{\frac{1}{2}}(\mathbf{R}^{n-1})$, then it holds that
\[
\Vert f \Vert_{\dot{H}^{\frac{1}{2}}(\mathbf{R}^{n-1})} = C(n) [f]_{\dot{H}^{\frac{1}{2}}(\mathbf{R}^{n-1})} 
\]
with some constant $C(n)$ that depends only on dimension $n$; see e.g. \cite[Proposition 1.37]{BCD}. 
However, the finiteness of the Gagliardo seminorm $[f]_{\dot{H}^{\frac{1}{2}}(\mathbf{R}^{n-1})}$ does not imply that $f \in \dot{H}^{\frac{1}{2}}(\mathbf{R}^{n-1})$. Constant functions in $\mathbf{R}^{n-1}$ are typical counterexamples.
Since we are considering the case where $n \geq 3$, we have a very important property that $\dot{H}^{\frac{1}{2}}(\mathbf{R}^{n-1})$ can be continuously embedded in $L^{\frac{2n-2}{n-2}}(\mathbf{R}^{n-1})$, i.e., there exists a constant $C(n)>0$ such that the estimate
\begin{align} \label{EmHdR}
\Vert f \Vert_{L^{\frac{2n-2}{n-2}}(\mathbf{R}^{n-1})} \leq C(n) \Vert f \Vert_{\dot{H}^{\frac{1}{2}}(\mathbf{R}^{n-1})}
\end{align}
holds for any $f \in \dot{H}^{\frac{1}{2}}(\mathbf{R}^{n-1})$, see e.g. \cite[Theorem 1.38]{BCD}.
As a result, we expect that $\dot{H}^{\frac{1}{2}}(\mathbf{R}^{n-1})$ can be identified with the space
\[
\dot{G}^{\frac{1}{2}}(\mathbf{R}^{n-1}) := \Big\{ f \in L^{\frac{2n-2}{n-2}}(\mathbf{R}^{n-1}) \; \Big\vert \; [f]_{\dot{H}^{\frac{1}{2}}(\mathbf{R}^{n-1})} < \infty \Big\}.
\]

Fortunately, this expectation is indeed true. The Gagliardo seminorm $[\cdot]_{\dot{H}^{\frac{1}{2}}(\mathbf{R}^{n-1})}$ turns out to be a norm on $\dot{G}^{\frac{1}{2}}(\mathbf{R}^{n-1})$. The space $\dot{G}^{\frac{1}{2}}(\mathbf{R}^{n-1})$ is complete with norm $[\cdot]_{\dot{H}^{\frac{1}{2}}(\mathbf{R}^{n-1})}$ and it contains $C_\mathrm{c}^\infty(\mathbf{R}^{n-1})$ as a dense subspace; see \cite[Theorem 3.1]{BGV}.
Since $C_\mathrm{c}^\infty(\mathbf{R}^{n-1}) \subset \dot{H}^{\frac{1}{2}}(\mathbf{R}^{n-1})$, we see that $\| f \|_{\dot{H}^{\frac{1}{2}}(\mathbf{R}^{n-1})}$ and $[f]_{\dot{H}^{\frac{1}{2}}(\mathbf{R}^{n-1})}$ are equivalent for any $f \in C_\mathrm{c}^\infty(\mathbf{R}^{n-1})$.
Hence, the space $\dot{G}^{\frac{1}{2}}(\mathbf{R}^{n-1})$ coincide with the completion of $C_\mathrm{c}^\infty(\mathbf{R}^{n-1})$ in norm $\| \cdot \|_{\dot{H}^{\frac{1}{2}}(\mathbf{R}^{n-1})}$, i.e., it holds that
\[
\dot{H}^{\frac{1}{2}}(\mathbf{R}^{n-1}) = \dot{G}^{\frac{1}{2}}(\mathbf{R}^{n-1}).
\]
The reason why the completion of $C_\mathrm{c}^\infty(\mathbf{R}^{n-1})$ in norm $\| \cdot \|_{\dot{H}^{\frac{1}{2}}(\mathbf{R}^{n-1})}$ is indeed $\dot{H}^{\frac{1}{2}}(\mathbf{R}^{n-1})$ is as follows.
For $\rho>0$, we can construct a cut-off function $\theta_{2 \rho} \in C_\mathrm{c}^\infty(\mathbf{R}^{n-1})$ such that $0 \leq \theta_{2 \rho} \leq 1$ in $\mathbf{R}^{n-1}$, $\theta_{2 \rho} = 1$ in $B_\rho(0')$ and $\theta_{2 \rho} = 0$ in $B_{2 \rho}(0')^\mathrm{c}$.
We note that for any $f \in \mathcal{S}(\mathbf{R}^{n-1})$ where $\mathcal{S}(\mathbf{R}^{n-1})$ denotes the Schwartz space, $\theta_{2 \rho} f$ converges to $f$ in norm $\| \cdot \|_{\dot{H}^{\frac{1}{2}}(\mathbf{R}^{n-1})}$ as $\rho \to \infty$ (This claim will be established within the proof of Proposition \ref{NTL2hs} which appears later in subsection \ref{sub:HmENT}). Therefore, by recalling that the space $\mathcal{S}_0(\mathbf{R}^{n-1})$ is dense in $\dot{H}^{\frac{1}{2}}(\mathbf{R}^{n-1})$ where $\mathcal{S}_0(\mathbf{R}^{n-1})$ denotes the subspace of $\mathcal{S}(\mathbf{R}^{n-1})$ whose Fourier transform vanishes near the origin, see e.g. \cite[Proposition 1.35]{BCD}, we can deduce that
\[
\dot{H}^{\frac{1}{2}}(\mathbf{R}^{n-1}) = \overline{C_\mathrm{c}^\infty(\mathbf{R}^{n-1})}^{\| \cdot \|_{\dot{H}^{\frac{1}{2}}(\mathbf{R}^{n-1})}}.
\]

With respect to a function $f$ defined on $\mathbf{R}^{n-1}$, we could define a corresponding function $f^h$ that is defined on $\Gamma$ by setting
\[
f^h\big( y',h(y') \big) := f(y'), \quad \text{for all} \quad y' \in \mathbf{R}^{n-1}.
\]
Let the mapping $f \mapsto f^h$ be denoted by $T_h$, i.e., $f^h = T_h(f)$. 
Since it is not that natural to consider the Fourier transform on a surface, we follow the characterization of $\dot{H}^{\frac{1}{2}}(\mathbf{R}^{n-1})$ above to define the homogeneous fractional Sobolev space $\dot{H}^{\frac{1}{2}}(\Gamma)$. We say that $f^h \in \dot{H}^{\frac{1}{2}}(\Gamma)$ if $f^h \in L^{\frac{2n-2}{n-2}}(\Gamma)$ satisfies
\[
[f^h]_{\dot{H}^{\frac{1}{2}}(\Gamma)}^2 := \int_\Gamma \int_\Gamma \frac{\left\lvert f^h(x) - f^h(y)\right\rvert^2}{\lvert x - y \rvert^n} \, d\mathcal{H}^{n-1}(x) \, d\mathcal{H}^{n-1}(y) < \infty.
\]
The space $\dot{H}^{\frac{1}{2}}(\Gamma)$ is a Banach space (actually Hilbert space) equipped with the norm $[\cdot]_{\dot{H}^{\frac{1}{2}}(\Gamma)}$. Without causing any ambiguity, we simply use the norm notation $\| \cdot \|_{\dot{H}^{\frac{1}{2}}(\Gamma)}$ to denote $[\cdot]_{\dot{H}^{\frac{1}{2}}(\Gamma)}$.
Finally, we would like to note that
\begin{align*}
\| f \|_{L^{\frac{2n-2}{n-2}}(\mathbf{R}^{n-1})} &\leq \left( \int_{\mathbf{R}^{n-1}} \lvert f(y') \rvert^{\frac{2n-2}{n-2}} \big( 1 + \left\lvert \nabla' h (y') \right\rvert^2 \big)^{\frac{1}{2}} \, dy' \right)^{\frac{n-2}{2n-2}} \\
&= \| f^h \|_{L^{\frac{2n-2}{n-2}}(\Gamma)} \leq C_s(h)^{\frac{n-2}{2n-2}} \Vert f \Vert_{L^{\frac{2n-2}{n-2}}(\mathbf{R}^{n-1})},
\end{align*}
i.e., $f \in L^{\frac{2n-2}{n-2}}(\mathbf{R}^{n-1})$ if and only if $f^h \in L^{\frac{2n-2}{n-2}}(\Gamma)$.

\begin{lemma} \label{IsoH}
The mapping $T_h : \dot{H}^{\frac{1}{2}}(\mathbf{R}^{n-1}) \to \dot{H}^{\frac{1}{2}}(\Gamma)$ is an isomorphism.
\end{lemma}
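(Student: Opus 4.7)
The plan is to establish a two-sided bound between the Gagliardo seminorms on $\mathbf{R}^{n-1}$ and on $\Gamma$, exploiting the fact that $T_h$ is simply a graph reparametrization by a Lipschitz function $h \in C^1_c(\mathbf{R}^{n-1})$. Linearity of $T_h$ is immediate, so I only need boundedness of $T_h$ and of its inverse $T_h^{-1}\colon f^h \mapsto f$ (whose existence as a set-theoretic map is also immediate, since every point of $\Gamma$ is uniquely of the form $(y',h(y'))$).

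First I would verify that $T_h$ preserves the integrability condition $f \in L^{(2n-2)/(n-2)}$ in both directions. This follows directly from the change of variables $y \in \Gamma \mapsto y' \in \mathbf{R}^{n-1}$, under which $d\mathcal{H}^{n-1}(y) = (1+|\nabla' h(y')|^2)^{1/2}\, dy'$, and from the pointwise bound $1 \leq (1+|\nabla' h(y')|^2)^{1/2} \leq C_s(h)$. The paper already sketches this computation just before the lemma.

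Next I would pass to the Gagliardo seminorms. Writing $x = (x',h(x'))$ and $y = (y',h(y'))$, the two elementary bounds
\[
|x'-y'|^2 \leq |x-y|^2 = |x'-y'|^2 + |h(x')-h(y')|^2 \leq \bigl(1+\|\nabla' h\|_{L^\infty}^2\bigr)|x'-y'|^2
\]
yield $|x'-y'| \leq |x-y| \leq C_s(h)\,|x'-y'|$, hence the kernel $|x-y|^{-n}$ is comparable to $|x'-y'|^{-n}$ up to a factor depending only on $C_s(h)$. Combining this with $1 \leq (1+|\nabla' h|^2)^{1/2} \leq C_s(h)$ for the surface measures, I obtain the equivalence
\[
c_1(h)\,[f]_{\dot H^{1/2}(\mathbf{R}^{n-1})}^{2} \;\leq\; [f^h]_{\dot H^{1/2}(\Gamma)}^{2} \;\leq\; c_2(h)\,[f]_{\dot H^{1/2}(\mathbf{R}^{n-1})}^{2}
\]
with constants $c_1(h), c_2(h)>0$ depending only on $C_s(h)$.

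Finally I invoke the identification $\dot H^{1/2}(\mathbf{R}^{n-1}) = \dot G^{1/2}(\mathbf{R}^{n-1})$ recalled just above the lemma, which says that the Gagliardo seminorm is equivalent to the Fourier-analytic norm $\|f\|_{\dot H^{1/2}(\mathbf{R}^{n-1})}$ on $\dot H^{1/2}(\mathbf{R}^{n-1})$. Together with the parallel definition of $\dot H^{1/2}(\Gamma)$ via the Gagliardo seminorm, the equivalence above then shows that $T_h$ and $T_h^{-1}$ are both bounded, completing the proof. I do not expect any serious obstacle here: the result is essentially the statement that a bi-Lipschitz graph change of variables preserves the $\dot H^{1/2}$ scale, and the only care needed is to keep track of the characterization of $\dot H^{1/2}(\mathbf{R}^{n-1})$ as $\dot G^{1/2}(\mathbf{R}^{n-1})$, which is precisely the reason the hypothesis $n \geq 3$ was imposed earlier.
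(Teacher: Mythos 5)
Your proposal is correct and follows essentially the same route as the paper's proof: both directions rest on the two-sided comparison $|x'-y'|\leq |x-y|\leq C_s(h)|x'-y'|$ (the paper obtains the upper bound via the mean value theorem applied to $h$) together with the comparability of $d\mathcal{H}^{n-1}$ and $dy'$, and on the identification of $\dot H^{\frac12}(\mathbf{R}^{n-1})$ with $\dot G^{\frac12}(\mathbf{R}^{n-1})$ and the $L^{\frac{2n-2}{n-2}}$ equivalence recorded just before the lemma. No gap.
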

\begin{proof}
Let $f \in \dot{H}^{\frac{1}{2}}(\mathbf{R}^{n-1})$. We naturally have that
\begin{align*}
&\int_\Gamma \int_\Gamma \frac{\left\lvert f^h(x) - f^h(y) \right\rvert^2}{\lvert x-y \rvert^n} \, d\mathcal{H}^{n-1}(x) \, d\mathcal{H}^{n-1}(y) \\
&\ \ = \int_{\mathbf{R}^{n-1}} \int_{\mathbf{R}^{n-1}} \frac{\left\lvert f^h\big( x',h(x') \big) - f^h\big( y',h(y') \big) \right\rvert^2}{\left\lvert \big( x'-y', h(x') - h(y') \big) \right\rvert^n} \big( 1 + \left\lvert \nabla' h(x') \right\rvert^2 \big)^{\frac{1}{2}} \big( 1 + \left\lvert \nabla' h(y') \right\rvert^2 \big)^{\frac{1}{2}} \, dx' \, dy' \\
&\ \ \leq C_s(h)^2 \int_{\mathbf{R}^{n-1}} \int_{\mathbf{R}^{n-1}} \frac{\left\lvert f(x') - f(y') \right\rvert^2}{\lvert x'-y' \rvert^n} \, dx' \, dy'.
\end{align*}
Hence, the $\dot{H}^{\frac{1}{2}}(\Gamma)$ estimate for $f^h$ reads as
\begin{align} \label{ghgE}
\Vert f^h \Vert_{\dot{H}^{\frac{1}{2}}(\Gamma)} \leq C_s(h) \Vert f \Vert_{\dot{H}^{\frac{1}{2}}(\mathbf{R}^{n-1})}.
\end{align}

Let $f^h \in \dot{H}^{\frac{1}{2}}(\Gamma)$. With respect to $f^h$, we define that $f(y') := f^h\big( y',h(y') \big)$ for any $y' \in \mathbf{R}^{n-1}$. By the mean value theorem, we see that the estimate $\left\lvert h(x') - h(y')\right\rvert \leq \Vert \nabla' h \Vert_{L^\infty(\mathbf{R}^{n-1})} \lvert x'-y' \rvert$ holds for any $x',y' \in \mathbf{R}^{n-1}$. 
As a result, for any $x,y \in \Gamma$ we have that 
\[
\frac{1}{\lvert x' - y' \rvert} \leq \frac{C_s(h)}{\big\lvert \big( x'-y', h(x') - h(y') \big) \big\rvert} = \frac{C_s(h)}{\lvert x-y \rvert}.
\]
Hence, we deduce that
\begin{align*}
&\int_{\mathbf{R}^{n-1}} \int_{\mathbf{R}^{n-1}} \frac{\big\lvert f(x') - f(y') \big\rvert^2}{\lvert x'-y' \rvert^n} \, dx' \, dy' \\
&\leq C_s(h)^n \int_{\mathbf{R}^{n-1}} \int_{\mathbf{R}^{n-1}} \frac{\big\lvert f^h\big( x',h(x') \big) - f^h\big( y',h(y') \big) \big\rvert^2}{\big\lvert \big( x'-y', h(x') - h(y') \big) \big\rvert^n} \big( 1 + \big\lvert \nabla' h(x') \big\rvert^2 \big)^{\frac{1}{2}} \big( 1 + \big\lvert \nabla' h(y') \big\rvert^2 \big)^{\frac{1}{2}} \, dx' \, dy' \\
&= C_s(h)^n \int_\Gamma \int_\Gamma \frac{\left\lvert f^h(x) - f^h(y) \right\rvert^2}{\lvert x-y \rvert^n} \, d\mathcal{H}^{n-1}(x) \, d\mathcal{H}^{n-1}(y).
\end{align*}
Therefore, we obtain that
\begin{align} \label{gghE}
\Vert f \Vert_{\dot{H}^{\frac{1}{2}}(\mathbf{R}^{n-1})} \leq C(n) C_s(h)^{\frac{n}{2}} \Vert f^h \Vert_{\dot{H}^{\frac{1}{2}}(\Gamma)}.
\end{align}
\end{proof}

As an application of Lemma \ref{IsoH}, we have the following embedding result.

\begin{corollary}
$\dot{H}^{\frac{1}{2}}(\Gamma)$ is continuously embedded in $L^{\frac{2n-2}{n-2}}(\Gamma)$.
\end{corollary}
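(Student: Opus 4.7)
The plan is to deduce the corollary immediately by composing three ingredients that are all available: the isomorphism $T_h$ from Lemma \ref{IsoH}, the flat-space Sobolev embedding \eqref{EmHdR}, and the trivial comparison between $L^{(2n-2)/(n-2)}(\mathbf{R}^{n-1})$ and $L^{(2n-2)/(n-2)}(\Gamma)$ recorded just before Lemma \ref{IsoH}.

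Concretely, starting from any $f^h \in \dot{H}^{\frac{1}{2}}(\Gamma)$ (so in particular $f^h \in L^{\frac{2n-2}{n-2}}(\Gamma)$ by definition of $\dot{H}^{\frac{1}{2}}(\Gamma)$), I would first form its pullback under the graph parametrization, $f(y') := f^h(y',h(y'))$. Lemma \ref{IsoH}, specifically estimate \eqref{gghE}, yields
\[
\Vert f \Vert_{\dot{H}^{\frac{1}{2}}(\mathbf{R}^{n-1})} \leq C(n)\, C_s(h)^{\frac{n}{2}} \Vert f^h \Vert_{\dot{H}^{\frac{1}{2}}(\Gamma)}.
\]
Next, I apply the classical homogeneous Sobolev embedding \eqref{EmHdR} on $\mathbf{R}^{n-1}$ to get
\[
\Vert f \Vert_{L^{\frac{2n-2}{n-2}}(\mathbf{R}^{n-1})} \leq C(n)\, \Vert f \Vert_{\dot{H}^{\frac{1}{2}}(\mathbf{R}^{n-1})}.
\]
Finally, the change-of-variables inequality noted just before Lemma \ref{IsoH},
\[
\Vert f^h \Vert_{L^{\frac{2n-2}{n-2}}(\Gamma)} \leq C_s(h)^{\frac{n-2}{2n-2}} \Vert f \Vert_{L^{\frac{2n-2}{n-2}}(\mathbf{R}^{n-1})},
\]
pushes the estimate back up to $\Gamma$. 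Chaining these three bounds gives
\[
\Vert f^h \Vert_{L^{\frac{2n-2}{n-2}}(\Gamma)} \leq C(n)\, C_s(h)^{\frac{n-2}{2n-2}+\frac{n}{2}} \Vert f^h \Vert_{\dot{H}^{\frac{1}{2}}(\Gamma)},
\]
which is exactly the claimed continuous embedding.

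There is really no hard step here; the only thing to be careful about is that the definition of $\dot{H}^{\frac{1}{2}}(\Gamma)$ adopted in this section already includes $f^h \in L^{\frac{2n-2}{n-2}}(\Gamma)$ as part of the membership requirement, so we only need to produce the norm bound (not the membership itself). Bookkeeping of the $C_s(h)$-powers is routine, and the requirement $n\geq 3$ enters solely through the validity of \eqref{EmHdR}, which is why the corollary is stated in that regime.
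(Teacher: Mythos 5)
Your proposal is correct and coincides with the paper's own argument: the paper likewise pulls $f^h$ back to $f = T_h^{-1}(f^h)$, applies the surface-to-plane $L^{\frac{2n-2}{n-2}}$ comparison, the flat embedding \eqref{EmHdR}, and estimate \eqref{gghE}, arriving at the same constant $C(n)C_s(h)^{\frac{n^2-2}{2n-2}}$ (note $\frac{n-2}{2n-2}+\frac{n}{2}=\frac{n^2-2}{2n-2}$). No issues.
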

\begin{proof}
Let $f^h \in \dot{H}^{\frac{1}{2}}(\Gamma)$ and $f = T_h^{-1}(f^h)$. Since $\dot{H}^{\frac{1}{2}}(\mathbf{R}^{n-1})$ is continuously embedded in $L^{\frac{2n-2}{n-2}}(\mathbf{R}^{n-1})$, by estimate (\ref{gghE}) we see that
\begin{align*}
\Vert f^h \Vert_{L^{\frac{2n-2}{n-2}}(\Gamma)} &\leq C_s(h)^{\frac{n-2}{2n-2}} \Vert f \Vert_{L^{\frac{2n-2}{n-2}}(\mathbf{R}^{n-1})} \leq C(n) C_s(h)^{\frac{n-2}{2n-2}} \Vert f \Vert_{\dot{H}^{\frac{1}{2}}(\mathbf{R}^{n-1})} \\
&\leq C(n) C_s(h)^{\frac{n^2 - 2}{2n-2}} \Vert f^h \Vert_{\dot{H}^{\frac{1}{2}}(\Gamma)}.
\end{align*}
\end{proof}

\subsection{Isomorphism between $\dot{H}^{-\frac{1}{2}}(\mathbf{R}^{n-1})$ and $\dot{H}^{-\frac{1}{2}}(\Gamma)$} 
\label{sub:IsoHm}

Let us further recall that a tempered distribution $g$ is said to belong to the homogeneous Sobolev space $\dot{H}^{-\frac{1}{2}}(\mathbf{R}^{n-1})$ if $\widehat{g} \in L_{loc}^1(\mathbf{R}^{n-1})$ satisfies 
\[
\Vert g \Vert_{\dot{H}^{-\frac{1}{2}}(\mathbf{R}^{n-1})}^2 := \int_{\mathbf{R}^{n-1}} \lvert \xi' \rvert^{-1} \big\vert \widehat{g}(\xi) \big\vert^2 \, d\xi' < \infty.
\]
Since $\frac{1}{2} < \frac{n-1}{2}$ for all $n \geq 3$, $\dot{H}^{-\frac{1}{2}}(\mathbf{R}^{n-1})$ is characterized as the dual space of $\dot{H}^{\frac{1}{2}}(\mathbf{R}^{n-1})$, i.e., it holds that
\[
\Vert g \Vert_{\dot{H}^{-\frac{1}{2}}(\mathbf{R}^{n-1})} = \sup_{\Vert f \Vert_{\dot{H}^{\frac{1}{2}}(\mathbf{R}^{n-1})} \leq 1} \, \left\lvert  \int_{\mathbf{R}^{n-1}} g(y') f(y') \, dy' \right\rvert ,
\]
see e.g. \cite[Proposition 1.36]{BCD}.
In order to establish an isomorphism between the dual spaces of $\dot{H}^{\frac{1}{2}}(\mathbf{R}^{n-1})$ and $\dot{H}^{\frac{1}{2}}(\Gamma)$, we need the following multiplication rule for $\dot{H}^{\frac{1}{2}}(\mathbf{R}^{n-1})$.

\begin{proposition} \label{MRHhalf}
Let $\rho>0$ and $\zeta \in C^1(\mathbf{R}^{n-1})$ satisfies that $\zeta$ is identically a constant in $B_\rho(0')^\mathrm{c}$. Then for any $f \in \dot{H}^{\frac{1}{2}}(\mathbf{R}^{n-1})$, we have that $\zeta f \in \dot{H}^{\frac{1}{2}}(\mathbf{R}^{n-1})$ satisfying
\[
\Vert \zeta f \Vert_{\dot{H}^{\frac{1}{2}}(\mathbf{R}^{n-1})} \leq C(n) \big( \Vert \zeta \Vert_{L^\infty(\mathbf{R}^{n-1})} + \rho \Vert \nabla' \zeta \Vert_{L^\infty(\mathbf{R}^{n-1})} \big) \Vert f \Vert_{\dot{H}^{\frac{1}{2}}(\mathbf{R}^{n-1})}.
\]
\end{proposition}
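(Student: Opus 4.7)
The plan is to work with the Gagliardo seminorm characterization established in subsection \ref{sub:IsoH}: since $\Vert f \Vert_{\dot{H}^{\frac{1}{2}}(\mathbf{R}^{n-1})} = C(n) [f]_{\dot{H}^{\frac{1}{2}}(\mathbf{R}^{n-1})}$ for every $f \in \dot{H}^{\frac{1}{2}}(\mathbf{R}^{n-1})$, it suffices to control
\[
[\zeta f]_{\dot{H}^{\frac{1}{2}}(\mathbf{R}^{n-1})}^2 = \int_{\mathbf{R}^{n-1}} \int_{\mathbf{R}^{n-1}} \frac{\lvert \zeta(x') f(x') - \zeta(y') f(y') \rvert^2}{\lvert x' - y' \rvert^n} \, dx' \, dy'
\]
and to verify $\zeta f \in L^{\frac{2n-2}{n-2}}(\mathbf{R}^{n-1})$, which is immediate from $\lvert \zeta f \rvert \leq \Vert \zeta \Vert_{L^\infty} \lvert f \rvert$ together with \eqref{EmHdR}. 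Splitting via the Leibniz identity $\zeta(x') f(x') - \zeta(y') f(y') = \zeta(x')\big( f(x') - f(y') \big) + \big( \zeta(x') - \zeta(y') \big) f(y')$, the first piece contributes at most $2 \Vert \zeta \Vert_{L^\infty}^2 [f]_{\dot{H}^{\frac{1}{2}}(\mathbf{R}^{n-1})}^2$. The entire difficulty therefore reduces to the cross term
\[
J := \int_{\mathbf{R}^{n-1}} \lvert f(y') \rvert^2 \, K(y') \, dy', \qquad K(y') := \int_{\mathbf{R}^{n-1}} \frac{\lvert \zeta(x') - \zeta(y') \rvert^2}{\lvert x' - y' \rvert^n} \, dx'.
\]

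The next step is to estimate $K(y')$ pointwise. Splitting the $x'$-integral at $\lvert x' - y' \rvert = R$, bounding the close part by the mean value theorem $\lvert \zeta(x') - \zeta(y') \rvert \leq \Vert \nabla' \zeta \Vert_{L^\infty} \lvert x' - y' \rvert$ and the far part by $\lvert \zeta(x') - \zeta(y') \rvert \leq 2 \Vert \zeta \Vert_{L^\infty}$, and then choosing $R = \Vert \zeta \Vert_{L^\infty} / \Vert \nabla' \zeta \Vert_{L^\infty}$ yields $K(y') \leq C(n) \Vert \zeta \Vert_{L^\infty} \Vert \nabla' \zeta \Vert_{L^\infty}$ uniformly in $y'$. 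For $\lvert y' \rvert > 2 \rho$, I would invoke the essential hypothesis that $\zeta$ is constant on $B_\rho(0')^\mathrm{c}$: then $\zeta(y')$ equals $\zeta$'s value at infinity, the integrand in $K(y')$ is supported on $B_\rho(0')$, and $\lvert x' - y' \rvert \geq \lvert y' \rvert / 2$ there, yielding the sharper decay $K(y') \leq C(n) \Vert \zeta \Vert_{L^\infty}^2 \rho^{n-1} / \lvert y' \rvert^n$.

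The main obstacle is that a generic element of $\dot{H}^{\frac{1}{2}}(\mathbf{R}^{n-1})$ need not belong to $L^2(\mathbf{R}^{n-1})$, so $J$ cannot be estimated by pairing the pointwise bound on $K$ with $\Vert f \Vert_{L^2}^2$. To overcome this I would apply H\"older's inequality with exponents $\frac{n-1}{n-2}$ and $n-1$ separately on the regions $\{\lvert y' \rvert \leq 2 \rho\}$ and $\{\lvert y' \rvert > 2 \rho\}$, then invoke the Sobolev embedding $\dot{H}^{\frac{1}{2}}(\mathbf{R}^{n-1}) \hookrightarrow L^{\frac{2n-2}{n-2}}(\mathbf{R}^{n-1})$ recalled in \eqref{EmHdR}. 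On $\{\lvert y' \rvert \leq 2 \rho\}$ the volume factor $\lvert B_{2 \rho}(0') \rvert^{1/(n-1)} \sim \rho$ delivers the contribution $C(n) \rho \, \Vert \zeta \Vert_{L^\infty} \Vert \nabla' \zeta \Vert_{L^\infty} \Vert f \Vert_{\dot{H}^{\frac{1}{2}}}^2$; on $\{\lvert y' \rvert > 2 \rho\}$ the tail integral $\big( \int_{\lvert y' \rvert > 2 \rho} \lvert y' \rvert^{-n(n-1)} \, dy' \big)^{1/(n-1)} = C(n) \rho^{-(n-1)}$ precisely absorbs the prefactor $\rho^{n-1}$ coming from the decay estimate on $K$, producing the contribution $C(n) \Vert \zeta \Vert_{L^\infty}^2 \Vert f \Vert_{\dot{H}^{\frac{1}{2}}}^2$. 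Recombining with the Leibniz split and using $\Vert \zeta \Vert_{L^\infty} \cdot \rho \Vert \nabla' \zeta \Vert_{L^\infty} \leq \big( \Vert \zeta \Vert_{L^\infty} + \rho \Vert \nabla' \zeta \Vert_{L^\infty} \big)^2$ finishes the proof after taking square roots.
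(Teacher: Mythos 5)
Your proof is correct and follows essentially the same route as the paper's: the Leibniz split reduces everything to the cross term, which is then controlled by H\"older's inequality with exponents $\frac{n-1}{n-2}$ and $n-1$ together with the embedding \eqref{EmHdR}, exploiting that $\zeta$ is constant (hence $\nabla' \zeta$ vanishes) outside $B_\rho(0')$. The only difference is organizational: you first integrate out $x'$ to obtain pointwise bounds on the kernel $K(y')$ (including the optimized splitting radius $R = \Vert \zeta \Vert_{L^\infty} / \Vert \nabla' \zeta \Vert_{L^\infty}$ for the uniform bound), whereas the paper decomposes the double integral jointly into the three regions $I_1, I_2, I_3$; both arguments yield the stated constant.
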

\begin{proof}
Let $f \in \dot{H}^{\frac{1}{2}}(\mathbf{R}^{n-1})$.
For $x',y' \in \mathbf{R}^{n-1}$, by the triangle inequality we see that
\[
\left\lvert \zeta(x') f(x') - \zeta(y') f(y') \right\rvert^2 \leq 2 \Vert \zeta \Vert_{L^\infty(\mathbf{R}^{n-1})}^2 \left\lvert f(x') - f(y') \right\rvert^2 + 2 \left\lvert f(x') \right\rvert^2 \left\lvert \zeta(x') - \zeta(y') \right\rvert^2.
\]
Since $\zeta $ is identically a constant in $B_\rho(0')^\mathrm{c}$, it is sufficient to estimate
\begin{align*}
&\int_{B_\rho(0')} \int_{B_{2\rho}(0')^\mathrm{c}} \left\lvert f(x') \right\rvert^2 \frac{\left\lvert \zeta(x') - \zeta(y') \right\rvert^2}{\lvert x'-y' \rvert^n} \, dx' \, dy' + \int_{B_{2\rho}(0')^\mathrm{c}} \int_{B_\rho(0')} \left\lvert f(x') \right\rvert^2 \frac{\left\lvert  \zeta(x') - \zeta(y') \right\rvert^2}{\lvert x'-y' \rvert^n} \, dx' \, dy' \\
&\ \ + \int_{B_{2\rho}(0')} \int_{B_{2\rho}(0')} \left\lvert f(x')\right\rvert^2 \frac{\left\lvert \zeta(x') - \zeta(y') \right\rvert^2}{\lvert x'-y' \rvert^n} \, dx' \, dy' = I_1 + I_2 + I_3.
\end{align*}
For $\lvert x' \rvert\geq 2\rho$ and $\lvert y' \rvert<\rho$, we have that $\lvert x'-y' \rvert\geq \lvert x' \rvert  - \rho$. Hence, by H$\ddot{\text{o}}$lder's inequality and the embedding of $\dot{H}^{\frac{1}{2}}(\mathbf{R}^{n-1})$ in $L^{\frac{2n-2}{n-2}}(\mathbf{R}^{n-1})$, see estimate (\ref{EmHdR}), we deduce that
\begin{align*}
I_1 &\leq C(n) \rho^{n-1} \Vert \zeta \Vert_{L^\infty(\mathbf{R}^{n-1})}^2 \left( \int_{B_{2\rho}(0')^\mathrm{c}} \left\lvert f(x')\right\rvert^{\frac{2n-2}{n-2}} \, dx' \right)^{\frac{n-2}{n-1}} \left( \int_{B_{2\rho}(0')^\mathrm{c}} \frac{1}{( \lvert x' \rvert - \rho )^{n^2-n}} \, dx' \right)^{\frac{1}{n-1}} \\
&\leq C(n) \rho^{n-1} \Vert \zeta \Vert_{L^\infty(\mathbf{R}^{n-1})}^2 \Vert f \Vert_{L^{\frac{2n-2}{n-2}}(\mathbf{R}^{n-1})}^2 \left( \sum_{i=0}^{n-2} \int_\rho^\infty \frac{\rho^i}{r^{n^2-2n+2+i}} \, dr \right)^{\frac{1}{n-1}} \\
&\leq C(n) \Vert \zeta \Vert_{L^\infty(\mathbf{R}^{n-1})}^2 \Vert f \Vert_{\dot{H}^{\frac{1}{2}}(\mathbf{R}^{n-1})}^2.
\end{align*}
Similarly, for $\lvert y'\rvert \geq 2\rho$ and $\lvert x'\rvert<\rho$, we also have that $\lvert x'-y'\rvert\geq \lvert y'\rvert- \rho$.
By H$\ddot{\text{o}}$lder's inequality and estimate (\ref{EmHdR}) again, we see that
\begin{align*}
I_2 &\leq C(n) \rho \Vert \zeta \Vert_{L^\infty(\mathbf{R}^{n-1})}^2 \Vert f \Vert_{L^{\frac{2n-2}{n-2}}(\mathbf{R}^{n-1})}^2 \int_{B_{2\rho}(0')^\mathrm{c}} \frac{1}{(\lvert y'\rvert- \rho)^n} \, dy' \\
&\leq C(n) \rho \Vert \zeta \Vert_{L^\infty(\mathbf{R}^{n-1})}^2 \Vert f \Vert_{L^{\frac{2n-2}{n-2}}(\mathbf{R}^{n-1})}^2 \left( \sum_{i=0}^{n-2} \int_\rho^\infty \frac{\rho^{n-2-i}}{r^{n-i}} \, dr \right) \leq C(n) \Vert \zeta \Vert_{L^\infty(\mathbf{R}^{n-1})}^2 \Vert f \Vert_{\dot{H}^{\frac{1}{2}}(\mathbf{R}^{n-1})}^2.
\end{align*}
By the mean value theorem, it holds that $\left\lvert  \zeta(x') - \zeta(y') \right\rvert \leq \Vert \nabla' \zeta \Vert_{L^\infty(\mathbf{R}^{n-1})} \lvert x'-y' \rvert$. 
Therefore, by H$\ddot{\text{o}}$lder's inequality and estimate (\ref{EmHdR}) once more, we finally have that
\begin{align*}
I_3 &\leq \Vert \nabla' \zeta \Vert_{L^\infty(\mathbf{R}^{n-1})}^2 \int_{B_{2\rho}(0')} \left\lvert f(x') \right\rvert^2 \int_{B_{2\rho}(0')} \frac{1}{\lvert x'-y' \rvert^{n-2}} \, dy' \, dx' \\
&\leq C(n) \rho^2 \Vert \nabla' \zeta \Vert_{L^\infty(\mathbf{R}^{n-1})}^2 \Vert f \Vert_{L^{\frac{2n-2}{n-2}}(\mathbf{R}^{n-1})}^2 \leq C(n) \rho^2 \Vert \nabla' \zeta \Vert_{L^\infty(\mathbf{R}^{n-1})}^2 \Vert f \Vert_{\dot{H}^{\frac{1}{2}}(\mathbf{R}^{n-1})}^2.
\end{align*}
\end{proof}

In accordance with the duality relation between $\dot{H}^{-\frac{1}{2}}(\mathbf{R}^{n-1})$ and $\dot{H}^{\frac{1}{2}}(\mathbf{R}^{n-1})$, we define the homogeneous Sobolev space $\dot{H}^{-\frac{1}{2}}(\Gamma)$ to be the dual space of $\dot{H}^{\frac{1}{2}}(\Gamma)$ with 
\[
\Vert g^h \Vert_{\dot{H}^{-\frac{1}{2}}(\Gamma)} := \sup_{\Vert f^h \Vert_{\dot{H}^{\frac{1}{2}}(\Gamma)} \leq 1} \, \left\lvert  \int_\Gamma g^h(y) f^h(y) \, d\mathcal{H}^{n-1}(y) \right\rvert 
\]
for $g^h \in \dot{H}^{-\frac{1}{2}}(\Gamma)$.
Based on the fact that $\dot{H}^{\frac{1}{2}}(\mathbf{R}^{n-1})$ is isomorphic to $\dot{H}^{\frac{1}{2}}(\Gamma)$, we can show that their dual spaces are isomorphic with each other as well.

\begin{lemma} \label{IsoHm}
The mapping $T_h : \dot{H}^{-\frac{1}{2}}(\mathbf{R}^{n-1}) \to \dot{H}^{-\frac{1}{2}}(\Gamma)$ is an isomorphism.
\end{lemma}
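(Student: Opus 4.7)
The natural strategy is to obtain $T_h : \dot{H}^{-\frac{1}{2}}(\mathbf{R}^{n-1}) \to \dot{H}^{-\frac{1}{2}}(\Gamma)$ as the transpose of the $\dot{H}^{\frac{1}{2}}$-isomorphism of Lemma \ref{IsoH}. The subtlety is that the duality pairings on $\mathbf{R}^{n-1}$ and on $\Gamma$ use different measures (Lebesgue vs.\ $d\mathcal{H}^{n-1}$), so the dual map must absorb the surface Jacobian $J(y') := \sqrt{1+\lvert \nabla' h(y') \rvert^2}$.

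The first and essential step is a multiplier estimate. Since $h \in C^{2+\kappa}_{\mathrm{c}}(\mathbf{R}^{n-1})$ with $\operatorname{supp} h \subseteq \overline{B_{R_h}(0')}$, both $J$ and $J^{-1}$ lie in $C^1(\mathbf{R}^{n-1})$ and are identically $1$ on $B_{R_h}(0')^{\mathrm{c}}$. Proposition \ref{MRHhalf} then yields that pointwise multiplication by $J$ and by $J^{-1}$ is bounded on $\dot{H}^{\frac{1}{2}}(\mathbf{R}^{n-1})$, with operator norm controlled in terms of $C_s(h)$, $R_h$ and $\lVert \nabla'^2 h \rVert_{L^\infty}$. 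Transporting through the isomorphism of Lemma \ref{IsoH} gives the analogous multiplier bounds for $\widetilde{J} := T_h J$ and $\widetilde{J}^{-1}$ on $\dot{H}^{\frac{1}{2}}(\Gamma)$.

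With this in hand, I would define $T_h$ on $\dot{H}^{-\frac{1}{2}}(\mathbf{R}^{n-1})$ by declaring, for $g \in \dot{H}^{-\frac{1}{2}}(\mathbf{R}^{n-1})$ and $\phi^h \in \dot{H}^{\frac{1}{2}}(\Gamma)$,
\[
\big\langle T_h g,\, \phi^h \big\rangle_{\dot{H}^{-\frac{1}{2}}(\Gamma),\,\dot{H}^{\frac{1}{2}}(\Gamma)} := \big\langle g,\, J \cdot T_h^{-1}(\phi^h) \big\rangle_{\dot{H}^{-\frac{1}{2}}(\mathbf{R}^{n-1}),\,\dot{H}^{\frac{1}{2}}(\mathbf{R}^{n-1})}.
\]
This is consistent with the pointwise lift $T_h g(y', h(y')) = g(y')$ whenever $g \in L^2$, because the surface measure contributes exactly the factor $J(y')$ under the parametrization $y' \mapsto (y', h(y'))$. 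Combining Step 1 with the isomorphism of Lemma \ref{IsoH} yields the bound
\[
\lVert T_h g \rVert_{\dot{H}^{-\frac{1}{2}}(\Gamma)} \leq C(n, h) \, \lVert g \rVert_{\dot{H}^{-\frac{1}{2}}(\mathbf{R}^{n-1})}.
\]

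To produce an inverse $S : \dot{H}^{-\frac{1}{2}}(\Gamma) \to \dot{H}^{-\frac{1}{2}}(\mathbf{R}^{n-1})$, I would mirror the construction: for $g^h \in \dot{H}^{-\frac{1}{2}}(\Gamma)$ and $f \in \dot{H}^{\frac{1}{2}}(\mathbf{R}^{n-1})$, set
\[
\langle S g^h,\, f \rangle_{\mathbf{R}^{n-1}} := \big\langle g^h,\, \widetilde{J}^{-1} \, T_h(f) \big\rangle_\Gamma.
\]
The identities $S \circ T_h = \mathrm{id}$ and $T_h \circ S = \mathrm{id}$ reduce to the pointwise equality $T_h^{-1}(\widetilde{J}^{-1} T_h f) = J^{-1} f$ and its analogue, after which the Jacobian factor cancels inside the bracket. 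I expect the only non-cosmetic obstacle to be the verification that $J$ and $J^{-1}$ fit the hypothesis of Proposition \ref{MRHhalf}; once that is dispatched via the compact support of $h$ and the smoothness of $s \mapsto \sqrt{1+s^2}$, the rest is bookkeeping with the duality pairing.
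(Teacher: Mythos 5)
Your proposal is correct and follows essentially the same route as the paper: both rely on the change of variables that introduces the surface Jacobian $J=\sqrt{1+\lvert\nabla' h\rvert^2}$, the multiplier bounds of Proposition \ref{MRHhalf} applied to $J$ and $J^{-1}$ (which are $C^1$ and identically $1$ outside $B_{R_h}(0')$), and the transfer through the $\dot{H}^{\frac{1}{2}}$-isomorphism of Lemma \ref{IsoH} via duality. Your formulation of $T_h$ as the transpose operator is merely a cleaner formalization of the paper's pointwise definition, and the inverse construction with $\widetilde{J}^{-1}$ matches the paper's second half.
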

\begin{proof}
Let $g \in \dot{H}^{-\frac{1}{2}}(\mathbf{R}^{n-1})$ and $g^h = T_h(g)$. For any $f^h \in \dot{H}^{\frac{1}{2}}(\Gamma)$, we have that
\[
\int_\Gamma g^h(y) f^h(y) \, d\mathcal{H}^{n-1}(y) = \int_{\mathbf{R}^{n-1}} g(y') f(y') \big( 1 + \left\lvert \nabla' h(y') \right\rvert^2 \big)^{\frac{1}{2}} \, dy'.
\]
Since $\big( 1 + \left\lvert \nabla' h(y') \right\rvert^2 \big)^{\frac{1}{2}} \in C^1(\mathbf{R}^{n-1})$ and $\big( 1 + \big\lvert \nabla' h(y') \big\rvert^2 \big)^{\frac{1}{2}} = 1$ in $B_{R_h}(0')^\mathrm{c}$, by Proposition \ref{MRHhalf} we deduce that
\begin{align*}
\left\lvert \int_\Gamma g^h(y) f^h(y) \, d\mathcal{H}^{n-1}(y) \right\rvert &\leq \Vert g \Vert_{\dot{H}^{-\frac{1}{2}}(\mathbf{R}^{n-1})} \Vert \big( 1 + \left\lvert \nabla' h(y') \right\rvert^2 \big)^{\frac{1}{2}} f \Vert_{\dot{H}^{\frac{1}{2}}(\mathbf{R}^{n-1})} \\
&\leq C(n) C_s(h) C_1(h) \Vert g \Vert_{\dot{H}^{-\frac{1}{2}}(\mathbf{R}^{n-1})} \Vert f \Vert_{\dot{H}^{\frac{1}{2}}(\mathbf{R}^{n-1})}.
\end{align*}
where 
\[
C_1(h) := 1 + R_h \Vert \nabla'^2 h \Vert_{L^\infty(\mathbf{R}^{n-1})}.
\]
By controlling $\Vert f \Vert_{\dot{H}^{\frac{1}{2}}(\mathbf{R}^{n-1})}$ by $\Vert f^h \Vert_{\dot{H}^{\frac{1}{2}}(\Gamma)}$ using estimate \eqref{gghE}, we obtain that
\begin{align} \label{ghgEm}
\Vert g^h \Vert_{\dot{H}^{-\frac{1}{2}}(\Gamma)} \leq C(n) C_s(h)^{\frac{n}{2}+1} C_1(h) \Vert g \Vert_{\dot{H}^{-\frac{1}{2}}(\mathbf{R}^{n-1})}.
\end{align}

Let $g^h \in \dot{H}^{-\frac{1}{2}}(\Gamma)$ and $g = T_h^{-1}(g^h)$. For any $f \in \dot{H}^{\frac{1}{2}}(\mathbf{R}^{n-1})$, we have that
\begin{align*}
\int_{\mathbf{R}^{n-1}} g(y') f(y') \, dy' &= \int_{\mathbf{R}^{n-1}} g(y') \big( 1 + \big\lvert \nabla' h(y') \big\rvert^2 \big)^{-\frac{1}{2}} f(y') \big( 1 + \big\lvert \nabla' h(y') \big\rvert^2 \big)^{\frac{1}{2}} \, dy' \\
&= \int_\Gamma T_h\left( \big( 1 + \big\lvert \nabla' h(y') \big\rvert^2 \big)^{-\frac{1}{2}} \right) g^h f^h \, d\mathcal{H}^{n-1}(y).
\end{align*}
Since we define $\dot{H}^{-\frac{1}{2}}(\Gamma)$ to be the dual space of $\dot{H}^{\frac{1}{2}}(\Gamma)$, the duality relation says that
\begin{align} \label{Dugf}
\left\lvert \int_{\mathbf{R}^{n-1}} g(y') f(y') \, dy' \right\rvert \leq \left\Vert T_h\left( \big( 1 + \big\lvert \nabla' h(y') \big\rvert^2 \big)^{-\frac{1}{2}} \right) g^h \right\Vert_{\dot{H}^{-\frac{1}{2}}(\Gamma)} \Vert f^h \Vert_{\dot{H}^{\frac{1}{2}}(\Gamma)}.
\end{align}
Note that for any $f_\ast^h \in \dot{H}^{\frac{1}{2}}(\Gamma)$, we have that
\begin{align*}
&\left\lvert \int_\Gamma T_h\left( \big( 1 + \big\lvert \nabla' h(y') \big\rvert^2 \big)^{-\frac{1}{2}} \right) g^h f_\ast^h \, d\mathcal{H}^{n-1}(y) \right\rvert \leq \Vert g^h \Vert_{\dot{H}^{-\frac{1}{2}}(\Gamma)} \left\Vert T_h\left( \big( 1 + \big\lvert \nabla' h(y') \big\rvert^2 \big)^{-\frac{1}{2}} \right) f_\ast^h \right\Vert_{\dot{H}^{\frac{1}{2}}(\Gamma)}.
\end{align*}
Let $f_\ast = T_h^{-1}(f_\ast^h)$. By estimate \eqref{ghgE}, we see that
\[
\left\Vert T_h\left( \big( 1 + \big\lvert \nabla' h(y') \big\rvert^2 \big)^{-\frac{1}{2}} \right) f_\ast^h \right\Vert_{\dot{H}^{\frac{1}{2}}(\Gamma)} \leq C_s(h) \left\Vert \big( 1 + \big\lvert \nabla' h(y') \big\rvert^2 \big)^{-\frac{1}{2}} f_\ast \right\Vert_{\dot{H}^{\frac{1}{2}}(\mathbf{R}^{n-1})}.
\]
Since $\big( 1 + \big\lvert \nabla' h(y') \big\rvert^2 \big)^{-\frac{1}{2}} \in C^1(\mathbf{R}^{n-1})$ and $\big( 1 + \big\lvert \nabla' h(y') \big\rvert^2 \big)^{-\frac{1}{2}}$ is also identically $1$ in $B_{R_h}(0')^\mathrm{c}$, by Proposition \ref{MRHhalf} again we deduce that
\[
\left\Vert \big( 1 + \big\lvert \nabla' h(y') \big\rvert^2 \big)^{-\frac{1}{2}} f_\ast \right\Vert_{\dot{H}^{\frac{1}{2}}(\mathbf{R}^{n-1})} \leq C(n) C_s(h) C_1(h) \Vert f_\ast \Vert_{\dot{H}^{\frac{1}{2}}(\mathbf{R}^{n-1})}.
\]
Hence, by estimate \eqref{gghE} we have that
\begin{align*}
&\left\Vert T_h\left( \big( 1 + \big\lvert \nabla' h(y') \big\rvert^2 \big)^{-\frac{1}{2}} \right) g^h \right\Vert_{\dot{H}^{-\frac{1}{2}}(\Gamma)} = \sup_{\Vert f_\ast^h \Vert_{\dot{H}^{\frac{1}{2}}(\Gamma)} \leq 1} \left\lvert  \int_\Gamma T_h\left( \big( 1 + \big\lvert  \nabla' h(y') \big\rvert^2 \big)^{-\frac{1}{2}} \right) g^h f_\ast^h \, d\mathcal{H}^{n-1}(y) \right\rvert  \\
&\ \ \leq \sup_{\Vert f_\ast^h \Vert_{\dot{H}^{\frac{1}{2}}(\Gamma)} \leq 1} C(n) C_s(h)^2 C_1(h) \Vert g^h \Vert_{\dot{H}^{-\frac{1}{2}}(\Gamma)} \Vert f_\ast \Vert_{\dot{H}^{\frac{1}{2}}(\mathbf{R}^{n-1})} \leq C(n) C_s(h)^{\frac{n}{2}+2} C_1(h) \Vert g^h \Vert_{\dot{H}^{-\frac{1}{2}}(\Gamma)}.
\end{align*}
Finally, by controlling the $\dot{H}^{\frac{1}{2}}(\Gamma)$ norm of $f^h$ by the $\dot{H}^{\frac{1}{2}}(\mathbf{R}^{n-1})$ norm of $f$ using estimate \eqref{ghgE} again, we obtain from estimate \eqref{Dugf} that
\[
\left\lvert  \int_{\mathbf{R}^{n-1}} g(y') f(y') \, dy' \right\rvert  \leq C(n) C_s(h)^{\frac{n}{2}+3} C_1(h) \Vert g^h \Vert_{\dot{H}^{-\frac{1}{2}}(\Gamma)} \Vert f \Vert_{\dot{H}^{\frac{1}{2}}(\mathbf{R}^{n-1})},
\]
i.e.,
\begin{align} \label{gghEm}
\Vert g \Vert_{\dot{H}^{-\frac{1}{2}}(\mathbf{R}^{n-1})} \leq C(n) C_s(h)^{\frac{n}{2}+3} C_1(h) \Vert g^h \Vert_{\dot{H}^{-\frac{1}{2}}(\Gamma)}.
\end{align}
\end{proof}

Since $\dot{H}^{-\frac{1}{2}}(\mathbf{R}^{n-1})$ is the dual space of $\dot{H}^{\frac{1}{2}}(\mathbf{R}^{n-1})$, the fact that $\dot{H}^{\frac{1}{2}}(\mathbf{R}^{n-1})$ is continuously embedded in $L^{\frac{2n-2}{n-2}}(\mathbf{R}^{n-1})$ would imply that $L^{\frac{2n-2}{n}}(\mathbf{R}^{n-1})$ is continuously embedded in $\dot{H}^{-\frac{1}{2}}(\mathbf{R}^{n-1})$, i.e., there exists a constant $C(n)>0$ such that the estimate 
\begin{align} \label{EmHdhR}
\Vert g \Vert_{\dot{H}^{-\frac{1}{2}}(\mathbf{R}^{n-1})} \leq C(n) \Vert g \Vert_{L^{\frac{2n-2}{n}}(\mathbf{R}^{n-1})}
\end{align}
holds for any $g \in \dot{H}^{-\frac{1}{2}}(\mathbf{R}^{n-1})$. By the isomorphism between $\dot{H}^{-\frac{1}{2}}(\mathbf{R}^{n-1})$ and $\dot{H}^{-\frac{1}{2}}(\Gamma)$, we deduce the following embedding result.

\begin{corollary} \label{EmHdhG}
$L^{\frac{2n-2}{n}}(\Gamma)$ is continuously embedded in $\dot{H}^{-\frac{1}{2}}(\Gamma)$.
\end{corollary}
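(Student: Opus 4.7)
The plan is to chain together three ingredients already at hand: the isomorphism $T_h:\dot{H}^{-\frac{1}{2}}(\mathbf{R}^{n-1})\to\dot{H}^{-\frac{1}{2}}(\Gamma)$ established in Lemma \ref{IsoHm}, the flat-space embedding \eqref{EmHdhR} of $L^{\frac{2n-2}{n}}(\mathbf{R}^{n-1})$ into $\dot{H}^{-\frac{1}{2}}(\mathbf{R}^{n-1})$, and the elementary comparison between $L^p(\Gamma)$ and $L^p(\mathbf{R}^{n-1})$ via the graph parametrization $y\mapsto\bigl(y',h(y')\bigr)$.

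First, given $g^h\in L^{\frac{2n-2}{n}}(\Gamma)$, I would set $g:=T_h^{-1}(g^h)$, so that $g(y')=g^h\bigl(y',h(y')\bigr)$ for every $y'\in\mathbf{R}^{n-1}$. A direct change of variables on the surface $\Gamma$ gives
\begin{equation*}
\Vert g^h\Vert_{L^{\frac{2n-2}{n}}(\Gamma)}^{\frac{2n-2}{n}} = \int_{\mathbf{R}^{n-1}}\bigl\lvert g(y')\bigr\rvert^{\frac{2n-2}{n}}\bigl(1+\lvert\nabla' h(y')\rvert^{2}\bigr)^{\frac12}\,dy',
\end{equation*}
and since $(1+\lvert\nabla' h\rvert^{2})^{1/2}\geq 1$ pointwise one obtains immediately
\begin{equation*}
\Vert g\Vert_{L^{\frac{2n-2}{n}}(\mathbf{R}^{n-1})} \leq \Vert g^h\Vert_{L^{\frac{2n-2}{n}}(\Gamma)}.
\end{equation*}

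Next I would invoke the flat embedding \eqref{EmHdhR} to conclude $g\in\dot{H}^{-\frac{1}{2}}(\mathbf{R}^{n-1})$ with $\Vert g\Vert_{\dot{H}^{-\frac{1}{2}}(\mathbf{R}^{n-1})}\leq C(n)\Vert g\Vert_{L^{\frac{2n-2}{n}}(\mathbf{R}^{n-1})}$. Finally, applying the isomorphism estimate \eqref{ghgEm} from the proof of Lemma \ref{IsoHm} promotes this to a bound on $\Gamma$, namely
\begin{equation*}
\Vert g^h\Vert_{\dot{H}^{-\frac{1}{2}}(\Gamma)} \leq C(n)\,C_s(h)^{\frac{n}{2}+1}C_1(h)\Vert g\Vert_{\dot{H}^{-\frac{1}{2}}(\mathbf{R}^{n-1})}.
\end{equation*}
Composing the three inequalities yields the continuous embedding with explicit constant $C(n)\,C_s(h)^{\frac{n}{2}+1}C_1(h)$.

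There is essentially no obstacle here: every step is either a change of variables on a Lipschitz graph or an immediate application of a previously proved result. The only mild point worth being careful about is keeping track of which direction of the isomorphism $T_h$ is being used at each stage, so that the constants involving $C_s(h)$ and $C_1(h)$ are correctly inherited from \eqref{ghgEm} rather than from \eqref{gghEm}.
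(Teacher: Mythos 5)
Your proposal is correct and follows essentially the same route as the paper: the paper's own proof tests $g^h$ against $f^h\in\dot{H}^{\frac{1}{2}}(\Gamma)$, pulls the pairing back to $\mathbf{R}^{n-1}$, and applies Proposition \ref{MRHhalf}, \eqref{EmHdhR} and \eqref{gghE} — which is exactly the argument that establishes \eqref{ghgEm}, the estimate you cite. Your version merely factors the embedding through the already-proved maps $L^{\frac{2n-2}{n}}(\Gamma)\to L^{\frac{2n-2}{n}}(\mathbf{R}^{n-1})\to\dot{H}^{-\frac{1}{2}}(\mathbf{R}^{n-1})\to\dot{H}^{-\frac{1}{2}}(\Gamma)$, arriving at the identical constant $C(n)C_s(h)^{\frac{n}{2}+1}C_1(h)$.
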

\begin{proof}
We consider $g^h \in \dot{H}^{-\frac{1}{2}}(\Gamma)$ and $f^h \in \dot{H}^{\frac{1}{2}}(\Gamma)$ with $\Vert f^h \Vert_{\dot{H}^{\frac{1}{2}}(\Gamma)} \leq 1$. Let $g = T_h^{-1}(g^h)$ and $f = T_h^{-1}(f^h)$. By Proposition \ref{MRHhalf}, estimate (\ref{EmHdhR}) and estimate (\ref{gghE}), we can deduce that
\begin{align*}
\left\lvert  \int_\Gamma g^h(y) f^h(y) \, d\mathcal{H}^{n-1}(y) \right\rvert  &= \left\lvert  \int_{\mathbf{R}^{n-1}} g(y') f(y') \big( 1 + \big\lvert \nabla' h (y') \big\rvert^2 \big)^{\frac{1}{2}} \, dy' \right\rvert  \\
&\leq \Vert g \Vert_{\dot{H}^{-\frac{1}{2}}(\mathbf{R}^{n-1})} \left\Vert \big( 1 + \big\lvert  \nabla' h (y') \big\rvert^2 \big)^{\frac{1}{2}} f \right\Vert_{\dot{H}^{\frac{1}{2}}(\mathbf{R}^{n-1})} \\
&\leq C(n) C_s(h) C_1(h) \Vert g \Vert_{\dot{H}^{-\frac{1}{2}}(\mathbf{R}^{n-1})} \Vert f \Vert_{\dot{H}^{\frac{1}{2}}(\mathbf{R}^{n-1})} \\
&\leq C(n) C_s(h)^{\frac{n}{2}+1} C_1(h) \Vert g \Vert_{L^{\frac{2n-2}{n}}(\mathbf{R}^{n-1})}.
\end{align*}
Since $\Vert g \Vert_{L^{\frac{2n-2}{n}}(\mathbf{R}^{n-1})} \leq \Vert g^h \Vert_{L^{\frac{2n-2}{n}}(\Gamma)}$, we obtain that
\begin{align} \label{EHdhLp}
\Vert g^h \Vert_{\dot{H}^{-\frac{1}{2}}(\Gamma)} \leq C(n) C_s(h)^{\frac{n}{2}+1} C_1(h) \Vert g^h \Vert_{L^{\frac{2n-2}{n}}(\Gamma)}.
\end{align}
\end{proof}

\subsection{$\dot{H}^{-\frac{1}{2}}$ estimate for the normal trace $w \cdot \mathbf{n}$} 
\label{sub:HmENT}

Since we are considering the case where $n \geq 3$, similar to the characterization of $\dot{H}^{\frac{1}{2}}(\mathbf{R}^{n-1})$, the space $\dot{H}^1(\mathbf{R}^n)$ has the characterization that $u \in \dot{H}^1(\mathbf{R}^n)$ if and only if $u \in L^{\frac{2n}{n-2}}(\mathbf{R}^n)$ such that $\nabla u \in L^2(\mathbf{R}^n)$. The space $\dot{H}^1(\mathbf{R}^n)$ is complete with norm $\| \nabla u \|_{L^2(\mathbf{R}^n)}$. Moreover, it  contains $C_\mathrm{c}^\infty(\mathbf{R}^n)$ as a dense subspace; see e.g. \cite[Theorem 3.1]{BGV}, \cite{Gal}, \cite{GKL}.

\begin{proposition} \label{LftHdh}
There exists a bounded linear lifting operator $\ell_\mathbf{n} : \dot{H}^{\frac{1}{2}}(\mathbf{R}^{n-1}) \to \dot{H}^1(\mathbf{R}^n)$.
\end{proposition}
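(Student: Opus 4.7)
The plan is to construct $\ell_\mathbf{n}$ explicitly via the partial Fourier transform in the tangential variables and then verify boundedness by a direct change of variables combined with Plancherel's theorem. Fix once and for all a cutoff $\phi \in C_c^\infty(\mathbf{R})$ with $\phi(0) = 1$. For $f \in \dot{H}^{\frac{1}{2}}(\mathbf{R}^{n-1})$, I would define $u = \ell_\mathbf{n}(f)$ through its partial Fourier transform in $x'$ by
\[
\widehat{u}(\xi', x_n) := \phi\big(\lvert\xi'\rvert\, x_n\big)\,\widehat{f}(\xi'), \quad \xi' \in \mathbf{R}^{n-1},\ x_n \in \mathbf{R}.
\]
Because $\phi(0) = 1$, the trace at $x_n = 0$ returns $f$ in the Fourier sense. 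The partial Fourier symbols of the derivatives are
\[
\widehat{\nabla' u}(\xi', x_n) = i\xi'\,\phi\big(\lvert\xi'\rvert\, x_n\big)\widehat{f}(\xi'), \qquad \widehat{\partial_n u}(\xi', x_n) = \lvert\xi'\rvert\,\phi'\big(\lvert\xi'\rvert\, x_n\big)\widehat{f}(\xi').
\]

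The boundedness estimate reduces to a clean computation. By Plancherel in $\xi'$ followed by Fubini and the substitution $t = \lvert\xi'\rvert x_n$ in the $x_n$ integral,
\[
\lVert\nabla' u\rVert_{L^2(\mathbf{R}^n)}^2 = \int_{\mathbf{R}^{n-1}} \lvert\xi'\rvert^2\,\lvert\widehat{f}(\xi')\rvert^2 \left(\int_{\mathbf{R}} \lvert\phi(\lvert\xi'\rvert x_n)\rvert^2 dx_n\right) d\xi' = \lVert\phi\rVert_{L^2(\mathbf{R})}^2\,\lVert f\rVert_{\dot{H}^{\frac{1}{2}}(\mathbf{R}^{n-1})}^2.
\]
The same substitution applied to $\partial_n u$ gives $\lVert\partial_n u\rVert_{L^2(\mathbf{R}^n)}^2 = \lVert\phi'\rVert_{L^2(\mathbf{R})}^2\,\lVert f\rVert_{\dot{H}^{\frac{1}{2}}(\mathbf{R}^{n-1})}^2$. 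Hence $\lVert\nabla u\rVert_{L^2(\mathbf{R}^n)} \leq C(n)\lVert f\rVert_{\dot{H}^{\frac{1}{2}}(\mathbf{R}^{n-1})}$ with a constant that depends only on $\phi$.

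The main subtlety, and the step I expect to require the most care, is ensuring that $u$ genuinely belongs to the homogeneous space $\dot{H}^1(\mathbf{R}^n)$ rather than merely to the set of tempered distributions with $\nabla u \in L^2$. Since $n \geq 3$, the space $\dot{H}^1(\mathbf{R}^n)$ admits the characterization $\{u \in L^{\frac{2n}{n-2}}(\mathbf{R}^n) \mid \nabla u \in L^2(\mathbf{R}^n)\}$ and contains $\mathcal{S}_0(\mathbf{R}^n)$ as a dense subspace in the norm $\lVert\nabla\cdot\rVert_{L^2}$, in precise analogy with the characterization of $\dot{H}^{\frac{1}{2}}(\mathbf{R}^{n-1})$ recalled earlier. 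I would therefore first carry out the construction on the dense subspace $\mathcal{S}_0(\mathbf{R}^{n-1})$: if $\widehat{f}$ is supported in $\{\lvert\xi'\rvert \geq \delta\}$, then the compact support of $\phi$ forces $\widehat{u}(\xi', x_n)$ to vanish for $\lvert x_n\rvert$ sufficiently large uniformly over $\operatorname{supp}\widehat{f}$, so that $u$ is supported in a slab of $\mathbf{R}^n$ and is Schwartz in $x'$. Such $u$ lies in $L^{\frac{2n}{n-2}}(\mathbf{R}^n)$ directly, so $u \in \dot{H}^1(\mathbf{R}^n)$. The gradient estimate together with density of $\mathcal{S}_0(\mathbf{R}^{n-1})$ in $\dot{H}^{\frac{1}{2}}(\mathbf{R}^{n-1})$ then extends $\ell_\mathbf{n}$ uniquely by continuity, and the completeness of $\dot{H}^1(\mathbf{R}^n)$ guarantees the limit lies inside $\dot{H}^1(\mathbf{R}^n)$.
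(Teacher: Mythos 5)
Your construction is correct, and it is a genuinely different choice of extension from the paper's. The paper takes the canonical Poisson-type profile, defining $u_f$ by $\widehat{u_f}(\xi',x_n)=\mathrm{e}^{-\lvert x_n\rvert\lvert\xi'\rvert}\widehat{f}(\xi')$ in the tangential Fourier variable, and then computes the full space-time Plancherel identity $\Vert u_f\Vert_{\dot{H}^1(\mathbf{R}^n)}^2=8\pi^2\Vert f\Vert_{\dot{H}^{1/2}(\mathbf{R}^{n-1})}^2$ by integrating $\mathrm{e}^{-2x_n\lvert\xi'\rvert}$ in $x_n$. You replace $\mathrm{e}^{-\lvert x_n\rvert\lvert\xi'\rvert}$ by $\phi(\lvert\xi'\rvert x_n)$ with $\phi\in C_c^\infty$, $\phi(0)=1$; the mechanism is identical in both proofs — the scaling $t=\lvert\xi'\rvert x_n$ produces exactly the factor $\lvert\xi'\rvert^{-1}$ that converts $\lvert\xi'\rvert^2\lvert\widehat{f}\rvert^2$ into the $\dot{H}^{1/2}$ density $\lvert\xi'\rvert\lvert\widehat{f}\rvert^2$, and the paper's computation is just your substitution with the particular profile $\mathrm{e}^{-\lvert t\rvert}$, for which $\Vert\phi\Vert_{L^2(\mathbf{R})}^2=\Vert\phi'\Vert_{L^2(\mathbf{R})}^2=1$. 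What your choice buys is that for $f\in\mathcal{S}_0(\mathbf{R}^{n-1})$ the extension is supported in a slab $\{\lvert x_n\rvert\le M/\delta\}$, which makes the verification that $u$ actually lies in $\dot{H}^1(\mathbf{R}^n)$ (i.e.\ in $L^{2n/(n-2)}(\mathbf{R}^n)$, not merely that $\nabla u\in L^2$) immediate on the dense subspace, after which density of $\mathcal{S}_0$ and completeness of $\dot{H}^1(\mathbf{R}^n)$ for $n\ge3$ finish the argument; the paper glosses over this membership question entirely, so on this point you are more careful. What the paper's choice buys is harmonicity of $u_f$ in each half-space and the exact isometry constant, and it is the formula that matches the single layer potential used throughout the rest of the paper. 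One small remark applying equally to both proofs: neither explicitly verifies that the trace of the extension (respectively, of the limit obtained by density) is $f$, which is what makes $\ell_{\mathbf{n}}$ a \emph{lifting}; in your setting this follows from $\phi(0)=1$ on the dense subspace together with boundedness of the trace operator from $\dot{H}^1(\mathbf{R}^n)$ to $\dot{H}^{1/2}(\mathbf{R}^{n-1})$, and it would be worth one sentence.
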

\begin{proof}
For $f \in \dot{H}^{\frac{1}{2}}(\mathbf{R}^{n-1})$, we set 
\begin{align} \label{Lftuf}
u_f(x',x_n) := \frac{1}{(2 \pi)^n} \int_{\mathbf{R}^{n-1}} \mathrm{e}^{i x' \cdot \xi'} \left( \mathrm{e}^{- \lvert x_n\rvert \lvert \xi'\rvert} \widehat{f}(\xi') \right) \, d\xi'
\end{align}
for $(x',x_n) \in \mathbf{R}^n$, i.e., $u_f$ is the inverse Fourier transform of $\mathrm{e}^{- \lvert x_n\rvert \lvert \xi'\rvert} \widehat{f}(\xi')$ with respect to $\xi'$.
By the Fourier-Plancherel formula, see e.g. \cite[Theorem 1.25]{BCD}, we have that
\[
\int_{-\infty}^\infty \left\lvert \widehat{u_f}(\xi',\xi_n)  \right\rvert^2 \, d\xi_n = 8 \pi^2 \left\lvert \widehat{f}(\xi') \right\rvert^2 \int_0^\infty \mathrm{e}^{-2 x_n \lvert \xi'\rvert} \, dx_n = 4 \pi^2 \lvert \xi'\rvert^{-1} \left\lvert \widehat{f}(\xi') \right\rvert^2
\]
for $\xi' \neq 0'$. Hence, we can deduce that
\begin{align*}
\int_{\mathbf{R}^n} \lvert \xi' \rvert^2 \left\lvert \widehat{u_f}(\xi',\xi_n) \right\rvert^2 \, d\xi 
= \int_{\mathbf{R}^{n-1}} \lvert \xi' \rvert^2 \int_{-\infty}^\infty \left\lvert \widehat{u_f}(\xi',\xi_n) \right\rvert^2 \, d\xi_n \, d\xi' = 4 \pi^2 \int_{\mathbf{R}^{n-1}} \lvert \xi' \rvert \left\lvert \widehat{f}(\xi') \right\rvert^2 \, d\xi'.
\end{align*}
On the other hand, by the Fourier-Plancherel formula again, we see that
\begin{align*}
\int_{-\infty}^\infty \lvert \xi_n \rvert^2 \big\lvert \widehat{u_f}(\xi',\xi_n) \big\rvert^2 \, d\xi_n &= 4 \pi^2 \big\lvert \widehat{f}(\xi') \big\rvert^2 \int_{-\infty}^\infty \big\lvert \partial_{x_n} \big( \mathrm{e}^{- \lvert x_n \rvert \lvert \xi' \rvert} \big) \big\rvert^2 \, dx_n \\
&= 8 \pi^2 \lvert \xi' \rvert^2 \big\lvert \widehat{f}(\xi') \big\rvert^2 \int_0^\infty \mathrm{e}^{-2 x_n \lvert \xi' \rvert} \, dx_n = 4 \pi^2 \lvert \xi' \rvert \big\lvert \widehat{f}(\xi') \big\rvert^2,
\end{align*}
which further implies that
\begin{align*}
\int_{\mathbf{R}^n} \lvert \xi_n \rvert^2 \left\lvert \widehat{u_f}(\xi',\xi_n) \right\rvert^2 \, d\xi 
= \int_{\mathbf{R}^{n-1}} \int_{-\infty}^\infty \lvert \xi_n \rvert^2 \left\lvert \widehat{u_f}(\xi',\xi_n) \right\rvert^2 \, d\xi_n \, d\xi' = 4 \pi^2 \int_{\mathbf{R}^{n-1}} \lvert \xi' \rvert \left\lvert \widehat{f}(\xi') \right\rvert^2 \, d\xi'.
\end{align*}
Therefore, we obtain that
\[
\Vert u_f \Vert_{\dot{H}^1(\mathbf{R}^n)}^2 = \int_{\mathbf{R}^n} \lvert\xi\rvert^2 \left\lvert \widehat{u_f}(\xi',\xi_n) \right\rvert^2 \, d\xi = 8 \pi^2 \Vert f \Vert_{\dot{H}^{\frac{1}{2}}(\mathbf{R}^{n-1})}^2.
\]
Letting $\ell_\mathbf{n}(f) = u_f$ for any $f \in \dot{H}^{\frac{1}{2}}(\mathbf{R}^{n-1})$ completes the proof of Proposition \ref{LftHdh}.
\end{proof}

We start with the half space problem. If $w \in L^2(\mathbf{R}_+^n)^n$ satisfies $\operatorname{div} w = 0$ in $\mathbf{R}_+^n$, then the normal trace $w \cdot \mathbf{n}$ can be taken in the $\dot{H}^{-\frac{1}{2}}$ sense.

\begin{proposition} \label{NTL2hs}
There exists a constant $C(n)>0$ such that the estimate 
\begin{align} \label{HsENT}
\Vert w \cdot \mathbf{n} \Vert_{\dot{H}^{-\frac{1}{2}}(\mathbf{R}^{n-1})} \leq C(n) \Vert w \Vert_{L^2(\mathbf{R}_+^n)}
\end{align}
holds for any $w \in L^2(\mathbf{R}_+^n)^n$ with $\operatorname{div} w = 0$ in $\mathbf{R}_+^n$.
\end{proposition}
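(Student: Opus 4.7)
The plan is to prove Proposition \ref{NTL2hs} by duality, using the bounded lifting $\ell_\mathbf{n} : \dot{H}^{\frac{1}{2}}(\mathbf{R}^{n-1}) \to \dot{H}^1(\mathbf{R}^n)$ from Proposition \ref{LftHdh} together with an integration by parts that uses $\operatorname{div} w = 0$ to eliminate the volume term. Since $w$ is merely $L^2$, the classical trace $w_n|_{x_n = 0}$ does not exist pointwise, so I will \emph{define} the normal trace by the pairing that integration by parts would give for smooth fields, and then check that this defines an element of $\dot{H}^{-\frac{1}{2}}(\mathbf{R}^{n-1})$ with the required bound.

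Concretely, for $f \in \dot{H}^{\frac{1}{2}}(\mathbf{R}^{n-1})$ set $u_f := \ell_\mathbf{n}(f)$ and define
\[
\langle w \cdot \mathbf{n}, f \rangle := \int_{\mathbf{R}_+^n} w(x) \cdot \nabla u_f(x) \, dx.
\]
The right-hand side is well-defined and finite, since by Cauchy--Schwarz and Proposition \ref{LftHdh}
\[
\left| \langle w \cdot \mathbf{n}, f \rangle \right|
\leq \Vert w \Vert_{L^2(\mathbf{R}_+^n)} \Vert \nabla u_f \Vert_{L^2(\mathbf{R}_+^n)}
\leq \Vert w \Vert_{L^2(\mathbf{R}_+^n)} \Vert u_f \Vert_{\dot{H}^1(\mathbf{R}^n)}
\leq C(n) \Vert w \Vert_{L^2(\mathbf{R}_+^n)} \Vert f \Vert_{\dot{H}^{\frac{1}{2}}(\mathbf{R}^{n-1})}.
\]
Taking the supremum over $\Vert f \Vert_{\dot{H}^{\frac{1}{2}}(\mathbf{R}^{n-1})} \leq 1$ yields exactly the claimed estimate \eqref{HsENT}, provided the pairing actually represents the normal trace.

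To justify that the pairing coincides with the genuine normal trace, I will first argue for $w \in C_c^\infty(\overline{\mathbf{R}_+^n})^n$ with $\operatorname{div} w = 0$: the divergence theorem gives
\[
\int_{\mathbf{R}_+^n} w \cdot \nabla u_f \, dx = - \int_{\mathbf{R}_+^n} (\operatorname{div} w) u_f \, dx - \int_{\mathbf{R}^{n-1}} w_n(x',0) f(x') \, dx' = \int_{\mathbf{R}^{n-1}} (w \cdot \mathbf{n})(x',0) f(x') \, dx',
\]
since the outward unit normal on $\partial \mathbf{R}_+^n$ is $-e_n$ and $u_f|_{x_n=0} = f$. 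Thus the definition agrees with the classical pairing on smooth divergence-free fields. For a general $w \in L^2(\mathbf{R}_+^n)^n$ with $\operatorname{div} w = 0$, I will approximate $w$ in $L^2(\mathbf{R}_+^n)$ by a sequence of smooth divergence-free fields $w_k$ (e.g.\ mollify and apply the Leray projection, or work directly from the definition distributionally). Both sides of the inequality are continuous with respect to $L^2$-convergence of $w$ by the displayed estimate, so passing to the limit is immediate.

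The only genuine subtlety is independence of the pairing from the particular lifting. This is not strictly necessary if we use $\ell_\mathbf{n}$ throughout, but it is conceptually cleaner to observe that, for any other lifting $\tilde{u} \in \dot{H}^1(\mathbf{R}^n)$ with the same trace $f$, the difference $u_f - \tilde{u}$ has zero trace on $\{x_n = 0\}$ and hence lies in the closure of $C_c^\infty(\mathbf{R}_+^n)$; then $\int_{\mathbf{R}_+^n} w \cdot \nabla(u_f - \tilde{u}) \, dx = 0$ by the distributional identity $\operatorname{div} w = 0$ tested against the compactly supported approximants, which extends by density. This is the only step that is not completely mechanical, but it is standard and presents no real obstacle; the entire argument is essentially the definition of the normal trace in $H(\operatorname{div};\mathbf{R}_+^n)$ adapted to the homogeneous scale.
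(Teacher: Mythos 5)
Your core strategy is the same as the paper's: both proofs hinge on the lifting $\ell_{\mathbf{n}}$ of Proposition \ref{LftHdh} and on reading the normal trace off the pairing $\int_{\mathbf{R}_+^n} w\cdot\nabla u_f\,dx$, with Cauchy--Schwarz giving the bound $C(n)\Vert w\Vert_{L^2}\Vert f\Vert_{\dot{H}^{1/2}}$. The difference is purely in where the truncation needed to justify the integration by parts is placed. The paper truncates the \emph{test function}: it sets $u_{f,2\rho}=\theta_{2\rho}u_f$, which lies in the inhomogeneous space $H^1(B_{2\rho}^+)$ so that the Gauss--Green formula for $L^2$ divergence-free fields on a bounded domain applies directly, and then the bulk of the work goes into showing $\theta_{2\rho}(\cdot',0)f\to f$ in $\dot{H}^{1/2}(\mathbf{R}^{n-1})$. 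You instead propose to truncate/approximate the \emph{vector field} $w$.

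That is where your argument has a genuine gap. First, ``mollify and apply the Leray projection'' cannot work: the Leray projection maps onto $L^2_\sigma(\mathbf{R}_+^n)$, whose elements have vanishing normal trace, so it would destroy exactly the quantity you are trying to estimate. The correct elementary approximation (translate $w$ upward by $\varepsilon$ and mollify at scale $<\varepsilon$) produces smooth divergence-free fields on $\overline{\mathbf{R}_+^n}$ converging in $L^2$, but these are \emph{not} compactly supported, so your application of the divergence theorem on the unbounded half space still requires controlling a boundary term at infinity --- and truncating $w_k$ in space destroys the divergence-free condition unless you insert a Bogovskii-type corrector. In other words, ``passing to the limit is immediate'' conceals precisely the difficulty that the paper's cut-off of $u_f$ (and the accompanying $\dot{H}^{1/2}$-convergence lemma, which is the longest part of their proof) is designed to resolve. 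Your estimate and the duality framework are correct; to close the argument you should either adopt the paper's truncation of $u_f$, or carry out the approximation of $w$ by compactly supported divergence-free fields honestly, including the divergence correction.
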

\begin{proof}
Let $\theta_2 \in C_\mathrm{c}^\infty(\mathbf{R})$ be such that $0 \leq \theta_2 \leq 1$ in $\mathbf{R}$, $\theta_2(z) = 1$ for any $\lvert z \rvert<1$ and $\theta_2(z) = 0$ for any $\lvert z \rvert \geq 2$. We define $\theta_{2\rho} \in C_\mathrm{c}^\infty(\mathbf{R}^n)$ by setting
\[
\theta_{2\rho}(x) := \theta_2\left( \frac{\lvert x \rvert}{\rho} \right), \quad x \in \mathbf{R}^n.
\]
For $f \in \dot{H}^{\frac{1}{2}}(\mathbf{R}^{n-1})$, we consider
\[
f_I(x') := \theta_I(x') f(x'), \quad \theta_I(x') := 1 - \theta_{2\rho}(x',0)
\]
for $x' \in \mathbf{R}^{n-1}$. Since $\theta_I = 0$ in $B_\rho(0')$, we decompose
\begin{align*}
&\int_{\mathbf{R}^{n-1}} \int_{\mathbf{R}^{n-1}} \frac{\left\lvert f_I(x') - f_I(y')\right\rvert^2}{\lvert x' - y'\rvert^n} \, dx' \, dy' = \int_{\lvert y'\rvert\geq \frac{\rho}{2}} \int_{\lvert x'\rvert\geq \frac{\rho}{2}} \frac{\left\lvert f_I(x') - f_I(y')\right\rvert^2}{\lvert x' - y'\rvert^n} \, dx' \, dy' \\
&+ \int_{\lvert y'\rvert\geq \rho} \int_{\lvert x'\rvert< \frac{\rho}{2}} \frac{\left\lvert f_I(y')\right\rvert^2}{\lvert x' - y'\rvert^n} \, dx' \, dy' + \int_{\lvert y'\rvert< \frac{\rho}{2}} \int_{\lvert x'\rvert\geq \rho} \frac{\left\lvert f_I(x')\right\rvert^2}{\lvert x' - y'\rvert^n} \, dx' \, dy' = I_1 + I_2 + I_3.
\end{align*}
Since $\lvert y'\rvert\geq \rho$ and $\lvert x'\rvert< \frac{\rho}{2}$ would imply that $\lvert x' - y'\rvert\geq \lvert y'\rvert- \frac{\rho}{2}$, by H$\ddot{\text{o}}$lder's inequality we deduce that
\begin{align*}
I_2 \leq C(n) \rho^{n-1} \Vert f_I \Vert_{L^{\frac{2n-2}{n-2}}\big( B_\rho(0')^\mathrm{c} \big)}^2 \left( \int_{\lvert y'\rvert\geq \rho} \frac{1}{(\lvert y'\rvert- \frac{\rho}{2})^{n^2-n}} \, dx' \right)^{\frac{1}{n-1}} \leq C(n) \Vert f \Vert_{L^{\frac{2n-2}{n-2}}\big( B_\rho(0')^\mathrm{c} \big)}^2.
\end{align*}
By symmetry, $I_3$ follows the same estimate as $I_2$. For $I_1$, we follow the proof of Proposition \ref{MRHhalf} to estimate it as
\begin{align*}
I_1 &\leq 2 \int_{\lvert y'\rvert\geq \frac{\rho}{2}} \int_{\lvert x'\rvert \geq \frac{\rho}{2}} \frac{\left\lvert f(x') - f(y')\right\rvert^2}{\lvert x' - y'\rvert^n} \, dx' \, dy' \\
&\ \ + 2 \int_{\lvert y'\rvert\geq \frac{\rho}{2}} \int_{\lvert x'\rvert \geq \frac{\rho}{2}} \left\lvert f(y')\right\rvert^2 \frac{\left\lvert \theta_I(x') - \theta_I(y')\right\rvert^2}{\lvert x' - y'\rvert^n} \, dx' \, dy' = I_{1,1} + I_{1,2}.
\end{align*}
Since $\theta_I = 1$ in $B_{2\rho}(0')^\mathrm{c}$, we further decompose $I_{1,2}$ as
\begin{align*}
I_{1,2} &= \int_{\frac{\rho}{2} \leq \lvert y'\rvert<2\rho} \int_{\lvert x'\rvert\geq 3\rho} \lvert f(y')\rvert^2 \frac{\left\lvert \theta_I(x') - \theta_I(y') \right\rvert^2}{\lvert x'-y'\rvert^n} \, dx' \, dy' \\
&\ \ + \int_{\lvert y'\rvert \geq 3\rho} \int_{\frac{\rho}{2} \leq\lvert x'\rvert < 2\rho} \left\lvert f(y')\right\rvert^2 \frac{\left\lvert \theta_I(x') - \theta_I(y') \right\rvert^2}{\lvert x'-y'\rvert^n} \, dx' \, dy' \\
&\ \ + \int_{\frac{\rho}{2} \leq \lvert y'\rvert < 2\rho} \int_{\frac{\rho}{2} \leq \lvert x'\rvert < 2\rho} \left\lvert f(y')\right\rvert^2 \frac{\left\lvert \theta_I(x') - \theta_I(y') \right\rvert^2}{\lvert x'-y'\rvert^n} \, dx' \, dy' = J_1 + J_2 + J_3.
\end{align*}
By the proof of Proposition \ref{MRHhalf}, we see that
\[
J_1 + J_3 \leq C(n) \Vert f \Vert_{L^{\frac{2n-2}{n-2}}\big( B_{\rho/2}(0')^\mathrm{c} \big)}^2, \quad J_2 \leq C(n) \Vert f \Vert_{L^{\frac{2n-2}{n-2}}\big( B_{3\rho}(0')^\mathrm{c} \big)}^2.
\]
Therefore, we deduce that
\begin{align*}
&\Vert f_I \Vert_{\dot{H}^{\frac{1}{2}}(\mathbf{R}^{n-1})}^2 \leq 2 \int_{\lvert y' \rvert\geq \frac{\rho}{2}} \int_{\lvert x' \rvert \geq \frac{\rho}{2}} \frac{\left\lvert f(x') - f(y')\right\rvert^2}{\lvert x' - y'\rvert^n} \, dx' \, dy' \\
&\ \ + C(n) \left( \Vert f \Vert_{L^{\frac{2n-2}{n-2}}\big( B_{\rho/2}(0')^\mathrm{c} \big)}^2 + \Vert f \Vert_{L^{\frac{2n-2}{n-2}}\big( B_\rho(0')^\mathrm{c} \big)}^2 + \Vert f \Vert_{L^{\frac{2n-2}{n-2}}\big( B_{3\rho}(0')^\mathrm{c} \big)}^2 \right).
\end{align*}
Since the $L^{\frac{2n-2}{n-2}}$ norm of $f$ is controlled by the $\dot{H}^{\frac{1}{2}}$ norm of $f$, we deduce that $\Vert f_I \Vert_{\dot{H}^{\frac{1}{2}}(\mathbf{R}^{n-1})}$ converges to zero as $\rho$ tends to infinity, i.e., $\theta_{2\rho}(\cdot',0) f$ converges to $f$ in $\dot{H}^{\frac{1}{2}}$ norm as $\rho \to \infty$.

Let us note that the multiplication by a smooth function with compact support is bounded in $\dot{H}^1(\mathbf{R}^n)$. 
Indeed, by H$\ddot{\text{o}}$lder's inequality and the continuous embedding of $\dot{H}^1(\mathbf{R}^n)$ in $L^{\frac{2n}{n-2}}(\mathbf{R}^n)$, we see that the estimate
\begin{equation} \label{MRHd1}
\begin{split}
\Vert \phi u \Vert_{\dot{H}^1(\mathbf{R}^n)} &= \Vert \nabla ( \phi u ) \Vert_{L^2(\mathbf{R}^n)} \leq \Vert u \nabla \phi \Vert_{L^2(\mathbf{R}^n)} + \Vert \phi \nabla u \Vert_{L^2(\mathbf{R}^n)} \\
&\leq \Vert \nabla \phi \Vert_{L^n(\mathbf{R}^n)} \Vert u \Vert_{L^{\frac{2n}{n-2}}(\mathbf{R}^n)} + \Vert \phi \Vert_{L^\infty(\mathbf{R}^n)} \Vert \nabla u \Vert_{L^2(\mathbf{R}^n)} \\
&\leq \big( \Vert \nabla \phi \Vert_{L^n(\mathbf{R}^n)} + \Vert \phi \Vert_{L^\infty(\mathbf{R}^n)} \big) \Vert u \Vert_{\dot{H}^1(\mathbf{R}^n)}
\end{split}
\end{equation}
holds for any $\phi \in C_\mathrm{c}^\infty(\mathbf{R}^n)$. Let $f \in \dot{H}^{\frac{1}{2}}(\mathbf{R}^{n-1})$ and $u_f$ be defined as in expression (\ref{Lftuf}). We consider $u_{f,2\rho} := \theta_{2\rho} u_f$. Since $\Vert \nabla \theta_{2\rho} \Vert_{L^n(\mathbf{R}^n)} \leq C(n) \Vert \theta_2' \Vert_{L^\infty(\mathbf{R})}$, by estimate (\ref{MRHd1}) we have that
\[
\Vert u_{f,2\rho} \Vert_{\dot{H}^1(\mathbf{R}^n)} \leq C(n) \Vert u \Vert_{\dot{H}^1(\mathbf{R}^n)}.
\]
Since $\operatorname{supp} u_{f,2\rho} \subseteq \overline{B_{2\rho}(0)}$, it actually holds that $u_{f,2\rho} \in H^1(\mathbf{R}^n)$ satisfies
\[
\Vert u_{f,2\rho} \Vert_{H^1(\mathbf{R}^n)} \leq C(n) \rho \Vert u_{f,2\rho} \Vert_{\dot{H}^1(\mathbf{R}^n)},
\]
see e.g. \cite[Proposition 1.55]{BCD}. Let $B_{2\rho}^+ := B_{2\rho}(0) \cap \mathbf{R}_+^n$. Since $u_{f,2\rho} \in H^1(B_{2\rho}^+)$, by the Gauss-Green formula, see e.g. \cite[Lemma 1.2.3]{HSo}, it holds that
\begin{align*}
\left\lvert \int_{\mathbf{R}^{n-1}} \theta_{2\rho}(x',0) f(x') w_n(x',0) \, dx' \right\rvert &\leq \int_{B_{2\rho}^+} \left\lvert \nabla u_{f,2\rho} \cdot w \right\rvert \, dx \leq \Vert \nabla u_{f,2\rho} \Vert_{L^2\big( B_{2\rho}^+ \big)} \Vert w \Vert_{L^2\big( B_{2\rho}^+ \big)} \\
&\leq \Vert u_{f,2\rho} \Vert_{\dot{H}^1(\mathbf{R}^n)} \Vert w \Vert_{L^2(\mathbf{R}_+^n)} \leq C(n) \Vert u_f \Vert_{\dot{H}^1(\mathbf{R}^n)} \Vert w \Vert_{L^2(\mathbf{R}_+^n)}
\end{align*}
for any $\rho>0$ and $w \in L^2(\mathbf{R}_+^n)^n$ with $\operatorname{div} w = 0$ in $\mathbf{R}_+^n$. Since we have already shown above that $\theta_{2\rho}(\cdot',0) f$ converges to $f$ in $\dot{H}^{\frac{1}{2}}$ norm as $\rho \to \infty$, by Proposition \ref{LftHdh} we conclude that for any $f \in \dot{H}^{\frac{1}{2}}(\mathbf{R}^{n-1})$ and $w \in L^2(\mathbf{R}_+^n)$ with $\operatorname{div} w = 0$ in $\mathbf{R}_+^n$, 
\[
\left\lvert  \int_{\mathbf{R}^{n-1}} f(x') w_n(x',0) \, dx' \right\rvert  \leq C(n) \Vert f \Vert_{\dot{H}^{\frac{1}{2}}(\mathbf{R}^{n-1})} \Vert w \Vert_{L^2(\mathbf{R}_+^n)},
\]
i.e., 
\[
\Vert w \cdot \mathbf{n} \Vert_{\dot{H}^{-\frac{1}{2}}(\mathbf{R}^{n-1})} \leq C(n) \Vert w \Vert_{L^2(\mathbf{R}_+^n)}.
\]
\end{proof}
\begin{remark}
Instead, if the domain $\Omega \subset \mathbf{R}^n$ that we are considering is bounded $C^2$, then for $w \in L^2(\Omega)^n$ satisfying $\operatorname{div} w = 0$ in $\Omega$, the normal trace $w \cdot \mathbf{n}$ can be taken in the $H^{-\frac{1}{2}}$ sense, i.e., there exists a constant $C$, independent of $w$, such that 
\[
\Vert w \cdot \mathbf{n} \Vert_{H^{-\frac{1}{2}}(\partial \Omega)} \leq C \Vert w \Vert_{L^2(\Omega)}
\]
for any $w \in L^2(\Omega)^n$ with $\operatorname{div} w = 0$ in $\Omega$; see e.g. \cite{LiMa}, \cite{Tem}.
\end{remark}

Now we are ready to consider the perturbed half space problem. For $w \in vBMOL^2\big( \mathbf{R}_h^n \big)$ with $\operatorname{div} w = 0$ in $\mathbf{R}_h^n$, we show that the normal trace $w \cdot \mathbf{n}$ can be taken in the $L^\infty \cap \dot{H}^{-\frac{1}{2}}$ sense.

\begin{proof}[Proof of Lemma \ref{ET}]
Let $w \in vBMOL^2\big( \mathbf{R}_h^n \big)$ with $\operatorname{div} w = 0$ in $\mathbf{R}_h^n$.
Let $\varphi_\ast \in C_\mathrm{c}^\infty(\mathbf{R}^n)$ satisfies $\varphi_\ast = 1$ in $B_{1+\tau_1(h)}(0)$ and $\operatorname{supp} \varphi_\ast \subseteq \overline{B_{2+\tau_1(h)}(0)}$ with $\tau_1(h) := R_h + 4nKR_h^2$. Then we set $w_1 := \varphi_\ast w$ and $w_2 := w - w_1$.
By Proposition \ref{2M}, we see that $w_1, w_2 \in vBMOL^2\big( \mathbf{R}_h^n \big)$ satisfying 
\[
\Vert w_1 \Vert_{vBMOL^2\big( \mathbf{R}_h^n \big)} + \Vert w_2 \Vert_{vBMOL^2\big( \mathbf{R}_h^n \big)} \leq C \Vert \varphi_\ast \Vert_{C^1\big( \mathbf{R}_h^n \big)} \Vert w \Vert_{vBMOL^2\big( \mathbf{R}_h^n \big)}
\]
with some constant $C=C(\alpha,\beta,K,R_\ast)>0$.
For $x \in \Gamma$ such that $\lvert x' \rvert = R_h$, we have that $h(x') = 0$ and $\nabla' h (x') = 0'$. 
Thus, for any $y \in B'_\Gamma(R_h)$, by picking an arbitrary $x' \in \mathbf{R}^{n-1}$ such that $\lvert x' \rvert = R_h$ and considering the mean value theorem, we have that
\[
\left\lvert h(y')\right\rvert =\left\lvert h(y') - h(x')\right\rvert \leq \Vert \nabla'^2 h \Vert_{L^\infty(\Gamma)} \lvert y'-x' \rvert^2 \leq 4nKR_h^2.
\]
As a result, we deduce that 
\[
B_{\rho+\tau_1(h)}(0) \cap \Gamma = B'_\Gamma\big (\rho+\tau_1(h) \big)
\]
for any $\rho>0$.
Since $w_1 = 0$ in $\mathbf{R}_h^n \cap B_{2+\tau_1(h)}(0)^\mathrm{c}$, by Corollary \ref{EmHdhG} and the $L^\infty$ estimate for $w_1 \cdot \mathbf{n}$ \cite[Theorem 22]{GG22a}, we can deduce that
\begin{align*}
\Vert w_1 \cdot \mathbf{n} \Vert_{\dot{H}^{-\frac{1}{2}}(\Gamma)} &\leq \Vert w_1 \cdot \mathbf{n} \Vert_{L^{\frac{2n-2}{n}}(\Gamma)} \leq \Lambda_{2+\tau_1(h)}^{\frac{n}{2n-2}} \Vert w_1 \cdot \mathbf{n} \Vert_{L^\infty(\Gamma)} \\
&\leq C(\alpha,\beta,K,R_\ast) C_s(h)^{\frac{n}{2n-2}} \big( 2+\tau_1(h) \big)^{\frac{n}{2}} \Vert w \Vert_{vBMOL^2\big( \mathbf{R}_h^n \big)}.
\end{align*}

On the other hand, we define that
\begin{eqnarray*}
w_{2,H}(x) :=
\left\{
\begin{array}{lcl}
w_2(x) \quad \text{if} \quad x \in \mathbf{R}_+^n \cap \mathbf{R}_h^n, \\
0 \quad \quad \quad \text{if} \quad x \in \mathbf{R}_+^n \setminus \mathbf{R}_h^n.
\end{array}
\right.
\end{eqnarray*}
Since $w_2 = 0$ in $\mathbf{R}_h^n \cap B_{1+\tau_1(h)}(0)$, we have that $w_{2,H} = 0$ in $\mathbf{R}_+^n \cap B_{1+\tau_1(h)}(0)$.
By Proposition \ref{NTL2hs}, we have that
\[
\Vert w_{2,H} \cdot \mathbf{n}_{\partial \mathbf{R}_+^n} \Vert_{\dot{H}^{-\frac{1}{2}}(\mathbf{R}^{n-1})} \leq C(n) \Vert w_{2,H} \Vert_{L^2(\mathbf{R}_+^n)} \leq C(n) \Vert w_2 \Vert_{L^2\big( \mathbf{R}_h^n \big)}
\]
where $\mathbf{n}_{\partial \mathbf{R}_+^n}$ denotes the outward normal on the boundary $\partial \mathbf{R}_+^n$ of the half space $\mathbf{R}_+^n$.
Since $T_h\big( w_{2,H} \cdot \mathbf{n}_{\partial \mathbf{R}_+^n} \big) = w_2 \cdot \mathbf{n}_\Gamma$ with $\mathbf{n}_\Gamma$ denoting the outward normal on the boundary $\Gamma$ of the perturbed half space $\mathbf{R}_h^n$, by estimate \eqref{ghgEm} we obtain that 
\begin{align*}
\Vert w_2 \cdot \mathbf{n}_\Gamma \Vert_{\dot{H}^{-\frac{1}{2}}(\Gamma)} &\leq C(n) C_s(h)^{\frac{n}{2}+1} C_1(h) \Vert w_{2,H} \cdot \mathbf{n}_{\partial \mathbf{R}_+^n} \Vert_{\dot{H}^{-\frac{1}{2}}(\mathbf{R}^{n-1})} \\
&\leq C(n) C_s(h)^{\frac{n}{2}+1} C_1(h) \Vert w_2 \Vert_{L^2\big( \mathbf{R}_h^n \big)}.
\end{align*}
\end{proof}

\section{Estimates for some boundary integrals} 
\label{sec:EBI}

Let $E$ denotes the fundamental solution of $-\Delta$ in $\mathbf{R}^n$, i.e.,
\[
E(x) :=
\begin{cases}
-\log \lvert x \rvert / 2 \pi & \quad (n = 2), \\
\lvert x \rvert^{2-n} / \big( n(n-2) b_1(n) \big) & \quad (n \geq 3),
\end{cases}
\]
where $b_1(n)$ denotes the volume of the unit ball $B_1(0)$ in $\mathbf{R}^n$.
In this section, we assume that $\mathbf{R}_h^n$ is a perturbed $C^2$ half space of type $(K)$ with boundary $\Gamma = \partial \mathbf{R}_h^n$.
The purpose of this section is to establish several estimates for the trace operator of 
\[
\big( Qg \big)(x) := \int_\Gamma \frac{\partial E}{\partial \mathbf{n}_x}(x-y) g(y) \, d\mathcal{H}^{n-1}, \quad x \in \Gamma^\Omega_{\rho_0}
\]
for $g \in L^\infty(\Gamma) \cap \dot{H}^{-\frac{1}{2}}(\Gamma)$ where $\partial / \partial \mathbf{n}_x$ denotes the exterior normal derivative with respect to $x$-variable.

Let us recall that for a perturbed $C^2$ half space $\mathbf{R}_h^n$ with $h \in C_\mathrm{c}^2(\mathbf{R}^{n-1})$ that is not identically zero, $R_h>0$ represents the smallest positive real number such that $\operatorname{supp} h \subseteq \overline{B_{R_h}(0')}$.

\subsection{Estimate for the normal derivative in $y$ of $E$} 
\label{sub:ENDE}

To be specific, we give an estimate for the boundary integral of $\frac{\partial E}{\partial \mathbf{n}_y}(x- \cdot)$ for $x \in \mathbf{R}_h^n$ that is close to the boundary $\Gamma$. We have a compatible result with \cite[Lemma 6]{GG22b}, which deals with the case where the domain is bounded.
For $\rho \in (0,\rho_0]$, we define that $\Gamma^\Omega_\rho := \Gamma^{\mathbf{R}^n}_\rho \cap \Omega$.

\begin{lemma} \label{EP} 
Let $\mathbf{R}_h^n$ be a perturbed $C^2$ half space of type $(K)$ with boundary $\Gamma = \partial \mathbf{R}_h^n$, $n \geq 2$ and $\rho \in (0,\rho_0]$.
Then, it holds that
\begin{enumerate}
\item[(i)]
\[
\int_\Gamma \frac{\partial E}{\partial\mathbf{n}_y} (x-y) \, d\mathcal{H}^{n-1}(y) = -\frac{1}{2} \quad \text{for any} \quad x \in \mathbf{R}_h^n,
\]

\item[(i\hspace{-1pt}i)]
\[
\sup_{x \in \Gamma^\Omega_\rho} \, \int_\Gamma \left\lvert  \frac{\partial E}{\partial\mathbf{n}_y} (x-y) \right\rvert  \, d\mathcal{H}^{n-1}(y) < C(n) C_{\ref{EP}}(K,h,\rho)
\]
where
\[
C_{\ref{EP}}(K,h,\rho) := \big( R_h^{n-1} + \rho K + \rho +1 \big) \Vert h \Vert_{C^1(\mathbf{R}^{n-1})} + \Vert \nabla'^2 h \Vert_{L^\infty(\mathbf{R}^{n-1})} + \rho + 1.
\]
\end{enumerate}
\end{lemma}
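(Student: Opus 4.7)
The plan for (i) is to apply the divergence theorem to the solenoidal vector field $y \mapsto \nabla_y E(x-y)$ on the truncated domain $\mathbf{R}_h^n \cap B_R(0) \setminus B_\epsilon(x)$, with $\epsilon$ so small that $B_\epsilon(x) \subset \mathbf{R}_h^n$. Since $\nabla_y E(x-y) = (x-y)/(n b_1(n)|x-y|^n)$, the contribution from $\partial B_\epsilon(x)$ equals $+1$ by the defining property of the fundamental solution. Because $h \in C^k_c(\mathbf{R}^{n-1})$, the spherical cap $\mathbf{R}_h^n \cap \partial B_R(0)$ is asymptotically the upper hemisphere of area $\sim \tfrac12 n b_1(n) R^{n-1}$, and a direct computation shows the integrand tends to $-1/(n b_1(n) R^{n-1})$ there, so the cap contribution converges to $-1/2$ as $R \to \infty$. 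Rearranging the identity $0 = \int_{\Gamma \cap B_R} + \int_{\mathbf{R}_h^n \cap \partial B_R} + \int_{\partial B_\epsilon(x)}$ yields the claimed value $-1/2$.

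For (ii), split $\Gamma = \Gamma_c \cup \Gamma_f$ with curved part $\Gamma_c := \{(y', h(y')) : |y'| \leq R_h\}$ and flat part $\Gamma_f := \{(y', 0) : |y'| > R_h\}$. On $\Gamma_f$, the outward normal is $(0',-1)$ and
$$
\left|\frac{\partial E}{\partial \mathbf{n}_y}(x-y)\right| = \frac{|x_n|}{n b_1(n)(|x'-y'|^2 + x_n^2)^{n/2}},
$$
whose integral over any subset of $\mathbf{R}^{n-1}$ is at most $1/2$ by the half-space Poisson-kernel identity, uniformly in $x$. For the curved part, the key orthogonality estimate comes from $C^2$-regularity: parametrize $y = (y', h(y'))$ and apply the mean-value identity $h(y') - h(x') = \nabla' h(\xi') \cdot (y'-x')$; the numerator $(x'-y')\cdot\nabla' h(y') - (x_n - h(y'))$ then telescopes to $-(x_n - h(x')) + (x'-y')\cdot(\nabla' h(y') - \nabla' h(\xi'))$, with absolute value at most $|x_n - h(x')| + \|\nabla'^2 h\|_{L^\infty(\mathbf{R}^{n-1})} |x'-y'|^2$. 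Polar integration in $y'$ around $x'$, combined with $|x-y|^2 \geq |x'-y'|^2 + (x_n - h(y'))^2$, bounds the two resulting pieces: the $|x_n - h(x')|$-piece by a Poisson-type change of variables, and the curvature piece by a weakly singular integral of order $\|\nabla'^2 h\|_{L^\infty(\mathbf{R}^{n-1})} R_h$ over the bounded parameter domain $B_{R_h}(0')$.

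When $|x'| > R_h$ the identity $h(x') = 0$ forces $\pi x$ onto the flat part and the Taylor centering at $x'$ loses its symmetry; here one instead bounds the numerator crudely by $\|\nabla' h\|_{L^\infty(\mathbf{R}^{n-1})} |x'-y'| + |x_n| + \|h\|_{L^\infty(\mathbf{R}^{n-1})}$ and uses $|x-y| \geq |d(x)|$ on the bounded domain $B_{R_h}(0')$, producing the $R_h^{n-1}\|h\|_{C^1(\mathbf{R}^{n-1})}$ contribution to $C_{\ref{EP}}$. The main obstacle is achieving uniformity in $x \in \Gamma^\Omega_\rho$ as $d(x) \to 0$ with $\pi x$ near the seam $|y'| = R_h$, where neither estimate is individually sharp; patching the two regimes while tracking the explicit dependencies on $R_h$, $K$, $\|h\|_{C^1(\mathbf{R}^{n-1})}$, $\|\nabla'^2 h\|_{L^\infty(\mathbf{R}^{n-1})}$ and $\rho$ is what produces the stated constant $C_{\ref{EP}}(K,h,\rho)$.
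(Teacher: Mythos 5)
Part (i) of your proposal is correct and is essentially the paper's own argument: the paper likewise applies the divergence theorem on a truncated domain (a slab $\{h(y')<y_n<\Vert h\Vert_{L^\infty}+2\lvert x_n\rvert,\ \lvert y'\rvert<R\}$ rather than a large ball, and the distributional identity $\Delta_yE(x-y)=-\delta(x-y)$ rather than an excised small ball), identifies the far-field contribution with half of a Poisson integral, and lets $R\to\infty$. No objection there.

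For (ii), your decomposition (flat part via the Poisson-kernel identity; curved part split into a leading term with numerator $x_n-h(x')$ plus a curvature remainder of size $\Vert\nabla'^2h\Vert_{L^\infty}\lvert x'-y'\rvert^2$) is the same as the paper's, but your treatment of the leading term is where the proposal has a genuine gap. You claim that
\[
\int \frac{\lvert x_n-h(x')\rvert}{\left(\lvert x'-y'\rvert^2+\big(x_n-h(y')\big)^2\right)^{n/2}}\,dy'
\]
is controlled ``by a Poisson-type change of variables, combined with $\lvert x-y\rvert^2\ge\lvert x'-y'\rvert^2+(x_n-h(y'))^2$.'' That last relation is an identity for $y\in\Gamma$ and accomplishes nothing: the difficulty is precisely that the height in the denominator is $x_n-h(y')$, which varies with $y'$ and can be far smaller than the numerator's $x_n-h(x')$, so the integrand is not pointwise dominated by a Poisson kernel with parameter $x_n-h(x')$ without further input. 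The paper does not estimate this term directly at all; it notes that the leading kernel is positive on $\Gamma^\Omega_\rho$ and recovers its integral from the exact identity (i) after subtracting the already-bounded remainder and far-field pieces. Your route can be completed, but it requires the geometric comparison $\lvert x-y\rvert\ge c\,(x_n-h(x'))/C_s(h)$ for all $y\in\Gamma$ (obtained from $x_n-h(y')\ge(x_n-h(x'))-\Vert\nabla'h\Vert_{L^\infty}\lvert x'-y'\rvert$ by splitting on the size of $\lvert x'-y'\rvert$); with it the denominator is bounded below by a multiple of $\lvert x'-y'\rvert^2+\big((x_n-h(x'))/C_s(h)\big)^2$ and the integral becomes a genuine Poisson integral of size $C(n)C_s(h)$. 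As written, this step is absent. Separately, your fallback for $\lvert x'\rvert>R_h$ --- bounding the numerator by $\Vert\nabla'h\Vert_{L^\infty}\lvert x'-y'\rvert+\lvert x_n\rvert+\Vert h\Vert_{L^\infty}$ and using only $\lvert x-y\rvert\ge\lvert d(x)\rvert$ on $B_{R_h}(0')$ --- is not uniform as $d(x)\to0$, since the $\lvert x_n\rvert+\Vert h\Vert_{L^\infty}$ part of the numerator against $\lvert x'-y'\rvert^{-n}$ is non-integrable near $x'$; the paper uses this crude bound only on $\lvert x'-y'\rvert\ge1$. Once the leading-term estimate is repaired, your mean-value decomposition is valid for every $x'$, each piece is uniformly bounded, and the ``seam'' difficulty you raise in the final paragraph does not in fact arise.
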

\begin{proof}
\begin{enumerate}
\item[(i)] 
This follows from the Gauss divergence theorem. For a bounded piecewise $C^1$ domain $D \subset \mathbf{R}^n$, we have that
\[
\int_{\partial D} \frac{\partial E}{\partial\mathbf{n}_y} (x-y) \, d\mathcal{H}^{n-1}(y) = \int_D \Delta_y E(x-y) \, dy
\]
for any $x \in D$.
Since $\Delta_y E(x-y) = -\delta(x-y)$, we obtain that
\[
\int_{\partial D} \frac{\partial E}{\partial\mathbf{n}_y} (x-y) \, d\mathcal{H}^{n-1}(y) = -1
\]
for $x \in D$. Let $x \in \mathbf{R}_h^n$. 
For $R>0$, we define the domain $D_R$ by
\[
D_R := \left\{ (y', y_n) \, \middle\vert \, h(y') < y_n < \Vert h \Vert_{L^\infty(\mathbf{R}^{n-1})} + 2 \lvert x_n \rvert, \, \lvert y' \rvert < R \right\}.
\]
We consider $R > R_h+ \lvert x' \rvert$. By applying the Gauss divergence theorem in $D_R$, we deduce that
\begin{align*}
- 1 &= \int_{y_n = \Vert h \Vert_{L^\infty(\mathbf{R}^{n-1})} + 2 \lvert x_n \rvert, \, \lvert y' \rvert < R} \frac{\partial E}{\partial\mathbf{n}_y} (x-y) \, d\mathcal{H}^{n-1}(y) + \int_{y \in \Gamma, \, \lvert y' \rvert < R} \frac{\partial E}{\partial\mathbf{n}_y} (x-y) \, d\mathcal{H}^{n-1}(y) \\
&\ \ + \int_{0 < y_n < \Vert h \Vert_{L^\infty(\mathbf{R}^{n-1})} + 2 \lvert x_n \rvert, \, \lvert y' \rvert = R} \frac{\partial E}{\partial\mathbf{n}_y} (x-y) \, d\mathcal{H}^{n-1}(y).
\end{align*}
The last term tends to zero naturally as $R \to \infty$. For the first term, since $\mathbf{n}_y$ is pointing straightly upward but $x$ is located below $\left\{ (y',y_n) \, \middle\vert \, y_n = \Vert h \Vert_{L^\infty(\mathbf{R}^{n-1})} + 2 \lvert x_n \rvert \right\}$, the kernel $\big( \partial E / \partial \mathbf{n}_y \big)(x-y)$ in this case is exactly the half of the Poisson kernel $P_{\delta_{h,x_n}}(x'-y')$ with $\delta_{h,x_n} := \Vert h \Vert_{L^\infty(\mathbf{R}^{n-1})} + 2 \lvert x_n \rvert - x_n$. Hence, the first integral on the right hand side tends to $-\frac{1}{2}$ as $R \to \infty$. We therefore obtain (i).

\item[(i\hspace{-1pt}i)]
Let us observe that
\[
-\mathbf{n}_y = -\mathbf{n} \big( y', h(y') \big) = \big( -\nabla' h(y'), 1 \big)/ \omega(y') 
\]
where $\omega(y')=\big( 1+ \left\lvert\nabla' h(y')\right\rvert^2 \big)^{1/2}$ and $\nabla'$ is the gradient in $y'$ variables.
This implies that 
\[
- C(n) \frac{\partial E}{\partial\mathbf{n}_y}(x-y) = \frac{\sigma(y')}{\omega(y') \left( \lvert x'-y'\rvert^2 + \big( x_n- h(y') \big)^2 \right)^{n/2}}
\]
for $y \in \Gamma$ with
\[
\sigma(y') := -\nabla' h(y') \cdot (x'-y') + \big( x_n - h(y') \big) \; \; \text{where} \; \; x_n > h(x'),\ x',y' \in \mathbf{R}^{n-1}.
\]
We set that
\[
K(x', y', x_n) := \frac{\sigma(y')}{\left( \lvert x'-y' \rvert^2 + \big( x_n - h(y') \big)^2 \right)^{n/2}}.
\]
By the Taylor expansion, for $\lvert x'-y' \rvert < 1$ we have that
\[
h(x') = h(y') + \nabla' h(y') \cdot (x'-y') + r(x',y')
\]
with
\[
r(x',y') = (x'-y')^{\mathrm{T}} \cdot \int^1_0 (1-\theta) \Big( \nabla'^2 h \Big) \big( \theta x'+(1-\theta)y' \big) \, d\theta \cdot (x'-y').
\]
We obtain that
\[
\sigma(y') = x_n - h(x') + r(x',y')
\]
with an estimate
\begin{equation} \label{R1}
\left\lvert  r(x',y') \right\rvert  \leq \Vert \nabla'^2 h \Vert_{L^\infty\big( B_1(x') \big)} \lvert x'-y' \rvert^2.
\end{equation}
We decompose $K$ into the sum of a leading term and a remainder term
\[
K(x', y', x_n) = K_0(x', y', x_n) + R(x', y', x_n)
\]
with
\begin{align*}
K_0(x', y', x_n) &:= \frac{x_n - h(x')}{\left( \lvert x'-y' \rvert^2 + \big( x_n - h(y') \big)^2 \right)^{n/2}}, \\
R(x', y', x_n) &:= \frac{r(x', y')}{\left(\lvert x'-y' \rvert^2 + \big( x_n - h(y') \big)^2 \right)^{n/2}}.
\end{align*}
The term $R$ is estimated as
\[
\left\lvert  R(x', y', x_n) \right\rvert  \leq \Vert \nabla'^2 h\Vert_{L^\infty\big( B_1(x') \big)} \lvert x'-y' \rvert^{2-n}
\]
for $\lvert x'-y' \rvert<1$ by estimate \eqref{R1}. 
Hence, 
\[
\int_{\underset{\lvert x'-y' \rvert<1}{y \in \Gamma,}} \left\lvert  \frac{R(x',y',x_n)}{\omega(y')} \right\rvert  \, d\mathcal{H}^{n-1}(y) \leq C(n) \Vert \nabla'^2 h \Vert_{L^\infty(\mathbf{R}^{n-1})}.
\]
Since
\[
\left\lvert \sigma(y')\right\rvert \leq \left\lvert \nabla' h (y')\right\rvert \cdot \lvert x'-y' \rvert + \lvert x_n \rvert + \left\lvert h(y')\right\rvert
\]
for any $y' \in \mathbf{R}^{n-1}$, we have that
\begin{align}
&\begin{aligned} \label{EKW}
\int_{\underset{\lvert y'-x' \rvert \geq 1}{y \in \Gamma,}} \left\lvert \frac{K(x',y',x_n)}{\omega(y')} \right\rvert \, d\mathcal{H}^{n-1}(y) &\leq \int_{\lvert y'-x' \rvert \geq 1} \left\lvert \nabla' h(y')\right\rvert \, dy' + \int_{\lvert y'-x' \rvert \geq 1} \left\lvert h(y')\right\rvert \, dy' \\
&\ \ + \int_{\lvert y'-x'\rvert \geq 1} \frac{\lvert x_n \rvert}{\lvert y'-x' \rvert^n} \, dy'.
\end{aligned}
\end{align}
Since $\operatorname{supp} h \subseteq \overline{B_{R_h}(0')}$, the first two terms of estimate \eqref{EKW} can be estimated by the constant $C(n) R_h^{n-1} \Vert h \Vert_{C^1(\mathbf{R}^{n-1})}$. On the other hand, since the estimate
\[
\left\lvert x_n - h(x') \right\rvert \leq (nK+1) \rho \Vert h \Vert_{C^1(\mathbf{R}^{n-1})} + \rho
\]
holds for any $x \in \Gamma^\Omega_\rho$, The third term of estimate \eqref{EKW} can be controlled by the constant $C(n) \big( (\rho K + \rho + 1) \Vert h \Vert_{C^1(\mathbf{R}^{n-1})} + \rho \big)$.
By (i), we observe that
\begin{align*}
&\frac{C(n)}{2} = \int_{\underset{\lvert y'-x' \rvert \geq 1}{y \in \Gamma,}} \frac{K(x', y', x_n)}{\omega(y')} \, d\mathcal{H}^{n-1}(y) \\
&\ \ + \int_{\underset{\lvert y'-x' \rvert < 1}{y \in \Gamma,}} \frac{K_0(x', y', x_n)}{\omega(y')} \, d\mathcal{H}^{n-1}(y) + \int_{\underset{\lvert y'-x' \rvert< 1}{y \in \Gamma,}} \frac{R(x', y', x_n)}{\omega(y')} \, d\mathcal{H}^{n-1}(y).
\end{align*}
The term $K_0$ is very singular but it is positive for $x \in \Gamma^\Omega_\rho$. Hence, we have that
\[
\int_{\underset{\lvert y'-x' \rvert<1}{y \in \Gamma,}} \frac{K_0(x', y', x_n)}{\omega(y')} \, d\mathcal{H}^{n-1}(y) \leq C(n) C_{\ref{EP}}(K,h,\rho)
\]
where the constant $C_{\ref{EP}}(K,h,\rho)$ has the explicit expression
\[
C_{\ref{EP}}(K,h,\rho) := \big( R_h^{n-1} + \rho K + \rho +1 \big) \Vert h \Vert_{C^1(\mathbf{R}^{n-1})} + \Vert \nabla'^2 h \Vert_{L^\infty(\mathbf{R}^{n-1})} + \rho +1.
\]
Therefore, we finally obtain the estimate 
\[
\int_\Gamma \left\lvert  \frac{\partial E}{\partial\mathbf{n}_y}(x-y) \right\rvert  \, d\mathcal{H}^{n-1}(y)
\leq C(n) C_{\ref{EP}}(K,h,\rho)
\]
which holds for any $x \in \Gamma^\Omega_\rho$.
This completes the proof of Lemma \ref{EP}.
\end{enumerate}
\end{proof}

Before we end this subsection, we would like to give an estimate on the difference between gradients of the signed distance function near the boundary with explicit dependency on the boundary function $h$. This estimate plays an important role in later estimations of various boundary integrals.

\begin{proposition} \label{DbSD}
Let $\mathbf{R}_h^n$ be a perturbed $C^2$ half space with boundary $\Gamma = \partial \mathbf{R}_h^n$ and $n \geq 2$. Then, for any $x \in \Gamma^{\mathbf{R}^n}_{\rho_0}$ and $y \in \Gamma$, it holds that
\[
\left\lvert \nabla d (x) - \nabla d (y) \right\rvert \leq C(n) C_s(h)^2 \Vert \nabla'^2 h \Vert_{L^\infty(\mathbf{R}^{n-1})} \lvert x-y \rvert.
\]
\end{proposition}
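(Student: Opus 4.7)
The plan is to identify $\nabla d(x)-\nabla d(y)$ with the difference $\mathbf{n}(y)-\mathbf{n}(\pi x)$ of outward unit normals on $\Gamma$, and then Lipschitz-estimate the Gauss map via the graph representation of the boundary.

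First, I would invoke the standard fact (already used implicitly in the normal coordinate change (\ref{NCC})) that on the tubular neighborhood $\Gamma^{\mathbf{R}^n}_{\rho_0}$ the signed distance function satisfies $\nabla d(x)=-\mathbf{n}(\pi x)$; in particular $\nabla d$ is constant along each normal ray and reduces to $-\mathbf{n}(y)$ at $y\in\Gamma$. Consequently the claim reduces to
\[
|\mathbf{n}(y)-\mathbf{n}(\pi x)|\leq C(n)\,C_s(h)^2\,\|\nabla'^2 h\|_{L^\infty(\mathbf{R}^{n-1})}\,|x-y|.
\]

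Next, using the graph parametrization $z\mapsto(z',h(z'))$, one has $\mathbf{n}(z',h(z'))=(\nabla' h(z'),-1)/\omega(z')$ with $\omega(z')=(1+|\nabla' h(z')|^2)^{1/2}\leq C_s(h)$. Setting $a=\nabla' h(y')$ and $b=\nabla' h((\pi x)')$, the splitting
\[
\mathbf{n}(y)-\mathbf{n}(\pi x)=\frac{(a-b,0)}{\omega(y')}+\left(\frac{1}{\omega(y')}-\frac{1}{\omega((\pi x)')}\right)(b,-1),
\]
together with $\omega\geq 1$, the one-Lipschitz estimate $|\omega((\pi x)')-\omega(y')|\leq|a-b|$, and $|(b,-1)|=\omega((\pi x)')\leq C_s(h)$, yields
\[
|\mathbf{n}(y)-\mathbf{n}(\pi x)|\leq C(n)\,C_s(h)\,\|\nabla'^2 h\|_{L^\infty(\mathbf{R}^{n-1})}\,|y'-(\pi x)'|,
\]
and a further factor $C_s(h)$ is absorbed to match the stated bound.

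Finally I would convert the tangential displacement into the full Euclidean one by
\[
|y'-(\pi x)'|\leq|y-\pi x|\leq|y-x|+|x-\pi x|=|y-x|+|d(x)|\leq 2|x-y|,
\]
where the last inequality uses that $y\in\Gamma$ forces $|d(x)|=|x-\pi x|\leq|y-x|$. Chaining the three steps gives the claim. The argument is essentially mechanical; the only point requiring care is the identity $\nabla d(x)=-\mathbf{n}(\pi x)$ on the full two-sided tubular neighborhood, which rests on the uniform $C^2$ regularity and the reach condition already standing in the hypotheses, so no serious obstacle arises.
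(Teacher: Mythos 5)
Your proposal is correct and follows essentially the same route as the paper: both reduce to $\nabla d(x)=\nabla d(\pi x)$, estimate the difference of normals at $\pi x$ and $y$ via the graph representation $\big(-\nabla' h/\omega,\,1/\omega\big)$ together with the mean value theorem applied to $\nabla' h$ and $\omega$, and then convert $\lvert (\pi x)'-y'\rvert$ to $\lvert x-y\rvert$ using the minimality of $\lvert x-\pi x\rvert$ and the triangle inequality to get $\lvert y-\pi x\rvert\le 2\lvert x-y\rvert$. No gaps.
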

\begin{proof}
Let $y \in \Gamma$, $x \in \Gamma^{\mathbf{R}^n}_{\rho_0}$ and $\pi x$ be the unique projection of $x$ on $\Gamma$.
Note that
\begin{align*}
\nabla d(x) - \nabla d(y) &= \nabla d (\pi x) - \nabla d (y) = \left( \frac{\nabla' h (y')}{\omega(y')} - \frac{\nabla' h (\pi x')}{\omega(\pi x')}, \frac{1}{\omega(\pi x')} - \frac{1}{\omega(y')} \right)
\end{align*}
where $\omega(\cdot') =\big( 1 + \big\lvert \nabla' h (\cdot') \big\rvert^2 \big)^{1/2}$ and $\pi x'$ denotes the first $n-1$ component of $\pi x$.
By rewriting
\begin{align*}
\nabla' h (y') \omega(\pi x') - \nabla' h (\pi x') \omega(y') &= \big( \nabla' h (y') - \nabla' h (\pi x') \big) \omega(\pi x') + \nabla' h (\pi x') \big( \omega(\pi x') - \omega(y') \big)
\end{align*}
and applying the mean value theorem to both $\nabla' h (y') - \nabla' h (\pi x')$ and $\omega(\pi x') - \omega(y')$, we can deduce that
\[
\left\lvert \nabla d (\pi x) - \nabla d (y) \right\rvert \leq C(n) \Vert \nabla'^2 h \Vert_{L^\infty(\mathbf{R}^{n-1})} \big( 1 + \Vert \nabla' h \Vert_{L^\infty(\mathbf{R}^{n-1})}^2 \big) \lvert\pi x' - y'\rvert.
\]
Since we must have $\lvert x-y\rvert> \lvert x - \pi x\rvert$ for any $y \in \Gamma$ such that $y \neq \pi x$, by the triangle inequality $\lvert x - \pi x\rvert\geq \lvert y - \pi x\rvert - \lvert x-y\rvert$, we can deduce that the inequality $\lvert y - \pi x\rvert \leq 2\lvert x-y\rvert$ holds for any $y \in \Gamma$. Since obviously $\lvert \pi x' - y'\rvert\leq \lvert \pi x - y\rvert$, we obtain Proposition \ref{DbSD}.
\end{proof}

\subsection{Criterion for a class of functions to be in $H^{\frac{1}{2}}(\mathbf{R}^{n-1})$} 
\label{sub:CHHh}

Let $n \geq 2$. We say that $f \in H^{\frac{1}{2}}(\mathbf{R}^{n-1})$ if $f \in L^2(\mathbf{R}^{n-1})$ and
\[
\Vert f \Vert_{H^{\frac{1}{2}}(\mathbf{R}^{n-1})}^2 := \Vert f \Vert_{L^2(\mathbf{R}^{n-1})}^2 + \int_{\mathbf{R}^{n-1}} \int_{\mathbf{R}^{n-1}} \frac{\left\lvert f(x') - f(y')\right\rvert^2}{\lvert x'-y' \rvert^n} \, dx' \, dy' < \infty.
\]
Our criterion is similar and compatible with \cite[Lemma 3.2]{GGH}.

\begin{proposition} \label{BSNE}
Let $n \geq 2$ and $\rho>0$. Suppose that $f \in C^1(\mathbf{R}^{n-1})$ satisfies
\[
\operatorname{supp} f \subseteq B_{2 \rho}(0')^{\mathrm{c}}, \quad \left\lvert f(x')\right\rvert \cdot \lvert x' \rvert^{n-1} \leq c_1, \quad \left\lvert \nabla' f (x')\right\rvert \cdot \lvert x' \rvert^n \leq c_2
\]
with some constants $c_1$ and $c_2$ independent of $x' \in \mathbf{R}^{n-1}$. Then the estimate
\[
\Vert f \Vert_{H^{\frac{1}{2}}(\mathbf{R}^{n-1})}^2 \leq C(n) \left( \frac{c_1^2}{\rho^{n-1}} + \frac{c_1^2 + c_2^2}{\rho^n} \right)
\]
holds with some constant $C(n)>0$ depending on $n$ only.
\end{proposition}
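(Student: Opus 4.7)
The plan is to split $\|f\|_{H^{1/2}}^2 = \|f\|_{L^2}^2 + [f]_{\dot{H}^{1/2}}^2$ into the $L^2$ part (which will produce the $c_1^2/\rho^{n-1}$ contribution) and the Gagliardo seminorm (which will produce $(c_1^2 + c_2^2)/\rho^n$). For the $L^2$ part, the pointwise bound $|f(x')| \leq c_1/|x'|^{n-1}$ combined with $\operatorname{supp} f \subseteq B_{2\rho}(0')^{\mathrm{c}}$ gives
\[
\|f\|_{L^2(\mathbf{R}^{n-1})}^2 \leq c_1^2 \int_{|x'|\geq 2\rho} |x'|^{-2(n-1)} \, dx' = C(n) c_1^2 \int_{2\rho}^\infty r^{-n}\, dr = C(n) c_1^2/\rho^{n-1}.
\]

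For the Gagliardo seminorm I would split the double integral according to whether $|x'-y'| \geq \rho$ or $|x'-y'| < \rho$. In the regime $|x'-y'| \geq \rho$, the triangle inequality $|f(x')-f(y')|^2 \leq 2|f(x')|^2 + 2|f(y')|^2$ and Fubini give
\[
\int\!\!\int_{|x'-y'|\geq\rho} \frac{|f(x')-f(y')|^2}{|x'-y'|^n}\, dx'\, dy' \leq 4\|f\|_{L^2}^2 \int_{|u'|\geq\rho} \frac{du'}{|u'|^n} \leq C(n)\, \|f\|_{L^2}^2/\rho \leq C(n)\, c_1^2/\rho^n,
\]
using the $L^2$ bound already obtained.

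For the more delicate regime $|x'-y'| < \rho$, I would use the fundamental theorem of calculus along the segment from $y'$ to $x'$, so that Cauchy--Schwarz yields
\[
|f(x') - f(y')|^2 \leq |x'-y'|^2 \int_0^1 |\nabla' f(y'+s(x'-y'))|^2\, ds.
\]
Then the key move is the change of variables $u' = x'-y'$ and, at each fixed $s \in [0,1]$ and $u'$, $v' = y' + su'$; this has unit Jacobian and gives $dx'\, dy' = du'\, dv'$. Since $\nabla' f$ is $C^1$-supported in $B_{2\rho}(0')^{\mathrm{c}}$, the pointwise bound on $|\nabla' f|$ yields $\|\nabla' f\|_{L^2}^2 \leq C(n) c_2^2/\rho^{n+1}$, while $\int_{|u'|<\rho} |u'|^{-(n-2)}\, du' \leq C(n)\rho$. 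Putting these together gives a contribution of order $C(n) c_2^2/\rho^n$, completing the bound.

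The main (only) subtlety is executing the change of variables cleanly so that the double integral decouples into the $\|\nabla' f\|_{L^2}^2$ factor times a tame one-dimensional radial integral in $u'$; once this decoupling is done the rest is direct. Nothing genuinely new is required beyond this standard interpolation-style trick, so I expect no serious obstacle.
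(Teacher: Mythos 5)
Your proof is correct, and it takes a genuinely different route from the paper's for the Gagliardo seminorm. The paper first peels off the cross terms between $B_\rho(0')$ and $B_\rho(0')^{\mathrm{c}}$ (using $\lvert x'-y'\rvert\geq\rho$ there) and then handles the main integral over $B_\rho(0')^{\mathrm{c}}\times B_\rho(0')^{\mathrm{c}}$ by a pointwise argument borrowed from \cite[Lemma 3.2]{GGH}: connecting $x'$ and $y'$ by a geodesic in $B_{\lvert x'\rvert}(0')^{\mathrm{c}}$ of length at most $(\pi/2)\lvert x'-y'\rvert$ to get $\lvert f(x')-f(y')\rvert\leq(\pi/2)c_2\lvert x'-y'\rvert\,\lvert x'\rvert^{-n}$ (the detour is needed because the straight segment may pass near the origin, where the pointwise gradient bound is useless), followed by a further split into $\lvert x'-y'\rvert\lessgtr\lvert x'\rvert$ and a small-$\delta$ exponent trick. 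You instead split by $\lvert x'-y'\rvert\gtrless\rho$, handle the far regime through $\Vert f\Vert_{L^2}^2$, and handle the near regime by the fundamental theorem of calculus along the straight segment plus Cauchy--Schwarz and the unit-Jacobian change of variables $(x',y')\mapsto(u',v')=(x'-y',\,y'+s(x'-y'))$, which decouples the integral into $\Vert\nabla' f\Vert_{L^2}^2\int_{\lvert u'\rvert<\rho}\lvert u'\rvert^{2-n}\,du'$. All the numerology checks out: $\Vert f\Vert_{L^2}^2\lesssim c_1^2\rho^{-(n-1)}$, $\Vert\nabla' f\Vert_{L^2}^2\lesssim c_2^2\rho^{-(n+1)}$ (both using the support condition to avoid the origin), and the two seminorm pieces come out as $c_1^2/\rho^n$ and $c_2^2/\rho^n$ respectively. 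Your argument is the classical bound $[f]_{\dot H^{1/2}}^2\lesssim\lambda^{-1}\Vert f\Vert_{L^2}^2+\lambda\Vert\nabla' f\Vert_{L^2}^2$ with $\lambda=\rho$, and it is more elementary than the paper's (no geodesics, no auxiliary exponent $\delta$); the pointwise hypotheses enter only through the two $L^2$ norms, whereas the paper's route keeps everything pointwise, which is in the spirit of the reference it follows but buys nothing extra here.
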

\begin{proof}
By a direct calculation, we see that
\[
 \Vert f \Vert_{L^2(\mathbf{R}^{n-1})}^2 \leq c_1^2 \int_{B_{2 \rho}(0')^{\mathrm{c}}} \frac{1}{\lvert y' \rvert^{2n-2}} \, dy' \leq \frac{C(n) c_1^2}{\rho^{n-1}}.
\]
For $y' \in B_\rho(0')$ and $x' \in B_{2 \rho}(0')^\mathrm{c}$, we have that $\lvert x'-y' \rvert\geq \rho$. Hence, we can deduce that
\begin{align*}
\int_{B_\rho(0')} \int_{B_\rho(0')^\mathrm{c}} \frac{\left\lvert f(x') - f(y')\right\rvert^2}{\lvert x'-y' \rvert^n} \, dx' \, dy' &= \int_{B_\rho(0')} \int_{B_{2 \rho}(0')^\mathrm{c}} \frac{\left\lvert f(x')\right\rvert^2}{\lvert x'-y' \rvert^n} \, dx' \, dy' \\
&\leq \frac{C(n)}{\rho} \Vert f \Vert_{L^2\big( B_{2 \rho}(0')^\mathrm{c} \big)}^2 \leq \frac{C(n) c_1^2}{\rho^n}.
\end{align*}
By symmetry, we also have that
\[
\int_{B_\rho(0')} \int_{B_\rho(0')^\mathrm{c}} \frac{\left\lvert f(x') - f(y')\right\rvert^2}{\lvert x'-y' \rvert^n} \, dy' \, dx' \leq \frac{C(n) c_1^2}{\rho^n}.
\]
Hence, it is sufficient to estimate 
\[
I = \int_{B_\rho(0')^\mathrm{c}} \int_{B_\rho(0')^\mathrm{c}} \frac{\left\lvert f(x') - f(y')\right\rvert^2}{\lvert x'-y' \rvert^n} \, dx' \, dy'.
\]

We then follow the similar idea that proves \cite[Lemma 3.2]{GGH}. 
Assume that $\lvert x' \rvert \leq \lvert y' \rvert$ and connect $x'$ and $y'$ by a geodesic curve in $B_{\lvert x' \rvert}(0')^{\mathrm{c}}$.
Since the curve length is less than $(\pi/2)\lvert x'-y' \rvert$, by a fundamental theorem of calculus, we observe that
\begin{align*}
\left\lvert f(x')-f(y') \right\rvert
&\leq (\pi/2) \lvert x'-y' \rvert \cdot \sup\left\{ \left\lvert \nabla' f(z')\right\rvert  \,\middle\vert\,  z' \in B_{\lvert x' \rvert}(0')^{\mathrm{c}} \right\} \\
&\leq (\pi/2) c_2 \lvert x'-y' \rvert\cdot \lvert x' \rvert^{-n}.
\end{align*}
Since the integrand of $I$ is symmetric with respect to $x'$ and $y'$, we now estimate
\[
\frac{I}{2} = \int\!\!\!\!\int_{D_1} + \int\!\!\!\!\int_{D_2} \frac{\left\lvert f(x')-f(y') \right\rvert^2}{\lvert x'-y' \rvert^n} \, dx' \, dy' = I_1 + I_2
\]
with
\begin{align*}
D_1 &= \left\{ (x',y') \,\middle\vert\,  \rho \leq \lvert x' \rvert \leq \lvert y' \rvert,\ \lvert x'-y' \rvert \leq \lvert x' \rvert \right\}, \\
D_2 &= \left\{ (x',y') \,\middle\vert\,  \rho \leq\lvert x' \rvert \leq \lvert y' \rvert,\ \lvert x'-y' \rvert \geq \lvert x' \rvert \right\}.
\end{align*}
To estimate $I_1$, we observe that
\begin{align*}
\frac{\left\lvert  f(x')-f(y') \right\rvert ^2}{\lvert x'-y' \rvert^n} &\leq (\pi/2)^2 c_2^2 \lvert x' \rvert^{-2n} \lvert x'-y' \rvert^{-(n-2)} \\
&\leq (\pi/2)^2 c_2^2 \lvert x' \rvert^{-2n+1+\delta} \lvert x'-y' \rvert^{-(n-2)-1-\delta}
\end{align*}
for any $0< \delta < 1$ since $\lvert x'-y' \rvert\leq\lvert x' \rvert$. Thus,
\begin{align*}
I_1 &\leq (\pi/2)^2 c_2^2 \int_{B_\rho(0')^{\mathrm{c}}} \int_{B_\rho(x')} \lvert y'-x' \rvert^{-(n-2)} \, dy' \lvert x' \rvert^{-2n} \, dx' \\
&\ \ + (\pi/2)^2 c_2^2 \int_{B_\rho(0')^{\mathrm{c}}} \int_{B_\rho(x')^{\mathrm{c}}} \lvert y'-x' \rvert^{-(n-2)-1-\delta} \, dy' \lvert x' \rvert^{-2n+1+\delta} \, dx' < \frac{c_2^2 C(n)}{\rho^n} \left( 1 + \frac{1}{\delta (n-\delta)} \right).
\end{align*}
To estimate $I_2$, we observe that
\[
\frac{\left\lvert  f(x')-f(y') \right\rvert ^2}{\lvert x'-y' \rvert^n} \leq 2 \frac{\left\lvert f(x')\right\rvert ^2+\left\lvert f(y')\right\rvert ^2}{\lvert x'-y' \rvert^n} \leq 4 c_1^2 \lvert x'-y' \rvert^{-n} \lvert x' \rvert^{-(2n-2)}
\]
since $\lvert x' \rvert\leq\lvert y' \rvert$. Since $\lvert x'-y' \rvert \geq \lvert x' \rvert$ in this case, we have that
\[
\lvert x'-y' \rvert^{-n} \lvert x' \rvert^{-(2n-2)} \leq \lvert x'-y' \rvert^{-(n-2)} \lvert x' \rvert^{-2n}
\]
and 
\[
\lvert x'-y' \rvert^{-n} \lvert x' \rvert^{-(2n-2)} \leq \lvert x'-y' \rvert^{-(n-\delta)} \lvert x' \rvert^{-(2n-2)-\delta}
\]
for any $0<\delta<1$.
Hence,
\begin{align*}
I_2 &\leq 4 c_1^2 \int_{B_\rho(0')^{\mathrm{c}}} \int_{B_\rho(x')} \lvert y'-x' \rvert^{-(n-2)} \, dy' \lvert x' \rvert^{-2n} \, dx' \\
&\ \ + 4 c_1^2 \int_{B_\rho(0')^{\mathrm{c}}} \int_{B_\rho(x')^{\mathrm{c}}} \lvert y'-x' \rvert^{-(n-\delta)} \, dy' \lvert x' \rvert^{-(2n-2)-\delta} \, dx' < \frac{c_1^2 C(n)}{\rho^n} \cdot \frac{n - (n-2)\delta - \delta^2}{(1-\delta)(n+\delta-1)}.
\end{align*}
\end{proof}

\subsection{$L^\infty$ estimate for the trace operator of $Qg$} 
\label{sub:EPgS}

For $\rho \in (0,\infty)$, we let $1'_{B_\rho(0')}$ to be the characteristic function associated with the open ball $B_\rho(0')$ in $\mathbf{R}^{n-1}$, i.e., we define that
\[
1'_{B_\rho(0')}(x') :=
\begin{cases}
1 & \quad \text{if} \quad x' \in B_\rho(0'), \\
0 & \quad \text{if} \quad x' \in \mathbf{R}^{n-1} \setminus B_\rho(0').
\end{cases}
\]
For $g \in L^\infty(\Gamma)$, we decompose $g$ into the sum of the curved part $g_1$ and the straight part $g_2$ where
\begin{align*}
g_1\big( x',h(x') \big) &:= 1'_{B_{2 R_h}(0')}(x') g\big( x', h(x') \big), \\
g_2\big( x',h(x') \big) &:= g\big( x',h(x') \big) - g_1\big( x',h(x') \big)
\end{align*}
for any $x' \in \mathbf{R}^{n-1}$. Note that $g_1, g_2 \in L^\infty(\Gamma)$. With respect to $g_2$, we further define $g_2^H \in L^\infty(\partial \mathbf{R}_+^n)$ by setting
\begin{align} \label{HPg2}
g_2^H(x', 0) :=
\begin{cases}
0 & \quad \text{if} \quad \lvert x' \rvert < 2R_h, \\
g_2(x',0) & \quad \text{if} \quad \lvert x' \rvert \geq 2R_h
\end{cases}
\end{align}
for $x' \in \mathbf{R}^{n-1}$.

\begin{theorem} \label{TBIE}
Let $\mathbf{R}_h^n$ be a perturbed $C^2$ half space with boundary $\Gamma = \partial \mathbf{R}_h^n$ and $n \geq 3$. Moreover, let us assume that $h \in C_\mathrm{c}^2(\mathbf{R}^{n-1})$ satisfies the smallness condition
\[
R_h^{\frac{2n-1}{2n}} < \frac{1}{2}.
\]
Then, the boundary trace of $Qg$ is of the form
\[
\gamma \big( Qg \big) \big( x',h(x') \big) = \frac{1}{2} g\big( x',h(x') \big) - \big( Sg \big) \big( x',h(x') \big)
\]
for $g \in L^\infty(\Gamma) \cap \dot{H}^{-\frac{1}{2}}(\Gamma)$, where $S$ is a bounded linear operator from $L^\infty(\Gamma) \cap \dot{H}^{-\frac{1}{2}}(\Gamma)$ to $L^\infty(\Gamma)$ satisfying
\[
\Vert S \Vert_{L^\infty(\Gamma) \cap \dot{H}^{-\frac{1}{2}}(\Gamma) \to L^\infty(\Gamma)} \leq C^\ast_0(n) C_s(h)^{n+6} \big( C_{\ast,1}(h) + C_{\ast,2}(h) \big)
\]
with some specific constant $C^\ast_0(n)$ depending only on $n$ and
\begin{align*}
C_s(h) &:= 1 + \Vert h \Vert_{C^1(\mathbf{R}^{n-1})}, \quad C_1(h) := 1 + R_h \Vert \nabla'^2 h \Vert_{L^\infty(\mathbf{R}^{n-1})}, \\
C_{\ast,1}(h) &:= C_1(h)^3 (1 + R_h^{\frac{1}{4}}) \Big( R_h^{\frac{1}{2}} \Vert \nabla'^2 h \Vert_{L^\infty(\mathbf{R}^{n-1})} + R_h^{\frac{5}{2}} \Vert \nabla'^2 h \Vert_{L^\infty(\mathbf{R}^{n-1})}^3 \Big), \\
C_{\ast,2}(h) &:= \big( R_h + R_h^{\frac{1}{2n}} \big) \Vert \nabla'^2 h \Vert_{L^\infty(\mathbf{R}^{n-1})} + (R_h^{n-1} + 1) \Vert h \Vert_{C^1(\mathbf{R}^{n-1})}.
\end{align*}
\end{theorem}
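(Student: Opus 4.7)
The plan is to prove the theorem in two stages: first derive the jump formula $\gamma(Qg) = \tfrac{1}{2}g - Sg$, identifying $S$ as a principal value integral, and second estimate $\Vert Sg\Vert_{L^\infty(\Gamma)}$ in terms of the norm $\Vert g\Vert_{L^\infty(\Gamma) \cap \dot H^{-1/2}(\Gamma)}$ with the stated explicit constants. For the jump formula I would split $g = g_1 + g_2$ into its curved and straight parts as defined just before the theorem. On the flat portion of $\Gamma$ one has $\mathbf{n}_x = -e_n$, so $\partial E/\partial \mathbf{n}_x(x-y)$ specializes to the Poisson kernel of $\mathbf{R}_+^n$; a standard approximation-to-identity argument then yields $(Qg_2)(x) \to \tfrac{1}{2}g_2(\pi x)$ as $x \to \pi x \in \Gamma$ whenever $\pi x$ lies in the flat region. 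For the curved portion I would use the classical $C^2$ double-layer jump argument: add and subtract $g(\pi x)$ inside the integral, apply Proposition \ref{DbSD} to bound $\lvert \mathbf{n}_x - \mathbf{n}_y\rvert \lesssim \lvert x-y\rvert$, and exploit the fact that on a $C^2$ surface $(x-y) \cdot \mathbf{n}_x$ vanishes to order $\lvert x-y\rvert^2$, reducing the kernel to one integrable near the diagonal. The residual principal-value integral defines $-Sg$.

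For the operator bound I split again $g = g_1 + g_2$. The curved piece $g_1$ is supported in the compact set $B'_\Gamma(2R_h)$, so $\lvert (Sg_1)(x_0)\rvert$ is controlled by $\Vert g\Vert_{L^\infty(\Gamma)}$ times $\int_{B'_\Gamma(2R_h)} \lvert \partial E/\partial \mathbf{n}_{x_0}(x_0-y)\rvert\,d\mathcal{H}^{n-1}(y)$, which by the Taylor expansion technique used to prove Lemma \ref{EP}(ii), combined with Proposition \ref{DbSD}, is majorised by a constant of the shape $C(n)\,C_s(h)^\alpha(C_{\ast,1}(h) + C_{\ast,2}(h))$ for some explicit power $\alpha$. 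For the straight piece $g_2$ I distinguish two positions of the base point $x_0 \in \Gamma$. If $x_0$ lies in the flat region $\{\lvert x_0'\rvert > R_h\}$, then near $x_0$ the kernel reduces to the half-space Poisson kernel; up to a compactly supported perturbation coming from the transition annulus $R_h \leq \lvert y'\rvert \leq 2R_h$, $Sg_2(x_0)$ is a classical singular integral on $\mathbf{R}^{n-1}$ and is controlled by $\Vert g\Vert_{L^\infty(\Gamma)}$. If $x_0$ lies in the curved region $\{\lvert x_0'\rvert \leq R_h\}$, then $\lvert x_0 - y\rvert \geq c\,R_h$ for every $y \in \operatorname{supp} g_2$; I would then view the pulled-back kernel
\[
k_{x_0}^h(y') := \Bigl(\tfrac{\partial E}{\partial \mathbf{n}_{x_0}}\Bigr)\!\bigl(x_0 - (y', h(y'))\bigr)\,\omega(y')\,1'_{B_{2R_h}(0')^{\mathrm c}}(y')
\]
as an element of $H^{1/2}(\mathbf{R}^{n-1})$ by way of Proposition \ref{BSNE}, and apply the $\dot H^{-1/2}$--$\dot H^{1/2}$ duality together with Lemmas \ref{IsoH} and \ref{IsoHm} to conclude $\lvert (Sg_2)(x_0)\rvert \leq C(n)\,C_s(h)^\beta C_1(h)\,\Vert g\Vert_{\dot H^{-1/2}(\Gamma)}$.

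The main technical obstacle lies in this last estimate: verifying the hypotheses of Proposition \ref{BSNE} for $k_{x_0}^h$ with constants explicitly polynomial in $C_s(h)$, $C_1(h)$, and $R_h$. Differentiating $k_{x_0}^h$ in $y'$ produces $\nabla' h(y')$ and $\nabla'^2 h(y')$ factors that one must dominate pointwise while also using the cancellation of the numerator $(x_0 - y)\cdot \mathbf{n}_{x_0}$ on $\Gamma$; each transfer between $\dot H^s(\Gamma)$ and $\dot H^s(\mathbf{R}^{n-1})$ contributes further powers of $C_s(h)$ and $C_1(h)$, and combined with the $C_s(h)^2$ factor from Proposition \ref{DbSD} the honest bookkeeping produces the final $C_s(h)^{n+6}$ weight in the statement. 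The smallness assumption $R_h^{(2n-1)/(2n)} < 1/2$ enters to absorb the $R_h$-dependent tails from the $L^2$ portion of the $H^{1/2}$ estimate into the homogeneous part, yielding the clean combination $C_{\ast,1}(h) + C_{\ast,2}(h)$ in the final bound.
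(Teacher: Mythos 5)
Your overall architecture (curved/straight splitting of $g$, Poisson kernel on the flat part, classical double-layer jump on the curved part, $\dot H^{-1/2}$--$\dot H^{1/2}$ duality for the far field) matches the paper's, but there is a genuine gap in precisely the step you flag as the main technical obstacle. For base points $x_0$ near the curved region you propose to apply Proposition \ref{BSNE} to the pulled-back kernel using only the separation $\lvert x_0-y\rvert\geq c\,R_h$ coming from $\operatorname{supp} g_2\subseteq B'_\Gamma(2R_h)^{\mathrm c}$, i.e.\ with hole radius $\rho\sim R_h$. Since the kernel satisfies $\lvert K_{x_0}(y')\rvert\lesssim R_h\Vert\nabla'^2h\Vert_{L^\infty}\lvert x_0'-y'\rvert^{-(n-1)}$ and $\lvert\nabla'_{y'}K_{x_0}(y')\rvert\lesssim\big(R_h\Vert\nabla'^2h\Vert_{L^\infty}+R_h^3\Vert\nabla'^2h\Vert_{L^\infty}^3\big)\lvert x_0'-y'\rvert^{-n}$, Proposition \ref{BSNE} with $\rho\sim R_h$ only yields $\Vert K_{x_0}\Vert_{\dot H^{1/2}}\lesssim R_h^{1-\frac n2}\Vert\nabla'^2h\Vert_{L^\infty}+\cdots$. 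This is the wrong power of $R_h$: it does not reproduce the stated $C_{\ast,1}(h)\sim R_h^{1/2}\Vert\nabla'^2h\Vert_{L^\infty}+\cdots$, and it blows up as $R_h\to 0$, so the resulting bound on $\Vert S\Vert$ could not be made small and the Neumann series for $(I-2S)^{-1}$ in Lemma \ref{EN} would break down. The missing idea is a further decomposition of $g_2$ at the intermediate scale $r_h:=R_h^{1/(2n)}$: the piece supported in $B_{4r_h}(0')$ is compactly supported and estimated in $L^\infty$ (this is the origin of the $R_h^{1/(2n)}$ term in $C_{\ast,2}(h)$), while the duality is applied only to the remaining far piece, with a \emph{smooth} cutoff $1-\theta_{2r_h,x_0}$ (Proposition \ref{BSNE} requires a $C^1$ function, so your sharp indicator $1'_{B_{2R_h}(0')^{\mathrm c}}$ is not admissible) and hole radius $r_h\gg R_h$, which produces the correct powers of $R_h$.

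Relatedly, you misidentify the role of the hypothesis $R_h^{(2n-1)/(2n)}<\tfrac12$: it is not there to ``absorb the $L^2$ portion of the $H^{1/2}$ estimate,'' it is exactly the condition $2R_h<r_h$ ensuring that the two scales in the above decomposition separate. One smaller caution: at flat base points the contribution of $g_2$ to $S$ is not ``a classical singular integral controlled by $\Vert g\Vert_{L^\infty}$'' --- a tangential Riesz transform of an $L^\infty$ density is not pointwise bounded (this is the content of Proposition \ref{RfHdmh}); it is harmless there only because $(x_0-y)\cdot\mathbf n_{x_0}=0$ when both points lie on the flat part, so that kernel vanishes identically and $S$ at such points integrates only over $B'_\Gamma(2R_h)$.
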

\begin{proof}
For $x \in \Gamma^\Omega_{\rho_0}$, we decompose $g$ into the straight part $g_2$ and the curved part $g_1$, i.e.,
\begin{align*}
\big( Q g \big) (x) &= \int_{B'_\Gamma(2R_h)^\mathrm{c}} \frac{\partial E}{\partial \mathbf{n}_x}(x-y) g_2(y) \, d\mathcal{H}^{n-1}(y) \\
&\ \ + \int_{B'_\Gamma(2R_h)} \frac{\partial E}{\partial \mathbf{n}_x}(x-y) g_1(y) \, d\mathcal{H}^{n-1}(y) = I_1(x) + I_2(x).
\end{align*}
Moreover, we further decompose $I_2(x)$ as
\begin{align*}
I_2(x) &= \int_{B'_\Gamma(2R_h)} \big\{ \big( \nabla d (x) - \nabla d (y) \big) \cdot \nabla E (x-y) \big\} g_1(y) \, d\mathcal{H}^{n-1}(y) \\
&\ \ + \int_{B'_\Gamma(2R_h)} \frac{\partial E}{\partial \mathbf{n}_y}(x-y) g_1(y) \, d\mathcal{H}^{n-1}(y) = I_{2,1}(x) + I_{2,2}(x).
\end{align*}

Suppose that $x \in \Gamma^\Omega_{\rho_0}$ with $\lvert x' \rvert \geq 2R_h$. 
Since $\lvert x' \rvert \geq 2R_h$ is the straight part of $\Gamma$, we have that
\[
I_1(x) = -\int_{\lvert y' \rvert \geq 2R_h} P_{x_n}(x'-y') g_2(y') \, dy' = -\int_{\mathbf{R}^{n-1}} P_{x_n}(x'-y') g_2^H(y') \, dy',
\]
where $P_{x_n}$ denotes the Poisson kernel.
Let $x$ tends $x_0$ on the boundary, in this case we have that $I_1(x)$ tends to $\frac{1}{2} g_2^H(x_0)$, which is indeed $\frac{1}{2} g(x_0)$. 
We then estimate $I_{2,1}(x_0)$ for $x_0 \in \Gamma$ with $\lvert x_0' \rvert\geq 2R_h$. By Proposition \ref{DbSD}, $I_{2,1}(x_0)$ can be estimated as
\begin{align} \label{EI21}
\left\lvert I_{2,1}(x_0) \right\rvert \leq C(n) C_s(h)^3 \Vert \nabla'^2 h \Vert_{L^\infty(\mathbf{R}^{n-1})} \Vert g \Vert_{L^\infty(\Gamma)} \int_{\lvert y' \rvert<2R_h} \frac{1}{\lvert x_0' - y' \rvert^{n-2}} \, dy'.
\end{align}
If $\lvert x_0'\rvert \geq 3R_h$, then we have that $\lvert x_0' - y'\rvert \geq R_h$. In this case,
\[
\int_{\lvert y' \rvert<2R_h} \frac{1}{\lvert x_0' - y' \rvert^{n-2}} \, dy' \leq R_h^{-(n-2)} \left\lvert B_{2R_h}(0') \right\rvert \leq C(n) R_h.
\]
If $\lvert x_0' \rvert < 3R_h$, then in this case we have the estimate
\begin{align} \label{EI21x0s}
\int_{\lvert y' \rvert<2R_h} \frac{1}{\lvert x_0' - y' \rvert^{n-2}} \, dy' \leq \int_{\lvert x_0' - y' \rvert<5R_h} \frac{1}{\lvert x_0' - y' \rvert^{n-2}} \, dy' \leq C(n) R_h.
\end{align}
Hence, for $x_0 \in \Gamma$ with $\lvert x_0' \rvert \geq 2R_h$, we obtain that
\[
\left\lvert I_{2,1}(x_0) \right\rvert \leq C(n) C_s(h)^3 R_h \Vert \nabla'^2 h \Vert_{L^\infty(\mathbf{R}^{n-1})} \Vert g \Vert_{L^\infty(\Gamma)}.
\]

Next, we estimate $I_{2,2}(x_0)$ for $x_0 \in \Gamma$ with $\lvert x_0' \rvert \geq 2R_h$. Since $\operatorname{supp} h \subseteq \overline{B_{R_h}(0')}$, for $y \in \Gamma$ with $R_h< \lvert y' \rvert<2R_h$ and $x_0 \in \Gamma$ with $\lvert x_0' \rvert \geq 2R_h$, we actually have that
\[
\frac{\partial E}{\partial \mathbf{n}_y} (x_0-y) = 0.
\]
Thus, for $x_0 \in \Gamma$ with $\lvert x_0' \rvert \geq 2R_h$,
\[
I_{2,2}(x_0) = \int_{B'_\Gamma(R_h)} \frac{\partial E}{\partial \mathbf{n}_y}(x_0-y) g_1(y) \, d\mathcal{H}^{n-1}(y).
\]
By estimate (\ref{R1}), we have that
\[
\left\lvert I_{2,2}(x_0) \right\rvert \leq C_s(h) \Vert \nabla'^2 h \Vert_{L^\infty(\mathbf{R}^{n-1})} \Vert g \Vert_{L^\infty(\Gamma)} \int_{\lvert y'\rvert<R_h} \frac{1}{\lvert x_0' - y'\rvert^{n-2}} \, dy'.
\]
Since $\lvert x_0'\rvert\geq 2R_h$, it holds that $\lvert x_0' - y'\rvert\geq R_h$ for $\lvert y'\rvert<R_h$. Hence, we obtain the estimate for $\left\lvert I_{2,2}(x_0) \right\rvert$ for the case where $\lvert x_0'\rvert\geq 2R_h$, i.e.,
\begin{align*}
\left\lvert I_{2,2}(x_0)\right\rvert &\leq C_s(h) R_h^{-(n-2)} \Vert \nabla'^2 h \Vert_{L^\infty(\mathbf{R}^{n-1})} \Vert g \Vert_{L^\infty(\Gamma)} \int_{\lvert y' \rvert<R_h} 1 \, dy' \\
&\leq C(n) C_s(h) R_h \Vert \nabla'^2 h \Vert_{L^\infty(\mathbf{R}^{n-1})} \Vert g \Vert_{L^\infty(\Gamma)}.
\end{align*}
Therefore, for $x_0 \in \Gamma$ with $\lvert x_0' \rvert \geq 2R_h$, by setting
\[
\big( S g \big)(x_0) = - \int_{B'_\Gamma(2R_h)} \frac{\partial E}{\partial \mathbf{n}_{x_0}}(x_0-y) g(y) \, d\mathcal{H}^{n-1}(y),
\]
we get that
\[
\gamma \big( Qg) \big) (x_0) = \frac{1}{2} g(x_0) - \big( Sg \big) (x_0)
\]
with
\[
\Vert S \Vert_{\mathrm{op}} \leq C(n) C_s(h)^3 R_h \Vert \nabla'^2 h \Vert_{L^\infty(\mathbf{R}^{n-1})}.
\]

Suppose now that $x \in \Gamma^\Omega_{\rho_0}$ with $\lvert x' \rvert < 2R_h$. 
There exists a bounded $C^2$ domain $\Omega_c \subset \mathbf{R}_h^n$ such that $\partial \Omega_c \cap \Gamma = B'_\Gamma(2R_h)$.
Let us recall a standard result concerning the double layer potential, see e.g. \cite[Lemma 6.17]{HL}. Let $f \in L^\infty(\partial \Omega_c)$, then the boundary trace of the double layer potential
\[
(P f) (z) = \int_{\partial \Omega_c} \frac{\partial E}{\partial \mathbf{n}_y} (z-y) f(y) \, d\mathcal{H}^{n-1}(y), \quad z \in \Omega_c
\]
is of the form
\[
\gamma \big( P f \big) (w) = \frac{1}{2} f(w) + \int_{\partial \Omega_c} \frac{\partial E}{\partial \mathbf{n}_y} (w-y) f(y) \, d\mathcal{H}^{n-1}(y)
\]
for $w \in \partial \Omega_c$. We define $g_c \in L^\infty(\partial \Omega_c)$ by letting
\begin{eqnarray*} \label{BDBE1}
g_c(w) =
\left\{
\begin{array}{lcl}
g_1(w) \quad \text{for} \quad w \in \partial \Omega_c \cap \Gamma, \\
0 \quad \quad \quad \text{for} \quad w \in \partial \Omega_c \setminus \Gamma.
\end{array}
\right.
\end{eqnarray*}
Thus, for any $z \in \Omega_c$ we have that
\[
I_2(z) = I_{2,1}(z) + \big(P g_c \big) (z).
\]
Let $x$ tends to $x_0$ on the boundary, we deduce that
\[
I_2(x_0) = I_{2,1}(x_0) + \big( \gamma(P g_c) \big)(x_0) = I_{2,1}(x_0) + \frac{1}{2} g(x_0) + I_{2,2}(x_0).
\]

For $x_0 \in \Gamma$ with $\lvert x_0' \rvert<2R_h$, by applying Proposition \ref{DbSD} again, we see that $\left\lvert I_{2,1}(x_0) \right\rvert$ can also be controlled by estimate (\ref{EI21}) and (\ref{EI21x0s}), i.e., in this case we also have that 
\[
\left\lvert I_{2,1}(x_0) \right\rvert \leq C(n) C_s(h)^3 R_h \Vert \nabla'^2 h \Vert_{L^\infty(\mathbf{R}^{n-1})} \Vert g \Vert_{L^\infty(\Gamma)}.
\]
We now estimate $I_{2,2}(x_0)$ for $x_0 \in \Gamma$ with $\lvert x_0' \rvert< 2R_h$.
By estimate (\ref{R1}) again, we have that
\begin{align*}
\int_{\left\{y \in B'_{\Gamma}(2R_h) \, \middle\vert\, \lvert x_0'-y' \rvert<1 \right\}} \left\lvert \frac{\partial E}{\partial \mathbf{n}_y}(x_0-y) \right\rvert \, d\mathcal{H}^{n-1}(y) \leq C_s(h) \int_{\lvert y' \rvert<2R_h, \lvert x_0'-y' \rvert<1} \frac{\Vert \nabla'^2 h \Vert_{L^\infty(\mathbf{R}^{n-1})}}{\lvert x_0'-y' \rvert^{n-2}} \, dy'.
\end{align*}
Since in this case $\lvert x_0' \rvert<2R_h$, $\lvert y' \rvert<2R_h$ would imply that $\lvert x_0'-y' \rvert<4R_h$, we have that
\[
\int_{\lvert y' \rvert<2R_h} \frac{1}{\lvert x_0'-y' \rvert^{n-2}} \, dy' \leq \int_{\lvert x_0'-y' \rvert<4R_h} \frac{1}{\lvert x_0'-y' \rvert^{n-2}} \, dy' \leq C(n) R_h.
\]
On the other hand, for $y \in \Gamma$ such that $\lvert y'-x_0' \rvert \geq 1$, we can straightforwardly estimate $\left\lvert \sigma(y')\right\rvert$ in $\partial E/ \partial \mathbf{n}_y$ by $\left\lvert \nabla' h(y')\right\rvert \cdot \lvert x_0'-y' \rvert + \left\lvert h(x_0') \right\rvert + \left\lvert h(y')\right\rvert$. Hence, we have that
\begin{align*}
&\int_{\left\{y \in B'_{\Gamma}(2R_h) \,\middle\vert\, \lvert x_0'-y' \rvert\geq1 \right\}} \left\lvert  \frac{\partial E}{\partial \mathbf{n}_y}(x_0-y) \right\rvert  \, d\mathcal{H}^{n-1}(y) \\
&\ \ \leq C_s(h) \int_{\mathbf{R}^{n-1}} \left\lvert \nabla' h(y')\right\rvert + \left\lvert h(y')\right\rvert \, dy' + C_s(h) \int_{\lvert x_0'-y' \rvert\geq1} \frac{\Vert h \Vert_{L^\infty(\mathbf{R}^{n-1})}}{\lvert x_0'-y' \rvert^n} \, dy' \\
&\ \ \leq C(n) C_s(h) (R_h^{n-1}+1) \Vert h \Vert_{C^1(\mathbf{R}^{n-1})}.
\end{align*}
Combining these estimates together, we see that the estimate for $\left\lvert I_{2,1}(x_0)\right\rvert + \left\lvert I_{2,2}(x_0) \right\rvert$ reads as
\begin{align*}
\left\lvert I_{2,1}(x_0)\right\rvert +\left\lvert I_{2,2}(x_0) \right\rvert \leq C(n) C_s(h)^3 \big( R_h \Vert \nabla'^2 h \Vert_{L^\infty(\mathbf{R}^{n-1})} + (R_h^{n-1} + 1) \Vert h \Vert_{C^1(\mathbf{R}^{n-1})} \big) \Vert g \Vert_{L^\infty(\Gamma)}.
\end{align*}
In order to estimate $I_1(x_0)$ for $x_0 \in \Gamma$ with $\lvert x_0' \rvert<2R_h$, we further decompose
\begin{align*}
g_2(y',h(y')) &= 1'_{B_{4 r_h}(0')}(y') g_2(y',h(y')) + 1'_{B_{4 r_h}(0')^\mathrm{c}}(y') g_2(y',h(y')) \\
&= g_{2,1}(y',h(y')) + g_{2,2}(y',h(y'))
\end{align*}
for any $y' \in \mathbf{R}^{n-1}$ where $r_h := R_h^{\frac{1}{2n}}$ and
\begin{align*}
I_1(x_0) &= \int_{B'_\Gamma(r_h)^\mathrm{c}} \frac{\partial E}{\partial \mathbf{n}_{x_0}}(x_0 - y)  g_{2,2}(y) \, d\mathcal{H}^{n-1}(y) + \int_{B'_\Gamma(r_h)^\mathrm{c}} \frac{\partial E}{\partial \mathbf{n}_{x_0}}(x_0 - y)  g_{2,1}(y) \, d\mathcal{H}^{n-1}(y) \\
&\ \ + \int_{B'_\Gamma(r_h)} \frac{\partial E}{\partial \mathbf{n}_{x_0}}(x_0 - y) g_2(y) \, d\mathcal{H}^{n-1}(y) = I_{1,1}(x_0) + I_{1,2}(x_0) + I_{1,3}(x_0).
\end{align*}

We next seek to control $I_{1,1}(x_0)$ for $x_0 \in \Gamma$ with $\lvert x_0' \rvert<2R_h$. Let $\theta_2 \in C_\mathrm{c}^\infty(\mathbf{R}^{n-1})$ be a cut-off function such that $0 \leq \theta_2 \leq 1$ in $\mathbf{R}^{n-1}$, $\theta_2 = 1$ in $B_1(0')$ and $\operatorname{supp} \theta_2 \subseteq \overline{B_2(0')}$. We then set that 
\begin{align*}
\theta_{2r_h,x_0}(y') := \theta_2\left( \frac{y' - x_0'}{r_h} \right), \quad K_{x_0}(y') := \left\{ \nabla d (x_0) \cdot \nabla E \Big( x_0 - \big( y',h(y') \big) \Big) \right\} \big( 1 - \theta_{2r_h,x_0}(y') \big)
\end{align*}
for $y' \in \mathbf{R}^{n-1}$. Since we are assuming that $2R_h < r_h$, it holds that $B_{2r_h}(x_0') \subset B_{3r_h}(0')$.
Hence, $1 - \theta_{2r_h,x_0} = 1$ in $B_{2 r_h}(x_0')^\mathrm{c}$ would imply that
\[
I_{1,1}(x_0) = \int_{\mathbf{R}^{n-1}} K_{x_0}(y') g_{2,2}\big( y',h(y') \big) \big( 1 + \big\lvert \nabla' h (y') \big\rvert^2 \big)^{\frac{1}{2}} \, dy'.
\]
Note that for any $x_0,y \in \Gamma$ such that $x_0 \neq y$, we have that
\[
\nabla d (x_0) \cdot \nabla E (x_0 - y) = - C(n) \frac{- \nabla' h (x_0') \cdot (x_0' - y') + \big( h(x_0') - h(y') \big)}{\omega(x_0') \Big( \lvert x_0' - y' \rvert^2 + \big( h(x_0') - h(y') \big)^2 \Big)^{n/2}}
\]
where $\omega(x_0') = \big( 1 + \left\lvert \nabla' h (x_0') \right\rvert^2 \big)^{1/2}$. Through some simple calculations, we can deduce by the mean value theorem that the estimate
\[
\left\lvert K_{x_0}(y') \right\rvert \leq C(n) \frac{\Vert \nabla' h \Vert_{L^\infty(\mathbf{R}^{n-1})}}{\lvert x_0' - y' \rvert^{n-1}} \leq C(n) \frac{R_h \Vert \nabla'^2 h \Vert_{L^\infty(\mathbf{R}^{n-1})}}{\lvert x_0' - y' \rvert^{n-1}}
\]
holds for any $x_0,y \in \Gamma$ and the estimate
\[
\left\lvert \nabla'_{y'} \big( \nabla d (x_0) \cdot \nabla E (x_0 - y) \big) \right\rvert \leq C(n) \frac{R_h \Vert \nabla'^2 h \Vert_{L^\infty(\mathbf{R}^{n-1})} + R_h^3 \Vert \nabla'^2 h \Vert_{L^\infty(\mathbf{R}^{n-1})}^3}{\lvert x_0' - y' \rvert^n}
\]
holds for any $x_0,y \in \Gamma$ with $x_0 \neq y$.
In addition, for $y \in \Gamma$ such that $r_h <\lvert y' - x_0'\rvert <2r_h$, we have that
\[
\left\lvert \nabla'_{y'} \theta_{2r_h,x_0}(y') \right\rvert \leq \frac{\Vert \nabla' \theta_2 \Vert_{L^\infty(\mathbf{R}^{n-1})}}{r_h} \leq \frac{2 \Vert \nabla' \theta_2 \Vert_{L^\infty(\mathbf{R}^{n-1})}}{\lvert x_0' - y' \rvert}.
\]
Hence, we see that the estimate
\[
\left\lvert \nabla'_{y'} K_{x_0}(y') \right\rvert \leq C(n) \frac{R_h \Vert \nabla'^2 h \Vert_{L^\infty(\mathbf{R}^{n-1})} + R_h^3 \Vert \nabla'^2 h \Vert_{L^\infty(\mathbf{R}^{n-1})}^3}{\lvert x_0' - y' \rvert^n}
\]
holds for any $x_0,y \in \Gamma$. By the duality relation between $\dot{H}^{\frac{1}{2}}(\mathbf{R}^{n-1})$ and $\dot{H}^{-\frac{1}{2}}(\mathbf{R}^{n-1})$, we see that $I_{1,1}(x_0)$ follows the estimate
\[
\left\lvert I_{1,1}(x_0) \right\rvert \leq \Vert \big( 1 + \big\lvert \nabla' h (\cdot') \big\rvert^2 \big)^{\frac{1}{2}} K_{x_0}(\cdot') \Vert_{\dot{H}^{\frac{1}{2}}(\mathbf{R}^{n-1})} \Vert g_{2,2}\big( \cdot', h(\cdot') \big) \Vert_{\dot{H}^{-\frac{1}{2}}(\mathbf{R}^{n-1})}.
\]
By Proposition \ref{MRHhalf} and Proposition \ref{BSNE}, we have that
\begin{align*}
&\Vert \big( 1 + \big\lvert\nabla' h (\cdot') \big\rvert^2 \big)^{\frac{1}{2}} K_{x_0}(\cdot') \Vert_{\dot{H}^{\frac{1}{2}}(\mathbf{R}^{n-1})} \leq C(n) C_s(h) C_1(h) \Vert K_{x_0}(\cdot') \Vert_{\dot{H}^{\frac{1}{2}}(\mathbf{R}^{n-1})} \\
&\ \ \leq C(n) C_s(h) C_1(h) \left( R_h^{\frac{1}{2}} \Vert \nabla'^2 h \Vert_{L^\infty(\mathbf{R}^{n-1})} + R_h^{\frac{5}{2}} \Vert \nabla'^2 h \Vert_{L^\infty(\mathbf{R}^{n-1})}^3 \right).
\end{align*}
Since $L^{\frac{2n-2}{n}}(\mathbf{R}^{n-1})$ is continuously embedded in $\dot{H}^{-\frac{1}{2}}(\mathbf{R}^{n-1})$, by estimate (\ref{gghEm}) and estimate (\ref{EHdhLp}) we see that
\[
\big\Vert 1'_{B_{4r_h}(0')}(\cdot') g\big( \cdot',h(\cdot') \big) \big\Vert_{\dot{H}^{-\frac{1}{2}}(\mathbf{R}^{n-1})} \leq C(n) C_s(h)^{n+5} C_1(h)^2 R_h^{\frac{1}{4}} \Vert g \Vert_{L^\infty(\Gamma)}.
\]
Therefore, the estimate for $I_{1,1}(x_0)$ reads as
\[
\left\lvert I_{1,1}(x_0) \right\rvert \leq C(n) C_s(h)^{n+6} C_{\ast,1}(h) \Vert g \Vert_{L^\infty(\Gamma) \cap \dot{H}^{-\frac{1}{2}}(\Gamma)}
\]
where 
\[
C_{\ast,1}(h) := C_1(h)^3 (1 + R_h^{\frac{1}{4}}) \left( R_h^{\frac{1}{2}} \Vert \nabla'^2 h \Vert_{L^\infty(\mathbf{R}^{n-1})} + R_h^{\frac{5}{2}} \Vert \nabla'^2 h \Vert_{L^\infty(\mathbf{R}^{n-1})}^3 \right).
\]

Since for $I_{1,2}(x_0)$ and $I_{1,3}(x_0)$, the integration region is bounded, $I_{1,2}(x_0)$ and $I_{1,3}(x_0)$ can be estimated in exactly the same way as $I_{2,1}(x_0) + I_{2,2}(x_0)$ in the case where $\lvert x_0' \rvert<2R_h$. As a result, here we directly give the estimate for $I_{1,2}(x_0)$ and $I_{1,3}(x_0)$ without going through what have already been done again.
The estimate for $I_{1,2}(x_0)$ and $I_{1,3}(x_0)$ reads as
\[
\left\lvert I_{1,2}(x_0) \right\rvert +\left\lvert I_{1,3}(x_0) \right\rvert \leq C(n) C_s(h)^3 C_{\ast,2}(h) \Vert g \Vert_{L^\infty(\Gamma)}.
\]
where
\[
C_{\ast,2}(h) := \big( R_h + R_h^{\frac{1}{2n}} \big) \Vert \nabla'^2 h \Vert_{L^\infty(\mathbf{R}^{n-1})} + (R_h^{n-1} + 1) \Vert h \Vert_{C^1(\mathbf{R}^{n-1})}
\]
Therefore, for $x_0 \in \Gamma$ with $\lvert x_0' \rvert < 2R_h$, by setting
\[
\big( S g \big)(x_0) = - \int_{\Gamma} \frac{\partial E}{\partial \mathbf{n}_{x_0}}(x_0-y) g(y) \, d\mathcal{H}^{n-1}(y),
\]
we obtain that
\[
\gamma \big( Q g \big) (x_0) = \frac{1}{2} g(x_0) - \big( S g \big) (x_0)
\]
with
\[
\Vert S \Vert_{\mathrm{op}} \leq C(n) C_s(h)^{n+6} \big( C_{\ast,1}(h) + C_{\ast,2}(h) \big).
\]
This completes the proof of Theorem \ref{TBIE}.
\end{proof}

\subsection{$\dot{H}^{-\frac{1}{2}}$ estimate for the trace operator $S$} 
\label{sub:L2S}

In this subsection, we assume that $\mathbf{R}_h^n$ is a perturbed $C^2$ half space with boundary $\Gamma = \partial \mathbf{R}_h^n$ and $n \geq 3$. We shall derive the $\dot{H}^{-\frac{1}{2}}$ estimate for the trace operator $S$ from its $L^{\frac{2n-2}{n}}$ estimate. We begin with the $L^p$ estimate for $S$.

\begin{lemma} \label{BSLSH}
Let $g \in L^\infty(\Gamma) \cap \dot{H}^{-\frac{1}{2}}(\Gamma)$. Then, it holds that $S g \in L^p(\Gamma)$ for any $1 \leq p < \infty$. For $1 < p < \infty$, $Sg$ satisfies the estimate
\[
\Vert S g \Vert_{L^p(\Gamma)} \leq C^\ast_1(n,p) C_s(h)^{n+7} \big( C_{\ast,1}(h) + C_{\ast,2}(h) +1 \big) R_h^{\frac{n-1}{p}} \Vert g \Vert_{L^\infty(\Gamma) \cap \dot{H}^{-\frac{1}{2}}(\Gamma)}
\]
with some specific constant $C^\ast_1(n,p)>0$ that depends on $n$ and $p$ only. For $p=1$, $Sg$ satisfies the estimate
\[
\Vert S g \Vert_{L^1(\Gamma)} \leq C^\ast_2(n) C_s(h)^{n+7} C_{\ast,3}(h) \Vert g \Vert_{L^\infty(\Gamma) \cap \dot{H}^{-\frac{1}{2}}(\Gamma)}
\]
with some specific constant $C^\ast_2(n)>0$ that depends on $n$ only and
\[
C_{\ast,3}(h) := R_h^{n-1} \big( C_{\ast,1}(h) + C_{\ast,2}(h) \big) + R_h^n \Vert \nabla'^2 h \Vert_{L^\infty(\mathbf{R}^{n-1})}.
\]
\end{lemma}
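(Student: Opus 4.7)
The plan is to partition the boundary as $\Gamma = \Gamma_1 \cup \Gamma_2$, where $\Gamma_1 := B'_\Gamma(2R_h)$ contains the entire curved portion of $\Gamma$ and $\Gamma_2 := \Gamma \setminus B'_\Gamma(2R_h)$ is the outer flat part, and then to bound $\Vert Sg \Vert_{L^p(\Gamma_1)}$ and $\Vert Sg \Vert_{L^p(\Gamma_2)}$ separately, exploiting the two distinct representations of $Sg$ extracted inside the proof of Theorem~\ref{TBIE}. On $\Gamma_1$ the estimate is straightforward: combining the pointwise bound already available from Theorem~\ref{TBIE} with the crude surface-area estimate $\Lambda_{2R_h} \leq C(n) C_s(h) R_h^{n-1}$ and $C_s(h) \geq 1$ gives
\[
\Vert Sg \Vert_{L^p(\Gamma_1)} \leq \Lambda_{2R_h}^{1/p} \Vert Sg \Vert_{L^\infty(\Gamma)} \leq C(n,p) C_s(h)^{n+7} R_h^{(n-1)/p}\bigl(C_{\ast,1}(h) + C_{\ast,2}(h)\bigr) \Vert g \Vert_{L^\infty(\Gamma) \cap \dot{H}^{-\frac{1}{2}}(\Gamma)}.
\]
This contribution accounts for the entire right-hand side of the $p > 1$ estimate and for the first summand of $C_{\ast,3}(h)$ in the $p = 1$ case.

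The substantive analysis concerns $\Gamma_2$, where the representation from Theorem~\ref{TBIE} reduces $Sg$ to an integral over $B'_\Gamma(2R_h)$. For $x_0 = (x_0', 0) \in \Gamma_2$ the outward unit normal is $\mathbf{n}_{x_0} = (0', -1)$, so a direct computation on $y = (y', h(y')) \in \Gamma$ gives
\[
\frac{\partial E}{\partial \mathbf{n}_{x_0}}(x_0 - y) = -\frac{1}{n b_1(n)} \cdot \frac{h(y')}{\bigl(|x_0'-y'|^2 + h(y')^2\bigr)^{n/2}}.
\]
The key structural observation is that this kernel vanishes identically whenever $y$ belongs to the flat annulus $R_h \leq |y'| < 2R_h$, so the integral defining $Sg$ on $\Gamma_2$ is in fact supported in $y' \in B_{R_h}(0')$. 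Since $|x_0'| \geq 2R_h$ and $|y'| < R_h$ force $|x_0' - y'| \geq |x_0'|/2$, I obtain the pointwise decay
\[
|(Sg)(x_0)| \leq C(n) \Vert g \Vert_{L^\infty(\Gamma)} |x_0'|^{-n} \int_{|y'| < R_h} |h(y')| \, dy'.
\]

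For $1 < p < \infty$ the crude estimate $|h(y')| \leq R_h \Vert \nabla' h \Vert_{L^\infty(\mathbf{R}^{n-1})} \leq R_h C_s(h)$ is enough: it produces $\int_{|y'| < R_h} |h(y')|\,dy' \leq C(n) R_h^n C_s(h)$, and integrating $|x_0'|^{-np}$ over $\{|x_0'| \geq 2R_h\}$ (convergent precisely because $np > n-1$) yields a factor $R_h^{n-1-np}$; multiplication then delivers an extra $C_s(h) R_h^{(n-1)/p}$ exactly matching the claim. For $p = 1$ the kernel $|x_0'|^{-n}$ is only borderline integrable on $\Gamma_2$, so this crude bound is too weak; here I would exploit the fact that $h \in C_\mathrm{c}^2(\mathbf{R}^{n-1})$ with $\operatorname{supp} h \subseteq \overline{B_{R_h}(0')}$ forces both $h$ and $\nabla' h$ to vanish on $\partial B_{R_h}(0')$, so Taylor's theorem from the nearest boundary point of the support gives
\[
|h(y')| \leq \tfrac{1}{2} \Vert \nabla'^2 h \Vert_{L^\infty(\mathbf{R}^{n-1})}(R_h - |y'|)^2 \quad \text{for } |y'| \leq R_h.
\]
This yields $\int_{|y'|<R_h}|h(y')|\,dy' \leq C(n) R_h^{n+1}\Vert \nabla'^2 h \Vert_{L^\infty(\mathbf{R}^{n-1})}$, which combined with $\int_{|x_0'| \geq 2R_h}|x_0'|^{-n}\,dx_0' \leq C(n) R_h^{-1}$ produces the advertised $R_h^n \Vert \nabla'^2 h \Vert_{L^\infty}$ contribution to $C_{\ast,3}(h)$. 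Summing the $\Gamma_1$ and $\Gamma_2$ bounds completes the proof. The principal technical obstacle is the $p = 1$ case: without the quadratic-vanishing improvement from Taylor's theorem, the outer-piece contribution would merely reproduce an $R_h^{n-1}$ factor (no better than the $\Gamma_1$ part) and fail to yield the advertised $R_h^n \Vert \nabla'^2 h \Vert_{L^\infty}$ term.
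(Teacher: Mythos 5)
Your proof is correct and follows essentially the same route as the paper's: the inner region is handled by the $L^\infty$ bound of Theorem \ref{TBIE} times the surface area $\Lambda \lesssim C_s(h)R_h^{n-1}$, while on the outer flat part you use the explicit kernel $\partial E/\partial \mathbf{n}_{x_0} \propto h(y')\,\lvert x_0-y\rvert^{-n}$ together with $\Vert h \Vert_{L^\infty} \lesssim R_h^2 \Vert \nabla'^2 h \Vert_{L^\infty}$ to produce the $R_h^n \Vert \nabla'^2 h \Vert_{L^\infty}$ term for $p=1$, exactly as the paper does (the paper merely splits at $3R_h$ instead of $2R_h$ and uses the cruder $\lvert x'-y'\rvert^{-(n-1)}$ kernel bound for $p>1$). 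One harmless slip in your prose: $\lvert x_0'\rvert^{-n}$ is not ``borderline integrable'' over $\{\lvert x_0'\rvert \geq 2R_h\} \subset \mathbf{R}^{n-1}$ --- it integrates to $C(n)R_h^{-1}$, as you yourself use --- so the refined bound on $h$ is needed only to match the advertised constant $C_{\ast,3}(h)$, not for convergence.
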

\begin{proof}
We firstly consider $x \in \Gamma$ with $\lvert x'\rvert < 3 R_h$. Since we already have the $L^\infty$ estimate for $Sg$ on $\Gamma$ according to Theorem \ref{TBIE}, the estimate
\begin{align*}
\left( \int_{B'_\Gamma(3R_h)} \left\lvert S g (x) \right\rvert^p \, d\mathcal{H}^{n-1}(x) \right)^{\frac{1}{p}} \leq C^\ast_0(n) C_s(h)^{n+6} \big( C_{\ast,1}(h) + C_{\ast,2}(h) \big) \Lambda_{3R_h}^{\frac{1}{p}} \Vert g \Vert_{L^\infty(\Gamma) \cap \dot{H}^{-\frac{1}{2}}(\Gamma)}
\end{align*}
follows naturally, where the surface area $\Lambda_{3R_h}$ is estimated by
\[
\Lambda_{3R_h} = \int_{B'_\Gamma(3R_h)} 1 \, d\mathcal{H}^{n-1}(x) = \int_{\lvert x'\rvert<3R_h} \big( 1 + \big\lvert \nabla' h (x') \big\rvert^2 \big)^{\frac{1}{2}} \, dx' \leq C(n) C_s(h) R_h^{n-1}.
\]
Hence, for any $1 \leq p <\infty$, it holds that
\begin{align*}
\left( \int_{B'_\Gamma(3R_h)} \left\lvert S g (x) \right\rvert^p \, d\mathcal{H}^{n-1}(x) \right)^{\frac{1}{p}} \leq C(n,p) C_s(h)^{n+7} \big( C_{\ast,1}(h) + C_{\ast,2}(h) \big) R_h^{\frac{n-1}{p}} \Vert g \Vert_{L^\infty(\Gamma) \cap \dot{H}^{-\frac{1}{2}}(\Gamma)}.
\end{align*}

Let $1 < p < \infty$. We then consider $x \in \Gamma$ with $\lvert x'\rvert \geq 3R_h$. 
For $y \in \Gamma$ with $\lvert y'\rvert< 2R_h$, the triangle inequality implies that $\lvert x'-y'\rvert\geq \lvert x'\rvert - 2R_h$. In this case, we deduce that
\begin{align*}
\left\lvert S g(x) \right\rvert^p &\leq C(n,p) C_s(h)^p \left( \int_{\lvert y'\rvert<2R_h} \frac{1}{\lvert x'-y'\rvert^{n-1}} \, dy' \right)^p \Vert g \Vert_{L^\infty(\Gamma)}^p \\
&\leq C(n,p) C_s(h)^p \Vert g \Vert_{L^\infty(\Gamma)}^p \frac{\left\lvert B_{2R_h}(0')\right\rvert^p}{(\lvert x'\rvert - 2R_h)^{pn-p}}.
\end{align*}
Hence, we have that
\begin{align*}
\int_{B'_\Gamma(3R_h)^\mathrm{c}} \left\lvert S g(x)\right\rvert^p \, d\mathcal{H}^{n-1}(x) \leq C(n,p) C_s(h)^{p+1} R_h^{p(n-1)} \Vert g \Vert_{L^\infty(\Gamma)}^p \int_{\lvert x'\rvert \geq 3R_h} \frac{1}{(\lvert x'\rvert- 2R_h)^{pn-p}} \, dx',
\end{align*}
where the integral on the right hand side can be estimated as
\begin{align*}
&\int_{\lvert x'\rvert\geq 3R_h} \frac{1}{(\lvert x'\rvert- 2R_h)^{pn-p}} \, dx' \leq C(n) \int_{R_h}^\infty \frac{(r+2R_h)^{n-2}}{r^{pn-p}} \,  dr \\
&\ \ \leq C(n) \sum_{i=0}^{n-2} R_h^{n-2-i} \int_{R_h}^\infty \frac{r^i}{r^{pn-p}} \, dr \leq C(n,p) R_h^{(1-p)(n-1)}.
\end{align*}
Therefore, we obtain that
\[
\left( \int_{B'_\Gamma(3R_h)^\mathrm{c}} \left\lvert S g(x) \right\rvert^p \, d\mathcal{H}^{n-1}(x) \right)^{\frac{1}{p}} \leq C(n,p) C_s(h)^2 R_h^{\frac{n-1}{p}} \Vert g \Vert_{L^\infty(\Gamma)}.
\]

For $x \in \Gamma$ with $\lvert x'\rvert\geq 3R_h$, we indeed have that $\nabla d(x) = (0,...,0,1)$. Thus, $Sg(x)$ has the form
\[
Sg(x) = C(n) \int_{\lvert y'\rvert<2R_h} \frac{h(y') \big( 1 + \lvert\nabla' h (y')\rvert^2 \big)^{\frac{1}{2}}}{\big( \lvert x'-y'\rvert^2 + h(y')^2 \big)^{\frac{n}{2}}} g\big( y',h(y') \big) \, dy'.
\]
Since $\lvert x'-y'\rvert \geq \lvert x'\rvert - 2R_h$ for any $\lvert y'\rvert < 2R_h$, $\left\lvert Sg(x)\right\rvert$ can thus be estimated as
\[
\left\lvert Sg(x)\right\rvert\leq C(n) C_s(h) \Vert h \Vert_{L^\infty(\mathbf{R}^{n-1})} \Vert g \Vert_{L^\infty(\Gamma)} \frac{\left\lvert B_{2R_h}(0')\right\rvert}{(\lvert x'\rvert- 2R_h)^n} 
\]
Hence,
\begin{align*}
\int_{B'_\Gamma(3R_h)^\mathrm{c}} \left\lvert Sg(x)\right\rvert \, d\mathcal{H}^{n-1}(x) &\leq C(n) C_s(h)^2 R_h^{n-1} \Vert h \Vert_{L^\infty(\mathbf{R}^{n-1})} \Vert g \Vert_{L^\infty(\Gamma)} \int_{\lvert x'\rvert \geq 3R_h} \frac{1}{(\lvert x'\rvert - 2R_h)^n} \, dx' \\
&\leq C(n) C_s(h) R_h^n \Vert \nabla'^2 h \Vert_{L^\infty(\mathbf{R}^{n-1})} \Vert g \Vert_{L^\infty(\Gamma)}.
\end{align*}
\end{proof}

We are now ready to state the $\dot{H}^{-\frac{1}{2}}$ estimate for the trace operator $S$.

\begin{corollary} \label{ELHmh}
For $g \in L^\infty(\Gamma) \cap \dot{H}^{-\frac{1}{2}}(\Gamma)$, we have that $S g \in \dot{H}^{-\frac{1}{2}}(\Gamma)$ satisfying
\[
\Vert Sg \Vert_{\dot{H}^{-\frac{1}{2}}(\Gamma)} \leq C^\ast_3(n) C_s(h)^{\frac{3n}{2} + 8} C_1(h) \big( C_{\ast,1}(h) + C_{\ast,2}(h) +1 \big) R_h^{\frac{n}{2}} \Vert g \Vert_{L^\infty(\Gamma) \cap \dot{H}^{-\frac{1}{2}}(\Gamma)}
\]
with some specific constant $C^\ast_3(n)>0$ that depends on $n$ only.
\end{corollary}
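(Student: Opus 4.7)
The plan is to chain together two ingredients already at our disposal: the $L^p(\Gamma)$ bound for $Sg$ proved in Lemma \ref{BSLSH}, and the continuous embedding $L^{\frac{2n-2}{n}}(\Gamma) \hookrightarrow \dot{H}^{-\frac{1}{2}}(\Gamma)$ proved in Corollary \ref{EmHdhG} (more precisely, the quantitative form of the embedding recorded in estimate \eqref{EHdhLp}).

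First, I would choose the specific exponent $p = \frac{2n-2}{n}$ in Lemma \ref{BSLSH}. This is the unique Lebesgue exponent on $\Gamma$ that sits continuously inside $\dot{H}^{-\frac{1}{2}}(\Gamma)$ (since $n\geq 3$, so that its H\"older conjugate $\frac{2n-2}{n-2}$ is exactly the Sobolev exponent for $\dot{H}^{\frac{1}{2}}(\mathbf{R}^{n-1})$). The key numerical coincidence that makes the stated exponent on $R_h$ come out correctly is
\[
\frac{n-1}{p} \;=\; \frac{n(n-1)}{2(n-1)} \;=\; \frac{n}{2},
\]
so that the factor $R_h^{(n-1)/p}$ from Lemma \ref{BSLSH} is exactly $R_h^{n/2}$, matching the statement.

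Second, I would invoke estimate \eqref{EHdhLp} from the proof of Corollary \ref{EmHdhG}, which states
\[
\Vert Sg \Vert_{\dot{H}^{-\frac{1}{2}}(\Gamma)} \;\leq\; C(n)\, C_s(h)^{\frac{n}{2}+1} C_1(h)\, \Vert Sg \Vert_{L^{\frac{2n-2}{n}}(\Gamma)},
\]
applied to $Sg$ (which is in $L^{\frac{2n-2}{n}}(\Gamma)$ by Lemma \ref{BSLSH}). Substituting Lemma \ref{BSLSH} with $p=\frac{2n-2}{n}$ then gives
\[
\Vert Sg \Vert_{\dot{H}^{-\frac{1}{2}}(\Gamma)} \;\leq\; C(n,p)\, C_s(h)^{\frac{n}{2}+1} C_1(h) \cdot C_s(h)^{n+7}\bigl( C_{\ast,1}(h)+C_{\ast,2}(h)+1\bigr) R_h^{\frac{n}{2}} \Vert g \Vert_{L^\infty(\Gamma)\cap\dot{H}^{-\frac{1}{2}}(\Gamma)},
\]
and the exponent of $C_s(h)$ sums to $\frac{n}{2}+1+n+7 = \frac{3n}{2}+8$, exactly as claimed. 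Setting $C^\ast_3(n)$ to be the product of the constants from the two inputs completes the proof.

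There is really no obstacle here: the entire work was in establishing Lemma \ref{BSLSH} (which in turn rested on the pointwise bounds from Theorem \ref{TBIE} plus a careful far-field integration) and in setting up the $\dot{H}^{-\frac{1}{2}}$ -- $L^{\frac{2n-2}{n}}$ duality in subsection \ref{sub:IsoHm}. The corollary is essentially a bookkeeping exercise in which one tracks the constants and checks that the particular exponent $p=\frac{2n-2}{n}$ is admissible (it is, since $1<p<\infty$ for all $n\geq 3$) and produces the required $R_h^{n/2}$ scaling. The only subtlety worth emphasizing in the write-up is that it is crucial to use the $L^p$ estimate rather than the $L^1$ estimate, because $L^1(\Gamma)$ does \emph{not} embed into $\dot{H}^{-\frac{1}{2}}(\Gamma)$; this is the reason Lemma \ref{BSLSH} is separated into the $p>1$ case and the $p=1$ case, and only the former is used here.
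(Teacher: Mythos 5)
Your proposal is correct and is precisely the paper's own argument: the paper proves the corollary by combining estimate \eqref{EHdhLp} with Lemma \ref{BSLSH} at $p=\frac{2n-2}{n}$, and your bookkeeping of the exponents ($\frac{n-1}{p}=\frac{n}{2}$ and $\frac{n}{2}+1+n+7=\frac{3n}{2}+8$) matches the stated constants exactly.
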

\begin{proof}
Since $L^{\frac{2n-2}{n}}(\Gamma)$ is continuously embedded in $\dot{H}^{-\frac{1}{2}}(\Gamma)$, by considering estimate (\ref{EHdhLp}) and Lemma \ref{BSLSH} with $p = \frac{2n-2}{n}$, we obtain Corollary \ref{ELHmh}.
\end{proof}

We define constants 
\[
C_\ast(h) := C_s(h)^{\frac{3n}{2} + 8} C_1(h) \big( C_{\ast,1}(h) + C_{\ast,2}(h) + R_h^{\frac{n}{2}} \big)
\]
and $C^\ast(n) := C^\ast_0(n) + C^\ast_3(n)$. We would like to emphasize that $C^\ast(n)$ is a specific constant that depends on dimension $n$ only and $C_\ast(h)$ is a constant that depends on the boundary function $h$ only.  
Theorem \ref{TBIE} and Corollary \ref{ELHmh} guarantee that the trace operator $S : L^\infty(\Gamma) \cap \dot{H}^{-\frac{1}{2}}(\Gamma) \to L^\infty(\Gamma) \cap \dot{H}^{-\frac{1}{2}}(\Gamma)$ is bounded linear and
\[
\Vert S \Vert_{L^\infty(\Gamma) \cap \dot{H}^{-\frac{1}{2}}(\Gamma) \to L^\infty(\Gamma) \cap \dot{H}^{-\frac{1}{2}}(\Gamma)} \leq C^\ast(n) C_\ast(h).
\]
Moreover, we can require $C_\ast(h)$ to be arbitrarily small by taking $R_h$ to be sufficiently small.

\section{Neumann problem with bounded data in a perturbed $C^2$ half space with small perturbation} 
\label{sec:NPB}

We consider the Neumann problem for the Laplace equation in a perturbed $C^2$ half space in $\mathbf{R}^n$ with $L^\infty$-initial data for $n \geq 3$. We shall begin with the half space problem. It is well-known that a solution of the Neumann problem
\begin{align}
&\begin{aligned} \label{NPHS}
	\Delta u &= 0 \quad \, \text{in}\quad \mathbf{R}_+^n \\
	\frac{\partial u}{\partial \mathbf{n}} &= g \quad\text{on}\quad \partial \mathbf{R}_+^n
\end{aligned}
\end{align}
is formally given by
\begin{align} \label{SNPHS}
u(x) = \int_{\mathbf{R}^{n-1}} N(x,y) g(y) \, d\mathcal{H}^{n-1},
\end{align}
where $N$ denotes the Neumann-Green function
\[
N(x,y) = E(x-y) + E(x'-y', x_n+y_n).
\]
Its exterior normal derivative $\partial N/ \partial \mathbf{n}_x$ for $y_n=0$ is nothing but the Poisson kernel with the parameter $x_n$. By symmetry we observe that
\[
- \frac{\partial}{\partial x_n} \int_{\mathbf{R}^{n-1}} E(x'-y', x_n) g(y') \, dy' \to \frac{1}{2} g(x')
\]
as $x_n>0$ tends to zero. Thus $u$ gives a solution to $(\ref{NPHS})$ formally. The function
\[
E \ast (\delta_{\partial \mathbf{R}_+^n} \otimes g) := \int_{\mathbf{R}^{n-1}} E(x'-y',x_n) g(y') \, dy'
\]
is called the single layer potential of $g$. 

For $g \in L^\infty(\mathbf{R}^{n-1})$, we let $\widetilde{g}(x',x_n) := g(x',0)$ for any $x \in \mathbf{R}^n$. Natrually, $\widetilde{g} \in L^\infty(\mathbf{R}^n)$. Let $1_{\mathbf{R}_+^n}$ be the characteristic function associated with the half space $\mathbf{R}_+^n$. In this case, we have that
\[
\nabla E*(\delta_{\partial \mathbf{R}_+^n} \otimes g) = \nabla\partial_{x_n} E*1_{\mathbf{R}^n_+} \widetilde{g}.
\]
Hence, by the $L^\infty$-$BMO$ estimate for the singular integral operator \cite[Theorem 4.2.7]{GraM}, we have the estimate
\begin{align} \label{BIBEH}
\big[ \nabla \big( E \ast (\delta_{\partial \mathbf{R}_+^n} \otimes g) \big) \big]_{BMO(\mathbf{R}^n)} \leq C(n) \Vert g \Vert_{L^\infty(\mathbf{R}^{n-1})}.
\end{align}
Moreover, since $- \partial_{x_n} (E \ast (\delta_{\partial \mathbf{R}_+^n} \otimes g))$ is the half of the Poisson integral, i.e.,
\[
- \partial_{x_n} \big( E \ast (\delta_{\partial \mathbf{R}_+^n} \otimes g) \big) = \frac{1}{2} \int_{\mathbf{R}^{n-1}} P_{x_n}(x'-y')g(y') \, dy',
\]
the estimate
\begin{align} \label{LIEPnS}
\big\Vert \partial_{x_n} \big( E*(\delta_{\partial \mathbf{R}_+^n} \otimes g) \big) \big\Vert_{L^\infty(\mathbf{R}_+^n)} \leq \frac{1}{2}\Vert g\Vert_{L^\infty(\mathbf{R}^{n-1})}
\end{align}
holds for any $g \in L^\infty(\mathbf{R}^{n-1})$, see \cite[Lemma 7]{GG22b}.
We are able to to establish similar estimates for the case where the domain is a perturbed $C^2$ half space.

\begin{lemma} \label{NM}
Let $\mathbf{R}_h^n$ be a perturbed $C^2$ half space of type $(K)$ with boundary $\Gamma=\partial \mathbf{R}_h^n$ and $n \geq 3$. Then,
\begin{enumerate}
\item[(i)] ($BMO$ estimate)
For all $g\in L^\infty(\Gamma)$, the estimate
\begin{equation} \label{BMOEH}
\left[ \nabla E * \big( \delta_\Gamma \otimes g \big) \right]_{BMO(\mathbf{R}^n)} \leq C(n) C_{\ref{NM},i}(h,\rho_0) \Vert g\Vert_{L^\infty(\Gamma)}
\end{equation}
holds with
\[
C_{\ref{NM},i}(h,\rho_0) := C_s(h)^2 (R_h + \rho_0 +1)^n \big( \Vert \nabla'^2 h \Vert_{L^\infty(\mathbf{R}^{n-1})} + \rho_0^{-1} \big).
\]

\item[(i\hspace{-1pt}i)] ($L^\infty$ estimate for normal component)
For all $g\in L^\infty(\Gamma) \cap \dot{H}^{-\frac{1}{2}}(\Gamma)$, the estimate
\begin{equation} \label{NCEH}
\left\Vert \nabla d \cdot \nabla E * \big( \delta_\Gamma \otimes g \big) \right\Vert_{L^\infty\big( \Gamma^\Omega_{\rho_0} \big)} \leq C(n) C_{\ref{NM},ii}(K,h,\rho_0) \Vert g\Vert_{L^\infty(\Gamma) \cap \dot{H}^{-\frac{1}{2}}(\Gamma)}
\end{equation}
holds with
\begin{align*}
&C_{\ref{NM},ii}(K,h,\rho_0) := C_s(h) \big( R_h^{n-1} + \rho_0 K + \rho_0 + 2 \big) + \rho_0 \\
&\ \ + C_s(h)^2 (2+6R_h) \Vert \nabla'^2 h \Vert_{L^\infty(\mathbf{R}^{n-1})} + C_s(h)^{n+10} C_1(h)^3 (1 + 3R_h)^{\frac{n}{2}}.
\end{align*}
\end{enumerate}
\end{lemma}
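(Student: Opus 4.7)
For both parts the plan is to use the curved/straight decomposition $g = g_1 + g_2$ of Section \ref{sub:EPgS}, where $g_1\big(x',h(x')\big) = 1'_{B_{2R_h}(0')}(x') g\big(x',h(x')\big)$ is supported on the curved portion of $\Gamma$ and $g_2$ vanishes there. The associated hyperplane data $g_2^H$ defined in \eqref{HPg2} satisfies
$E\ast(\delta_\Gamma\otimes g_2)(x) = E\ast(\delta_{\partial\mathbf{R}_+^n}\otimes g_2^H)(x)$ for all $x \in \mathbf{R}^n$. For part (i), we then extend $g_2^H$ constantly in $x_n$ to $g_2^{\mathbf{R}^n}$, so that
$\nabla E\ast(\delta_{\partial\mathbf{R}_+^n}\otimes g_2^H) = \nabla\partial_{x_n} E\ast (1_{\mathbf{R}_+^n}g_2^{\mathbf{R}^n})$, and the Calder\'on--Zygmund $L^\infty$--$BMO$ estimate \cite[Theorem 4.2.7]{GraM} applied to the second-order Riesz-type kernel $\nabla\partial_{x_n} E$ yields the $BMO$ bound for the straight part in terms of $\|g\|_{L^\infty}$ alone, with a constant depending only on $n$ and $\|h\|_{C^1}$ through the estimate $\|g_2^{\mathbf{R}^n}\|_{L^\infty}\le C_s(h)\|g\|_{L^\infty}$.

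For the curved part $g_1$ in (i), since $g_1\big(\cdot',h(\cdot')\big)$ has compact support in $B_{2R_h}(0')$, we extend it to $g_{1,\mathrm{c}}^\mathrm{e}\in L^\infty(\mathbf{R}^n)$ with compact support and with $\nabla d\cdot\nabla g_{1,\mathrm{c}}^\mathrm{e}=0$ in a $\rho_0$-neighborhood of $\Gamma$, using a standard even reflection across $\Gamma$ multiplied by a cut-off in $d(x)$. A direct computation via the divergence theorem in $\mathbf{R}_h^n$, together with the jump of $1_{\mathbf{R}_h^n}\nabla d$ across $\Gamma$, yields representation \eqref{ReGSg1}, where $f_{\theta,\rho_0/4}$ is a continuous function supported in $\Gamma^{\mathbf{R}^n}_{\rho_0/4}$ arising from the derivatives of the cut-off. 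The first term $\nabla\operatorname{div}\big(E\ast(g_{1,\mathrm{c}}^\mathrm{e}1_{\mathbf{R}_h^n}\nabla d)\big)$ is again a Calder\'on--Zygmund operator acting on an $L^\infty$ function, giving a $BMO$ bound with constant of the form $C(n)C_s(h)^2(\rho_0^{-1}+\|\nabla'^2 h\|_{L^\infty})(R_h+\rho_0+1)^n$. The second term in \eqref{ReGSg1} has the locally integrable kernel $\nabla E$ convolved with a bounded compactly supported function, so it is bounded in $L^\infty(\mathbf{R}^n)$, hence in $BMO$. Summing the two contributions yields (i) with the claimed constant $C_{\ref{NM},i}(h,\rho_0)$.

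For part (ii) we evaluate $\nabla d(x)\cdot\nabla(E\ast(\delta_\Gamma\otimes g))(x)$ for $x\in\Gamma^\Omega_{\rho_0}$ and split the integrand as
\[
\nabla d(x)\cdot\nabla E(x-y) \;=\; \frac{\partial E}{\partial\mathbf{n}_y}(x-y) \;+\; \bigl(\nabla d(x)-\nabla d(y)\bigr)\cdot\nabla E(x-y),
\]
using $\mathbf{n}(y)=-\nabla d(y)$ and $\nabla_y E(x-y)=-\nabla_x E(x-y)$. For the curved part $g_1$, the first piece is controlled by $\|g\|_{L^\infty}$ via Lemma \ref{EP}(ii), and the second by Proposition \ref{DbSD} together with the bound $|\nabla E(x-y)|\le C|x-y|^{-(n-1)}$, yielding an integrand bounded by $C_s(h)^2\|\nabla'^2 h\|_{L^\infty}|x-y|^{-(n-2)}$, which is integrable over the compact support $B'_\Gamma(2R_h)$ of $g_1$. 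For the straight part $g_2$, when the base point $\pi x$ satisfies $|(\pi x)'|\ge 3R_h$ we have $\nabla d(x)=(0',1)$ on the straight portion containing $\pi x$, so $\nabla d(x)\cdot\nabla E(x-\cdot)=\partial_{x_n}E(x-\cdot)$ and the quantity is half a Poisson integral of $g_2^H$, bounded by $\|g\|_{L^\infty}$ via \eqref{LIEPnS}. When $|(\pi x)'|<3R_h$, the vector $\nabla d(x)$ is genuinely not vertical and tangential contributions survive; the hard part is here. We handle it exactly as $I_{1,1}(x_0)$ in the proof of Theorem \ref{TBIE}: the kernel
\[
K_x(y') := \bigl\{\nabla d(x)\cdot\nabla E\bigl(x-(y',h(y'))\bigr)\bigr\}\,\bigl(1-\theta_{2r_h,x'}(y')\bigr),\qquad r_h=R_h^{1/(2n)},
\]
satisfies the hypotheses of Proposition \ref{BSNE} with $c_1\sim R_h\|\nabla'^2 h\|_{L^\infty}$ and $c_2\sim R_h\|\nabla'^2 h\|_{L^\infty}+R_h^3\|\nabla'^2 h\|_{L^\infty}^3$ (the estimates computed after $K_{x_0}$ in the proof of Theorem \ref{TBIE} apply with $x$ in place of $x_0$), and the duality bound $|\langle K_x\omega,\,g_2(\cdot',h(\cdot'))\rangle|\le\|(1+|\nabla' h|^2)^{1/2}K_x\|_{\dot H^{1/2}(\mathbf{R}^{n-1})}\|g_2(\cdot',h(\cdot'))\|_{\dot H^{-1/2}(\mathbf{R}^{n-1})}$, combined with Proposition \ref{MRHhalf} and the embedding of $L^{(2n-2)/n}$ into $\dot H^{-1/2}$, absorbs $g_2$ restricted near $B'_\Gamma(4r_h)$ in the $L^\infty\cap\dot H^{-1/2}$ norm of $g$; the residual piece on $B'_\Gamma(4r_h)$ is compact and bounded by $\|g\|_{L^\infty}$ through the same direct boundary-integral computation used for $I_{1,2},I_{1,3}$ in Theorem \ref{TBIE}. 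Summing the four contributions and tracking constants reproduces $C_{\ref{NM},ii}(K,h,\rho_0)$. The principal obstacle is the uniformity over $x\in\Gamma^\Omega_{\rho_0}$ in the $\dot H^{1/2}$-bound on $K_x$, which is exactly the content of Proposition \ref{BSNE} applied with constants depending only on $R_h$, $\|h\|_{C^1}$ and $\|\nabla'^2 h\|_{L^\infty}$, and not on the specific location of $x$.
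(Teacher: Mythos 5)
Your part (i) is correct and is essentially the paper's own argument: the same curved/straight splitting $g=g_1+g_2$, the identity $\nabla E\ast(\delta_{\partial\mathbf{R}_+^n}\otimes g_2^H)=\nabla\partial_{x_n}E\ast(1_{\mathbf{R}_+^n}g_2^{\mathbf{R}^n})$ with the $L^\infty$--$BMO$ singular-integral bound for the straight part, and representation \eqref{ReGSg1} built from the normal-constant extension $g_{1,\mathrm{c}}^{\mathrm{e}}$ for the curved part, with the two terms handled exactly as you describe.

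Part (ii) has the right architecture, and your treatment of the curved part (Lemma \ref{EP}(ii) plus Proposition \ref{DbSD}) and of the region $\lvert x'\rvert\geq 3R_h$ is fine, but there is a genuine error at the decisive step. You assert that the kernel bounds computed for $K_{x_0}$ in Theorem \ref{TBIE} ``apply with $x$ in place of $x_0$,'' giving $c_1\sim R_h\lVert\nabla'^2h\rVert_{L^\infty}$. This is false. Those bounds rely on $x_0\in\Gamma$: then the numerator of $\nabla d(x_0)\cdot\nabla E(x_0-y)$ is $-\nabla'h(x_0')\cdot(x_0'-y')+\big(h(x_0')-h(y')\big)=O\big(\lVert\nabla'h\rVert_\infty\lvert x_0'-y'\rvert\big)$. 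For $x\in\Gamma^\Omega_{\rho_0}$ off the boundary the numerator contains the additional term $x_n-h(x')\approx d(x)>0$, which produces a genuine Poisson-type contribution $d(x)/\lvert x-y\rvert^n$. For this piece one only gets $\lvert K_x(y')\rvert\,\lvert x'-y'\rvert^{n-1}\lesssim 1$ (an absolute constant, independent of $\lVert\nabla'^2h\rVert$), and Proposition \ref{BSNE} applied at your cut-off scale $\rho\sim r_h=R_h^{1/(2n)}$ then yields $\lVert K_x\rVert_{\dot H^{1/2}}\gtrsim r_h^{-n/2}=R_h^{-1/4}$. This blows up as $R_h\to0$ and cannot reproduce $C_{\ref{NM},ii}$, which stays bounded; indeed the term $C_s(h)^{n+10}C_1(h)^3(1+3R_h)^{n/2}$ in $C_{\ref{NM},ii}$ carries no factor of $\lVert\nabla'^2h\rVert$ precisely because of this Poisson contribution. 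The paper's proof avoids the problem by using, for $\lvert x'\rvert<3R_h$, the modified decomposition $g_1^\ast=1'_{B_{1+3R_h}(0')}g$ and a cut-off $\theta_{\ast,x}$ at the \emph{fixed} scale $\tfrac12+3R_h\geq\tfrac12$, so that $\lvert K_x(y')\rvert\,\lvert x'-y'\rvert^{n-1}\leq C(n)C_s(h)$ and $\lvert\nabla'_{y'}K_x(y')\rvert\,\lvert x'-y'\rvert^{n}\leq C(n)C_s(h)^2$ hold, and Proposition \ref{BSNE} is applied with $\rho$ bounded below by an absolute constant. A secondary point: your choice $r_h=R_h^{1/(2n)}$ only organizes the supports correctly under the smallness condition $2R_h<r_h$, which Theorem \ref{TBIE} assumes but Lemma \ref{NM} does not. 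To repair your argument, keep the duality scheme but replace the $r_h$-scale cut-off by one at a scale bounded below by a fixed constant, and include the $O(1)$ Poisson contribution in the constants $c_1,c_2$ of Proposition \ref{BSNE}.
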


For $g \in L^\infty(\Gamma)$, the notation $E*(\delta_\Gamma\otimes g)$ in Lemma \ref{NM} means that
\[
E*(\delta_\Gamma\otimes g)(x) := \int_\Gamma E(x-y)g(y) \, d\mathcal{H}^{n-1}(y), \quad x \in \mathbf{R}^n.
\]

\subsection{$BMO$ estimate} \label{sub:BMOES} 

We follow the idea of the proof for \cite[Lemma 5 (i)]{GG22b}, which establishes the same $BMO$ estimate in the case where the domain is a bounded $C^2$ doamin.

\begin{proof}[Proof of Lemma \ref{NM} (i)]
For $g \in L^\infty(\Gamma)$, we follow the setting in subsection \ref{sub:EPgS} to decompose $g$ into the curved part $g_1$ and the straight part $g_2$ and let $g_2^H \in L^\infty(\mathbf{R}^{n-1})$ be defined as expression \eqref{HPg2}.
Since by definition we have that
\[
E \ast (\delta_\Gamma \otimes g_2) = E \ast (\delta_{\partial \mathbf{R}_+^n} \otimes g_2^H),
\]
the estimate 
\[
\big[ \nabla \big( E \ast (\delta_{\partial \mathbf{R}_+^n} \otimes g_2^H) \big) \big]_{BMO(\mathbf{R}^n)} \leq C(n) \Vert g_2^H \Vert_{L^\infty(\mathbf{R}^{n-1})} \leq C(n) \Vert g \Vert_{L^\infty(\Gamma)}
\]
follows from estimate \eqref{BIBEH}. As we are now considering the case where the boundary $\Gamma$ is uniformly $C^2$, the signed distance function $d$ is $C^2$ in $\Gamma^{\mathbf{R}^n}_{\rho_0}$, see e.g. \cite[Section 14.6]{GT}. 
We consider $\theta \in C_\mathrm{c}^\infty(\mathbf{R})$ such that $0 \leq \theta \leq 1$, $\theta(\sigma)=1$ for $\lvert\sigma\rvert \leq 1$ and $\theta(\sigma)=0$ for $\lvert\sigma\rvert \geq 2$.
Note that $\theta_d := \theta(4d/\rho_0)$ is $C^2$ in $\mathbf{R}^n$.
We extend $g_1 \in L^\infty(\Gamma)$ to $g_1^\mathrm{e} \in L^\infty\big( \Gamma^{\mathbf{R}^n}_{\rho_0/2} \big)$ by setting
\[
g_1^\mathrm{e}(x) := g_1(\pi x)
\]
for any $x \in \Gamma^{\mathbf{R}^n}_{\rho_0/2}$ with $\pi x$ denoting the unique projection of $x$ on $\Gamma$. For $x \in \Gamma^{\mathbf{R}^n}_{\rho_0/2}$, by considering the normal coordinate $x = F_{\pi x}(\eta)$ in $U_{\rho_0/2}(\pi x)$, we have that
\[
(\nabla_x d)_{F_{\pi x}} \cdot (\nabla_x g_1^\mathrm{e})_{F_{\pi x}} = \partial_{\eta_n} (g_1^\mathrm{e})_{F_{\pi x}} = 0
\]
as $(g_1^\mathrm{e})_{F_{\pi x}}(\eta', \tau_1) = (g_1^\mathrm{e})_{F_{\pi x}}(\eta', \tau_2)$ for any $\lvert\eta'\rvert< \rho_0/2$ and $\tau_1,\tau_2 \in (- \rho_0/2, \rho_0/2)$.
Here the notation $(f)_{F_{\pi x}}$ represents the composition of $f$ and $F_{\pi x}$, i.e., $(f)_{F_{\pi x}} := f \circ F_{\pi x}$.
Hence, we see that $\nabla d \cdot \nabla g_1^\mathrm{e} = 0$ in $\Gamma^{\mathbf{R}^n}_{\rho_0/2}$.

Let us consider $g_{1,\mathrm{c}}^\mathrm{e} := \theta_d g_1^\mathrm{e}$. A key observation is that
\begin{align*}	
\delta_\Gamma \otimes g_1 &= (\nabla 1_{\mathbf{R}_h^n} \cdot \nabla d) g_{1,\mathrm{c}}^\mathrm{e} = \operatorname{div} (g_{1,\mathrm{c}}^\mathrm{e} 1_{\mathbf{R}_h^n} \nabla d) - 1_{\mathbf{R}_h^n} \operatorname{div} (g_{1,\mathrm{c}}^\mathrm{e} \nabla d), \\
\operatorname{div} (g_{1,\mathrm{c}}^\mathrm{e} \nabla d) &= g_{1,\mathrm{c}}^\mathrm{e} \Delta d + \nabla d \cdot \nabla g_{1,\mathrm{c}}^\mathrm{e} = g_{1,\mathrm{c}}^\mathrm{e} \Delta d + \frac{4\theta'(4d/\rho_0)}{\rho_0} g_1^\mathrm{e}.
\end{align*}
Thus,
\[
\nabla E*(\delta_\Gamma \otimes g_1) = \nabla\operatorname{div} \big( E*( g_{1,\mathrm{c}}^\mathrm{e} 1_{\mathbf{R}_h^n} \nabla d) \big) -\nabla E* \big( 1_{\mathbf{R}_h^n} g_1^\mathrm{e} f_{\theta,\rho_0/4} \big) = I_1 + I_2
\]
where $f_{\theta,\rho_0/4} := \theta_d \Delta d + \frac{4\theta'(4d/\rho_0)}{\rho_0}$. 
By the $L^\infty$-$BMO$ estimate for the singular integral operator \cite[Theorem 4.2.7]{GraM}, the first term is estimated as
\[
[I_1]_{BMO(\mathbf{R}^n)} \leq C(n) \Vert g_{1,\mathrm{c}}^\mathrm{e} \nabla d \Vert_{L^\infty\big( \mathbf{R}_h^n \big)} \leq C(n) \Vert g\Vert_{L^\infty(\Gamma)}.
\]
Since 
\[
\operatorname{supp} (g_1^{\mathrm{e}} f_{\theta, \rho_0/4}) \subseteq U_{\mathrm{c}, \rho_0/2} := \left\{ x \in \Gamma^{\mathbf{R}^n}_{\rho_0/2} \,\middle\vert\, \left\lvert(\pi x)' \right\rvert \leq 2 R_h \right\},
\]
for $x \in \mathbf{R}^n$ with $d(x,U_{\mathrm{c}, \rho_0/2}) = \inf_{y \in U_{\mathrm{c}, \rho_0/2}} \lvert x-y\rvert < 1$ we have that
\begin{align*}
\left\lvert  I_2(x) \right\rvert  &\leq C(n) \int_{U_{\mathrm{c}, \rho_0/2}} \frac{1}{\lvert x-y \rvert^{n-1}} \, dy \Vert f_{\theta,\rho_0/4} \Vert_{L^\infty\big( U_{\mathrm{c}, \rho_0/2} \big)} \Vert g_1^\mathrm{e} \Vert_{L^\infty\big( U_{\mathrm{c}, \rho_0/2} \big)} \\
&\leq C(n) C_{\ref{NM},i}(h,\rho_0) \Vert g\Vert_{L^\infty(\Gamma)}.
\end{align*}
where
\[
C_{\ref{NM},i}(h,\rho_0) := (R_h + \rho_0 +1)^n \big( \Vert \nabla'^2 h \Vert_{L^\infty(\Gamma)} + \Vert \nabla' h \Vert_{L^\infty(\Gamma)}^2 \Vert \nabla'^2 h \Vert_{L^\infty(\Gamma)} + \rho_0^{-1}\big).
\]
For $x \in \mathbf{R}^n$ with $d(x, U_{\mathrm{c}, \rho_0/2}) = \inf_{y \in U_{\mathrm{c}, \rho_0/2}} \lvert x-y\rvert \geq 1$, same estimate above for $\left\lvert I_2(x) \right\rvert$ holds trivially as $\lvert x-y\rvert^{-(n-1)} \leq 1$ for any $y \in U_{\mathrm{c}, \rho_0/2}$.
The proof of the first part of Lemma \ref{NM} is now complete.
\end{proof}

\subsection{$L^\infty$ estimate for normal component} \label{sub:LIEnc} 

The $L^\infty$ estimate to the normal component of $\nabla E \ast (\delta_\Gamma \otimes g)$ within a small neighborhood of $\Gamma$ for $g \in L^\infty(\Gamma) \cap \dot{H}^{-\frac{1}{2}}(\Gamma)$ can be derived by almost the same argument as establishing the boundedness of the trace operator $S$ from $L^\infty(\Gamma) \cap \dot{H}^{-\frac{1}{2}}(\Gamma)$ to $L^\infty(\Gamma)$ in Theorem \ref{TBIE}.

\begin{proof}[Proof of Lemma \ref{NM} (i\hspace{-0.1pt}i)]
Let $g \in L^\infty(\Gamma) \cap \dot{H}^{-\frac{1}{2}}(\Gamma)$ and $x \in \Gamma^\Omega_{\rho_0}$. Suppose firstly that $\lvert x'\rvert \geq 3R_h$. 
By following the setting in subsection \ref{sub:EPgS} to decompose $g$ into the curved part $g_1$ and the straight part $g_2$, we have that
\begin{align*}
\nabla d(x) \cdot \big( \nabla E \ast (\delta_\Gamma \otimes g) \big) (x) &= \big( \partial_{x_n} E \ast (\delta_\Gamma \otimes g) \big) (x) \\
&= \big( \partial_{x_n} E \ast (\delta_{\partial \mathbf{R}_+^n} \otimes g_2^H) \big) (x) + \int_{B'_\Gamma(2R_h)} \big( \partial_{n} E \big) (x-y) g_1(y) \, d\mathcal{H}^{n-1}(y),
\end{align*}
where $g_2^H = T_h^{-1}(g_2)$. By estimate \eqref{LIEPnS}, we see that
\[
\big\lvert \big( \partial_{x_n} E \ast (\delta_{\partial \mathbf{R}_+^n} \otimes g_2^H) \big) (x) \big\rvert \leq \frac{1}{2} \Vert g_2^H \Vert_{L^\infty(\partial \mathbf{R}_+^n)} \leq \frac{1}{2} \Vert g \Vert_{L^\infty(\Gamma)}.
\]
Since $\lvert x'\rvert \geq 3R_h$, for any $y \in B'_\Gamma(2R_h)$ we have that $\lvert x-y \rvert\geq \lvert x'-y' \rvert \geq \lvert x'\rvert-\lvert y'\rvert \geq R_h$. Hence, 
\begin{align*}
\left\lvert  \int_{B'_\Gamma(2R_h)} \big( \partial_n E \big) (x-y) g_1(y) \, d\mathcal{H}^{n-1}(y) \right\rvert &\leq C(n) R_h^{-(n-1)} \Vert g \Vert_{L^\infty(\Gamma)} \int_{\lvert y'\rvert<2R_h} \big( 1 + \lvert \nabla' h (y')\rvert^2 \big)^{\frac{1}{2}} \, dy' \\
&\leq C(n) C_s(h) \Vert g \Vert_{L^\infty(\Gamma)}.
\end{align*}
Thus, for $x \in \Gamma^\Omega_{\rho_0}$ with $\lvert x'\rvert \geq 3R_h$, we show that
\[
\left\lvert \nabla d(x) \cdot \big( \nabla E \ast (\delta_\Gamma \otimes g) \big) (x) \right\rvert \leq C(n) C_s(h) \Vert g \Vert_{L^\infty(\Gamma)}.
\]

Next, we consider the case where $\lvert x'\rvert< 3R_h$.
In this case, we consider a modified decomposition of $g$. 
We let $g_1^\ast := 1'_{B_{1+3R_h}(0')} \cdot g$ and $g_2^\ast := g - g_1^\ast$ where $1'_{B_{1+3R_h}(0')}$ is the characteristic function associated with the open ball $B_{1+3R_h}(0')$ in $\mathbf{R}^{n-1}$.
We firstly deal with the modified curved part $g_1^\ast$. 
Note that
\begin{align*}
\nabla d (x) \cdot \big( \nabla E*(\delta_\Gamma \otimes g_1^\ast) \big)(x) &= \int_\Gamma \big( \nabla d(x) - \nabla d(y) \big) \cdot \nabla E (x-y) g_1^\ast(y) \, d\mathcal{H}^{n-1}(y) \\
&\ \ + \int_\Gamma \frac{\partial E}{\partial\mathbf{n}_y}(x-y) g_1^\ast(y) \, d\mathcal{H}^{n-1}(y) = I_1(x) + I_2(x).
\end{align*}
By Proposition \ref{DbSD}, we have that
\begin{align*}
\left\lvert I_1(x) \right\rvert 
&\leq C(n) \Vert \nabla'^2 h \Vert_{L^\infty(\mathbf{R}^{n-1})} C_s(h)^2 \Vert g \Vert_{L^\infty(\Gamma)} \int_{\lvert y'\rvert < 1+3R_h} \frac{1}{\lvert x'-y'\rvert^{n-2}} \, dy' \\
&\leq C(n) (1+6R_h) \Vert \nabla'^2 h \Vert_{L^\infty(\mathbf{R}^{n-1})} C_s(h)^2 \Vert g \Vert_{L^\infty(\Gamma)}.
\end{align*}
On the other hand, by Lemma \ref{EP} we have that
\[
\left\lvert I_2(x) \right\rvert \leq C(n) C_{\ref{EP}}(K,h,\rho_0) \Vert g \Vert_{L^\infty(\Gamma)}.
\]

Then, we deal with the modified straight part $g_2^\ast$. Let $\theta_\ast \in C_\mathrm{c}^\infty(\mathbf{R}^{n-1})$ be a cut-off function such that $0 \leq \theta_\ast \leq 1$, $\theta_\ast = 1$ in $B_{\frac{1}{2} + 3R_h}(0')$ and $\operatorname{supp} \theta_\ast \subseteq \overline{B_{1+3R_h}(0')}$.
Let $x \in \Gamma^\Omega_{\rho_0}$ with $\lvert x'\rvert<3R_h$. We define that
\[
\theta_{\ast,x}(y') := \theta_\ast(y' - x'), \quad K_x(y') := \left\{ \nabla d(x) \cdot \nabla E \Big( x - \big( y',h(y') \big) \Big) \right\} \big( 1 - \theta_{\ast,x}(y') \big).
\]
Note that
\[
\nabla d(x) \cdot \big( \nabla E \ast (\delta_\Gamma \otimes g_2^\ast) \big)(x) = \int_{\mathbf{R}^{n-1}} K_x(y') g_2^\ast \big( y',h(y') \big) \big( 1 + \big\lvert \nabla' h (y') \big\rvert^2 \big)^{\frac{1}{2}} \, dy'
\]
and for any $x \in \Gamma^\Omega_{\rho_0}$ and $y \in \Gamma$ with $x \neq y$, it holds that
\[
\nabla d (x) \cdot \nabla E (x - y) = - C(n) \frac{- \nabla' h (x') \cdot (x' - y') + \big( x_n - h(y') \big)}{\omega(x') \Big( \lvert x' - y' \rvert^2 + \big( x_n - h(y') \big)^2 \Big)^{n/2}}
\]
where $\omega(x') = \big( 1 + \big\lvert \nabla' h (x') \big\rvert^2 \big)^{\frac{1}{2}}$. 
By following similar calculations in the proof of Theorem \ref{TBIE}, we can deduce that $K_x(\cdot') \in C^1(\mathbf{R}^{n-1})$ satisfies $\operatorname{supp} K_x(\cdot') \subset B_{\frac{1}{2} + 3R_h}(x')^{\mathrm{c}}$,
\[
\left\lvert K_x(y')\right\rvert \cdot \lvert x'-y'\rvert^{n-1} \leq C(n) C_s(h), \quad \left\lvert\nabla'_{y'} K_x (y')\right\rvert \cdot \lvert x'-y'\rvert^n \leq C(n) C_s(h)^2.
\]
Hence, by the duality relation between $\dot{H}^{\frac{1}{2}}(\mathbf{R}^{n-1})$ and $\dot{H}^{-\frac{1}{2}}(\mathbf{R}^{n-1})$, Proposition \ref{MRHhalf}, Proposition \ref{BSNE}, estimate (\ref{gghEm}) and estimate (\ref{EHdhLp}), we can deduce that
\begin{align*} 
\big\lvert \nabla d(x) \cdot \big( \nabla E \ast (\delta_\Gamma \otimes g_2^\ast) \big) (x) \big\rvert &\leq \Vert \big( 1 + \big\lvert \nabla' h (\cdot') \big\rvert^2 \big)^{\frac{1}{2}} K_x(\cdot') \Vert_{\dot{H}^{\frac{1}{2}}(\mathbf{R}^{n-1})} \Vert g_2^\ast\big( \cdot', h(\cdot') \big) \Vert_{\dot{H}^{-\frac{1}{2}}(\mathbf{R}^{n-1})} \\
&\leq C(n) C_s(h)^{n+10} C_1(h)^3 (1 + 3R_h)^{\frac{n}{2}} \Vert g \Vert_{L^\infty(\Gamma) \cap \dot{H}^{-\frac{1}{2}}(\Gamma)}.
\end{align*}
\end{proof}

We would like to emphasize that it is insufficient to obtain an $L^\infty$ estimate for the normal component of $\nabla E \ast (\delta_\Gamma \otimes g)$ in a small neighborhood of $\Gamma$ if we only assume that $g \in L^\infty(\Gamma)$. 
Let $B$ be a ball centered at $0$ with radius $r_B$ such that $B'_\Gamma(2R_h) \subset B/2$.
By almost the same argument as in the proof of Lemma \ref{NM} (ii), we can see that if $x \in \Gamma_{\rho_0}^\Omega$ with $\lvert x'\rvert \geq r_B/2$, then we have the estimate
\[
\left\lvert \nabla d(x) \cdot \big( \nabla E \ast (\delta_\Gamma \otimes g) \big) (x) \right\rvert \leq C(K,R_\ast,R_h) \lVert g \rVert_{L^\infty(\Gamma)}.
\] 
In addition, if $x \in \Gamma_{\rho_0}^\Omega$ with $\lvert x' \rvert < r_B/2$, we decompose $g = g_c + g_s$ where
\begin{align*}
g_c\big( x',h(x') \big) := 1'_{B_{r_B}(0')}(x') g\big( x',h(x') \big), \quad g_s\big( x', h(x') \big) := g\big( x', h(x') \big) - g_c\big( x', h(x') \big)
\end{align*}
for any $x' \in \mathbf{R}^{n-1}$.
Since $g_c$ is compactly supported, we also have that
\[
\left\lvert \nabla d(x) \cdot \big( \nabla E \ast (\delta_\Gamma \otimes g_c) \big) (x) \right\rvert \leq C(K,R_\ast,R_h) \lVert g \rVert_{L^\infty(\Gamma)}.
\]
The main barrier comes from the contribution of $g_s$ in the case that $\lvert x'\rvert < r_B/2$.

\begin{proposition} \label{RfHdmh}
For $1 \leq j \leq n-1$, there does not exist a constant $C>0$ such that
\[
\left\lvert (\partial_j E) \ast \big( \delta_{\mathbf{R}^{n-1}} \otimes g \big) (x',0) \right\rvert \leq C \lVert g \rVert_{L^\infty(\mathbf{R}^{n-1})}
\]
for any $\lvert x' \rvert < r_B/2$ and $g \in L^\infty(\mathbf{R}^{n-1})$ with $\operatorname{supp} g \subseteq B_{r_B}(0')^\mathrm{c}$.
\end{proposition}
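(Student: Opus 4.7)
The plan is to disprove the existence of such a constant $C$ by constructing an explicit sequence of admissible data whose corresponding integrals diverge while their $L^\infty$ norms remain equal to $1$. I would first fix the evaluation point $x' = 0'$, which clearly lies in $B_{r_B/2}(0')$, and observe that, since $\partial_j E(y) = -y_j / \big( n b_1(n) \lvert y \rvert^n \big)$, the expression to estimate reduces at $x' = 0'$ to
\[
(\partial_j E) \ast (\delta_{\mathbf{R}^{n-1}} \otimes g)(0',0) = \frac{1}{n b_1(n)} \int_{\mathbf{R}^{n-1}} \frac{y'_j}{\lvert y' \rvert^n} g(y') \, dy'.
\]
The kernel has magnitude $\lvert y' \rvert^{-(n-1)}$, which fails to be integrable at infinity in $\mathbf{R}^{n-1}$; the idea is to align the sign of $g$ with the sign of $y'_j$ so that this non-integrability manifests itself as a logarithmic divergence.

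Concretely, for each $R > r_B$ I would take
\[
g_R(y') := \operatorname{sgn}(y'_j) \cdot 1'_{B_R(0')}(y') \cdot \big( 1 - 1'_{B_{r_B}(0')}(y') \big),
\]
which satisfies $\Vert g_R \Vert_{L^\infty(\mathbf{R}^{n-1})} = 1$ and $\operatorname{supp} g_R \subseteq \overline{B_{r_B}(0')^\mathrm{c}}$. Substituting into the identity above and using polar coordinates $y' = \rho \omega$ with $\omega \in S^{n-2}$ and surface element $\rho^{n-2} \, d\sigma(\omega) \, d\rho$, together with $y'_j \, \operatorname{sgn}(y'_j) = \lvert y'_j \rvert = \rho \lvert \omega_j \rvert$, I obtain
\[
(\partial_j E) \ast (\delta_{\mathbf{R}^{n-1}} \otimes g_R)(0',0) = \frac{1}{n b_1(n)} \left( \int_{S^{n-2}} \lvert \omega_j \rvert \, d\sigma(\omega) \right) \log\!\frac{R}{r_B}.
\]
The angular integral is a strictly positive constant depending only on $n$, so the left-hand side tends to $+\infty$ as $R \to \infty$ while $\Vert g_R \Vert_{L^\infty(\mathbf{R}^{n-1})} = 1$ for every $R$. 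No constant $C$ can absorb this growth, which proves the proposition.

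There is essentially no technical obstacle beyond choosing the test function so that sign cancellation does not occur; once $g_R$ is aligned with the sign of $y'_j$ on the annulus, the estimate reduces to the standard one-dimensional divergence of $\int d\rho / \rho$ at infinity. This negative result clarifies why, in subsection \ref{sub:LIEnc}, one cannot avoid introducing the $\dot{H}^{-\frac{1}{2}}$ norm when estimating the contribution from the straight part $g_2^\ast$ at points $x \in \Gamma^\Omega_{\rho_0}$ close to the curved portion of $\Gamma$: at such points $\nabla d(x)$ has nonzero tangential components, which couple to precisely the tangential kernels $\partial_j E$ ($1 \leq j \leq n-1$) whose boundary convolutions are ruled out by this proposition from being controlled by $\Vert g \Vert_{L^\infty(\Gamma)}$ alone.
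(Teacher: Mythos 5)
Your argument is correct, and it reaches the conclusion by a genuinely different and more elementary route than the paper. The paper argues by contradiction through duality: assuming the uniform bound, it passes to the adjoint operator $P^\ast$ acting on $L^1(\mathbf{R}^{n-1})$, tests it against Gaussian approximations $f_t$ of the Dirac mass, and invokes Fatou's lemma to conclude that $\phi_{r_B} R_j^\ast$ would have to lie in $L^1(\mathbf{R}^{n-1})$ — contradicting the fact that $x_j/\lvert x'\rvert^n$ is not integrable near infinity. Your proof instead exhibits an explicit admissible family $g_R = \operatorname{sgn}(y'_j)\,1'_{B_R(0')\setminus B_{r_B}(0')}$ with $\lVert g_R\rVert_{L^\infty} = 1$ and evaluates the convolution at $x'=0'$ in polar coordinates, obtaining the divergent quantity $c_n \log(R/r_B)$; the computation is correct (note $\partial_j E(y) = -y_j/(n b_1(n)\lvert y\rvert^n)$, so at $x'=0'$ the kernel becomes $+y'_j/(n b_1(n)\lvert y'\rvert^n)$, and the sign alignment via $\operatorname{sgn}(y'_j)$ kills all cancellation, while $\int_{S^{n-2}}\lvert\omega_j\rvert\,d\sigma>0$). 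Both proofs ultimately rest on the same fact — the logarithmic non-integrability of $\lvert y'\rvert^{-(n-1)}$ at infinity in $\mathbf{R}^{n-1}$ — but yours converts it directly into a blowing-up sequence of test data rather than routing it through an $L^1$ adjoint estimate and a limiting argument. The constructive version is shorter, avoids the measure-theoretic limit passage (Bolzano--Weierstrass plus Fatou), and makes transparent exactly which boundary data defeat an $L^\infty$-only estimate; the paper's duality formulation, on the other hand, emphasizes the structural point that the obstruction is the failure of the (truncated) Riesz transform to be bounded on $L^\infty$. Either proof is complete.
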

\begin{proof}
Let $1 \leq j \leq n-1$. Note that 
\[
(\partial_j E) \ast \big( \delta_{\mathbf{R}^{n-1}} \otimes g \big) (x',0) = C(n) \int_{\mathbf{R}^{n-1}} \frac{x_j - y_j}{\lvert x'-y' \rvert^n} g(y') \, dy' = C(n) R_j(g)
\]
where $R_j(g)$ represents the $j$-th Riesz transform of $g$.
Let $\theta_2 \in C_\mathrm{c}^\infty(\mathbf{R}^{n-1})$ be a cut-off function that satisfies $0 \leq \theta_2 \leq 1$, $\theta_2 = 1$ in $B_1(0')$ and $\operatorname{supp} \theta_2 \subseteq \overline{B_2(0')}$.
We set $\psi_{r_B/4}(z') := \theta_2\big( \frac{4 z'}{r_B} \big)$, $\phi_{r_B}(z') := 1 - \theta_2(z'/r_B)$, $\psi_{r_B/16}(z') := \theta_2\big( \frac{16 z'}{r_B} \big)$ and
\[
R_j^\ast(z') := \big( 1 - \psi_{r_B/16}(z') \big) \cdot \frac{z_j}{\lvert z' \rvert^n}
\]
for any $z' \in \mathbf{R}^{n-1}$ and $1 \leq j \leq n-1$.

We assume the contrary of Proposition \ref{RfHdmh}. Suppose that there exist a constant $C'>0$ such that 
\[
\left\lvert R_j (g) (x') \right\rvert  \leq C' \lVert g \rVert_{L^\infty(\mathbf{R}^{n-1})}
\]
for any $\lvert x' \rvert< r_B/2$ and $g \in L^\infty(\mathbf{R}^{n-1})$ with $\operatorname{supp} g \subseteq B_{r_B}(0')^\mathrm{c}$. As a consequence, the estimate
\begin{align} \label{rResiz}
\lVert P(g) \rVert_{L^\infty(\mathbf{R}^{n-1})} \leq C' \lVert g \rVert_{L^\infty(\mathbf{R}^{n-1})}
\end{align}
holds for any $g \in L^\infty(\mathbf{R}^{n-1})$ where
\[
P(g)(x') := \psi_{r_B/4}(x') \Big( R_j^\ast \ast \big( \phi_{r_B} g \big) \Big)(x'), \quad x' \in \mathbf{R}^{n-1}.
\]
With respect to $f \in L^1(\mathbf{R}^{n-1})$, we can define the adjoint operator of $P$ by
\[
P^\ast(f)(x') := \phi_{r_B}(x') \Big( R_j^\ast \ast \big( \psi_{r_B/4} f \big) \Big)(x'), \quad x' \in \mathbf{R}^{n-1}.
\]
Estimate (\ref{rResiz}) implies that $P^\ast$ is a bounded linear operator which maps $L^1(\mathbf{R}^{n-1})$ to $L^1(\mathbf{R}^{n-1})$, i.e., it holds that
\begin{align} \label{aResiz}
\lVert P^\ast(f) \rVert_{L^1(\mathbf{R}^{n-1})} \leq C' \lVert f \rVert_{L^1(\mathbf{R}^{n-1})}
\end{align}
for any $f \in L^1(\mathbf{R}^{n-1})$.
For $t>0$, we consider the Gaussian function
\[
f_t(z') := \frac{1}{(4 \pi t)^{\frac{n-1}{2}}} \mathrm{e}^{- \frac{\lvert z'\rvert^2}{4t}}; \quad z' \in \mathbf{R}^{n-1}.
\]
Since $R_j^\ast(\cdot') \psi_{r_B/4}(x' - \cdot') \in C_\mathrm{c}^\infty(\mathbf{R}^{n-1})$ and 
\[
\underset{t \to 0}{\operatorname{lim}} \, f_t(x' - y') = \delta(x' - y')
\]
in the sense of distributions, we see that
\[
\underset{t \to 0}{\operatorname{lim}} \, P^\ast(f_t)(x') = \phi_{r_B}(x') R_j^\ast(x').
\]
Since $\lVert f_t \rVert_{L^1(\mathbf{R}^{n-1})} = 1$ for any $t>0$, estimate (\ref{aResiz}) implies that for any $t>0$, it holds that
\[
\lVert P^\ast(f_t) \rVert_{L^1(\mathbf{R}^{n-1})} \leq C'.
\]
Hence, by the Bolzano-Weierstrass theorem, there exists a sequence $\{ t_m \}_{m \in \mathbf{N}}$ which converges to zero so that the sequence
\[
\left\{ \lVert P^\ast(f_{t_m}) \rVert_{L^1(\mathbf{R}^{n-1})} \right\}_{m \in \mathbf{N}}
\]
is convergent. By Fatou's lemma, we can then conclude that
\[
\lVert \phi_{r_B} R_j^\ast \rVert_{L^1(\mathbf{R}^{n-1})} \leq \underset{t_m \to 0}{\operatorname{lim}} \, \lVert P^\ast(f_{t_m}) \rVert_{L^1(\mathbf{R}^{n-1})} \leq C'.
\]
However, for $\lvert x' \rvert \geq 2r_B$ we have that $\phi_{r_B}(x') R_j^\ast(x') = x_j / \lvert x' \rvert^n$, which is clearly not $L^1$ integrable in the region $\left\{x' \in \mathbf{R}^{n-1} \,\middle\vert\, \lvert x' \rvert \geq 2r_B \right\}$.
We reach a contradiction.
\end{proof}

\subsection{$L^2$ estimate for the gradient of the single layer potential} \label{sub:L2ESP}

We begin with the half space case.

\begin{proposition} \label{L2EHS}
Let $n \geq 3$. For any $g \in \dot{H}^{-\frac{1}{2}}(\mathbf{R}^{n-1})$, the estimate
\[
\big\Vert \nabla E \ast \big( \delta_{\partial \mathbf{R}_+^n} \otimes g \big) \big\Vert_{L^2(\mathbf{R}_+^n)} = C(n) \Vert g \Vert_{\dot{H}^{-\frac{1}{2}}(\mathbf{R}^{n-1})}
\]
holds with some constant $C(n)>0$ that depends on dimension $n$ only.
\end{proposition}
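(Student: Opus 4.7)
The plan is to reduce the identity to a one-line Plancherel computation by partial Fourier transform in the tangential variable $x'$, handling technicalities by first working with a dense subclass of $g$.

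First, I would recall that the partial Fourier transform of $E(\cdot', x_n)$ in the $x'$ variables is, for $x_n > 0$,
\[
\mathcal{F}_{x'}[E(\cdot', x_n)](\xi') = \frac{C(n)}{|\xi'|}\, e^{-x_n |\xi'|}.
\]
This is the standard Poisson-extension representation of $E$ in the upper half space; it can be verified either from the explicit formula for $E$ or, more cleanly, by noting that $E(x',x_n)$ solves $-\Delta E = \delta$ so its partial Fourier transform satisfies $(-\partial_{x_n}^2 + |\xi'|^2) \hat E(\xi', x_n) = \delta(x_n)$, with the unique bounded solution $(2|\xi'|)^{-1} e^{-|\xi'||x_n|}$ (up to the normalization $C(n)$).

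Next, for $g \in \mathcal{S}_0(\mathbf{R}^{n-1})$ (Schwartz functions with $\widehat{g}$ supported away from $0$), the single layer potential $u = E \ast (\delta_{\partial \mathbf{R}_+^n} \otimes g)$ is smooth in $\overline{\mathbf{R}_+^n}$ and its partial Fourier transform is
\[
\mathcal{F}_{x'}[u](\xi', x_n) = \frac{C(n)}{|\xi'|}\, e^{-x_n|\xi'|}\, \widehat{g}(\xi'), \qquad x_n > 0.
\]
Differentiating gives
\[
\mathcal{F}_{x'}[\nabla' u] = i C(n)\frac{\xi'}{|\xi'|}\, e^{-x_n|\xi'|}\, \widehat{g}(\xi'), \qquad \mathcal{F}_{x'}[\partial_{x_n} u] = -C(n)\, e^{-x_n|\xi'|}\, \widehat{g}(\xi'),
\]
so $|\mathcal{F}_{x'}[\nabla u](\xi', x_n)|^2 = 2 C(n)^2\, e^{-2 x_n |\xi'|}\, |\widehat{g}(\xi')|^2$. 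Applying the Fourier–Plancherel identity in $x'$ and then integrating over $x_n \in (0,\infty)$, the $x_n$-integral produces $\int_0^\infty e^{-2 x_n |\xi'|} \, dx_n = (2|\xi'|)^{-1}$, which yields
\[
\|\nabla u\|_{L^2(\mathbf{R}_+^n)}^2 = C(n)^2 \int_{\mathbf{R}^{n-1}} |\xi'|^{-1}\, |\widehat{g}(\xi')|^2 \, d\xi' = C(n)^2\, \|g\|_{\dot{H}^{-1/2}(\mathbf{R}^{n-1})}^2.
\]

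Finally, since $\mathcal{S}_0(\mathbf{R}^{n-1})$ is dense in $\dot{H}^{-1/2}(\mathbf{R}^{n-1})$ (this is the dual of the analogous density for $\dot{H}^{1/2}$ used already in Section \ref{sub:IsoH}), the identity extends to general $g \in \dot{H}^{-1/2}(\mathbf{R}^{n-1})$ by continuity, with $\nabla E \ast (\delta_{\partial \mathbf{R}_+^n} \otimes g)$ defined as the limit in $L^2(\mathbf{R}_+^n)$ of the smooth approximations. The only mildly delicate point is this density/extension step: one must check that the approximating sequence converges to something that genuinely represents $\nabla E \ast (\delta_{\partial \mathbf{R}_+^n} \otimes g)$ in, say, the distributional sense, which follows since the map $g \mapsto \nabla u$ is isometric between $\dot{H}^{-1/2}(\mathbf{R}^{n-1})$ and its image in $L^2(\mathbf{R}_+^n)^n$ and agrees with the pointwise integral representation on the dense subclass.
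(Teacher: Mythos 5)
Your proposal is correct and follows essentially the same route as the paper: both rest on the partial Fourier transform identity $\mathcal{F}_{x'}[E(\cdot',x_n)](\xi') = C(n)|\xi'|^{-1}e^{-x_n|\xi'|}$ followed by the Fourier--Plancherel formula and the elementary $x_n$-integral $\int_0^\infty e^{-2x_n|\xi'|}\,dx_n = (2|\xi'|)^{-1}$. The only cosmetic differences are that the paper obtains the kernel formula via a Hankel-transform identity rather than your ODE argument, and that it leaves the density/approximation step implicit where you spell it out.
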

\begin{proof}
We consider the partial Fourier transform of $E$ with respect to $x'$, i.e., we let
\[
\widehat{E}'(\xi',x_n) := \int_{\mathbf{R}^{n-1}} \mathrm{e}^{- i \xi' \cdot x'} E(x',x_n) \, dx'.
\]
Since $E(x',x_n)$ is radial symmetric in $\mathbf{R}^{n-1}$ for any fixed $x_n>0$, $\widehat{E}'(\xi',x_n)$ can be calculated by the Hankel transform of order $\frac{n-3}{2}$ of the function $r^{\frac{n-3}{2}} E(r,x_n)$ where 
\[
E(r,x_n) := \frac{C(n)}{(r^2 + x_n^2)^{\frac{n-2}{2}}},
\]
i.e., we have that
\[
\widehat{E}'(\xi',x_n) =\lvert \xi'\rvert^{\frac{3-n}{2}} \int_0^\infty r^{\frac{n-1}{2}} E(r,x_n) J_{\frac{n-3}{2}}(r\lvert \xi'\rvert) \, dr = C(n) \frac{\mathrm{e}^{- x_n \lvert \xi'\rvert}}{\lvert \xi'\rvert}
\]
where $J_{\frac{n-3}{2}}$ is the Bessel function of the first kind of order $\frac{n-3}{2}$, see e.g. \cite[Formula 6.565.2]{GR}.
Then by the Fourier-Plancherel formula, we obtain that
\begin{align*}
&\big\lVert \nabla' E \ast \big( \delta_{\partial \mathbf{R}_+^n} \otimes g \big) \big\rVert_{L^2(\mathbf{R}_+^n)}^2
= \int_0^\infty \int_{\mathbf{R}^{n-1}} \big\lvert \nabla' E \ast \big( \delta_{\partial \mathbf{R}_+^n} \otimes g \big) (x',x_n) \big\rvert^2 \, dx' \, dx_n \\
&\ \ = C(n) \int_{\mathbf{R}^{n-1}} \big\lvert \widehat{g}'(\xi') \big\rvert^2 \int_0^\infty \mathrm{e}^{-2 x_n \lvert\xi'\rvert} \, dx_n \, d\xi' = C(n) \Vert g \Vert_{\dot{H}^{-\frac{1}{2}}(\mathbf{R}^{n-1})}^2
\end{align*}
and
\begin{align*}
&\big\lVert \partial_{x_n} E \ast \big( \delta_{\partial \mathbf{R}_+^n} \otimes g \big) \big\rVert_{L^2(\mathbf{R}_+^n)}^2 = \int_0^\infty \int_{\mathbf{R}^{n-1}} \big\lvert \partial_{x_n} E \ast \big( \delta_{\partial \mathbf{R}_+^n} \otimes g \big) (x',x_n) \big\rvert^2 \, dx' \, dx_n \\
&\ \ = C(n) \int_{\mathbf{R}^{n-1}} \big\lvert\widehat{g}'(\xi') \big\rvert^2 \int_0^\infty \mathrm{e}^{-2 x_n \lvert\xi'\rvert} \, dx_n \, d\xi' = C(n) \lVert g \rVert_{\dot{H}^{-\frac{1}{2}}(\mathbf{R}^{n-1})}^2.
\end{align*}
\end{proof}

We then generalize this result to arbitrary perturbed $C^2$ half space $\mathbf{R}_h^n$.

\begin{lemma} \label{L2EPH}
Let $\mathbf{R}_h^n$ be a perturbed $C^2$ half space with boundary $\Gamma = \partial \Omega$ and $n \geq 3$. For any $g \in L^\infty(\Gamma) \cap \dot{H}^{-\frac{1}{2}}(\Gamma)$,
the estimate
\[
\big\Vert \nabla E \ast \big( \delta_\Gamma \otimes g \big) \big\Vert_{L^2(\mathbf{R}^n)} \leq C(n) C_{\ref{L2EPH}}(h,\rho_0) \Vert g \Vert_{L^\infty(\Gamma) \cap \dot{H}^{-\frac{1}{2}}(\Gamma)} 
\]
holds with 
\begin{align*}
C_{\ref{L2EPH}}(h,\rho_0) &:= \rho_0^{\frac{1}{2}} R_h^{\frac{n-1}{2}} + \rho_0^{\frac{n+2}{2n}} C_s(h)^2 R_h^{\frac{n^2+n-2}{2n}} \big( \Vert \nabla'^2 h \Vert_{L^\infty(\mathbf{R}^{n-1})} + \rho_0^{-1} \big) \\
&\ \ + C_s(h)^{n+5} C_1(h)^2 \big( 1+R_h^{\frac{n}{2}} \big).
\end{align*}
\end{lemma}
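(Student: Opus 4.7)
The plan is to follow the decomposition strategy outlined in the introduction: split $g = g_1 + g_2$ into the curved part with $g_1\big(y',h(y')\big) := 1'_{B_{2R_h}(0')}(y') \cdot g\big(y',h(y')\big)$ and the straight part $g_2 := g - g_1$, and estimate the $L^2$-norm of $\nabla E \ast (\delta_\Gamma \otimes g_j)$ separately for $j=1,2$. Both pieces lie in $L^\infty(\Gamma) \cap \dot{H}^{-\frac{1}{2}}(\Gamma)$: $g_1$ because it is bounded with compact support in $\Gamma$ and so belongs to $L^{(2n-2)/n}(\Gamma) \hookrightarrow \dot{H}^{-\frac{1}{2}}(\Gamma)$ by Corollary \ref{EmHdhG}; $g_2$ as its complement.

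For the straight part, I would use the identification $E \ast (\delta_\Gamma \otimes g_2) = E \ast (\delta_{\partial \mathbf{R}_+^n} \otimes g_2^H)$, which holds on all of $\mathbf{R}^n$ because $g_2$ is supported where $h \equiv 0$. Since the tangential components of $\nabla E(\cdot,x_n)$ are even in $x_n$ while the normal component is odd, the modulus of $\nabla E \ast (\delta_{\partial \mathbf{R}_+^n} \otimes g_2^H)$ is symmetric under $x_n \mapsto -x_n$, so its $L^2(\mathbf{R}^n)$-norm is a universal multiple of its $L^2(\mathbf{R}_+^n)$-norm. Proposition \ref{L2EHS} bounds the latter by $C(n) \Vert g_2^H \Vert_{\dot{H}^{-\frac{1}{2}}(\mathbf{R}^{n-1})}$, and the isomorphism $T_h^{-1}$ of Lemma \ref{IsoHm} (applied via $g_2^H = T_h^{-1}(g_2)$) converts this into a bound of order $C_s(h)^{n/2+3} C_1(h) \Vert g \Vert_{L^\infty(\Gamma) \cap \dot{H}^{-\frac{1}{2}}(\Gamma)}$, which is consistent with the stated $C_{\ref{L2EPH}}(h,\rho_0)$.

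For the curved part, I would reuse the identity \eqref{ReGSg1} recorded in the introduction,
\[
\nabla E \ast (\delta_\Gamma \otimes g_1) = \nabla \operatorname{div}\big(E \ast (g_{1,\mathrm{c}}^\mathrm{e} 1_{\mathbf{R}_h^n} \nabla d)\big) - \nabla E \ast (1_{\mathbf{R}_h^n} g_1^\mathrm{e} f_{\theta,\rho_0/4}),
\]
with $g_{1,\mathrm{c}}^\mathrm{e} = \theta_d g_1^\mathrm{e}$ and $f_{\theta,\rho_0/4}$ as constructed in the proof of Lemma \ref{NM} (i), both supported in the bounded set $U_{\mathrm{c},\rho_0/2}$. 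The first term is controlled in $L^2(\mathbf{R}^n)$ by the Calderón--Zygmund $L^2$-boundedness of the second-order operator $\nabla \operatorname{div}\, E \ast$ (see \cite[Theorems 5.2.7 and 5.2.10]{Gra}), which yields a bound by $\Vert g_{1,\mathrm{c}}^\mathrm{e} 1_{\mathbf{R}_h^n} \nabla d \Vert_{L^2(\mathbf{R}^n)}$, in turn dominated by a power of $\rho_0$ and $R_h$ times $\Vert g \Vert_{L^\infty(\Gamma)}$. The second term is handled via the Hardy--Littlewood--Sobolev inequality: since $|\nabla E(x)| \lesssim |x|^{-(n-1)}$, convolution with $\nabla E$ maps $L^{2n/(n+2)}(\mathbf{R}^n)$ into $L^2(\mathbf{R}^n)$, and $1_{\mathbf{R}_h^n} g_1^\mathrm{e} f_{\theta,\rho_0/4}$ lies in $L^{2n/(n+2)}(\mathbf{R}^n)$ with norm bounded by $|U_{\mathrm{c},\rho_0/2}|^{(n+2)/(2n)}$ times $\Vert f_{\theta,\rho_0/4} \Vert_{L^\infty} \Vert g \Vert_{L^\infty(\Gamma)}$, the latter $L^\infty$ factor accounting for the contribution of $\Vert \nabla'^2 h \Vert_{L^\infty}$ and $\rho_0^{-1}$ to $C_{\ref{L2EPH}}(h,\rho_0)$.

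Combining the estimates for $g_1$ and $g_2$ and adding up the resulting constants yields the claimed bound. The only real obstacle is bookkeeping: tracking the exact dependence of each term on $\rho_0$, $R_h$, $\Vert \nabla'^2 h \Vert_{L^\infty}$ and the cumulative factors $C_s(h)^{\cdot}$, $C_1(h)^{\cdot}$ through the Calderón--Zygmund and Hardy--Littlewood--Sobolev steps, so that the final constant matches the explicit expression for $C_{\ref{L2EPH}}(h,\rho_0)$. No new analytical idea is required beyond those already exploited in Lemma \ref{NM}.
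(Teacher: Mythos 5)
Your proposal follows essentially the same route as the paper's proof: the same curved/straight decomposition of $g$, the same reduction of the straight part to the half-space via $g_2^H$, Proposition \ref{L2EHS} and the reflection symmetry, and the same treatment of the curved part through identity \eqref{ReGSg1} with Calder\'on--Zygmund $L^2$-boundedness of $\nabla\operatorname{div}E\,\ast$ and Hardy--Littlewood--Sobolev for the remainder. The only discrepancy is minor bookkeeping in the straight-part constant (the paper's bound on $\Vert g_2^H\Vert_{\dot{H}^{-1/2}(\mathbf{R}^{n-1})}$ passes through $\Vert g_2\Vert_{\dot{H}^{-1/2}(\Gamma)}\le\Vert g\Vert_{\dot{H}^{-1/2}(\Gamma)}+\Vert g_1\Vert_{\dot{H}^{-1/2}(\Gamma)}$, which is where the extra factor $C_s(h)^{n+5}C_1(h)^2(1+R_h^{n/2})$ comes from), which you correctly identify as a tracking issue rather than a gap.
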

\begin{proof}
Let $g \in L^\infty(\Gamma) \cap \dot{H}^{-\frac{1}{2}}(\Gamma)$.
Following the setting in subsection \ref{sub:EPgS}, we decompose $g$ into the curved part $g_1 = 1'_{B_{2R_h}(0')} g$ and the straight part $g_2 = g - g_1$.
Since $g \in L^\infty(\Gamma)$ and $L^{\frac{2n-2}{n}}(\Gamma)$ is continuously embedded in $\dot{H}^{-\frac{1}{2}}(\Gamma)$, by estimate (\ref{EHdhLp}) we see that $g_1 \in \dot{H}^{-\frac{1}{2}}(\Gamma)$ satisfies
\begin{align*}
\Vert g_1 \Vert_{\dot{H}^{-\frac{1}{2}}(\Gamma)} &\leq C(n) C_s(h)^{\frac{n}{2}+1} C_1(h) \Vert g_1 \Vert_{L^\frac{2n-2}{n}(\Gamma)} \\
&\leq C(n) C_s(h)^{\frac{n}{2}+2} C_1(h) R_h^{\frac{n}{2}} \Vert g \Vert_{L^\infty(\Gamma)}.
\end{align*}
Hence, it holds that both $g_1, g_2 \in L^\infty(\Gamma) \cap \dot{H}^{-\frac{1}{2}}(\Gamma)$. 
With respect to $g_2$, let $g_2^H$ be defined by expression \eqref{HPg2}.
Since $T_h(g_2^H) = g_2$ and the mapping $T_h : \dot{H}^{-\frac{1}{2}}(\mathbf{R}^{n-1}) \to \dot{H}^{-\frac{1}{2}}(\Gamma)$ is an isomorphism, by estimate \eqref{gghEm} that $g_2^H \in \dot{H}^{-\frac{1}{2}}(\mathbf{R}^{n-1})$ satisfies
\begin{equation} \label{HmEg2H}
\begin{split}
\Vert g_2^H \Vert_{\dot{H}^{-\frac{1}{2}}(\mathbf{R}^{n-1})} &\leq C(n) C_s(h)^{\frac{n}{2}+3} C_1(h) \Vert g_2 \Vert_{\dot{H}^{-\frac{1}{2}}(\Gamma)} \\
&\leq C(n) C_s(h)^{n+5} C_1(h)^2 \big( 1+R_h^{\frac{n}{2}} \big) \Vert g \Vert_{L^\infty(\Gamma) \cap \dot{H}^{-\frac{1}{2}}(\Gamma)}.
\end{split}
\end{equation}

Next, we follow the proof of Lemma \ref{NM} (i) to estimate the $L^2$ norm of $\nabla E \ast \big( \delta_\Gamma \otimes g_1 \big)$. 
We consider $\theta \in C_\mathrm{c}^\infty(\mathbf{R})$ such that $0 \leq \theta \leq 1$, $\theta(\sigma)=1$ for $\lvert\sigma\rvert \leq 1$ and $\theta(\sigma)=0$ for $\lvert\sigma\rvert \geq 2$.
We let $\theta_d := \theta(4d/\rho_0)$ where $d$ is the signed distance function defined by expression (\ref{SDF}). 
Note that this $\theta_d$ is $C^2$ in $\mathbf{R}^n$.
We extend $g_1 \in L^\infty(\Gamma)$ to $g_1^\mathrm{e} \in L^\infty\big( \Gamma^{\mathbf{R}^n}_{\rho_0/2} \big)$ by setting $g_1^\mathrm{e}(x) := g_1(\pi x)$ for any $x \in \Gamma^{\mathbf{R}^n}_{\rho_0/2}$ with $\pi x$ denoting the unique projection of $x$ on $\Gamma$. 
It holds that $\nabla d \cdot \nabla g_1^\mathrm{e}=0$ in $\Gamma^{\mathbf{R}^n}_{\rho_0/2}$.
We then set $g_{1,\mathrm{c}}^\mathrm{e} := \theta_d g_1^\mathrm{e}$. 
For any $x \in \mathbf{R}^n$, we have that
\begin{align*}
\nabla E \ast \big( \delta_\Gamma \otimes g_1\big)(x) &= \nabla\operatorname{div} \big(E*( g_{1,\mathrm{c}}^\mathrm{e} 1_{\mathbf{R}_h^n} \nabla d) \big)(x) -\nabla E* \big( 1_{\mathbf{R}_h^n} g_1^\mathrm{e} f_{\theta,\rho_0/4} \big) (x) \\
&= I_1(x) + I_2(x)
\end{align*}
where $f_{\theta,\rho_0/4} := \theta_d \Delta d + \frac{4\theta'(4d/\rho_0)}{\rho_0}$. Since $\nabla \operatorname{div} E$ is bounded in $L^p$ for any $1<p<\infty$, see e.g. \cite[Theorem 5.2.7 and Theorem 5.2.10]{Gra}, we can deduce that
\[
\Vert I_1 \Vert_{L^2(\mathbf{R}^n)} \leq C \Vert g_{1,\mathrm{c}}^\mathrm{e} 1_{\mathbf{R}_h^n} \nabla d \Vert_{L^2(\mathbf{R}^n)} \leq C(n) \rho_0^{\frac{1}{2}} R_h^{\frac{n-1}{2}} \Vert g \Vert_{L^\infty(\Gamma)}
\]
as $\operatorname{supp} g_{1,\mathrm{c}}^{\mathrm{e}} \subset \left\{ x \in \Gamma^{\mathbf{R}^n}_{\rho_0 /2} \,\middle\vert\, \left\lvert(\pi x)'\right\rvert < 2R_h \right\}$.
 Since $\nabla E(x)$ is an integration kernel that is dominated by $C(n) \lvert x \rvert^{1-n}$ for $x \in \mathbf{R}^n \setminus \{0\}$, by the famous Hardy-Littlewood-Sobolev inequality, see e.g. \cite[Theorem 1.7]{BCD}, we have that
\[
\Vert I_2 \Vert_{L^2(\mathbf{R}^n)} \leq C(n) \Vert 1_{\mathbf{R}_h^n} g_1^\mathrm{e} f_{\theta,\rho_0/4} \Vert_{L^r(\mathbf{R}^n)}
\]
where $r= \frac{2n}{n+2}$. Since $\operatorname{supp} g_1^{\mathrm{e}} f_{\theta,\rho_0 /4} \subset \left\{ x \in \Gamma^{\mathbf{R}^n}_{\rho_0 /2} \,\middle\vert\, \left\lvert(\pi x)'\right\rvert< 2R_h \right\}$, we deduce that
\begin{align*}
&\Vert 1_{\mathbf{R}_h^n} g_1^{\mathrm{e}} f_{\theta,\rho_0 /4} \Vert_{L^r(\mathbf{R}^n)} \\
&\ \ \leq C(n) \rho_0^{\frac{1}{r}} R_h^{\frac{n-1}{r}} \left( \Vert \nabla'^2 h \Vert_{L^\infty(\mathbf{R}^{n-1})} \big( 1 + \Vert \nabla' h \Vert_{L^\infty(\mathbf{R}^{n-1})}^2 \big) + \rho_0^{-1} \right) \Vert g \Vert_{L^\infty(\Gamma)}.
\end{align*}
Hence, we obtain the $L^2$ estimate for $g_1$, i.e., 
\[
\big\Vert \nabla E \ast \big( \delta_\Gamma \otimes g_1 \big) \big\Vert_{L^2(\mathbf{R}^n)} \leq C(n) C_{\ref{L2EPH},1}(h,\rho_0) \Vert g \Vert_{L^\infty(\Gamma)}.
\]
where
\[
C_{\ref{L2EPH},1}(h,\rho_0) := \rho_0^{\frac{1}{2}} R_h^{\frac{n-1}{2}} + \rho_0^{\frac{n+2}{2n}} C_s(h)^2 R_h^{\frac{n^2+n-2}{2n}} \big( \Vert \nabla'^2 h \Vert_{L^\infty(\mathbf{R}^{n-1})} + \rho_0^{-1} \big).
\]

The $L^2$ estimate of $\nabla E \ast \big( \delta_\Gamma \otimes g_2 \big)$ in $\mathbf{R}_+^n$ follows directly from Proposition \ref{L2EHS} and estimate (\ref{HmEg2H}). We have that
\begin{align*}
\big\Vert \nabla E \ast \big(\delta_{\partial \mathbf{R}_+^n} \otimes g_2^H \big) \big\Vert_{L^2(\mathbf{R}_+^n)} &= C(n) \Vert g_2^H \Vert_{\dot{H}^{-\frac{1}{2}}(\mathbf{R}^{n-1})} \\
&\leq C(n) C_s(h)^{n+5} C_1(h)^2 \big( 1+R_h^{\frac{n}{2}} \big) \Vert g \Vert_{L^\infty(\Gamma) \cap \dot{H}^{-\frac{1}{2}}(\Gamma)}.
\end{align*}
Note that for any $x \in \mathbf{R}^n$, it holds that
\[
\nabla E \ast \big( \delta_\Gamma \otimes g_2 \big) (x) = \nabla E \ast \big( \delta_{\partial \mathbf{R}_+^n} \otimes g_2^H \big) (x).
\]
Moreover, for any $x = (x',x_n) \in \mathbf{R}_+^n$ we have that
\[
\left\lvert \nabla E \ast \big( \delta_{\partial \mathbf{R}_+^n} \otimes g_2^H \big) (x',-x_n) \right\rvert  = \left\lvert \nabla E \ast \big( \delta_{\partial \mathbf{R}_+^n} \otimes g_2^H \big) (x',x_n) \right\rvert.
\]
Therefore, the $L^2$ estimate of $\nabla E \ast \big( \delta_\Gamma \otimes g_2 \big)$ in $\mathbf{R}^n$ reads as
\begin{align*}
\big\Vert \nabla E \ast \big( \delta_\Gamma \otimes g_2 \big) \big\Vert_{L^2(\mathbf{R}^n)} &= 2 \big\Vert \nabla E \ast \big(\delta_{\partial \mathbf{R}_+^n} \otimes g_2^H \big) \big\Vert_{L^2(\mathbf{R}_+^n)} \\
&\leq C(n) C_s(h)^{n+5} C_1(h)^2 \big( 1+R_h^{\frac{n}{2}} \big) \Vert g \Vert_{L^\infty(\Gamma) \cap \dot{H}^{-\frac{1}{2}}(\Gamma)}.
\end{align*}
\end{proof}

\subsection{Solution to the Neumann problem} \label{sub:SoNP}

Let $\mathbf{R}_h^n$ be a perturbed $C^2$ half space with boundary $\Gamma = \partial \mathbf{R}_h^n$ and $n \geq 3$. We further assume that $\mathbf{R}_h^n$ has small perturbation, i.e., we require that the boundary function $h \in C_c^2(\mathbf{R}^{n-1})$ satisfies
\[
C_\ast(h) < \frac{1}{2 C^\ast(n)}
\]
where $C^\ast(n)$ is a specific constant that depends on dimension $n$ only.
Under this setting, we are able to construct a solution to Neumann problem (\ref{1NP}).

\begin{proof}[Proof of Lemma \ref{EN}]
Let $g \in L^\infty(\Gamma) \cap \dot{H}^{-\frac{1}{2}}(\Gamma)$.
By Corollary \ref{ELHmh} and Theorem \ref{TBIE}, for any $i \in \mathbf{N}$ we have that
\[
\Vert (2S)^i g \Vert_{L^\infty(\Gamma) \cap \dot{H}^{-\frac{1}{2}}(\Gamma)} \leq 2^i C^\ast(n)^i C_\ast(h)^i \Vert g \Vert_{L^\infty(\Gamma) \cap \dot{H}^{-\frac{1}{2}}(\Gamma)}.
\]
Since we are now assuming that $2 C^\ast(n) C_\ast(h) < 1$, the operator $I - 2S$, which is bounded linear from $L^\infty(\Gamma) \cap \dot{H}^{-\frac{1}{2}}(\Gamma)$ to $L^\infty(\Gamma) \cap \dot{H}^{-\frac{1}{2}}(\Gamma)$, admits a well-defined inverse constructed by the Neumann series  
\[
(I - 2S)^{-1} := \sum_{i=0}^\infty (2S)^i
\]
in the sense that $(I-2S)^{-1} : L^\infty(\Gamma) \cap \dot{H}^{-\frac{1}{2}}(\Gamma) \to L^\infty(\Gamma) \cap \dot{H}^{-\frac{1}{2}}(\Gamma)$ is also bounded linear.
For $g \in L^\infty(\Gamma) \cap \dot{H}^{-\frac{1}{2}}(\Gamma)$, we have that
\begin{align*}
\Vert (I - 2S)^{-1} g \Vert_{L^\infty(\Gamma) \cap \dot{H}^{-\frac{1}{2}}(\Gamma)} \leq \left( \sum_{i=0}^\infty 2^i C^\ast(n)^i C_\ast(h)^i \right) \Vert g \Vert_{L^\infty(\Gamma) \cap \dot{H}^{-\frac{1}{2}}(\Gamma)} \leq \frac{\Vert g \Vert_{L^\infty(\Gamma) \cap \dot{H}^{-\frac{1}{2}}(\Gamma)}}{1 - 2 C^\ast(n) C_\ast(h)}.
\end{align*}
Hence, with respect to $g \in L^\infty(\Gamma) \cap \dot{H}^{-\frac{1}{2}}(\Gamma)$, we claim that the solution to Neumann problem (\ref{1NP}) can be constructed as
\[
u(x) = E \ast \Big( \delta_\Gamma \otimes \big( 2(I - 2S)^{-1} g \big) \Big)(x),  \quad x \in \mathbf{R}_h^n.
\]
A simple check ensures that $u$ satisfies Neumann problem (\ref{1NP}) formally. It is sufficient to establish the $vBMOL^2$ estimate for $\nabla u$.
The $vBMO^{\infty, \rho_0}$-norm for $\nabla u$ in $\mathbf{R}_h^n$ is guaranteed by Lemma \ref{NM}. By estimate (\ref{BMOEH}) and estimate (\ref{NCEH}), we have that
\begin{align*}
\Vert \nabla u \Vert_{vBMO^{\infty,\rho_0}\big( \mathbf{R}_h^n \big)} \leq \frac{C(n) C_{\ref{NM}}(K,h,\rho_0)}{1 - 2C^\ast(n) C_\ast(h)} \Vert g \Vert_{L^\infty(\Gamma) \cap \dot{H}^{-\frac{1}{2}}(\Gamma)} 
\end{align*}
where $C_{\ref{NM}}(K,h,\rho_0) := C_{\ref{NM},i}(h,\rho_0) + C_{\ref{NM},ii}(K,h,\rho_0)$. The $L^2$ estimate of $\nabla u$ follows directly from Lemma \ref{L2EPH}, we have that
\begin{align*}
\Vert \nabla u \Vert_{L^2\big( \mathbf{R}_h^n \big)} \leq C(n) C_{\ref{L2EPH}}(h,\rho_0) \Vert (I - 2S)^{-1} g \Vert_{L^\infty(\Gamma) \cap \dot{H}^{-\frac{1}{2}}(\Gamma)} \leq \frac{C(n) C_{\ref{L2EPH}}(h,\rho_0)}{1 - 2C^\ast(n) C_\ast(h)} \Vert g \Vert_{L^\infty(\Gamma) \cap \dot{H}^{-\frac{1}{2}}(\Gamma)}.
\end{align*}
This completes the proof of Lemma \ref{EN}.
\end{proof}

\section*{Acknowledgement}
The first author was partly supported by the Japan Society for the Promotion of Science through grants No. 19H00639 (Kiban A), No. 18H05323 (Kaitaku), No. 17H01091 (Kiban A) and by Arithmer Inc. and Daikin Industries Ltd. through a collaborative grant.
%

%
%



\end{document}